\title{Decomposition numbers for Rouquier blocks of Ariki-Koike algebras I}
\author[S.~Lyle]{Sin\'ead Lyle}
\address{School of Mathematics, University of East Anglia, Norwich NR4 7TJ, UK.}
\email{s.lyle@uea.ac.uk}
\subjclass[2020]{20C08, 20C30, 05E10}
\keywords{Ariki-Koike algebras, Rouquier blocks, abacuses}
\numberwithin{equation}{section}
\numberwithin{figure}{section}
\newtheorem{lemma}{Lemma}[section]
\newtheorem{theorem}[lemma]{Theorem}
\newtheorem{proposition}[lemma]{Proposition}
\newtheorem{corollary}[lemma]{Corollary}
\newtheorem{thmx}{Theorem}
\newtheorem{conjx}[thmx]{Conjecture}
\theoremstyle{remark}
\newtheorem*{ex}{Example}
\newcommand{\bla}{\boldsymbol \la}
\newcommand{\bnu}{\boldsymbol \nu}
\newcommand{\bmu}{\boldsymbol \mu}
\newcommand{\bal}{\boldsymbol \alpha}
\newcommand{\bgam}{\boldsymbol \gamma}
\newcommand{\bdel}{\boldsymbol \delta}
\newcommand{\bDel}{\boldsymbol \Delta}
\newcommand{\bbe}{\boldsymbol \beta}
\newcommand{\ba}{\boldsymbol a}
\newcommand{\bb}{\boldsymbol b}
\newcommand{\bc}{\boldsymbol c}
\newcommand{\bd}{\boldsymbol d}
\newcommand{\bsig}{\boldsymbol \sigma}
\newcommand{\btau}{\boldsymbol \tau}
\newcommand{\bet}{\boldsymbol \eta}
\newcommand{\bep}{\boldsymbol \epsilon}
\newcommand{\al}{\alpha}
\newcommand{\be}{\beta}
\newcommand{\ep}{\epsilon}
\newcommand{\emp}{\varnothing}
\newcommand{\C}{\mathbb{C}}
\newcommand{\Z}{\mathbb{Z}}
\newcommand{\N}{\mathbb{N}}
\newcommand{\U}{\mathcal{U}}
\newcommand{\F}{\mathcal{F}}
\newcommand{\h}{\mathcal{H}}
\newcommand{\la}{\lambda}
\newcommand{\La}{\Lambda}
\newcommand{\Lar}{\Lambda^r_{\text{reg}}}
\newcommand{\Larn}{\Lambda^{r}_n}
\newcommand{\Kl}{\La^{\mc}_n}
\newcommand{\Kla}{\La^{\mc}}
\newcommand{\R}{\mathcal{R}}
\newcommand{\RR}{\mathcal{R}_{\text{reg}}}
\newcommand{\Rs}{\bar{\R}^s}
\newcommand{\Rrs}{\bar{\R}^s_{\text{reg}}}
\newcommand{\Rp}{\R^{\diamond}}
\newcommand{\mc}{\boldsymbol a}
\newcommand{\M}[2]{\Gamma^{#1}_{#2}}
\newcommand{\hook}{\mathrm{h}}
\newcommand{\ap}{\approx_{\mc}}
\newcommand{\gedom}{\trianglerighteq}
\newcommand{\gdom}{\triangleright}
\newcommand{\BB}{\mathfrak{B}}
\DeclareMathOperator{\res}{res}
\DeclareMathOperator{\Res}{Res}
\DeclareMathOperator{\qt}{qt}
\DeclareMathOperator{\add}{add}
\DeclareMathOperator{\rem}{rem}
\DeclareMathOperator{\End}{End}
\newcommand{\Gr}[1]{[\![ #1]\!]}
\begin{document}
\begin{abstract}
Let $\h=\h_{r,n}(q,{\bf Q})$ denote an Ariki-Koike algebra over a field of characteristic $p\geq 0$. For each $r$-multipartition $\bla$ of $n$, we define a $\h$-module $S^{\bla}$ and for each Kleshchev $r$-multipartition $\bmu$ of $n$, we define an irreducible $\h$-module $D^{\bmu}$. Given a multipartition $\bla$ and a Kleshchev multipartition $\bmu$ both lying in a Rouquier block such that $\bla$ and $\bmu$ have the same multicore, we give a closed formula for the graded decomposition number $[S^{\bla}:D^{\bmu}]_v$ when $p=0$ or when each component of $\bmu$ has fewer than $p$ removable $e$-rim hooks. 
\end{abstract}
\maketitle

\section{Introduction}
\medskip

Let $\h=\h_{r,n}(q,{\bf Q})$ denote an Ariki-Koike algebra. These algebras were introduced by Ariki and Koike~\cite{ArikiKoike} as a simultaneous generalization of the Hecke algebras of type $A$, when $r=1$ and type $B$, when $r=2$. In a natural generalization of the combinatorics which appear in the type $A$ case, there is a class of important $\h$-modules, called Specht modules, which are indexed by the set of $r$-multipartitions of $n$. When $\h$ is semisimple, these Specht modules form a complete set of non-isomorphic irreducible $\h$-modules; otherwise, the simple modules appear as the heads of a subset of the Specht modules.

One of the most important open problems in the representation theory of the Ariki-Koike algebras is to determine the multiplicity $[S^{\bla}:D^{\bmu}]$ of a simple module $D^{\bmu}$ as a composition factor of a Specht module $S^{\bla}$; it also seems to be a very difficult problem. Even when $r=1$, there are very few cases where there is a closed formula for these decomposition numbers. However when $r=1$, the decomposition numbers are known for certain blocks called Rouquier blocks or RoCK blocks. 

The Rouquier blocks for the Hecke algebras of type $A$ were defined by Rouquier~\cite{Rouquier}. Due to work on them by Chuang and Kessar~\cite{CK}, many authors refer to them as RoCK (Rouquier or Chuang-Kessar) blocks. 
Chuang and Kessar showed that a Rouquier block of $\mathbb{F}_p \mathfrak{S}_n$ of weight $w<p$ is Morita equivalent to the principal block of $\mathfrak{S}_p \wr \mathfrak{S}_n$, and hence they were able to prove Brou\'e's abelian defect group conjecture for the Rouquier blocks; Chuang and Rouquier~\cite{CR} later extended the proof of the conjecture to all blocks of $\mathbb{F}_p \mathfrak{S}_n$ by showing that any block of weight $w$ is derived equivalent to a Rouquier block. There is an elegant closed formula for the decomposition numbers of Specht modules lying in Rouquier blocks, proved by Leclerc and Miyachi~\cite{LeclercMiyachi2} when $p=0$; by Chuang and Tan~\cite{CT} for the symmetric group algebra when $p<w$; and by James, Mathas and the author~\cite{JLM} for a Hecke algebra over a field of characteristic $p<w$. Apart from the value of knowing decomposition numbers, this explicit formula has been used to study the other aspects of the Hecke algebras. Decomposition numbers for the Rouquier blocks were recently used in the proof that all blocks of of weight at least $2$ in quantum characteristic $e\geq 3$ are Schurian-infinite~\cite{ALS}; they appear in the classification of the irreducible Specht modules when $e>2$~\cite{Fayers:Irred}; they were used in the first construction of homomorphism spaces of dimension larger than $1$ between Specht modules when $e>2$~\cite{Dodge}; and they provide a starting point for many results about Specht modules in blocks of small weight~\cite{Fayers:W3,Fayers:W4}. 

In~\cite{L:Rouquier}, we introduced the notion of Rouquier blocks for the Ariki-Koike algebras, as a natural generalization of the Rouquier blocks for the Hecke algebras of type $A$. The description of Rouquier blocks we give is in terms of abacus combinatorics. Analogously to~\cite{Webster}, we use the term RoCK block to refer to a block which is Scopes equivalent to a Rouquier block; although our main theorem is stated (and proved) for Rouquier blocks, similar results hold for any RoCK block.   
Given the known results for $r=1$, it is natural to ask whether it is possible to have a closed formula for the decomposition numbers for the Rouquier blocks of the Ariki-Koike algebras. This paper gives a partial answer. When $\h$ is defined over a field of characteristic $0$ we give such a formula for the graded multiplicity $[S^{\bla}:D^{\bmu}]_v$ where $\bla$ and $\bmu$ lie in a Rouquier block and have a common multicore. The formula also holds when the characteristic of the field is larger than the number of removable $e$-rim hooks in every component of $\bmu$. Additional motivation for our results is given by work of Muth, Speyer and Sutton~\cite{MSS} who show that these decomposition numbers should be related to decomposition numbers for cell modules in the cyclotomic wreath-zigzag algebra.  

We summarize the main results of the paper. We will define notation rigorously in later sections; for now, we just indicate where the definitions can be found.

Suppose that $\bla$ and $\bmu$ are both $r$-multipartitions of $n$ lying in a Rouquier block, with $\bmu$ $e$-regular, and that $\bla$ and $\bmu$ have a common multicore. For $0 \le k \le r-1$ and $0 \leq i \le e-1$, let $\la^k_i$ (resp. $\mu^k_i$) denote the quotient on runner $i$ of the abacus configuration of the $k$th component of $\bla$ (resp. $\bmu$); we define these terms in Section~\ref{SS3}. Set

\[
g_{\bla\bmu}(v) = v^{\omega(\bla)-\omega(\bmu)} \sum_{\bal \in \M{r}{e+1}} \sum_{\bbe \in \M{r}{e}} \sum_{\bgam \in \M{{r+1}}{e}} \sum_{\bdel \in \M{r}{e}} \left( \prod_{k=0}^{r-1} \prod_{i=0}^{e-1} c^{\delta^k_i}_{\mu^k_i \gamma^k_i} c^{\delta^k_i}_{ \al^k_i \be^k_i \gamma^{k+1}_i} c^{\la^k_i}_{\be^k_i (\al^k_{i+1})'}\right) \\ c^{\emp}_{\gamma^0_0 \gamma^0_1 \ldots \gamma^{0}_{e-1}}  c^{\emp}_{\gamma^r_0 \gamma^r_1 \ldots \gamma^r_{e-1}}.
\]
We define $\omega(\bla)$ in Section~\ref{SS3}. 
The terms $c^{\la}_{\al_1\ldots\al_{t}}$ are generalized Littlewood-Richardson coefficients, which are defined in Section~\ref{S:LR}, and $\Gamma^{s}_f$ is the set of $(s \times f)$-matrices whose entries are partitions; we give more details in Section~\ref{S:Coeffs}. 

Let $\{s_{\bla} \mid \bla \text{ an $r$-multipartition}\}$ denote the standard basis of the Fock space representation $\F^{\mc}$ of $\U_{v}(\widehat{\mathfrak{sl}}_{e})$, which appears in Section~\ref{SS:Fock}. For each $e$-regular $r$-multipartition $\bmu$, let $G(\bmu)$ denote the canonical basis vector of $\F^{\mc}$ indexed by $\bmu$ and suppose that
\[G(\bmu) = \sum_{\bla} d^{\mc}_{\bla\bmu}(v) s_{\bla}.\]

\begin{thmx} \label{thm:all}
Suppose that $\bla$ and $\bmu$ are $r$-multipartitions of $n$ with $\bmu$ $e$-regular. Suppose further that $\bla$ and $\bmu$ belong to a Rouquier block and that they have the same multicore. Then
\[d^{\mc}_{\bla\bmu}(v) = g_{\bla\bmu}(v).\]
\end{thmx}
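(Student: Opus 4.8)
The plan is to compute the canonical basis vector $G(\bmu)$ directly by exhibiting a bar-invariant element of $\F^{\mc}$ with the correct triangularity, and to do so by transporting the whole problem to the Fock space of $\U_v(\widehat{\mathfrak{sl}}_e)$ attached to a wreath-product-like situation. Since $\bla$ and $\bmu$ share a multicore and lie in a Rouquier block, the abacus displays of their components are, on each runner, a fixed ``staircase'' core configuration with a partition-shaped perturbation; the data of $\bla$ (resp.\ $\bmu$) is exactly recorded by the runner quotients $\la^k_i$ (resp.\ $\mu^k_i$). First I would recall the ``runner removal''/row and column removal style reductions and the product structure on the relevant part of the Fock space: in a Rouquier block the action of the Chevalley generators decouples runner-by-runner up to the interaction between consecutive runners $i$ and $i+1$ (this is where the $(\al^k_{i+1})'$ transpose appears), so that $G(\bmu)$ factors as a suitable product/coproduct of $\mathfrak{sl}_2$-type canonical basis elements. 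This is the mechanism that, for $r=1$, produces the Leclerc--Miyachi/James--Mathas--Lyle formula, and I would set things up so that the $r=1$ case is literally a special case of the computation.

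The core computation is then to identify the coefficients appearing when one writes the product of local canonical basis vectors back in the standard basis. Each local factor contributes a Littlewood--Richardson-type expansion: the $\mathfrak{sl}_\infty$ (or $\mathfrak{sl}_e$ level-rank) combinatorics turns products of Schur-like objects into sums over intermediate partitions with LR coefficients $c^{\la}_{\mu\nu}$, and iterating over all $e$ runners and all $r+1$ ``layers'' (the extra $\mathfrak{sl}_{r+1}$ direction coming from the multi-charge / the $r$ components plus the two boundary terms) produces exactly the nested sum over $\bal\in\M{r}{e+1}$, $\bbe\in\M{r}{e}$, $\bgam\in\M{r+1}{e}$, $\bdel\in\M{r}{e}$ with the stated product of generalized LR coefficients, the two ``closing'' coefficients $c^{\emp}_{\gamma^0_0\ldots\gamma^0_{e-1}}$ and $c^{\emp}_{\gamma^r_0\ldots\gamma^r_{e-1}}$ enforcing the boundary conditions at the first and last layers. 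The power of $v$, namely $v^{\omega(\bla)-\omega(\bmu)}$, must come out of bookkeeping the degrees: I would check that $\omega$ is precisely the grading statistic (the analogue of the ``weight''/$\deg$ function on the Fock space) so that each elementary move contributes the correct power of $v$ and the global shift is $\omega(\bla)-\omega(\bmu)$. It then remains to verify that $g_{\bla\bmu}(v)$ so obtained is bar-invariant (symmetry of the LR combinatorics together with the degree shift) and unitriangular with respect to the dominance order ($g_{\bmu\bmu}=1$ and $g_{\bla\bmu}=0$ unless $\bla\gedom\bmu$), which by the uniqueness characterization of the canonical basis forces $d^{\mc}_{\bla\bmu}(v)=g_{\bla\bmu}(v)$.

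I would organize the steps as: (1) fix the combinatorial parametrization of Rouquier-block multipartitions with a given multicore via runner quotients; (2) establish the decoupled product formula for the Fock space and the action of the generators restricted to such a block (reducing to the $\mathfrak{sl}_2$/level-rank picture); (3) compute the canonical basis vector of each local factor explicitly (this is classical $\mathfrak{sl}_2$ canonical basis, i.e.\ essentially a single $q$-binomial/divided-power computation); (4) multiply the factors, pass to the standard basis, and match the output with $g_{\bla\bmu}(v)$ by unwinding the generalized LR coefficients and the matrix index sets $\M{s}{f}$; (5) check bar-invariance, unitriangularity and the degree shift to conclude via uniqueness of the canonical basis.

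The main obstacle I expect is Step (2) together with the matching in Step (4): proving that the Fock space action genuinely decouples in a Rouquier block with the precise ``two consecutive runners interact, with a transpose'' structure, and then keeping track of all the intermediate partitions so that the fourfold nested sum with its particular product of $c$'s is reproduced exactly rather than up to some reindexing. Controlling the transpose $(\al^k_{i+1})'$ and the cyclic wrap-around $i+1 \bmod e$ correctly, and ensuring the boundary layers $0$ and $r$ behave as the empty-partition constraints $c^{\emp}_{\gamma^0_0\ldots\gamma^0_{e-1}}$, $c^{\emp}_{\gamma^r_0\ldots\gamma^r_{e-1}}$, is the delicate combinatorial heart of the argument; the degree bookkeeping for $\omega$ is fiddly but routine once the rest is in place. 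The hypothesis on $p$ (characteristic $0$, or larger than the number of removable $e$-rim hooks in each component of $\bmu$) should enter only to guarantee that the canonical/decomposition numbers computed over $\C$ specialize unchanged, i.e.\ that no genuinely modular phenomenon intervenes in the $\mathfrak{sl}_2$ factors, mirroring the $p<w$ condition in the $r=1$ case.
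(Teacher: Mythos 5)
Your approach is genuinely different from the paper's, and it has a real gap at its centre. The paper does not prove Theorem~\ref{thm:all} by exhibiting a ``decoupled product formula'' for $G(\bmu)$ and then checking bar-invariance and unitriangularity of $g_{\bla\bmu}(v)$ directly. Instead it constructs $G(\bmu)$ by a triple induction (on $r$, on $\hook(\mu^{(0)})$, and on a total order on $\mu^{(0)}$), using the operators $f^{(s,j)}=f^{(s)}_j\cdots f^{(s)}_1 f^{(s)}_{j+1}\cdots f^{(s)}_0$ whose action on standard basis vectors in a Rouquier block is computed explicitly in Proposition~\ref{L:I1}, and then on the truncated element $Q(\bnu)=\sum_{\bla\approx\bnu}g_{\bla\bnu}(v)s_{\bla}$ in Proposition~\ref{P:HeavyGraded} (the identities of Lemmas~\ref{LR1}, \ref{LR2}, \ref{L:Tunnel} do the Littlewood--Richardson bookkeeping). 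Because each $f^{(s,j)}G(\bnu)$ is automatically bar-invariant, Corollary~\ref{C:Ones} lets the argument sidestep ever having to verify bar-invariance of $g_{\bla\bmu}(v)$ by itself, and Lemmas~\ref{L:AddEmpty} and~\ref{L:LLTLead} handle the $\hook(\mu^{(0)})=0$ step. Your step~(2), the ``decoupling'' of the Fock space action into local $\mathfrak{sl}_2$/level-rank factors whose product is the canonical basis, is precisely the assertion that the paper goes to lengths to avoid: the paper explicitly notes in Section~\ref{SS:Open} that no runner-removal theorem is available for $r>1$, which undercuts the hoped-for factorization. Nothing in your proposal explains how to establish the product/coproduct structure, and without it steps~(3)--(5) do not get off the ground. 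Separately, proving bar-invariance of $g_{\bla\bmu}(v)$ directly from LR-symmetry and the $\omega$-shift is far from routine; the paper never attempts it.

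A smaller but telling confusion: the hypothesis on $p$ plays no role in Theorem~\ref{thm:all} at all. That statement is a purely combinatorial identity of Fock-space transition coefficients $d^{\mc}_{\bla\bmu}(v)=g_{\bla\bmu}(v)$, valid without reference to any ground field; the characteristic enters only in the representation-theoretic corollaries (Theorems~\ref{thm:all2} and~\ref{T:Charp}, via Ariki's theorem and the cyclotomic $q$-Schur algebra with an adjustment matrix). Framing the $p$-condition as part of the proof of Theorem~\ref{thm:all} suggests the proposal is not cleanly separating the combinatorial identity from its modular consequences. If you want to salvage your approach, the part worth developing is the explicit base-step analysis of $\mathfrak{sl}_2$ canonical bases on a single runner, since that is consistent with Lemma~\ref{L:r1} and the Leclerc--Miyachi input; but you would still need to replace the global factorization claim with something like the paper's inductive use of $f^{(s,j)}$, because that is where the actual content lies.
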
 

Applying (the graded version of) Ariki's Theorem~\cite{Ariki,BK:Decomp} to Theorem~\ref{thm:all}, we immediately obtain the following result.

\begin{thmx} \label{thm:all2}
  Let $\h=\h_{r,n}(q,{\bf Q})$ be an Ariki-Koike algebra defined over a field of characteristic $0$ and suppose that $\bla$ and $\bmu$ are $r$-multipartitions of $n$ with $\bmu$ a Kleshchev multipartition. Suppose further that $\bla$ and $\bmu$ belong to a Rouquier block and that they have the same multicore. Then the graded multiplicity of the simple module $D^{\bmu}$ as a composition factor of the Specht module $S^{\bla}$ is
  \[[S^{\bla}:D^{\bmu}]_v = g_{\bla\bmu}(v).\]
\end{thmx}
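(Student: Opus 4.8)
\textbf{Plan for Theorem~\ref{thm:all2}.} The plan is to deduce Theorem~\ref{thm:all2} from Theorem~\ref{thm:all} by a formal application of Ariki's theorem. Over a field of characteristic $0$ the graded version of Ariki's categorification theorem~\cite{Ariki,BK:Decomp} gives, for every Kleshchev $r$-multipartition $\bmu$, the identity $[S^{\bla}:D^{\bmu}]_v = d^{\mc}_{\bla\bmu}(v)$, with both sides vanishing unless $\bla$ and $\bmu$ lie in the same block. A Kleshchev multipartition in a Rouquier block is $e$-regular, so Theorem~\ref{thm:all} applies to the pair $(\bla,\bmu)$ and gives $d^{\mc}_{\bla\bmu}(v)=g_{\bla\bmu}(v)$; combining the two equalities proves the theorem. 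The common-multicore hypothesis is simply imported from Theorem~\ref{thm:all}: it is the range in which the closed formula $g_{\bla\bmu}$ has been established. So the entire substance lies in Theorem~\ref{thm:all}, which I now discuss.

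\textbf{Plan for Theorem~\ref{thm:all}.} Since $G(\bmu)$ is characterized inside $\F^{\mc}$ as the unique bar-invariant vector lying in $s_{\bmu}+\sum_{\bla}v\Z[v]\,s_{\bla}$, the plan is to set $G'(\bmu):=\sum_{\bla}g_{\bla\bmu}(v)\,s_{\bla}$ and verify these two properties for $G'(\bmu)$. Unitriangularity should come directly out of the closed formula: the support conditions on the generalized Littlewood-Richardson coefficients of Section~\ref{S:LR} force $g_{\bla\bmu}(v)=0$ unless $\bla\gedom\bmu$; the unique ``diagonal'' choice of the matrices $\bal,\bbe,\bgam,\bdel$ contributes the coefficient $1$ of $s_{\bmu}$; and the prefactor $v^{\omega(\bla)-\omega(\bmu)}$, together with degree bounds on the remaining factors, shows $g_{\bla\bmu}(v)\in v\Z[v]$ for $\bla\ne\bmu$. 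What licenses the very shape of $g_{\bla\bmu}$ --- a product over $0\le k\le r-1$ and $0\le i\le e-1$ of local LR coefficients with only the two ``boundary'' terms $c^{\emp}_{\gamma^0_0\ldots\gamma^0_{e-1}}$ and $c^{\emp}_{\gamma^r_0\ldots\gamma^r_{e-1}}$ closing up the outermost components --- is the defining feature of a Rouquier block: in the abacus description of~\cite{L:Rouquier} and Section~\ref{SS3}, the beads of such a block are so spread out that the runners, and the $r$ components of $\bmu$, interact only through controlled ``short-range'' moves.

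\textbf{Bar-invariance.} The genuinely hard part will be $\overline{G'(\bmu)}=G'(\bmu)$, because the bar involution on the higher-level Fock space $\F^{\mc}$ is opaque. I would use Uglov's realization of $\F^{\mc}$ inside a tensor product of level-one Fock spaces, under which the $\U_v(\widehat{\mathfrak{sl}}_e)$-action and the bar involution agree with those on the factors up to a triangular twist governed by the braiding. In a Rouquier block each level-one factor is itself a Rouquier block of a Hecke algebra of type $A$, whose canonical basis is known explicitly by Leclerc-Miyachi~\cite{LeclercMiyachi2} and James-Mathas-Lyle~\cite{JLM}; this reproduces the coefficients $c^{\la^k_i}_{\be^k_i(\al^k_{i+1})'}$, while $c^{\delta^k_i}_{\mu^k_i\gamma^k_i}$, $c^{\delta^k_i}_{\al^k_i\be^k_i\gamma^{k+1}_i}$ and the $c^{\emp}$ terms would arise from computing the triangular twist \emph{exactly}. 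A more self-contained route would be an induction on the weight $w$: stripping one $e$-rim hook from each component of $\bmu$ yields a Kleshchev multipartition in a Rouquier block of weight $w-1$; the canonical basis behaves predictably under the associated divided-power crystal operator, and one would match the recursion it forces on the $d^{\mc}_{\bla\bmu}$ against the recursion satisfied by the $g_{\bla\bmu}$, the latter reducing to a combinatorial identity among the generalized LR coefficients of Section~\ref{S:Coeffs}.

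\textbf{Main obstacle.} Either way, I expect the main obstacle to be the \emph{exactness} of this decoupling in a Rouquier block --- showing that the bar-invariant, unitriangular vector is literally $\sum_{\bla}g_{\bla\bmu}(v)\,s_{\bla}$, with no lower-order corrections. Concretely one must prove that in the computation of $G(\bmu)$ no ``long-range'' interaction among runners or among components survives once the abacus is as spread out as the Rouquier condition of~\cite{L:Rouquier} forces, and then use the identities for generalized LR coefficients assembled in Section~\ref{S:Coeffs} (associativity, the $c^{\emp}$-relations, conjugation) to collapse the resulting iterated sums into the single compact product defining $g_{\bla\bmu}$. Finally, I note that Theorem~\ref{thm:all} is characteristic-free and that $p=0$ enters Theorem~\ref{thm:all2} only through Ariki's theorem; this is precisely why the same formula should extend to the positive-characteristic range promised in the abstract --- each component of $\bmu$ having fewer than $p$ removable $e$-rim hooks --- using the known characteristic-$p$ analogues of Ariki's theorem for blocks of small weight.
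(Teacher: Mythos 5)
For the statement itself your argument is exactly the paper's: Theorem~\ref{thm:all2} is obtained by applying the graded form of Ariki's theorem (Theorem~\ref{T:Ariki}) to Theorem~\ref{thm:all}, after observing that a Kleshchev multipartition is $e$-regular so that Theorem~\ref{thm:all} applies to the pair $(\bla,\bmu)$. That deduction is correct and complete.

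Since most of your text is a plan for Theorem~\ref{thm:all}, it is worth noting that your sketch there departs from the paper's actual proof. You propose to verify directly that $G'(\bmu)=\sum_{\bla}g_{\bla\bmu}(v)s_{\bla}$ is bar-invariant and unitriangular, via Uglov's tensor-product realization or an induction on weight; you correctly flag bar-invariance as the hard point. The paper avoids confronting the bar involution on $\F^{\mc}$ altogether: it builds $G(\bmu)$ inductively by applying explicit elements $f^{(s,j)}=f^{(s)}_j\cdots f^{(s)}_1 f^{(s)}_{j+1}\cdots f^{(s)}_0$ (and, for the base case in the first component, the operators of Fayers' algorithm~\cite{Fayers:LLT}) to canonical basis elements $G(\bnu)$ already known by induction. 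Bar-invariance of the result is then automatic because the $f_i$ are bar-equivariant, and Corollary~\ref{C:Ones} converts positivity/degree information into an identification of canonical basis elements. The combinatorial core is Proposition~\ref{L:I1} (the action of $f^{(s,j)}$ on standard basis vectors in a Rouquier block) together with Proposition~\ref{P:HeavyGraded}, which shows $f^{(s,j)}Q(\bnu)=\sum_\Delta c^{\Delta}_{\nu^0_j(1^s)}Q(\bDel)$ via the Littlewood--Richardson identities of Lemmas~\ref{LR2} and~\ref{L:Tunnel}; the induction is triple (on $r$, on $\hook(\mu^{(0)})$, and on a total order on $H(\bmu)$), with the $r=1$ base supplied by \cite{LeclercMiyachi2,JLM}. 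Your alternative would require establishing exact control of Uglov's triangular twist in Rouquier blocks, which is not done in the paper and is likely harder than the route taken; also note that unitriangularity in the paper is with respect to $|\mu^{(0)}|\geq|\la^{(0)}|$ and the residue condition (Lemma~\ref{L:DomNeeded}), not a dominance order derived from LR support conditions as you suggest.
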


Using the cyclotomic $q$-Schur algebra, we may extend this result to fields of prime characteristic. 

\begin{thmx} \label{T:Charp}
Let $\h=\h_{r,n}(q,{\bf Q})$ be an Ariki-Koike algebra defined over a field of characteristic $p > 0$ and suppose that $\bla$ and $\bmu$ are $r$-multipartitions of $n$ with $\bmu$ a Kleshchev multipartition. Suppose further that $\bla$ and $\bmu$ belong to a Rouquier block, that they have the same multicore and that no component of $\bmu$ has $p$ or more removable $e$-rim hooks. 
Then the graded multiplicity of the simple module $D^{\bmu}$ as a composition factor of the Specht module $S^{\bla}$ is
  \[[S^{\bla}:D^{\bmu}]_v = g_{\bla\bmu}(v).\]
  \end{thmx}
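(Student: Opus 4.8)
\section{Outline of the proof of Theorem~\ref{T:Charp}}

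The plan is to deduce Theorem~\ref{T:Charp} from Theorem~\ref{thm:all} by passing to the cyclotomic $q$-Schur algebra $\mathcal{S}$ attached to $\h$ and controlling the adjustment matrix relating its graded decomposition matrix in characteristic $p$ to the one in characteristic $0$. Recall that $\mathcal{S}$ is a graded quasi-hereditary algebra whose graded Weyl modules $W^{\bla}$ and graded simple modules $L^{\bmu}$ are indexed by all $r$-multipartitions of $n$, that the Schur functor sends $L^{\bmu}$ to $D^{\bmu}$ when $\bmu$ is Kleshchev and to $0$ otherwise, and that this functor identifies the graded multiplicity $[S^{\bla}:D^{\bmu}]_v$ with the graded decomposition number $[W^{\bla}:L^{\bmu}]_v$ of $\mathcal{S}$. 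Over a field of characteristic $0$ one has $[W^{\bla}:L^{\bmu}]_v = d^{\mc}_{\bla\bmu}(v)$ whenever $\bmu$ is Kleshchev, by Ariki's theorem and its graded refinement~\cite{Ariki,BK:Decomp}, so by Theorem~\ref{thm:all} this number equals $g_{\bla\bmu}(v)$ when in addition $\bla$ and $\bmu$ lie in a Rouquier block and share a multicore. Over a field of characteristic $p$ there is a graded adjustment matrix $\mathbf{A} = (a_{\bnu\bmu}(v))$, lower unitriangular for the dominance order, with entries in $\N[v,v^{-1}]$, which is block diagonal for the block decomposition of $\mathcal{S}$ and expresses each characteristic-$p$ decomposition number as $[W^{\bla}:L^{\bmu}]_v = \sum_{\bnu} [W^{\bla}:L^{\bnu}]_v\, a_{\bnu\bmu}(v)$, where the decomposition numbers on the right are computed in characteristic $0$ and the sum runs over $\bnu$ with $\bla \gedom \bnu \gedom \bmu$ in the block of $\bmu$.

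With this in hand, Theorem~\ref{T:Charp} reduces to the following claim: if $\bmu$ lies in a Rouquier block and no component of $\bmu$ has $p$ or more removable $e$-rim hooks, then the column of $\mathbf{A}$ indexed by $\bmu$ is trivial, that is, $a_{\bnu\bmu}(v) = \delta_{\bnu\bmu}$ for every $\bnu$. Indeed, granting the claim, the sum above collapses to its $\bnu = \bmu$ term and we obtain, for any $\bla$ in the same block sharing the multicore of $\bmu$,
\[
[S^{\bla}:D^{\bmu}]_v \;=\; [W^{\bla}:L^{\bmu}]_v \;=\; d^{\mc}_{\bla\bmu}(v) \;=\; g_{\bla\bmu}(v),
\]
the last equality being Theorem~\ref{thm:all}; this is exactly the assertion of Theorem~\ref{T:Charp}.

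The substance of the proof is therefore to bound the adjustment matrix on a Rouquier block, and here I would exploit the same structural features that make the formula for $g_{\bla\bmu}(v)$ possible. The contribution of the combinatorics along a single abacus runner is of type $A$: the adjustment matrix of a Rouquier block of a Hecke algebra (or $q$-Schur algebra) of a symmetric group is trivial in every characteristic, by Chuang--Tan~\cite{CT} and James--Mathas~\cite{JLM}, and this input requires no hypothesis on $p$. For $r \geq 2$ there is an additional, genuinely cyclotomic, contribution, and the aim is to show that the only discrepancy it could introduce between characteristics $0$ and $p$ is governed by the decomposition numbers of symmetric groups $\mathfrak{S}_m$ over a field of characteristic $p$, with $m$ bounded by the number of removable $e$-rim hooks of a single component of $\bmu$; under the hypothesis $m < p$ the group algebra of $\mathfrak{S}_m$ is semisimple by Maschke's theorem, so these decomposition matrices are identities and contribute nothing. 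Assembling the two ingredients, by means of the explicit abacus description of Rouquier blocks from~\cite{L:Rouquier} together with a cyclotomic Jantzen--Schaper sum formula or a direct Specht-filtration analysis of the Weyl modules $W^{\bla}$ in the block, should give $a_{\bnu\bmu}(v) = \delta_{\bnu\bmu}$, as required.

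The hardest part will be isolating the second ingredient cleanly: organizing the block combinatorics so that the purely $r \geq 2$ part of the $\bmu$-column of $\mathbf{A}$ is visibly controlled by symmetric groups whose size is exactly the count of removable $e$-rim hooks appearing in the statement, and making this dovetail with the type-$A$ input without double counting and with the gradings kept track of. A secondary subtlety is that $\bnu$ in the adjustment sum ranges over all multipartitions in the block of $\bmu$, including ones that do not share the multicore of $\bmu$ and for which no closed formula for the characteristic-$0$ coefficient is available, so the triviality of the $\bmu$-column must be established uniformly rather than by a term-by-term comparison with known characteristic-$0$ data. Once that triviality is proved, the reduction above completes the argument.
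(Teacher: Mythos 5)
Your reduction to the cyclotomic $q$-Schur algebra via the Schur functor, and the appeal to a graded adjustment matrix, are the right first moves and match the paper's strategy. However, the lemma you propose to prove---that the $\bmu$-column of the adjustment matrix is trivial, i.e.\ $a_{\bnu\bmu}(v)=\delta_{\bnu\bmu}$ for \emph{all} $\bnu$ in the block---is a much stronger statement than Theorem~\ref{T:Charp} asserts, and it is almost certainly not attainable by the arguments you sketch. Column triviality would force $h_{\bla\bmu}=h^0_{\bla\bmu}$ for \emph{every} $\bla$ in the block, including multipartitions whose multicore differs from that of $\bmu$, whereas the theorem only concerns $\bla\approx_{\mc}\bmu$. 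Subsection~\ref{SS:Open} of the paper is explicit that for $\bnu\not\approx_{\mc}\bmu$ one has no control over the entries $a_{\bnu\bmu}$, and that whether the $r=1$ adjustment formula (Proposition~\ref{T:AdjP}) generalises to $r\ge 2$ is left open. Your supporting claim that for $r=1$ the Rouquier-block adjustment matrix is trivial ``in every characteristic, requiring no hypothesis on $p$'' is also false: the James--Mathas result requires $p>\hook(\mu)$, and it is exactly this bound that makes the $h_{\btau\bmu}$ appearing in Proposition~\ref{T:AdjP} vanish for $\btau\neq\bmu$ (Lemma~\ref{L:SameZero}). Finally, the invocation of a ``cyclotomic Jantzen--Schaper sum formula or direct Specht-filtration analysis'' is not developed to the point where it could close any of these gaps.

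The paper avoids the adjustment-matrix-triviality claim entirely. It proves the Schur-algebra identity $h_{\bla\bmu}=g_{\bla\bmu}$ directly, and only for $\bla\approx_{\mc}\bmu$, by the same three-fold induction used for Theorem~\ref{T:Main}. The base case $\hook(\mu^{(0)})=0$ uses the Bowman--Speyer factorisation of Weyl-module decomposition numbers (Proposition~\ref{P:DiagCuts}) to reduce to level $r-1$; this input, absent from your proposal, is what makes the level-reduction possible on the Schur-algebra side. For $\hook(\mu^{(0)})>0$ one applies the induction functor $\uparrow^{(s)}_j$ to the projective cover $P^{\bnu}$ (with $\bnu$ obtained by deleting a column of $\mu^0_j$), and uses Lemma~\ref{P:HeavyUngradedII}, Lemma~\ref{L:SameZero} (where $\hook(\mu^{(0)})<p$ enters crucially) and Lemma~\ref{L:ID} to write $\Gr{P^{\bnu}}\uparrow^{(s)}_j$ as a non-negative integer combination of projectives. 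The clinching step is the one-sided inequality $h^0_{\bla\bDel}\le h_{\bla\bDel}$ supplied by Theorem~\ref{T:Adj}; comparing coefficients and using positivity forces equality. No triviality of the adjustment matrix is asserted or needed. The graded statement then follows from Theorem~\ref{T:Main2} together with the existence of the graded adjustment matrix. To repair your argument you should replace the proposed triviality lemma with this positivity argument, built on Proposition~\ref{P:DiagCuts} and Lemma~\ref{L:SameZero}.
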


Let $\mathcal{S}_{r,n}$ denote the cyclotomic $q$-Schur algebra corresponding to an Ariki-Koike algebra $\h_{r,n}(q,{\bf Q})$; see~\cite{Mathas:AKSurvey} for details. For $\bla$ an $r$-multipartition of $n$, let $\Delta(\bla)$ denote the Weyl module indexed by $\bla$ and $L(\bla)$ the simple module indexed by $\bla$. We prove the next conjecture in the case that $\bmu$ is $e$-regular. 

\begin{conjx} \label{C:Schur}
Suppose that $\mathcal{S}_{r,n}$ is a cyclotomic $q$-Schur algebra as above defined over a field of characteristic $p \geq 0$. Suppose that $\bla$ and $\bmu$ are $r$-multipartitions of $n$ which lie in a Rouquier block and which have the same multicore and that no component of $\bmu$ has $p$ or more removable $e$-rim hooks. Then the multiplicity of $L(\bmu)$ as a composition factor of $\Delta(\bla)$ is given by 
  \[[\Delta(\bla):L(\bmu)] = \sum_{\bal \in \M{r}{e+1}} \sum_{\bbe \in \M{r}{e}} \sum_{\bgam \in \M{{r+1}}{e}} \sum_{\bdel \in \M{r}{e}} \left( \prod_{k=0}^{r-1} \prod_{i=0}^{e-1} c^{\delta^k_i}_{\mu^k_i \gamma^k_i} c^{\delta^k_i}_{ \al^k_i \be^k_i \gamma^{k+1}_i} c^{\la^k_i}_{\be^k_i (\al^k_{i+1})'}\right) c^{\emp}_{\alpha^0_0 \al^1_0 \ldots \al^{r-1}_{0}} c^{\emp}_{\gamma^0_0 \gamma^0_1 \ldots \gamma^{0}_{e-1}}   c^{\emp}_{\gamma^r_0 \gamma^r_1 \ldots \gamma^r_{e-1}}.
\]
  \end{conjx}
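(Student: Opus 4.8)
\medskip

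\noindent\textbf{A route to Conjecture~\ref{C:Schur}.} The plan is to reduce the statement to a canonical basis computation, working one level above Theorem~\ref{T:Charp}, that is, on the cyclotomic $q$-Schur algebra itself rather than through the Schur functor. First I would dispose of the characteristic. By quasi-heredity, the decomposition matrix of a block of $\mathcal{S}_{r,n}$ over a field of characteristic $p$ is $D_0 A$, where $D_0$ is the corresponding characteristic-zero decomposition matrix and $A$ is a lower-unitriangular adjustment matrix with entries in $\mathbb{Z}_{\ge 0}$. In a Rouquier block the multicore combinatorics should let one factor the $\bmu$-column of $A$ as a product, over the $e$ runners and the $r$ components, of adjustment matrices of classical $q$-Schur algebras whose weight is controlled by the number of removable $e$-rim hooks in the relevant component of $\bmu$; the precise bound in the conjecture is exactly what forces each of these factors to be trivial (by James's row/column removal together with the semisimplicity of $q$-Schur algebras of small weight), so that $D_p[-,\bmu]=D_0[-,\bmu]$. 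Hence it suffices to prove the formula when $p=0$. For $\bmu$ $e$-regular this is already settled: the Schur functor is exact and carries $\Delta(\bla)\mapsto S^{\bla}$, $L(\bmu)\mapsto D^{\bmu}$, so $[\Delta(\bla):L(\bmu)]=[S^{\bla}:D^{\bmu}]=g_{\bla\bmu}(1)$ by Theorem~\ref{thm:all2}, and one verifies that on the Rouquier block the extra factor $c^{\emp}_{\alpha^0_0 \al^1_0 \ldots \al^{r-1}_0}$ is redundant whenever $\bmu$ is Kleshchev (every contributing tuple then having $\alpha^0_0=\dots=\alpha^{r-1}_0=\emp$). The content of the conjecture is therefore the $e$-\emph{singular} columns in characteristic zero.

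For those columns I would invoke Yvonne's conjecture, now a theorem: for the multicharge $\mathbf{s}$ compatible with the dominance order defining $\mathcal{S}_{r,n}$, the characteristic-zero decomposition numbers $[\Delta(\bla):L(\bmu)]$ are the values at $v=1$ of the transition coefficients of Uglov's canonical basis $\{\mathcal{G}(\bmu)\}$ of the higher-level Fock space $\F[\mathbf{s}]$ (via Rouquier's equivalence with cyclotomic Cherednik category $\mathcal{O}$ and the computation of the latter by Rouquier--Shan--Varagnolo--Vasserot, Losev, Shan, Webster). Unlike the $\widehat{\mathfrak{sl}}_e$-canonical basis of Theorem~\ref{thm:all}, this basis is indexed by all $r$-multipartitions, so it sees the $e$-singular $\bmu$. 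The problem thus becomes: compute $\mathcal{G}(\bmu)$ when $\bmu$ lies in the Rouquier block and has the prescribed multicore.

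This computation should parallel the proof of Theorem~\ref{thm:all}, but with the $\widehat{\mathfrak{sl}}_e$-bar involution replaced by Uglov's bar involution on $\F[\mathbf{s}]$, which additionally records the $\mathfrak{sl}_\infty$ (level-rank dual) structure. Using the abacus description of the Rouquier block one writes down the candidate $b_{\bmu}=\sum_{\bla} g'_{\bla\bmu}(v)\,s_{\bla}$, where $g'_{\bla\bmu}(v)$ is the right-hand side of the conjecture with the $v$-grading (the shift and the degrees of the Littlewood--Richardson factors) reinstated, and then checks that $b_{\bmu}\equiv s_{\bmu}\pmod{v\mathcal{L}}$ (a weight count) and that $b_{\bmu}$ is bar-invariant; uniqueness of Uglov's canonical basis then gives $\mathcal{G}(\bmu)=b_{\bmu}$, and $v=1$ finishes the proof. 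The place where the new factor $c^{\emp}_{\alpha^0_0 \al^1_0 \ldots \al^{r-1}_0}$ must enter is bar-invariance: the $\widehat{\mathfrak{sl}}_e$-part of the involution is controlled runner-by-runner by (generalisations of) the argument of Theorem~\ref{thm:all}, producing the boundary conditions $c^{\emp}_{\gamma^0_0\cdots\gamma^0_{e-1}}$ and $c^{\emp}_{\gamma^r_0\cdots\gamma^r_{e-1}}$ in the runner direction, while the $\mathfrak{sl}_\infty$-part imposes the analogous boundary condition on runner $0$ in the \emph{level} direction, namely $c^{\emp}_{\alpha^0_0 \al^1_0 \ldots \al^{r-1}_0}$.

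The main obstacle is exactly this last step for $e$-singular $\bmu$. Uglov's bar involution on a Rouquier block is far more delicate than its $\widehat{\mathfrak{sl}}_e$ counterpart: its $\mathfrak{sl}_\infty$-part couples the $r$ components of a multipartition rather than acting one runner at a time, so the clean runner-wise factorisation used for Theorem~\ref{thm:all} is not directly available and must be recovered. I would attack this in one of two ways. The first is to develop a ``triple abacus'' (or tensor-product) normal form for multipartitions in a fixed-multicore Rouquier block on which both halves of Uglov's involution become transparent, reducing bar-invariance of $b_{\bmu}$ to the two families of Littlewood--Richardson identities already used for Theorem~\ref{thm:all} together with one further identity in the level direction, from which the factor $c^{\emp}_{\alpha^0_0 \al^1_0 \ldots \al^{r-1}_0}$ emerges. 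The second, and perhaps more robust, is to bypass the Fock space and prove directly --- in the spirit of Chuang--Kessar and Turner's analysis of RoCK blocks --- that a fixed-multicore Rouquier block of $\mathcal{S}_{r,n}$ is Morita equivalent to a tensor product, over the $er$ runner-component pairs, of wreath products of classical $q$-Schur algebras of the relevant small weight; under the hypothesis of the conjecture these have trivial decomposition matrices, so the decomposition numbers of the model become a pure Littlewood--Richardson count to be matched term by term with the conjectured formula. Establishing that Morita equivalence in the cyclotomic, multicore-fixed setting is itself the hard part of this second route.
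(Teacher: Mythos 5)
This statement is Conjecture~\ref{C:Schur}: the paper does not prove it in general, but only in the case that $\bmu$ is $e$-regular (via Theorem~\ref{T:MainSchur} together with the observation, from Lemma~\ref{l:lims}, that every contributing tuple then has $\al^k_0=\emp$, so the extra factor $c^{\emp}_{\al^0_0\al^1_0\ldots\al^{r-1}_0}$ equals $1$). Your proposal correctly isolates the same dividing line, but what you have written is a research programme rather than a proof, and you say so yourself: the decisive steps are left open. Concretely, (i) the reduction from characteristic $p$ to characteristic $0$ for $e$-singular $\bmu$ rests on an asserted ``factorisation'' of the $\bmu$-column of the adjustment matrix over runners and components, which is not established anywhere and is plausibly as hard as the conjecture itself (the paper explicitly remarks in Section~\ref{SS:Open} that it has no control over the adjustment-matrix entries $a_{\bnu\bmu}$ when $\bnu\not\approx\bmu$, and that the $r=1$ proof of the conjecture used a runner-removal theorem with no known analogue for $r>1$); (ii) the characteristic-zero $e$-singular case is reduced to computing Uglov's canonical basis on a Rouquier block, and you acknowledge that the bar-invariance of your candidate vector under Uglov's involution --- precisely where the new factor $c^{\emp}_{\al^0_0\ldots\al^{r-1}_0}$ must be produced --- is not carried out, nor is the alternative Morita-equivalence route. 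So there is a genuine gap: the content of the conjecture beyond the $e$-regular case is not proved.

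Two smaller points. For $e$-regular $\bmu$ your appeal to the Schur functor and Theorem~\ref{thm:all2} only covers \emph{Kleshchev} $\bmu$; for $e$-regular non-Kleshchev $\bmu$ there is no $D^{\bmu}$, and one must instead argue inside the Schur algebra (as the paper does in Theorem~\ref{T:MainSchur}, using the induction functors $\uparrow^{(s)}_j$, Lemma~\ref{L:SameZero} and the adjustment-matrix inequality $h^0_{\bla\btau}\le h_{\bla\btau}$) or, in characteristic $0$, invoke Theorem~\ref{T:WeylEqualI} to identify $[\Delta^{\bla}:L^{\bmu}]_v$ with $d_{\bla\bmu}(v)$ for all $e$-regular $\bmu$. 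Also, your characteristic-$p$ hypothesis handling in the $e$-regular case should follow the paper's actual argument rather than a generic adjustment-matrix factorisation, since that is where the hypothesis $\hook(\mu^{(k)})<p$ is genuinely used (via Lemma~\ref{L:SameZero} and \cite[Proposition~3.3]{JLM}).
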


We prove Theorem~\ref{thm:all} using a variation on an algorithm of Fayers~\cite{Fayers:LLT}. The variation is very similar to the method employed in~\cite{JLM} to determine the decomposition numbers for the Rouquier blocks when $r=1$ in that we find elements $f^{(s,j)} \in \U_v(\widehat{\mathfrak{sl}}_e)$ whose action on the standard basis elements $s_{\bla} \in \F^{\mc}$ where the multipartitions obtained by adding $s$ $e$-rim hooks to $[\bla]$ belong to a Rouquier block is particularly nice. Combined with results of Fayers, this enables us to compute the transition coefficients $d^{\mc}_{\bla\bmu}(v)$ under the assumptions of Theorem~\ref{thm:all}. As noted above, we may then use Ariki's theorem to prove Theorem~\ref{thm:all2}.

The paper is structured as follows. In Section~\ref{S:2}, we recall the definitions that we will need, including background on the Ariki-Koike algebras, multipartitions and abacuses, Rouquier blocks, the Fock space representation and Ariki's theorem, and Littlewood-Richardson coefficients. Although we include some new results which will be used later, most of this material is standard.

In Section~\ref{S:Decomp} we move on to the proof of the main results. Section~\ref{S:Coeffs} introduces the coefficients $g_{\bla\bmu}(v)$ and considers some of their properties. In Section~\ref{SS:FockInd}, we define the elements $f^{(s,j)}$ and in Proposition~\ref{L:I1} we describe the action of $f^{(s,j)}$ on a standard basis element $s_{\btau}$, where the resulting basis elements are indexed by multipartitions lying in a Rouquier block. This enables us to describe the action of $f^{(s,j)}$ on a truncated canonical basis element $Q(\bnu)$, which is given in Proposition~\ref{P:HeavyGraded}. Having set up our machinery, we then prove Theorem~\ref{thm:all} in Section~\ref{SS:ProofMain}. The proof is given by a triple induction, firstly on $r$, secondly on the number of removable $e$-rim hooks in the first component of $\bmu$ and lastly using a total order on the first component of $\bmu$.

In Section~\ref{SS:Schur}, we introduce the cyclotomic $q$-Schur algebra. Working in this algebra and applying the Schur functor, we prove Theorem~\ref{T:Charp}. In Section~\ref{SS:Scopes}, we note how our results also apply to blocks which are Scopes equivalent to the Rouquier blocks. 
Finally, in Section~\ref{SS:Open} we briefly discuss some open questions related to our results.

\section{Background and definitions} \label{S:2}
\subsection{Multipartitions and Young diagrams} \label{SS1}
For $n\geq 0$, we define a partition of $n$ to be a sequence $\la=(\la_1,\la_2,\ldots)$ of non-negative integers such that $\la_1 \geq \la_2 \geq \ldots$ and $\sum_{i \geq 1} \la_i =n$. When writing a partition, we usually omit the zeros at the end and gather together equal terms, so that $(3,3,2,1,1,0,0,\ldots) = (3^2,2,1^2)$. Let $\La_n$ denote the set of partitions of $n$ and let $\La=\bigcup_{n \geq 0} \La_n$ denote the set of all partitions. We write $\emp$ to denote the unique partition of $0$. If $\la \in \La_n$, we write $|\la|=n$. 

For $r \geq 1$, we say that an $r$-multipartition, or multipartition, of $n$ is an $r$-tuple of partitions $\bla=(\la^{(0)},\la^{(1)},\ldots,\la^{(r-1)})$ such that $\sum_{k=0}^{r-1} |\la^{(k)}|=n$; we write $|\bla|=n$. Note that, contrary to the usual convention, the components of our multipartitions are labelled from $0$ to $r-1$ rather than from $1$ to $r$. We write $\Larn$ to denote the set of $r$-multipartitions of $n$ and $\La^r=\bigcup_{n \geq 0} \Larn$ to denote the set of all $r$-multipartitions. Let $\emp^r$ denote the unique $r$-multipartition of $0$. 
For $e \geq 2$, we say that $\la \in \La$ is $e$-regular if no $e$ non-zero parts of $\la$ are the same, and we say that $\bla \in \La^r$ is $e$-regular if each component of $\bla$ is an $e$-regular partition. Let $\Lar$ denote the set of $e$-regular $r$-multipartitions.  

Suppose that $\bla \in \La^r$. Fix $e \geq 2$ and let $I=\{0,1,\ldots,e-1\}$; we will identify $I$ with $\Z/e\Z$.  Suppose $\mc =(a_0,a_1,\ldots,a_{r-1})\in I^r$. The Young diagram of $\bla$ is the set 
\[[\bla] = \{(x,y,k) \in \Z_{>0} \times \Z_{>0} \times \{0,1 \ldots r-1\} \mid y \leq \la_x^{(k)}\}.\]
To each node $(x,y,k) \in [\bla]$ we associate its residue, $\res_{\mc}(x,y,k) = a_k + y - x \mod e$; using the identification above we assume that $\res_{\mc}(x,y,k) \in I$. We draw the residue diagram of $\bla$ by replacing each node in the Young diagram by its residue.
We define the residue set of $\bla$ to be the multiset $\Res_{\mc}(\bla) = \{\res(\mathfrak{n}) \mid \mathfrak{n} \in [\bla]\}$. 

We say that $\mathfrak{n} \in [\bla]$ is a removable node of $[\bla]$ if the diagram containing the nodes $[\bla]\setminus \mathfrak{n}$ is the diagram of a multipartition. Similarly we say that $\mathfrak{n} \notin [\bla]$ is an addable node of $[\bla]$ if the diagram containing the nodes $[\bla] \cup \mathfrak{n}$ is the diagram of a multipartition. If a removable (resp. addable) node has residue $i$, we refer to it as a removable (resp. addable) $i$-node. 
Let $\rem_i(\bla)$ (resp. $\add_i(\bla))$ denote the set of removable (resp. addable) $i$-nodes of $\bla$. If $\mathfrak{n_1}=(x,y,k), \mathfrak{n_2}=(x',y',k') \in \Z_{>0} \times \Z_{>0} \times \{0,1,\ldots,r-1\}$ then we say that $\mathfrak{n}_1$ is above $\mathfrak{n}_2$ if $k<k'$ or if $k=k'$ and $x<x'$. 

The rim of $[\bla]$ consists of the nodes $\{(x,y,k) \in [\bla] \mid (x+1,y+1,k) \notin [\bla]\}$ and if $l \geq 1$ then an $l$-rim hook of $[\bla]$ is a connected subset of the rim of size $l$. 

\subsection{The Ariki-Koike algebra} \label{SS2}
We refer the reader to~\cite{Mathas:AKSurvey} for a survey of the Ariki-Koike algebras and to~\cite{Kleshchev:Survey} for the relationship between the Ariki-Koike algebras and the cyclotomic Khovanov-Lauda-Rouquier algebras.

Fix $r\geq 1$ and $n \geq 0$ and let $\mathbb{F}$ be a field of characteristic $p \geq 0$. Suppose $q \in \mathbb{F} \setminus \{0\}$ and ${\bf Q}=(Q_0,\dots,Q_{r-1}) \in \mathbb{F}^r$. The Ariki-Koike algebra $\h=\h_{r,n}(q,\bf Q)$ is the unital associative $\mathbb{F}$-algebra with generators $T_0 , T_1, \dots, T_{n-1}$ and relations
$$\begin{array}{crcll}
& (T_i +q )(T_i -1) & = & 0, &  \text{ for } 1 \leq i \leq n-1, \\
& T_i T_j & = & T_j T_i, & \text{ for } 0 \leq i,j \leq n-1, |i-j|>1, \\
& T_i T_{i+1} T_i & = & T_{i+1} T_i T_{i+1}, & \text{ for } 1 \leq i \leq n-2,\\
& (T_0 - Q_0)\dots (T_0 - Q_{r-1}) & = & 0, & \\
& T_0 T_1 T_0 T_1 & = & T_1 T_0 T_1 T_0. &
\end{array}$$

Define $e$ to be minimal such that $1+q+\dots+q^{e-1}=0$, or set $e=\infty$ if no such value exists. Throughout this paper we shall assume that $e$ is finite and we shall refer to $e$ as the quantum characteristic of $\h$. Write $I=\{0,1,\ldots,e-1\}$. We shall further assume that each $Q_k$ is a power of $q$, that is, there exists $\mc =(a_0,a_1,\ldots,a_{r-1}) \in I^r$ such that $Q_k = q^{a_k}$ for $0 \leq k \leq r-1$. We call $\mc$ the multicharge corresponding to $\h$.  

For each multipartition $\bla \in \Larn$ we define a $\h$-module $S^{\bla}$ called a Specht module and when $\h$ is semisimple the set $\{S^{\bla} \mid \bla \in \Larn\}$ forms a complete set of non-isomorphic irreducible $\h$-modules.
There is a subset $\Kl \subseteq \Larn$ such that the simple modules arise as the heads of the Specht modules in the set $\{S^{\bmu}\mid \bmu \in \Kl\}$. We denote the simple modules as $D^{\bmu}$ so that $\{D^{\bmu} \mid \bmu \in \Kl\}$ is a complete set of non-isomorphic irreducible $\h$-modules.

The Specht modules we use are dual to the Specht modules defined in~\cite{DJM:CellularBasis} so that $\Kl$ is a subset of the $e$-regular multipartitions; see~\cite{Fayers:WeightsII} for the connections between the two conventions. We use this definition for two reasons: firstly so that our description of the decomposition numbers is consistent with the well-known formula for decomposition numbers of Rouquier blocks when $r=1$~\cite{CT,JLM,LeclercMiyachi2} and secondly so that our notation is consistent with an algorithm of Fayers~\cite{Fayers:LLT} which informs our computations. 
We let $\Kla = \bigcup_{n\ge 0} \Kl$ and call the elements of $\Kla$ Kleshchev multipartitions, although elsewhere in the literature, our Kleshchev multipartitions are often refered to as conjugate Kleshchev multipartitions. If $r=1$, they are simply the $e$-regular partitions. Otherwise, these is a recursive method that will test whether $\bla \in \La^r$ is a Kleshchev multipartition. A Kleshchev multipartition is always $e$-regular so we have the inclusions $\Kla \subseteq \Lar \subseteq \La^r$. 

If $\bla \in \Larn$ and $\bmu \in \Kl$, let $[S^{\bla}:D^{\bmu}]$ denote the multiplicity of the simple module $D^{\bmu}$ as a composition factor of the Specht module $S^{\bla}$; these numbers are called decomposition numbers.  Determining the decomposition numbers is one of the most important open problems in the representation theory of the Ariki-Koike algebras. Even when $r=1$, there are very few cases when a closed formula for decomposition numbers is known. We note that the decomposition numbers depend only on $e,p$ and $\mc$, not on the actual values of $q$ and ${\bf Q}$. 

In~\cite{BK:Blocks}, it was shown that the Ariki-Koike algebra $\h$ is isomorphic to a cyclotomic Khovanov-Lauda-Rouquier algebra of type $A$. These are $\Z$-graded algebras and it was further shown in~\cite{BKW} that there is a corresponding $\Z$-grading on the Specht modules. Using these results one may define graded decomposition numbers $[S^{\bla}:D^{\bmu}]_v \in \mathbb{N}[v,v^{-1}]$; we recover the original decomposition numbers by setting $v=1$. For more details, see~\cite{Kleshchev:Survey}. 

All of the composition factors of a Specht module $S^{\bla}$ belong to the same block, and so we can think about a Specht module lying in a specific block or two Specht modules lying in the same block; it is clear that if $[S^{\bla}:D^{\bmu}] \neq 0$ then $S^{\bla}$ and $S^{\bmu}$ lie in the same block. 

\begin{proposition}[\cite{LM:Blocks}, Theorem~2.11] \label{P:Blocks}
Suppose that $\h=\h_{r,n}(q,{\bf Q})$ has multicharge $\mc$. 
Let $\bla,\bmu \in \Larn$. Then $S^{\bla}$ and $S^{\bmu}$ lie in the same block of $\h$ if and only if $\Res_{\mc}(\bla)=\Res_{\mc}(\bmu)$. 
\end{proposition}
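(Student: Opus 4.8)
The plan is to show that the residue content is exactly the invariant separating the blocks, where I regard $\Res_{\mc}(\bla)$ as the tuple $(\alpha_i)_{i\in I}\in\N^I$ recording, for each $i\in I$, the number of nodes of $[\bla]$ of residue $i$ (so $\sum_i\alpha_i=n$). There are two directions: the ``only if'' direction asks that this content is a block invariant, and the ``if'' direction asks that two Specht modules with the same content necessarily lie in one block. The second is the hard one.

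For the ``only if'' direction I would use the grading. By~\cite{BK:Blocks}, $\h$ is a cyclotomic Khovanov--Lauda--Rouquier algebra and so carries a complete set of orthogonal idempotents $\{e(\mathbf i)\mid \mathbf i\in I^n\}$; collecting these according to the content $\sum_k i_k$ of $\mathbf i$ yields central idempotents $e_\alpha=\sum_{\mathbf i}e(\mathbf i)$ (the sum over $\mathbf i$ of content $\alpha$) and a decomposition $\h=\bigoplus_\alpha \h_\alpha$ into two-sided ideals, so every block of $\h$ lies inside a single $\h_\alpha$. By~\cite{BKW} the graded Specht module $S^{\bla}$ has a homogeneous basis indexed by standard $\bla$-tableaux, on which $e(\mathbf i)$ acts as the projection onto the span of the basis vectors labelled by tableaux of residue sequence $\mathbf i$; since every standard $\bla$-tableau has residue sequence of content $\Res_{\mc}(\bla)$, the module $S^{\bla}$ lies in $\h_{\Res_{\mc}(\bla)}$. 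Hence $S^{\bla}$ and $S^{\bmu}$ being in the same block forces $\Res_{\mc}(\bla)=\Res_{\mc}(\bmu)$. (Classically one can instead use that $\h$ is cellular by~\cite{DJM:CellularBasis}, so that every central element acts on each cell module $S^{\bla}$ as a scalar, together with the fact that suitable central symmetric functions in the Jucys--Murphy elements of $\h$ act on $S^{\bla}$ by scalars from which the multiset $\Res_{\mc}(\bla)$ can be recovered.)

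For the ``if'' direction, the ``only if'' direction together with the fact that every block of $\h$ contains a Specht module reduces the claim to showing that each $\h_\alpha$ containing a Specht module is indecomposable as an algebra, equivalently that all Specht modules of content $\alpha$ lie in one block. I would prove this in two steps. First, a combinatorial step: any two $r$-multipartitions of $n$ with the same content are joined by a chain $\bla=\bla^{(0)},\bla^{(1)},\dots,\bla^{(t)}=\bmu$ of $r$-multipartitions of $n$ in which each $\bla^{(s)}$ differs from $\bla^{(s-1)}$ only in that $\bla^{(s-1)}=\bnu\cup A$ and $\bla^{(s)}=\bnu\cup B$ for some $r$-multipartition $\bnu$ of $n-1$ and some distinct addable $i$-nodes $A,B$ of $\bnu$ (for some $i\in I$, possibly in different components); this preserves the content at every step and can be checked by manipulating the abacus displays of the components. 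Second, a linkage step: for such $\bnu$ and such $A,B$, the Specht modules $S^{\bnu\cup A}$ and $S^{\bnu\cup B}$ lie in the same block. Both occur as Specht subquotients of the $i$-induced module $F_iS^{\bnu}$, which has a Specht filtration with one layer $S^{\bnu\cup C}$ for each addable $i$-node $C$ of $\bnu$, and one argues that these layers cannot be separated into distinct blocks. Iterating the linkage step along the chain then shows $S^{\bla}$ and $S^{\bmu}$ lie in one block. (The route of~\cite{LM:Blocks} is essentially to carry out this linkage first for the associated affine Hecke algebra $\widehat{\h}_n$, where a ``$\Z$-chain'' argument classifies the blocks, and then to pass to the cyclotomic quotient $\h$.)

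The main obstacle is the linkage step, i.e. showing that the content-graded pieces $\h_\alpha$ do not decompose further. The content-invariance and the combinatorial connectivity are routine; the real difficulty is in controlling how blocks interact under $i$-induction and $i$-restriction, for which one needs the $\mathfrak{sl}_2$-categorification relations between the functors $E_i$ and $F_i$ (or, equivalently, the detour through the affine Hecke algebra used in~\cite{LM:Blocks}).
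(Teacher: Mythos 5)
This proposition is quoted from \cite{LM:Blocks} without proof in the present paper, so the relevant comparison is with the cited reference. Your ``only if'' direction is fine (either via the KLR idempotents $e(\mathbf i)$, or, as in the classical approach closer to \cite{LM:Blocks}, via central symmetric polynomials in the Jucys--Murphy elements acting as scalars on cell modules). The gap is in the ``if'' direction, at precisely the point you flag. The assertion that the Specht subquotients $S^{\bnu\cup A}$ and $S^{\bnu\cup B}$ of the $i$-induced module $F_iS^{\bnu}$ ``cannot be separated into distinct blocks'' is not a consequence of their appearing in a common Specht filtration: an induced module may well decompose, and two subquotients of a decomposable module need not be linked. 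To close this you would need an indecomposability statement, a non-split extension between the layers, or a common composition factor, and you supply none of these; invoking ``$\mathfrak{sl}_2$-categorification relations'' does not by itself produce the needed linkage.

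Moreover, the strategy you attribute to \cite{LM:Blocks} is not the one that paper uses. Lyle and Mathas do not pass to the affine Hecke algebra and run a ``$\Z$-chain'' argument there; they work with the cyclotomic $q$-Schur algebra, invoke the cyclotomic Jantzen sum formula to show that Weyl modules with the same residue content are linked (the sum formula naturally produces linkage along rim-hook moves on the abacus, not single-box moves of the form $\bnu\cup A\rightsquigarrow\bnu\cup B$), and then transfer the resulting block classification to $\h$ via the Schur functor. So your reduction to one-box connectivity is not the combinatorics of the reference, and even granting it, the linkage step that would make it an actual proof is left open. If you want to pursue the induction-functor route, you should look for (and prove) an extension or common-composition-factor statement relating $S^{\bnu\cup A}$ and $S^{\bnu\cup B}$; otherwise the Jantzen sum formula argument of \cite{LM:Blocks} is the efficient path.
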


\subsection{The abacus} \label{SS3}
The abacus was first introduced by James~\cite{James:Abacus} as a way to represent partitions. Fix $e \geq 2$ and let $I=\{0,1,\ldots,e-1\}$.  
Suppose that $\la=(\la_1,\la_2,\ldots) \in \La$ and $a \in I$. Define the $\beta$-set \[\BB_a(\la)=\{\la_i - i + a \mid i \geq 1\}.\]
We encode a partition as an abacus configuration. Our abacus will have $e$ runners, labelled $0,1,\ldots,e-1$ from left to right. The positions on the abacus are labelled in order by the elements of $\Z$ such that the runners congruent to $i$ modulo $e$ lie on runner $i$, for $0 \leq i \leq e-1$. Given $\la \in \La$ and $a \in \Z$, we form the resulting abacus configuration by putting a bead at each element of the $\beta$-set $\BB_a(\la)$.

\begin{ex}
Let $\la=(10^2,8,4,2,1^3)$ and let $a=3$. Then 
\[\BB_a(\la)=(12,11,8,3,0,-2,-3,-4,-6,-7,-8,\ldots).\]
If $e=5$ then the corresponding abacus configuration is given by
 \[\abacus(bbbbb,nbbbn,bnnbn,nnnbn,nbbnn,nnnnn)\]
where we assume that the runners extend infinitely up and down the page with beads above the levels shown and empty positions below the levels shown.   
\end{ex}

Given an abacus configuration corresponding to a partition $\la$ and $a \in I$, it is straightforward to recover $a$ and hence $\la$. Let $\BB$ be the $\beta$-set of the configuration and choose $x$ such that $b \in \BB$ for all $b < xe$. If $M=\#\{b \in \BB: b \ge xe\}$ then $a \equiv M \mod e$. In the example above, with $e=5$, we can count 13 beads and so, as expected, $a=3$. 

If $r=1$ it is well-known that removing a $e$-rim hook from a Young diagram corresponds to moving a bead up one position on the corresponding abacus configuration. The partition $\bar{\la}$ obtained by removing all possible $e$-rim hooks of $\la$, or equally, by moving all beads up as high as possible on the abacus configuration, is called the $e$-core (or core) of $\la$ and the number of hooks removed to get to the core is called the $e$-weight (or weight) of $\la$.

\begin{proposition}[\cite{Brauer, Robinson}] \label{P:Naka}
Suppose that $r=1$, that $\mc=(a)$ and that $\la,\mu \in \La$. Then 
$\Res_{\mc}((\la))=\Res_{\mc}((\mu))$ if and only $\la$ and $\mu$ have the same core and the same weight. 
  \end{proposition}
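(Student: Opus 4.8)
The plan is to isolate one combinatorial fact about $e$-rim hooks and one injectivity statement about $e$-cores, from which both implications follow quickly. Throughout, I regard the residue multiset $\Res_{\mc}((\la))$ as the vector in $\Z^e$ whose $i$-th entry is the number of nodes of $[\la]$ of residue $i$, and I write $\sigma=(1,1,\dots,1)\in\Z^e$. The key lemma is: \emph{every $e$-rim hook of a Young diagram contains exactly one node of each residue in $I$}. To see this, note that for $\la\neq\emp$ the rim of $[\la]$ is a staircase lattice path running from its top-right node to its bottom-left node, and each step down this path replaces a node $(x,y)$ either by $(x,y-1)$ or by $(x+1,y)$; in either case the content $a+y-x$ decreases by exactly $1$, so the residue decreases by $1$ in $\Z/e\Z$. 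Since an $e$-rim hook is a block of $e$ consecutive nodes of this path, its residues form a complete set of representatives mod $e$, as claimed. Consequently, adding one $e$-rim hook to $\la$ adds $\sigma$ to $\Res_{\mc}((\la))$; as the $e$-core $\bar\la$ and $e$-weight $w$ of $\la$ are well defined and $\la$ is obtained from $\bar\la$ by $w$ successive additions of $e$-rim hooks, we get the identity
\[\Res_{\mc}((\la)) = \Res_{\mc}((\bar\la)) + w\,\sigma.\]

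The forward implication is then immediate: if $\la$ and $\mu$ have the same core and the same weight, the identity gives $\Res_{\mc}((\la))=\Res_{\mc}((\mu))$. For the converse, suppose $\Res_{\mc}((\la))=\Res_{\mc}((\mu))$. Summing entries gives $|\la|=|\mu|$, and the identity gives $\Res_{\mc}((\bar\la))\equiv\Res_{\mc}((\bar\mu))\pmod{\Z\sigma}$. So it suffices to prove that the map sending an $e$-core $\kappa$ to the class of $\Res_{\mc}((\kappa))$ in $\Z^e/\Z\sigma$ is injective: granting this, $\bar\la=\bar\mu$, and then $|\bar\la|+ew=|\la|=|\mu|=|\bar\mu|+ew'$ forces $w=w'$, as required.

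This injectivity is the one real obstacle, and is where I would do the actual work. I would parametrize $e$-cores by pushed-up abacus configurations: with $\mc=(a)$ fixed, the core $\kappa$ is recorded by the tuple $(m_0,\dots,m_{e-1})\in\Z^e$ with $\sum_i m_i=a$, where $m_i$ counts the beads of $\BB_a(\kappa)$ lying on runner $i$ in non-negative positions minus the gaps on runner $i$ in negative positions. One checks that this is a bijection onto $\{(m_i):\sum_i m_i=a\}$ and that each $m_i$ is unchanged by sliding a bead up its runner, so it depends only on $\kappa$. A short computation then expresses the successive differences of $\Res_{\mc}((\kappa))$ (the number of nodes of residue $i$ minus the number of residue $i-1$, for $i\in\Z/e\Z$) as affine-linear functions of $(m_0,\dots,m_{e-1})$, the associated $e\times e$ integer coefficient matrix having kernel spanned by $\sigma$ (this is, up to sign and relabelling, the statement that the affine Cartan matrix of $\widehat{\mathfrak{sl}}_e$ has one-dimensional radical). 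Since the class of $\Res_{\mc}((\kappa))$ modulo $\Z\sigma$ is exactly the data of these differences, it determines $(m_i)$ modulo $\sigma$, hence—using $\sum_i m_i=a$—determines $(m_i)$, hence $\kappa$. (Alternatively one may simply invoke the classical description of $e$-cores of Brauer and Robinson; the displayed computation is its combinatorial heart, and is what makes the proposition go through for arbitrary $e$.)
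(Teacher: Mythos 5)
The paper does not prove this proposition: it is stated as a citation of the classical Brauer--Robinson theorem (the Nakayama conjecture), so there is no in-text argument against which to compare your proposal. Your outline is nonetheless a correct and standard route to the result. The two ingredients you isolate --- that every $e$-rim hook carries exactly one node of each residue (so that $\Res_{\mc}((\la))=\Res_{\mc}((\bar\la))+w\sigma$ with $\sigma=(1,\dots,1)$), and that the map sending an $e$-core $\kappa$ to $\Res_{\mc}((\kappa))$ modulo $\Z\sigma$ is injective --- are precisely the combinatorial heart of Brauer's and Robinson's proofs, and the reduction of the converse to this injectivity via $|\la|=|\mu|$ and the weight identity is clean.

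The one place you should be careful is the injectivity step. Your abacus parametrization of $e$-cores by tuples $(m_0,\dots,m_{e-1})$ with $\sum_i m_i$ fixed is correct, and the assertion that the successive residue differences are affine-linear in $(m_i)$ with linear part having kernel $\Z\sigma$ is also true, but as written you have not actually exhibited the linear map or verified the kernel. If you want a self-contained proof rather than a pointer to the affine Cartan matrix of $\widehat{\mathfrak{sl}}_e$, the cleanest route is to compute, directly from the abacus, that the number of nodes of $\kappa$ of residue $i$ minus the number of residue $i-1$ equals $m_{i-1}-m_i$ plus a constant depending only on $i$ and $a$; then the differences recover $(m_i)$ up to adding a multiple of $\sigma$, and the normalisation $\sum_i m_i$ fixed pins down $(m_i)$ and hence $\kappa$. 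As it stands the sketch is sound but the key computation is invoked rather than performed, which is worth flagging if this were being written out in full.
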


We now want to translate this to higher levels. Let $\mc \in I^r$ and $\bla \in \La^r$. The abacus configuration of $\bla$ with respect to $\mc$ is the $r$-tuple of abacuses where abacus $k$ has $\beta$-numbers $\BB_{a_k}(\la^{(k)})$, for $0 \leq k \leq r-1$. Erasing a removable $i$-node from component $k$ of $[\bla]$ corresponds to moving a bead on runner $i$ of abacus $k$ back by one position, that is, to runner $i-1$  (or from runner $0$ to runner $e-1$). 

In an analogue of the $r=1$ case, we define the multicore $\bar{\bla}$ of a multipartition $\bla$ to be the multipartition obtained by removing all possible $e$-rim hooks from all components of $[\bla]$. We define $\hook(\bla) = (|\bla|-|\bar{\bla}|)/e$ to be the number of $e$-rim hooks removed in order to get to the multicore.

We now define two equivalence relations on the set $\La^r$. Let $\mc \in I^r$ and suppose $\bla, \bmu \in \La^r$. 
\begin{itemize}
\item Say that $\bla \sim_{\mc} \bmu$ if $\Res_{\mc}(\bla) = \Res_{\mc}(\bmu)$.
\item Say that $\bla \approx_{\mc} \bmu$ if $\bla$ and $\bmu$ have the same multicore and $\hook(\bla)=\hook(\bmu)$.  
  \end{itemize}

In fact, the relation $\approx_{\mc}$ is independent of $\mc$ as the process of adding and removing $e$-rim hooks from a multipartition does not depend on the multicharge. Following Proposition~\ref{P:Blocks}, we refer to the $\sim_{\mc}$-equivalence classes as blocks. Proposition~\ref{P:Naka} shows that if $\bla \ap \bmu$ then $\bla \sim_{\mc} \bmu$. The converse is true (again by Proposition~\ref{P:Naka}) if $r=1$; but is false in general for $r>1$.

\begin{ex}
  Let $r=2$ and $e=3$ and suppose $\mc=(0,1)$. Let $\bla=((9,7,5,3,1^2),(2^2,1^2))$. Then the $\sim_{\mc}$-equivalence class of $\bla$ splits into five $\approx_{\mc}$-equivalence classes with representatives below. 
  
  \[\abacus(bbb,nbb,nnb,nnb,nnb,nnb) \;\;\abacus(bbb,bbb,nbb,nbb,nnn,nnn),\quad
  \abacus(bbb,nbb,nbb,nbb,nnn,nnn) \;\; \abacus(bbb,bbb,nnb,nnb,nnb,nnb), \quad
  \abacus(bbb,nbb,bbb,nnb,nnn,nnn)\;\; \abacus(bbb,nbb,nbb,nnb,nnb,nnb),\quad
  \abacus(bbb,nbb,bnb,nnb,nnb,nnn)\;\; \abacus(bbb,nbb,nbb,nbb,nnb,nnn), \quad
  \abacus(nbb,bbb,nbb,nnb,nnb,nnn)\;\; \abacus(bbb,nbb,bbb,nnb,nnb,nnn)
  \]
\end{ex} 

Suppose now that $\bla \in \La^r$ and $\mc \in I^r$. For each $0 \leq k \leq r-1$, consider the $\beta$-numbers $\BB_{a_k}(\la^{(k)})$. For $0 \leq i \leq e-1$, let $\BB^k_i=\{b \in \BB_{a_k}(\la^{(k)}) \mid b \equiv i \mod e\}$ and let $\mathfrak{C}^k_i=\{(b-i)/e \mid b \in \BB^k_i\}$. Then each $\mathfrak{C}^k_i$ is a $\beta$-set and so corresponds to a partition which we denote by $\la^k_i$. We define the quotient of $\bla$ to be the $r$-tuple of elements of $\La^e$ given by
\[\qt(\bla) = ((\la^0_0,\la^0_0,\ldots,\la^0_{e-1}),(\la^1_0,\la^1_1,\ldots,\la^1_{e-1}),\ldots,(\la^{r-1}_0,\la^{r-1}_1,\ldots,\la^{r-1}_{e-1})).\]
Note that
\[\hook(\bla) = \sum_{k=0}^{r-1} \sum_{i=0}^{e-1} |\la^k_i|.\]
We also define
\[\omega(\bla)  = \sum_{k=0}^{r-1} \sum_{i=0}^{e-1} (e-i+k-1) |\la^k_i|;\]
we will see this function again in Section~\ref{S:Decomp}.
Note that if $C$ is a $\approx_{\mc}$-equivalence class of $\La^r$ with $\hook(\bla)=h$ for any $\bla \in C$ then the elements of $C$ are in bijection with the tuples
 \[((\la^0_0,\la^0_0,\ldots,\la^0_{e-1}),(\la^1_0,\la^1_1,\ldots,\la^1_{e-1}),\ldots,(\la^{r-1}_0,\la^{r-1}_1,\ldots,\la^{r-1}_{e-1}))\]
 with $\sum_{k=0}^{r-1} \sum_{i=0}^{e-1} |\la^k_i|=h$.
 
\subsection{Rouquier blocks} \label{SS:Rouq}
The Rouquier blocks for Ariki-Koike algebras were recently introduced by the author as a generalization of the Rouquier blocks previously defined when $r=1$~\cite{L:Rouquier}. The definition we give here is equivalent. Fix quantum characteristic $e \geq 2$ and let $I=\{0,1,\ldots,e-1\}$. We fix a multicharge $\mc \in I^r$ and let $\bla \in \La^r$. For $0 \leq i \leq e-1$ and $0 \leq k \leq r-1$, define
\[\mathfrak{b}^k_i(\bla) = \max\left\{\frac{(b-i)}{e}\mid b \in \BB_{a_k}(\bar{\la}^{(k)}) \text{ and } b \equiv i \mod e\right\}.\]
We can think of $\mathfrak{b}^k_i(\bla)$ as the lowest level in component $k$ of the abacus configuration of $\bar{\bla}$ which contains a bead. Note that if $\bla\approx_{\mc} \bmu$ then $\mathfrak{b}^k_i(\bla)=\mathfrak{b}^k_i(\bmu)$ for all $i,k$. For  $1 \leq i \leq e-1$ and $0 \leq k \leq r-1$, define
\[\mathfrak{d}^k_i(\bla) = \mathfrak{b}^k_i(\bla) - \mathfrak{b}^k_{i-1}(\bla).\]
We say that $\bla$ is a Rouquier multipartition if 
\[\hook(\bla) \leq \mathfrak{d}^k_i(\bla) +1\]
for all $1 \leq i \leq e-1$ and $0 \leq k \leq r-1$. We say that a $\sim_{\mc}$-equivalence class $B$ of $\La^r$ is a Rouquier block if each $\bla \in B$ is a Rouquier multipartition. As observed in~\cite{L:Rouquier}, if $r \geq 2$, it is perfectly possible to have $\bla \sim_{\mc} \bmu$ with $\bla$ a Rouquier multipartition and $\bmu$ not. But if $\bla \approx_{\mc} \bmu$ then $\bla$ is a Rouquier multipartition if and only if $\bmu$ is a Rouquier multipartition. 

Let $\R^{\mc} \subset \La^r$ denote the set of multipartitions that belong to Rouquier blocks.  We return to the last example. 

\begin{ex}
  Let $r=2$ and $e=3$ and suppose $\mc=(0,1)$. Let $\bla=((9,7,5,3,1^2),(2^2,1^2))$. Then the $\sim_{\mc}$-equivalence class of $\bla$ is a Rouquier block. To see this, it is sufficient to consider the five $\approx_{\mc}$-equivalence classes. 
  
  \[\abacus(bbb,nbb,nnb,nnb,nnb,nnb) \;\;\abacus(bbb,bbb,nbb,nbb,nnn,nnn),\quad
  \abacus(bbb,nbb,nbb,nbb,nnn,nnn) \;\; \abacus(bbb,bbb,nnb,nnb,nnb,nnb), \quad
  \abacus(bbb,nbb,bbb,nnb,nnn,nnn)\;\; \abacus(bbb,nbb,nbb,nnb,nnb,nnb),\quad
  \abacus(bbb,nbb,bnb,nnb,nnb,nnn)\;\; \abacus(bbb,nbb,nbb,nbb,nnb,nnn), \quad
  \abacus(nbb,bbb,nbb,nnb,nnb,nnn)\;\; \abacus(bbb,nbb,bbb,nnb,nnb,nnn)
  \]
  For each multipartition $\bmu$ in the first class (which only contains one multipartition) we have 
\[\mathfrak{d}^0_1(\bmu)=1, \qquad \mathfrak{d}^0_2(\bmu)=4, \qquad \mathfrak{d}^1_0(\bmu)=2, \qquad \mathfrak{d}^1_1(\bmu)=0, \qquad \hook(\bmu) =0.\] 
Similarly we can see that the other four classes contain only Rouquier multipartitions.  
\end{ex} 

In fact, it follows from~\cite[Corollary~5.2]{DA} that it is sufficient to check the Rouquier condition on the classes obtained by adding hooks to the `core block' of the equivalence class, that is, the classes with the maximal number of removable $e$-rim hooks. 

\begin{lemma} [\cite{CT2}, Lemma~4.1 (1)] \label{L:Rouqreg}
  Suppose that $\bmu\in \R$ with
   \[\qt(\bmu) = ((\mu^0_0,\mu^0_1,\ldots,\mu^0_{e-1}),(\mu^1_0,\mu^1_1,\ldots,\mu^1_{e-1}),\ldots,(\mu^{r-1}_0,\mu^{r-1}_1,\ldots,\mu^{r-1}_{e-1})).\]
Then $\bmu$ is $e$-regular if and only if $\mu^k_0=\emp$ for $0 \leq k\leq r-1$.   
\end{lemma}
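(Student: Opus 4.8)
The plan is to understand how the $e$-regularity of a Rouquier multipartition $\bmu$ is detected on its abacus configuration, and to translate the condition ``$\mu^k_0=\emp$ for all $k$'' into the language of beads. Recall that a partition $\la$ is $e$-regular if and only if, on its abacus configuration (for any choice of $a$), there is no runner $i$ with $e$ or more consecutive empty positions immediately below a bead on a lower-indexed runner — equivalently, $\la$ fails to be $e$-regular precisely when some bead can be ``slid'' to create $e$ equal parts, which on the abacus means there is a bead in position $b$ with positions $b+1, b+2, \ldots, b+e$ all empty and position $b+e$ immediately above... more precisely, $\la$ is $e$-singular iff there exist $e$ beads occupying positions whose gaps force $e$ repeated parts. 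The cleanest criterion: $\la^{(k)}$ is $e$-regular iff for every runner $i$, whenever position $je+i$ is empty, there do not exist $e$ beads on the runners ``cyclically before'' $i$ at level $j$ that would be forced past it. Rather than chase this in full generality, I would use the known runner-description: $\la^{(k)}$ is $e$-singular if and only if there is some $i$ and some level at which runner $i$ is empty but runner $i-1$ (one level up, with wraparound conventions) has ``too many'' beads — this is exactly where the structure of the quotient $\mu^k_i$ comes in.

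First I would fix $k$ and work with a single component $\mu^{(k)}$ of $\bmu$, with abacus given by $\BB_{a_k}(\mu^{(k)})$, whose runners carry the quotient partitions $\mu^k_0, \mu^k_1, \ldots, \mu^k_{e-1}$. The multicore $\bar\bmu^{(k)}$ has all beads pushed as high as possible on each runner, and the numbers $\mathfrak{b}^k_i(\bmu)$ record the lowest occupied level of runner $i$ in the core. The Rouquier condition $\hook(\bmu)\le \mathfrak d^k_i(\bmu)+1$ for all $1\le i\le e-1$ says the gaps $\mathfrak{b}^k_i - \mathfrak{b}^k_{i-1}$ between consecutive runners (in the core) are all large — at least $\hook(\bmu)-1$ — so that when we add back the $e$-rim hooks to pass from the core to $\bmu$, the beads on runner $i$ never descend far enough to interact with runner $i-1$. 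Concretely, the beads that move to build $\mu^k_i$ stay strictly below the lowest core bead of runner $i-1$, so the only way to create $e$ equal parts in $\mu^{(k)}$ is \emph{within} a single runner's contribution, i.e.\ via $\mu^k_i$ having $e$ equal parts — \emph{except} on runner $0$, where the wraparound to runner $e-1$ (one ``block'' up) behaves differently and an empty runner-$0$ position can line up with beads arbitrarily high on runner $e-1$. So the key computation is: in a Rouquier block, the abacus of $\mu^{(k)}$ has an $e$-singularity if and only if runner $0$ contains a bead strictly below its topmost ``full'' level, which is exactly the statement $\mu^k_0\neq\emp$.

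The main steps, in order, are: (1) recall/state the abacus criterion for $e$-regularity of a single partition in terms of beads on runners; (2) using the Rouquier gap condition, show that for $1\le i\le e-1$ the beads realizing $\mu^k_i$ lie in a ``band'' of the abacus that is isolated from runner $i-1$, so these contribute an $e$-singularity only if $\mu^k_i$ itself is $e$-singular — but then push this further: even an $e$-\emph{regular} nonempty $\mu^k_i$ for $i\ge 1$ cannot cause trouble because an $e$-singularity of $\mu^{(k)}$ requires a specific bead configuration across the runner boundary, and the large gaps prevent it; (3) analyze runner $0$ separately, where a single bead from a nonempty $\mu^k_0$ sits below a long run of beads on runner $e-1$ (from the core), immediately producing $e$ equal parts, hence $e$-singularity; conversely if $\mu^k_0=\emp$ for all $k$ then no singularity arises on any runner, so $\bmu$ is $e$-regular. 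Then (4) conjoin over all $k$, since $\bmu$ is $e$-regular iff every component is.

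I expect the main obstacle to be step (2)–(3): carefully pinning down the precise bead pattern that witnesses $e$-singularity and verifying that the Rouquier inequality $\hook(\bmu)\le\mathfrak d^k_i(\bmu)+1$ is exactly strong enough to rule it out on runners $1,\dots,e-1$ while the absence of any lower bound between runner $e-1$ and runner $0$ (there is no $\mathfrak d^k_0$ in the definition) is exactly what lets a nonempty $\mu^k_0$ create it. One must be slightly careful with the cyclic indexing of runners and with the bookkeeping of how adding an $e$-rim hook moves a bead down by one level on a \emph{fixed} runner of component $k$ (so the runner partitions $\mu^k_i$ genuinely record the vertical displacement from the core). Since this is precisely Lemma~4.1(1) of~\cite{CT2}, I would either cite it directly or reproduce this short abacus argument; there is nothing deep beyond the observation that Rouquier blocks are, on each runner, ``spread out enough'' that $e$-regularity becomes a purely runner-$0$ phenomenon.
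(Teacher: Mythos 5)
Your strategy is sound, but note first that the paper does not actually prove this lemma: it is quoted directly from \cite[Lemma~4.1(1)]{CT2}, and the general-$r$ statement reduces at once to that $r=1$ result component by component, since $e$-regularity of $\bmu$ is defined componentwise, the quotient of $\mu^{(k)}$ is $(\mu^k_0,\dots,\mu^k_{e-1})$, and each component of a Rouquier multipartition is itself a Rouquier partition for the single charge $a_k$ (because $\hook(\mu^{(k)})\le\hook(\bmu)\le\mathfrak{d}^k_i(\bmu)+1$ and $\mathfrak{d}^k_i$ depends only on component $k$). So citing \cite{CT2}, as you offer to do at the end, is exactly the paper's route; your componentwise abacus argument is the content of that citation.

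If you do write the argument out, the one thing you must repair is the abacus criterion for $e$-singularity, which you state incorrectly in several forms before hedging. The correct statement is: $\la$ is $e$-singular if and only if there are $e$ consecutive occupied positions $b,b+1,\dots,b+e-1$ together with at least one empty position $c<b$ (because the part indexed by a bead equals the number of empty positions at smaller positions). In particular $e$ consecutive positions meet every runner exactly once, so a singularity is never ``within a single runner's contribution''. With $h=\hook(\bmu)$, the inequalities that make your steps (2)--(3) work are: every empty position on runner $i$ of component $k$ lies at level at least $\mathfrak{b}^k_i-|\mu^k_i|+1$, every bead at level at most $\mathfrak{b}^k_i+|\mu^k_i|$, $\sum_i|\mu^k_i|\le h$, and the Rouquier condition gives $\mathfrak{b}^k_i\ge\mathfrak{b}^k_0+i(h-1)$ for $i\ge1$. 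If $\mu^k_0=\emp$ these force every empty position to occur at a position no smaller than the first position of any run of $e$ consecutive beads (the run must contain a runner-$0$ bead, hence starts at level at most $\mathfrak{b}^k_0$, while all runners are fully occupied above that level), so no singularity occurs. If instead $(\mu^k_0)_1\ge1$, then with $m=\mathfrak{b}^k_0+(\mu^k_0)_1$ the position $me$ is occupied, the $e-1$ positions immediately preceding it (level $m-1$ on runners $e-1,\dots,1$) are occupied because those runners are full down to level $\mathfrak{b}^k_i-|\mu^k_i|\ge\mathfrak{b}^k_0+|\mu^k_0|-1\ge m-1$, and runner $0$ has an empty position at a level below $m$, hence at a position $<(m-1)e+1$. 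That is precisely your ``wraparound'' picture, and the rest of your outline then goes through.
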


\subsection{The Fock space representation of $\mathcal{U}_v(\widehat{\mathfrak{sl}}_e)$} \label{SS:Fock}
Let $e \geq 2$, let $I=\{0,1,\ldots,e-1\}$ and let $\mathcal{U}$ denote the quantized enveloping algebra $\mathcal{U}=\mathcal{U}_v(\widehat{\mathfrak{sl}}_e)$. Given $\mc \in I^r$, there exists a representation $\mathcal{F}^{\mc}$ of $\mathcal{U}$ whose basis $\{s_{\bla} \mid \bla \in \La^r\}$ is indexed by multipartitions and where the action of $\mathcal{U}$ depends on $\mc$. We call $\mathcal{F}^{\mc}$ the Fock space representation of $\mathcal{U}$. The $\U$-submodule $M^{\mc}$ generated by $s_{\emp^r}$ is isomorphic to the irreducible highest weight module $V(\Upsilon)$ for some dominant integral weight $\Upsilon$ of $\mathcal{U}$. This module has a canonical basis (in the sense of Lusztig and Kashiwara) and so we may write the canonical basis elements in terms of the standard basis above. 
The canonical basis elements can be indexed by the set of Kleshchev multipartitions; following the conjecture by Lascoux, Leclerc and Thibon~\cite{LLT}, Ariki~\cite{Ariki} proved that the transition coefficients between the canonical basis and the standard basis evaluated at $v=1$ are exactly the decomposition numbers $[S^{\bla}:D^{\bmu}]$ which appear in the representation theory of the Ariki-Koike algebras, where the algebras are defined over a field of characteristic $0$. It was later shown~\cite{BK:Decomp} that in fact the transition coefficients are equal to the graded decomposition numbers $[S^{\bla}:D^{\bmu}]_v$. 

We are therefore interested in computing the canonical basis vectors for $M^{\mc}$. There are various algorithms which will perform this computation; we use the main ideas of an algorithm of Fayers~\cite{Fayers:LLT} which computes canonical basis vectors for a $\mathcal{U}$-module $M^{\otimes \mc}$ where $M^{\mc} \subseteq M^{\otimes \mc} \subseteq \mathcal{F}^{\mc}$. 

Below we describe only the concepts that we need for this paper; for a full description of the algebra $\U$ and its action on the Fock space $\F^{\mc}$, we refer the reader to~\cite{LLT}. They may also find it useful to refer to~\cite{Fayers:LLT}. 

Let $e\geq 2$ and set $I=\{0,1,\ldots, e-1\}$, which we identify with $\Z/e\Z$. Let $\mathcal{U} = \mathcal{U}_v(\widehat{\mathfrak{sl}}_e)$. This is a $\mathbb{Q}(v)$-algebra with generators $e_i, f_i$ for $i \in I$ and $v^h$ for $h \in P^{\vee}$; the relations may be found in~\cite{LLT}.  There exists a $\mathbb{Q}$-linear automorphism $^{-}:\U \rightarrow \U$ called the bar involution determined by
\[\overline{e_i} = e_i, \qquad \overline{f_i} = f_i, \qquad \overline{v}=v^{-1}, \qquad \overline{v^h} = v^{-h}.\]
Let $\mc \in I^r$ and let $\F^{\mc}$ be the $\mathbb{Q}(v)$-vector space with basis $\{s_{\bla} \mid \bla \in \Lambda^r\}$. This becomes a $\U$-module under the action described in~\cite{LLT}; it is sufficient for us to describe the action of $f_i$ which we do below. 
Let $M^{\mc}$ be the submodule generated by $s_{\emp^r}$. We can define a bar involution on $M^{\mc}$ which is compatible with the bar involution on $\U$ by setting 
\[\overline{s_{\emp^r}} = s_{\emp^r} \qquad \text{ and }\qquad \overline{um} = \overline{u} \, \overline{m} \text{ for all } u \in \U \text{ and } m \in M^{\mc}.\] 
Then $M^{\mc}$ has a canonical basis $\{G^{\mc}(\bmu) \mid \bmu \in \Kla\}$ which is uniquely determined subject to the following properties. For $\bmu \in \Kla$, 
\begin{itemize}
\item $\overline{G^{\mc}(\bmu)} = G^{\mc}(\bmu)$.
\item Suppose $G^{\mc}(\bmu) = \sum_{\bla \in \La^r} d^{\mc}_{\bla\bmu}(v)s_{\bla}$ where $d^{\mc}_{\bla\bmu}(v) \in \mathbb{Q}(v)$. Then
\begin{itemize}
\item $d^{\mc}_{\bmu\bmu}(v)=1$; and
\item $d^{\mc}_{\bla\bmu}(v) \in v \Z[v]$ for $\bla \neq \bmu$.  
\end{itemize}
\end{itemize}

We can extend the bar involution on $M^{\mc}$ to the whole of $\F^{\mc}$, and so we have a canonical basis $\{G^{\mc}(\bla) \mid \bla \in \La^r\}$ for $\F^{\mc}$ which satisfies the conditions above. 
For $\bla,\bmu \in \La^r$, we define $d^{\mc}_{\bla\bmu}(v)\in \mathbb{Q}(v)$ to be the coefficient which appears in the sum
\[G^{\mc}(\mu) = \sum_{\bla \in \La^r} d^{\mc}_{\bla\bmu}(v) s_{\bla}.\]
Ariki's theorem relates the coefficients $d^{\mc}_{\bla\bmu}(v)$ evaluated at $v=1$ to the decomposition numbers for the Ariki-Koike algebras over fields of characteristic $0$. Brundan and Kleshchev later showed that $d^{\mc}_{\bla\bmu}(v)$ is exactly the graded decomposition number. 

\begin{theorem} [\cite{Ariki,BK:Decomp}] \label{T:Ariki}
Let $\h=\h_{r,n}(q,{\bf Q})$ be an Ariki-Koike algebra over a field of characteristic $0$, where $\h$ has quantum characteristic $e$ and multicharge $\mc$. 
Suppose that $\bla,\bmu \in \Larn$ with $\bmu \in \Kl$. Then
\[[S^{\bla}:D^{\bmu}]_v = d^{\mc}_{\bla\bmu}(v).\]
\end{theorem}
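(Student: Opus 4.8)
The plan is to reduce the statement to the identification, inside a suitable Grothendieck group, of the classes of the (graded) indecomposable projective $\h$-modules with the canonical basis of $M^{\mc}$; this is exactly Ariki's theorem in the ungraded case and its graded refinement due to Brundan and Kleshchev, and I would recover both along the following lines. Throughout I will treat the geometric and categorical inputs as black boxes, since that is where all the real difficulty lies.

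First, for the ungraded statement (the coefficients at $v=1$), I would categorify the Fock space. Using the Dipper--James--Mathas cellular structure~\cite{DJM:CellularBasis} together with the classical realisation of $\h_{r,n}(q,{\bf Q})$ as a quotient of an affine Hecke algebra of type $A$, one equips $\bigoplus_{n \geq 0} K_0(\h_{r,n}\text{-proj})$ with operators obtained by splitting the usual induction and restriction functors between $\h_{r,n}$ and $\h_{r,n+1}$ according to the residue $i \in I$ ($i$-induction and $i$-restriction). These operators satisfy the Serre relations of $\widehat{\mathfrak{sl}}_e$, so this Grothendieck group becomes an integrable module, which one identifies with $M^{\mc}$ specialised at $v=1$, with $[S^{\bla}] \mapsto s_{\bla}$ (so that $\h_{r,0}=\mathbb{F}$ maps to the highest weight vector $s_{\emp^r}$). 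The heart of the argument is then to show that the class of the projective cover $P^{\bmu}$ of $D^{\bmu}$ maps to $G^{\mc}(\bmu)$ evaluated at $v=1$; this is the geometric input of Ariki's proof, resting on Ginzburg's description of the simple modules of affine Hecke algebras via equivariant $K$-theory of Steinberg varieties and on Lusztig's perverse-sheaf construction of the canonical basis. Granting this, comparing $[S^{\bla}] = \sum_{\bmu} [S^{\bla}:D^{\bmu}]\,[D^{\bmu}]$ with the transition matrix between the standard and dual canonical bases yields $[S^{\bla}:D^{\bmu}] = d^{\mc}_{\bla\bmu}(1)$.

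Second, to upgrade to the graded statement I would replace $\h_{r,n}$ by the isomorphic cyclotomic Khovanov--Lauda--Rouquier algebra $R^{\Lambda}_n$~\cite{BK:Blocks}, which is $\Z$-graded, and use the graded Specht modules of~\cite{BKW}. Now $\bigoplus_n [R^{\Lambda}_n\text{-proj}]$ is a $\Z[v,v^{-1}]$-module with $v$ acting by grading shift, the graded $i$-induction and $i$-restriction functors make it a $\U_v(\widehat{\mathfrak{sl}}_e)$-module isomorphic to $M^{\mc}$, and $[S^{\bla}] \mapsto s_{\bla}$. It then suffices to check that $[P^{\bmu}]$ maps to an element satisfying the three defining properties of $G^{\mc}(\bmu)$: bar-invariance (from the graded self-duality of the projective cover, matched against the bar-involution on $M^{\mc}$), upper unitriangularity, and off-diagonal coefficients in $v\Z[v]$ (both coming from positivity of the grading, equivalently from the graded cellular structure of $R^{\Lambda}_n$). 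Translating back through the isomorphism of~\cite{BK:Blocks} gives $[S^{\bla}:D^{\bmu}]_v = d^{\mc}_{\bla\bmu}(v)$.

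I expect the main obstacle to be precisely this identification of (graded) indecomposable projectives with canonical basis vectors. At $v=1$ it is the geometric core of Ariki's theorem and draws on deep results from the representation theory of affine Hecke algebras; the graded version needs in addition the Brundan--Kleshchev graded isomorphism and a careful bookkeeping of the gradings on Specht and projective modules so that the bar-involution is matched with the correct normalisation. None of this is remotely routine, so in the body of the present paper we simply invoke~\cite{Ariki,BK:Decomp} and treat Theorem~\ref{T:Ariki} as known.
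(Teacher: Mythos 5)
This theorem is imported from \cite{Ariki,BK:Decomp}; the paper gives no proof of its own and simply cites those references, which is exactly what your proposal ultimately does after sketching (accurately, at the level of detail attempted) the structure of Ariki's geometric argument and the Brundan--Kleshchev graded refinement. Your outline is consistent with the cited proofs, so there is nothing to fault, though be aware that the Specht classes live in the Grothendieck group of all finite-dimensional modules rather than of projectives, and that the paper's Specht modules are duals of the Dipper--James--Mathas ones, so the indexing by (conjugate) Kleshchev multipartitions must be matched to that convention.
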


Let $\F^\circ = \{x \in \F^{\mc} \mid \overline{x} = x\}$ denote the set of bar-invariant elements of $\F^{\mc}$. Then \[\F^\circ = \left\{\sum_{\bmu \in \La^r} b_{\bmu}(v) G^{\mc}(\bmu) \mid b_{\bmu}(v) \in \mathbb{Q}(v) \text{ and } \overline{b_{\bmu}(v)}=b_{\bmu}(v)\right\}.\]

\begin{corollary} \label{C:Ones}
Suppose that $x = \sum_{\bla \in \La^r} c_{\bla} s_{\bla} \in \F^\circ$ where $c_{\bla} \in \Z[v]$. Then
\[x = \sum_{\substack{\bmu \in \La^r \\ c_{\bmu} \in \Z}} c_{\bmu} G^{\mc}(\bmu).\]
\end{corollary}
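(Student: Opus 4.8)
The plan is to expand $x$ in the canonical basis of $\F^{\mc}$ and then exploit bar-invariance to force the coefficients to be scalars. Write $x = \sum_{\bmu \in \La^r} b_{\bmu}(v) G^{\mc}(\bmu)$; this is legitimate since $\{G^{\mc}(\bmu) \mid \bmu \in \La^r\}$ is a basis of $\F^{\mc}$. Applying the bar involution and using $\overline{x} = x$ together with $\overline{G^{\mc}(\bmu)} = G^{\mc}(\bmu)$, we get $\sum \overline{b_{\bmu}(v)} G^{\mc}(\bmu) = \sum b_{\bmu}(v) G^{\mc}(\bmu)$, so $\overline{b_{\bmu}(v)} = b_{\bmu}(v)$ for every $\bmu$; that is, $b_{\bmu}(v) \in \mathbb{Q}(v)$ is bar-invariant, which is exactly the characterization of $\F^\circ$ recalled just before the statement.

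Next I would run a ``highest term'' induction, using the dominance order (or any total refinement of it) on $\La^r$, which is compatible with the unitriangularity of the canonical basis: $d^{\mc}_{\bla\bmu}(v) = 0$ unless $\bla \gedom \bmu$, $d^{\mc}_{\bmu\bmu}(v) = 1$, and $d^{\mc}_{\bla\bmu}(v) \in v\Z[v]$ for $\bla \neq \bmu$. Take $\bmu$ maximal (in the chosen total order) among those with $b_{\bmu}(v) \neq 0$. Comparing the coefficient of $s_{\bmu}$ on both sides of $\sum c_{\bla} s_{\bla} = \sum b_{\bnu}(v) G^{\mc}(\bnu)$: on the right, only $\bnu \gedom \bmu$ can contribute an $s_{\bmu}$-term, but by maximality the only such $\bnu$ with $b_{\bnu} \neq 0$ is $\bmu$ itself, contributing $b_{\bmu}(v) d^{\mc}_{\bmu\bmu}(v) = b_{\bmu}(v)$. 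Hence $c_{\bmu} = b_{\bmu}(v)$. Since $c_{\bmu} \in \Z[v]$ and $b_{\bmu}(v)$ is bar-invariant, and the only bar-invariant elements of $\Z[v]$ are the constants in $\Z$, we conclude $c_{\bmu} = b_{\bmu}(v) \in \Z$. Now subtract $c_{\bmu} G^{\mc}(\bmu)$ from $x$; the result is again bar-invariant with $\Z[v]$-coefficients in the standard basis (the subtracted term has such coefficients), and its canonical-basis expansion no longer involves $\bmu$ or anything larger in the order. Iterating over the finitely many $\bla$ with $c_{\bla} \neq 0$ (note $x$ is a finite sum, since each $c_{\bla} \in \Z[v]$ and we may assume all but finitely many vanish) gives $x = \sum_{\bmu : c_{\bmu} \in \Z} c_{\bmu} G^{\mc}(\bmu)$, and the same argument shows every nonzero coefficient lies in $\Z$, so the condition ``$c_{\bmu} \in \Z$'' in the index set is automatically satisfied for exactly those $\bmu$ appearing.

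The only genuinely delicate point is the claim that a bar-invariant Laurent polynomial coefficient appearing against a canonical basis vector, once it is pinned down to equal an element of $\Z[v]$, must in fact be an integer: this uses that $\overline{v} = v^{-1}$ forces a bar-invariant element of $\Z[v]$ (equivalently of $\Z[v,v^{-1}]$) to be constant. Everything else is the standard unitriangular ``peeling'' argument for canonical bases, and the finiteness of the support of $x$ is what makes the induction terminate; I do not expect any real obstruction beyond bookkeeping the order correctly.
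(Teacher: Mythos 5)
Your peeling argument is the standard one and is surely what the paper intends (the corollary is stated without proof, following directly from the preceding characterization of $\F^\circ$). The first part is fine: expanding $x$ in the canonical basis with bar-invariant coefficients, finding a maximal $\bmu$ with $b_{\bmu}\neq 0$, comparing $s_{\bmu}$-coefficients to get $c_{\bmu}=b_{\bmu}$, and invoking that bar-invariant elements of $\Z[v]$ are constants so $c_{\bmu}=b_{\bmu}\in\Z$. Subtracting $c_{\bmu}G^{\mc}(\bmu)$ and iterating correctly yields $x=\sum_{\bmu}b_{\bmu}G^{\mc}(\bmu)$ with every $b_{\bmu}\in\Z$.

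The last step is where you are imprecise in a way you should notice. After the first subtraction, the standard-basis coefficients change: the coefficient of $s_{\btau}$ in $x-c_{\bmu}G^{\mc}(\bmu)$ is $c_{\btau}-c_{\bmu}d^{\mc}_{\btau\bmu}(v)$, not $c_{\btau}$. So the peeling does not directly exhibit $x$ as $\sum_{c_{\bmu}\in\Z}c_{\bmu}G^{\mc}(\bmu)$. The cleaner way to finish is: once you know all $b_{\bnu}\in\Z$, compare $s_{\bmu}$-coefficients in $x=\sum b_{\bnu}G^{\mc}(\bnu)$ to get $c_{\bmu}=b_{\bmu}+\sum_{\bnu\neq\bmu}b_{\bnu}d^{\mc}_{\bmu\bnu}(v)$, where the correction term lies in $v\Z[v]$; hence $b_{\bmu}$ is precisely the constant term of $c_{\bmu}$. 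In particular $b_{\bmu}=c_{\bmu}$ whenever $c_{\bmu}\in\Z$. Your parenthetical ``the same argument shows every nonzero coefficient lies in $\Z$'' conflates $b_{\bmu}$ with $c_{\bmu}$: the $b_{\bmu}$ are integers, but a $\bmu$ with $b_{\bmu}\neq 0$ can have $c_{\bmu}\notin\Z$ (take $x=G^{\mc}(\bmu_1)+G^{\mc}(\bmu_2)$ with $d^{\mc}_{\bmu_2\bmu_1}(v)\neq 0$; then $c_{\bmu_2}=1+d^{\mc}_{\bmu_2\bmu_1}(v)\notin\Z$ although $b_{\bmu_2}=1$). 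To get the displayed equality one needs to know $b_{\bmu}=0$ whenever $c_{\bmu}\notin\Z$, i.e.\ that each non-constant $c_{\bmu}$ has zero constant term. This is an extra hypothesis not forced by $c_{\bmu}\in\Z[v]$ and bar-invariance of $x$; it does hold for the specific $x$ to which the paper applies the corollary (there each $c_{\bmu}$ is either in $\Z$ or in $v\Z[v]$). So the gap is as much in the corollary's own phrasing as in your write-up, but you should make explicit the constant-term comparison, and be aware that the index set ``$c_{\bmu}\in\Z$'' does not automatically capture the support of $x$ in the canonical basis.
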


As shown in~\cite[Proposition~4.2]{Fayers:LLT}, the space $M^{\otimes \mc}$ with basis 
$\{G^{\mc}(\bmu) \mid \bmu \in \Lar\}$ 
is a $\U$-submodule of $\F^{\mc}$. Since every Kleshchev multipartition is $e$-regular, we have $M^{\mc} \subset M^{\otimes \mc} \subset \F^{\mc}$.  
Fayers described an algorithm to compute the coefficients $d^{\mc}_{\bla\bmu}(v)$ where $\bmu \in \Lar$. The following results are easy consequences of his algorithm. 

\begin{lemma} \label{L:DomNeeded}
Suppose that $\bla,\bmu \in \La^r$ with $\bmu \in \Lar$.
\begin{itemize}
\item $d^{\mc}_{\bla\bmu}(v) = 0$ unless $\Res_{\mc}(\bla)=\Res_{\mc}(\bmu)$. In particular, $\bmu$ and $\bla$ must have the same size. 
\item Suppose that $|\bmu|=|\bla|$. Then $d^{\mc}_{\bmu\bmu}=1$ and $d^{\mc}_{\bla\bmu}(v) = 0$ unless $|\mu^{(0)}| \geq |\la^{(0)}|$. 
\end{itemize}
\end{lemma}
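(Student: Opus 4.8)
The plan is to obtain both statements directly from the structure of Fayers' algorithm together with the weight decomposition of $\F^{\mc}$; no new computation is needed, so the only real task is to quote the correct triangularity properties.

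For the first bullet I would argue with weights. The standard basis vector $s_{\bla}$ is a weight vector for $\U$, and its weight differs from that of $s_{\emp^r}$ by $-\sum_{i\in I} m_i(\bla)\,\alpha_i$, where $m_i(\bla)$ is the multiplicity of $i$ in the multiset $\Res_{\mc}(\bla)$; this is part of the definition of the $\U$-action on $\F^{\mc}$ recalled in Section~\ref{SS:Fock}. Since the simple roots $\alpha_0,\dots,\alpha_{e-1}$ of $\widehat{\mathfrak{sl}}_e$ are linearly independent, the weight of $s_{\bla}$ determines the tuple $(m_0(\bla),\dots,m_{e-1}(\bla))$, hence $\Res_{\mc}(\bla)$. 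A canonical basis vector of any $\U$-module is homogeneous for the weight decomposition: the bar involution preserves weight spaces (as $\overline{v^h}=v^{-h}$), and the defining unitriangularity and $v\Z[v]$-conditions are imposed separately on each weight space. Hence every $s_{\bla}$ occurring in $G^{\mc}(\bmu)$ satisfies $\Res_{\mc}(\bla)=\Res_{\mc}(\bmu)$, which gives $d^{\mc}_{\bla\bmu}(v)=0$ unless $\Res_{\mc}(\bla)=\Res_{\mc}(\bmu)$; in that case $|\bla|=\sum_i m_i(\bla)=\sum_i m_i(\bmu)=|\bmu|$.

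For the second bullet, $d^{\mc}_{\bmu\bmu}(v)=1$ is one of the defining properties of the canonical basis of $M^{\otimes\mc}$ listed in Section~\ref{SS:Fock}, so only the inequality needs work. Here I would use the tensor factorization $\F^{\mc}\cong\F^{(a_0)}\otimes\F^{\mc'}$ underlying Fayers' algorithm, where $\mc'=(a_1,\dots,a_{r-1})$ and $s_{\bla}\leftrightarrow s_{\la^{(0)}}\otimes s_{(\la^{(1)},\dots,\la^{(r-1)})}$, and prove by induction on $|\mu^{(0)}|$ (and, for a fixed value of $|\mu^{(0)}|$, on the total order on first components that drives the algorithm) that $G^{\mc}(\bmu)$ lies in the span of those $s_{\bla}$ with $|\la^{(0)}|\le|\mu^{(0)}|$. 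The base case $\mu^{(0)}=\emp$ holds because then $G^{\mc}(\bmu)=s_{\emp}\otimes G^{\mc'}(\bmu')$ with $\bmu'=(\mu^{(1)},\dots,\mu^{(r-1)})$. For the inductive step one writes $G^{\mc}(\bmu)$ as the bar-invariant element $G^{(a_0)}(\mu^{(0)})\otimes G^{\mc'}(\bmu')$ minus a linear combination of canonical basis vectors $G^{\mc}(\bnu)$ with $\bnu$ strictly below $\bmu$ in the algorithm's order. The first term is supported on $s_{\lambda}\otimes s_{\bla'}$ with $|\lambda|=|\mu^{(0)}|$, because every partition occurring in $G^{(a_0)}(\mu^{(0)})$ lies in the same level-one block as $\mu^{(0)}$ (by the first bullet applied at level one) and hence has the same size; and each correction term lies in the required span by the inductive hypothesis, since $\bnu$ below $\bmu$ in the algorithm's order forces $|\nu^{(0)}|\le|\mu^{(0)}|$. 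This shows $d^{\mc}_{\bla\bmu}(v)=0$ whenever $|\la^{(0)}|>|\mu^{(0)}|$, i.e. unless $|\mu^{(0)}|\ge|\la^{(0)}|$.

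The only delicate point is the second bullet: one must confirm from Fayers' paper that the order driving his algorithm really does refine the comparison of the sizes $|\la^{(0)}|$ — equivalently, that the canonical basis of $M^{\otimes\mc}$ is unitriangular with respect to such an order — and that his reduction of the level-$r$ problem to the level-$(r-1)$ problem is precisely the factorization $\F^{\mc}\cong\F^{(a_0)}\otimes\F^{\mc'}$ used above, so that the base case is genuinely available. Granting these structural facts, both bullets reduce to the short weight-space and tensor-product inductions indicated.
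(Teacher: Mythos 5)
The paper does not actually prove Lemma~\ref{L:DomNeeded}; it is stated with the comment that ``the following results are easy consequences of [Fayers'] algorithm,'' so any proof you supply is your own. Your argument for the first bullet is correct and is exactly how one should argue it: $s_{\bla}$ lies in the weight space of $\F^{\mc}$ determined by $(m_0(\bla),\dots,m_{e-1}(\bla))$, the bar involution fixes weight spaces because $\overline{v^h}=v^{-h}$ forces $\overline{m}$ to have the same $v^h$-eigenvalue as $m$, and unitriangularity is imposed separately on each weight space, so each $G^{\mc}(\bmu)$ is a weight vector.

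The argument for the second bullet has two genuine gaps. First, the element $G^{(a_0)}(\mu^{(0)})\otimes G^{\mc'}(\bmu')$ is \emph{not} bar-invariant under the bar involution on $\F^{\mc}$ that the paper uses (recalled in Section~\ref{SS:Fock}). That involution on $M^{\otimes\mc}$ is obtained from the level-one involutions by twisting with a quasi-R-matrix, so in general $\overline{m_1\otimes m_2}\neq\overline{m_1}\otimes\overline{m_2}$; a tensor product of canonical basis vectors is therefore not a legitimate bar-invariant starting point, and your inductive step as written is not justified. The bar-invariant element Fayers' algorithm actually produces is $f\bigl(G^{\mc}(\hat{\bmu}^{\emp})\bigr)$ with $f$ as in Lemma~\ref{L:LLTLead}, which is bar-invariant because $\overline{f_i}=f_i$. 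Second, your base case $\mu^{(0)}=\emp$ asserts $G^{\mc}(\bmu)=s_{\emp}\otimes G^{\mc'}(\bmu')$, which is exactly the statement $d^{\mc}_{\bla\bmu}(v)=0$ for $\la^{(0)}\neq\emp$ that you are trying to prove; Lemma~\ref{L:AddEmpty} only controls the coefficients of $\bla$ with $\la^{(0)}=\emp$ and says nothing about the others. (Note that there genuinely are $\bla$ with $\la^{(0)}\neq\emp$, $\Res_{\mc}(\bla)=\Res_{\mc}(\bmu)$, and $\mu^{(0)}=\emp$, so the base case is not vacuous.) To repair the argument one must either run the induction using $f\bigl(G^{\mc}(\hat{\bmu}^{\emp})\bigr)$ together with Lemma~\ref{L:LLTLead} and an order refining comparison of $|\la^{(0)}|$, or quote directly from Fayers (or LLT/Uglov) the unitriangularity of the bar involution on $\F^{\mc}$ with respect to a partial order under which $\bnu$ strictly below $\bla$ forces $|\nu^{(0)}|\leq|\la^{(0)}|$ — that triangularity is the actual content here, and it also handles the base case.
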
 

Now suppose $r \ge 2$. If $\bla = (\la^{(0)},\la^{(1)},\ldots,\la^{(r-1)}) \in \La^r$, let $\hat{\bla}=(\la^{(1)},\ldots,\la^{(r-1)}) \in \La^{r-1}$. If $\mc=(a_0,a_1,\ldots,a_{r-1}) \in I^r$, write $\hat{\mc}= (a_1,\ldots,a_{r-1})$. 

\begin{lemma} [\cite{Fayers:LLT}, Corollary~3.2] \label{L:AddEmpty}
Suppose $r \geq 2$ and that $\bla\in \La^r$, $\bmu \in \Lar$ with $\mu^{(0)}=\la^{(0)}=\emp$. Then
\[d^{\mc}_{\bla\bmu}(v) = d^{\hat{\mc}}_{\hat{\bla}\hat{\bmu}}(v).\]
\end{lemma}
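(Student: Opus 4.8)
The plan is to exploit the fact that, when $\mu^{(0)}=\emp$, the canonical basis vector $G^{\mc}(\bmu)$ and its $\hat{\mc}$-counterpart $G^{\hat{\mc}}(\hat{\bmu})$ are characterised by the \emph{same} axioms (bar-invariance, unitriangularity with diagonal $1$, off-diagonal coefficients in $v\Z[v]$), so it suffices to produce a $\U$-module map relating the relevant Fock spaces that sends $s_{\hat{\bnu}}$ to $s_{\bnu}$ whenever $\nu^{(0)}=\emp$, intertwines the bar involutions, and respects the relevant triangularity. Concretely, I would first recall from~\cite{Fayers:LLT} (this is essentially the content cited in Corollary~3.2 there, but I would reprove the piece I need) that the assignment $\iota\colon s_{\hat{\bla}}\mapsto s_{(\emp,\hat{\bla})}$ defines an embedding of $\U$-modules $\F^{\hat{\mc}}\hookrightarrow\F^{\mc}$, or at least that the $f_i$-action on standard basis elements indexed by multipartitions with empty first component only produces multipartitions with empty first component, so that the span of $\{s_{\bnu}\mid\nu^{(0)}=\emp\}$ is a $\U$-submodule of $\F^{\mc}$ isomorphic to $\F^{\hat{\mc}}$ via $\iota$.

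Next I would check that $\iota$ intertwines the bar involutions: since $\overline{s_{\emp^r}}=s_{\emp^r}$ on $M^{\mc}$ and $\overline{um}=\bar u\,\bar m$, and since $s_{(\emp,\hat{\bmu})}$ with $\hat{\bmu}\in\Lar[r-1]$ lies in $M^{\otimes\mc}$, bar-invariance on the submodule $\iota(\F^{\hat{\mc}})$ is inherited from bar-invariance on $\F^{\hat{\mc}}$ — more precisely $\overline{\iota(x)}=\iota(\bar x)$ follows because $\iota$ is a $\U$-map and sends the relevant canonical/bar-invariant generators to bar-invariant elements. Granting this, $\iota(G^{\hat{\mc}}(\hat{\bmu}))$ is a bar-invariant element of $\F^{\mc}$ which, expanded in the standard basis, is unitriangular with respect to (the restriction of) the dominance order, has diagonal coefficient $1$ at $\bmu=(\emp,\hat{\bmu})$, and has all other coefficients in $v\Z[v]$; by the uniqueness characterisation of the canonical basis (the bulleted conditions just before Theorem~\ref{T:Ariki}), $\iota(G^{\hat{\mc}}(\hat{\bmu}))=G^{\mc}(\bmu)$. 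Comparing coefficients of $s_{(\emp,\hat{\bla})}$ on both sides gives $d^{\mc}_{\bla\bmu}(v)=d^{\hat{\mc}}_{\hat{\bla}\hat{\bmu}}(v)$ when $\la^{(0)}=\emp$; and by Lemma~\ref{L:DomNeeded} (second bullet), $d^{\mc}_{\bla\bmu}(v)=0$ unless $|\mu^{(0)}|\ge|\la^{(0)}|$, i.e. unless $\la^{(0)}=\emp$, so the remaining cases are $0=0$ and the identity holds for all $\bla$.

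The main obstacle is the verification that $\iota$ genuinely intertwines the \emph{bar involutions} rather than merely the $\U$-module structures: the bar involution on $\F^{\mc}$ is defined via its restriction to $M^{\mc}$ and then extended, and one must make sure the extension is compatible with the decomposition coming from the first runner/component. In practice this reduces to checking that the bar involution preserves the submodule $\iota(\F^{\hat{\mc}})$ and acts there as $\iota\circ{}^{-}\circ\iota^{-1}$; this can be seen either from an explicit description of the bar involution on $\F^{\mc}$ in terms of the Bruhat-type ordering (as in~\cite{LLT,Fayers:LLT}) or, more cleanly, by citing~\cite[Corollary~3.2]{Fayers:LLT} directly — which is in fact what the statement does. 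If one wants a self-contained argument, the remaining routine points are: that dominance order on $\{\bla\mid\la^{(0)}=\emp\}$ pulls back correctly under $\iota$, and that $\hat{\bmu}\in\Lar[r-1]$ whenever $(\emp,\hat{\bmu})\in\Lar$, both of which are immediate from the definitions in Section~\ref{SS1}.
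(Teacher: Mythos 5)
The paper offers no proof of this lemma; it simply cites \cite[Corollary~3.2]{Fayers:LLT}. So your proposal must be judged on its own terms, and as written it has a genuine gap at its very first step. You claim that ``the $f_i$-action on standard basis elements indexed by multipartitions with empty first component only produces multipartitions with empty first component'', and hence that $\operatorname{span}\{s_{\bnu}\mid\nu^{(0)}=\emp\}$ is a $\U$-submodule of $\F^{\mc}$ isomorphic to $\F^{\hat{\mc}}$ via $\iota$. This is false: if $\nu^{(0)}=\emp$ then $(1,1,0)$ is an addable node of $[\bnu]$ of residue $a_0$, so $f_{a_0}s_{\bnu}$ has a nonzero component on $s_{\bla}$ with $\la^{(0)}=(1)$. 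Equivalently, in the abacus picture there is a bead at position $a_0-1$ and a gap at position $a_0$ on component $0$, so $f_{a_0}$ moves it. Thus $\iota$ is not a $\U$-module map and the subspace is not $\U$-stable, which removes the foundation for everything that follows: you cannot ``inherit'' bar-invariance from $\F^{\hat{\mc}}$ along a map that does not intertwine the $\U$-actions, and indeed the bar involution on $\F^{\mc}$ does \emph{not} preserve $\iota(\F^{\hat{\mc}})$ either. You flagged the bar-compatibility as ``the main obstacle'', but the obstacle occurs one step earlier.

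What does survive from your argument is the use of Lemma~\ref{L:DomNeeded}: since $\mu^{(0)}=\emp$, that lemma forces $d^{\mc}_{\bla\bmu}(v)=0$ unless $\la^{(0)}=\emp$, so $G^{\mc}(\bmu)$ really is supported on the subspace you wanted. The missing ingredient is the correct comparison of bar involutions. The standard route is to view $\F^{\mc}\cong\F^{(a_0)}\otimes\F^{\hat{\mc}}$ and use that the bar involution on a tensor product is $\Theta\circ(\bar{\phantom{x}}\otimes\bar{\phantom{x}})$ with $\Theta$ the quasi-$R$-matrix lying in $1$ plus a completion of $\U^{-}\otimes\U^{+}$. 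Since $s_{\emp}$ is bar-fixed and since applying $\U^{-}$ to $s_{\emp}$ strictly increases $|\la^{(0)}|$, the coefficient of $s_{\emp}\otimes s_{\hat{\bsig}}$ in $\overline{s_{\emp}\otimes s_{\hat{\bla}}}$ coincides with the coefficient of $s_{\hat{\bsig}}$ in $\overline{s_{\hat{\bla}}}$ in $\F^{\hat{\mc}}$, even though $\overline{s_{\emp}\otimes s_{\hat{\bla}}}$ has further terms outside the subspace. Combining this ``restricted'' agreement of the bar matrices with the support constraint from Lemma~\ref{L:DomNeeded} and the uniqueness characterisation of the canonical basis then yields $d^{\mc}_{\bla\bmu}(v)=d^{\hat{\mc}}_{\hat{\bla}\hat{\bmu}}(v)$. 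This quasi-$R$-matrix analysis (or something equivalent) is precisely the content that Fayers' Corollary~3.2 packages up, and it is why the paper cites it rather than reproving it; your sketch cannot be made to work without it.
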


For $i \in I$ and $m >0$, we write $f^{(m)}_i = f^{m}_i / [m]!$, the quantum divided power.
We describe the action of the divided powers $f_i^{(m)}$ on the standard basis of $\F^{\mc}$. 
If $\bnu \in \La^r$, write $\bnu \xrightarrow{m:i} \bla$ if $[\bla]$ is formed from $[\bnu]$ by adding $m$ nodes all of residue $i$. If $\bnu \xrightarrow{m:i} \bla$ set
\[N(\bnu,\bla) =\sum_{\mathfrak{n} \in [\bla] \setminus [\bnu]} \#\{\mathfrak{m} \in \add_i(\bnu) \mid \mathfrak{m} \text{ is above } \mathfrak{n}\}-\#\{\mathfrak{m} \in \rem_i(\bla) \mid \mathfrak{m} \text{ is above } \mathfrak{n}\}. \]
Then
\[f_i^{(m)} s_{\bnu} = \sum_{\bnu \xrightarrow{m:i} \bla} v^{N(\bnu,\bla)} s_{\bla}.\] 

As we have seen previously, adding a node of residue $i$ to a multipartition $\bnu$ corresponds to moving a bead on the abacus configuration of $\bnu$ from runner $i-1$ to runner $i$. By identifying the basis elements $s_{\bnu}$ with abacus configurations, we observe the action of $f^{(m)}_i$ on $s_{\bnu}$.

\begin{ex} Let $r=2$ and $e=3$. Take $\mc=(2,1)$ and $\bnu=((4,3^5,2,1),(2^4,1^3))$. Consider $f^{(2)}_1 s_{\bnu}$. 

\[\abacus(bbb,nbn,bnb,bbb,bnb) \;\;\;\abacus(bbb,nbb,bnb,bbb,nnn) \; \xrightarrow{f^{(2)}_1} \;
\abacus(bbb,nbn,nbb,bbb,nbb) \;\;\;\abacus(bbb,nbb,bnb,bbb,nnn)
\; + \; \abacus(bbb,nbn,bnb,bbb,nbb) \;\;\;\abacus(bbb,nbb,nbb,bbb,nnn)
\; +v \;\;\abacus(bbb,nbn,nbb,bbb,bnb) \;\;\;\abacus(bbb,nbb,nbb,bbb,nnn)
\]
\end{ex}

Note that it is straightforward to read $N(\bnu,\bla)$ from the abacus. Let $\bnu \in \La^r$ and for $0 \leq k \leq r-1$ let $\BB^{(k)}=\BB_{a_k}(\nu^{(k)})$ be the set of $\beta$-numbers for $\nu^{(k)}$. 
Then there is a bijection between addable $i$-nodes of $\bnu$ and elements of the set
\[\{(b,k) \in \Z \times \{0,1,\ldots,r-1\} \mid b \notin \BB^{(k)}, \; b+1 \in \BB^{(k)} \text{ and } b \equiv i-1 \mod e\}\]
with $(b,k)$ lying above $(b',k')$ if and only if $k<k'$ or $k=k'$ and $b<b'$. A similar construction exists for removable $i$-nodes.  

The next example illustrates Lemma~\ref{L:I1}. 

\begin{ex}
 Let $r=2$ and $e=3$. Take $\mc=(0,2)$ and $\bnu=((6,4,2^2,1^2),(7,5,3^2,2^2,1^2))$. Consider $f^{(2)}_1 f^{(2)}_2 f^{(2)}_0 s_{\bnu}$. 

\begin{multline*}
\abacus(bbb,bbb,nbb,nbb,nnb,nnb,nnn) \;\;\;\abacus(bbb,bbb,nbb,nbb,nbb,nnb,nnb) \; \xrightarrow{f^{(2)}_1 f^{(2)}_2 f^{(2)}_0} \; 
\abacus(bbb,bbb,nnb,nbb,nbb,nnb,nnn) \;\;\;\abacus(bbb,bbb,nbb,nbb,nbb,nnb,nnb)
\; + v \;\;\abacus(bbb,nbb,bbb,nnb,nbb,nnb,nnn) \;\;\;\abacus(bbb,bbb,nbb,nbb,nbb,nnb,nnb)
\; + v^2 \;\; \abacus(bbb,nbb,nbb,bbb,nnb,nnb,nnn) \;\;\;\abacus(bbb,bbb,nbb,nbb,nbb,nnb,nnb)\\
+ v \;\; \abacus(bbb,bbb,nbb,nnb,nbb,nnb,nnn) \;\;\;\abacus(bbb,bbb,nbb,nbb,nnb,nbb,nnb)
\; + v^2 \;\; \abacus(bbb,bbb,nbb,nnb,nbb,nnb,nnn) \;\;\;\abacus(bbb,nbb,bbb,nbb,nbb,nnb,nnb) 
\; +v^2 \;\; \abacus(bbb,nbb,bbb,nbb,nnb,nnb,nnn) \;\;\;\abacus(bbb,bbb,nbb,nbb,nnb,nbb,nnb)
\; +v^3\;\;\abacus(bbb,nbb,bbb,nbb,nnb,nnb,nnn) \;\;\;\abacus(bbb,nbb,bbb,nbb,nbb,nnb,nnb) \\
+ v^2 \;\; \abacus(bbb,bbb,nbb,nbb,nnb,nnb,nnn) \;\;\;\abacus(bbb,bbb,nbb,nnb,nbb,nbb,nnb)
\; + v^3 \;\; \abacus(bbb,bbb,nbb,nbb,nnb,nnb,nnn) \;\;\;\abacus(bbb,nbb,bbb,nbb,nnb,nbb,nnb)
\; + v^4 \;\; \abacus(bbb,bbb,nbb,nbb,nnb,nnb,nnn) \;\;\;\abacus(bbb,nbb,nbb,bbb,nbb,nnb,nnb)
\end{multline*}
\end{ex}

The final result, which follows from~\cite{LLT}, is used in Fayers' algorithm~\cite{Fayers:LLT}.   

\begin{lemma}  [\cite{LLT}] \label{L:LLTLead}
Suppose that $\bmu \in \La^r$ with $\mu^{(0)}$ $e$-regular. Let $\hat{\bmu}^{\emp} = (\emp,\mu^{(1)},\ldots,\mu^{(r-1)})$. Then there exists $f = f^{(t_x)}_{i_x} f^{(t_{x-1})}_{i_{x-1}} \ldots f^{(t_1)}_{i_{1}} \in \U$ such that 
 \[f s_{\hat{\bmu}^\emp} = s_{\bmu} + 
\sum_{\substack{\bla = (\la,\mu^{(1)},\ldots,\mu^{(r-1)}) \\ \la \sim_{(a_0)} \mu^{(0)}, \, \la \neq \mu^{(0)}}} b_{\bla}(v) s_{\bla}
+ \sum_{\substack{\bla \sim \bmu \\ |\la^{(0)}| < |\mu^{(0)}|}} b_{\bla}(v) s_{\bla}\]
where $b_{\bla}(v) \in \mathbb{N}[v,v^{-1}]$ for all $\bla$. 
 In particular, if $\mu^{(0)}$ is an $e$-core then the middle term above is empty. 
\end{lemma}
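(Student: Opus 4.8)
The plan is to reduce to the case $r=1$, where the statement is \cite{LLT}. In level one the result reads: given an $e$-regular partition $\mu^{(0)}$ and $a_0\in I$, there is a monomial $f=f^{(t_x)}_{i_x}\cdots f^{(t_1)}_{i_1}$ in the divided powers with $f\,s_{(\emp)}=s_{(\mu^{(0)})}+\sum_{\la} b_{(\la)}(v)\,s_{(\la)}$, the sum over partitions $\la\neq\mu^{(0)}$ having the same residue content as $\mu^{(0)}$ (equivalently $\la\sim_{(a_0)}\mu^{(0)}$), with $b_{(\la)}(v)\in\mathbb{N}[v,v^{-1}]$ and with the coefficient of $s_{(\mu^{(0)})}$ equal to $1$. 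I would take this very $f$ --- built from the first component $\mu^{(0)}$ with respect to $a_0$ --- and show that it already works for $\bmu$.

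First note that $f$ adds exactly $|\mu^{(0)}|$ nodes in total, so every $\bla$ occurring in $f\,s_{\hat{\bmu}^{\emp}}$ arises by distributing $|\mu^{(0)}|$ added nodes among the $r$ components of $\hat{\bmu}^{\emp}$; in particular $|\la^{(0)}|\leq|\mu^{(0)}|$, and all coefficients lie in $\mathbb{N}[v,v^{-1}]$ since each $f^{(m)}_i$ acts with non-negative monomial coefficients. Those $\bla$ with $|\la^{(0)}|<|\mu^{(0)}|$ feed the third sum of the statement. For the rest, $|\la^{(0)}|=|\mu^{(0)}|$ forces every added node into component $0$, so $\la^{(k)}=\mu^{(k)}$ for $1\leq k\leq r-1$; and since the residues of the nodes $f$ adds are exactly the residues of $[\mu^{(0)}]$, we get $\la^{(0)}\sim_{(a_0)}\mu^{(0)}$. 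So these are precisely the multipartitions $(\la,\mu^{(1)},\dots,\mu^{(r-1)})$ with $\la\sim_{(a_0)}\mu^{(0)}$ indexing, together with $\bmu$, the first two sums.

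It then remains to match the coefficient of such an $s_{\bla}$ with the level-one coefficient of $s_{(\la^{(0)})}$ in $f\,s_{(\emp)}$, and this is where the convention on nodes does the work. Because a node in component $0$ lies above every node of components $1,\dots,r-1$, along the branch of the computation of $f\,s_{\hat{\bmu}^{\emp}}$ in which nodes are added only in component $0$, the addable and removable $i$-nodes lying above any newly added node all lie in component $0$ (and in earlier rows); hence at each step $N(\bnu,\bla)$ equals its level-one value $N(\nu^{(0)},\la^{(0)})$, and that branch reproduces $f\,s_{(\emp)}$ coefficientwise in the first component. In particular $s_{\bmu}$ occurs with coefficient $1$ --- and no other branch can produce $s_{\bmu}$, whose first component has the maximal possible size --- while each $s_{(\la,\mu^{(1)},\dots,\mu^{(r-1)})}$ with $\la\sim_{(a_0)}\mu^{(0)}$, $\la\neq\mu^{(0)}$, occurs with coefficient $b_{(\la)}(v)\in\mathbb{N}[v,v^{-1}]$. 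Finally, if $\mu^{(0)}$ is an $e$-core then by Proposition~\ref{P:Naka} it is the unique partition $\sim_{(a_0)}\mu^{(0)}$ of its size, so the middle sum is empty.

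The only genuine input is the level-one statement of \cite{LLT}, in particular that the greedy monomial yields leading coefficient exactly $1$ (equivalently, $N=0$ at the leading term of each step); one could instead argue directly by induction on $|\mu^{(0)}|$, peeling off a suitable set of equal-residue nodes whose removal keeps the partition $e$-regular, but since this is exactly \cite{LLT} I would simply cite it. I expect the main (mild) obstacle to be verifying that the ``above/below'' convention really makes the $N$-statistic for nodes added in component $0$ agree with its level-one counterpart; everything else is the bookkeeping above.
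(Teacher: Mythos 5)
Your proof is correct, and since the paper simply cites this lemma from~\cite{LLT} (via Fayers' algorithm~\cite{Fayers:LLT}) without giving a proof, there is nothing in the text to compare it against directly; but the reduction you give is exactly the natural one the citation implies. The two essential points are both handled properly: (i) the monomial $f$ is the greedy level-one monomial attached to $\mu^{(0)}$ and $(a_0)$, so it adds exactly $|\mu^{(0)}|$ nodes with the residue content of $[\mu^{(0)}]$, whence $|\la^{(0)}|\leq|\mu^{(0)}|$ for every term and equality forces $\la^{(k)}=\mu^{(k)}$ for $k\geq 1$ and $\la^{(0)}\sim_{(a_0)}\mu^{(0)}$; and (ii) for a step in which all added $i$-nodes land in component $0$, the statistic $N(\bnu,\bla)$ only counts addable/removable $i$-nodes strictly above the added nodes, and by the ``above'' convention (component $0$ dominates all later components, and a node in component $k'\geq 1$ is never above a node in component $0$) these all lie in component $0$, so $N(\bnu,\bla)=N(\nu^{(0)},\la^{(0)})$ as in level one. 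Two small points you could make explicit but which are immediate: every $\bla$ appearing in $f\,s_{\hat{\bmu}^{\emp}}$ satisfies $\bla\sim\bmu$ since $f$ adds a fixed multiset of residues, so the third sum is indeed indexed as stated; and the coefficient of $s_{\bmu}$ is $1$ because every path to $\bmu$ adds all nodes in component~$0$ and hence contributes exactly the corresponding level-one path coefficient, and the sum of those is $1$ by the level-one statement. The observation that Proposition~\ref{P:Naka} kills the middle sum when $\mu^{(0)}$ is an $e$-core is also correct.
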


\subsection{Littlewood-Richardson coefficients} \label{S:LR}
The Littlewood-Richardson coefficients appear naturally in the representation theory of the symmetric groups $\mathfrak{S}_n$ over the complex numbers. 
We give a brief introduction to them here; for more details, and for a combinatorial description of how to compute them, we refer the reader to~\cite[Chapter~5]{Fulton}. 

For $\la \in \La_n$ we let $S^{\la}_{\C}$ denote the irreducible $\C \mathfrak{S}_n$-module corresponding to $\la$. If $\al,\be \in \La$ with $|\al|=n_1$ and $|\be|=n_2$ then we have
\[S^{\al}_{\C} \otimes S^{\be}_{\C} \uparrow_{\mathbb{C}(\mathfrak{S}_{n_1} \times \mathfrak{S}_{n_2})}^{\mathbb{C} \mathfrak{S}_{n_1 + n_2}} = \sum_{\la \vdash n_1+n_2} c^{\la}_{\al\be}S^{\la}_{\C} \]
for some non-negative integers $c^{\la}_{\al\be}$. If $\al,\be,\la \in \La$ with $|\al|+|\be|=|\la|$ then $c^{\la}_{\al\be}$ is the corresponding Littlewood-Richardson coefficient. We extend the definition to arbitrary $\al,\be,\la \in \La$ by setting $c^{\la}_{\al\be}=0$ if $|\al|+|\be|\neq|\la|$.

Since the tensor product is associative, we can generalize this definition. 
Suppose that $t \geq 1$ and $\alpha_1,\alpha_2,\ldots,\alpha_t \in \Lambda$ with $\alpha_i \vdash n_i$ for each $i$. Then 
\[S^{\alpha_1}_{\C} \otimes S^{\alpha_2}_{\C} \otimes \ldots \otimes S^{\alpha_t}_{\C} \uparrow_{\mathbb{C}(\mathfrak{S}_{n_1} \times \mathfrak{S}_{n_2} \times \ldots \times \mathfrak{S}_{n_t})}^{\mathbb{C} \mathfrak{S}_{n_1+n_2+\ldots+n_t}} = \sum_{\lambda \vdash n_1+\ldots+n_t} c^{\lambda}_{\alpha_1\alpha_2\ldots\alpha_t} S^{\la}_{\C}\]
for some non-negative integers $c^{\lambda}_{\alpha_1\alpha_2\ldots\alpha_t}$, and we call $c^{\lambda}_{\alpha_1\alpha_2\ldots\alpha_t}$ a generalized Littlewood-Richardson coefficient. If $|\lambda| \neq |\alpha_1|+\ldots+|\alpha_t|$ we set $c^{\la}_{\alpha_1\alpha_2\ldots\alpha_t}=0$.

Using the properties of the tensor product, we can see that the generalized Littlewood-Richardson coefficients satisfy the following recursive equation. 
We have $c^{\la}_{\al} = \delta_{\al \la}$ and for $t \geq 2$ we have 
\[c^{\la}_{\alpha_1\alpha_2\ldots\alpha_t} = \sum_{\beta\in \La} c^\la_{\al_1 \beta} c^\beta_{\alpha_2\ldots\al_t};\]
note that for $t=2$, this formula is self-referential. Also note that if $\la,\al \in \La$ then $c^{\la}_{\al \emp} = \delta_{\la \al}$.  

For the rest of this section, we use the convention that unless otherwise stated, all sums are over $\Lambda$, with the understanding that only partitions of the right size contribute to the sum. 
Recall that if $\la$ is a partition of $n$ then $\la'$ is the conjugate partition, that is, the partition obtained by swapping the rows and the columns of $[\la]$. 

\begin{lemma} [\cite{James}, Theorem~6.7] \label{L:Conjugation}
Suppose $\la \in \La$. Then 
$$S_{\C}^{\la'} \cong S_{\C}^{\la} \otimes S_{\C}^{(1^n)},$$ 
where $S^{(1^n)}_{\C}$ is the 1-dimensional sign representation. 
\end{lemma}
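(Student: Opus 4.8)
The plan is to use the Young symmetrizer realization of $S^{\la}_{\C}$ together with the observation that tensoring with the sign representation is the same thing as twisting by the sign automorphism of $\C\mathfrak{S}_n$. First I would fix a Young tableau $t$ of shape $\la$, let $R_t$ and $C_t$ be its row and column stabilizers, and set $a_t=\sum_{\sigma\in R_t}\sigma$, $b_t=\sum_{\tau\in C_t}\operatorname{sgn}(\tau)\,\tau$ and $c_t=a_tb_t$; recall that $\C\mathfrak{S}_n\cdot c_t\cong S^{\la}_{\C}$. Let $\epsilon\colon\C\mathfrak{S}_n\to\C\mathfrak{S}_n$ be the algebra automorphism determined by $\epsilon(\sigma)=\operatorname{sgn}(\sigma)\,\sigma$; it is an involution. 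For any left $\C\mathfrak{S}_n$-module $V$ one has $V\otimes S^{(1^n)}_{\C}\cong\epsilon^{*}V$, the pullback of $V$ along $\epsilon$ (identify $v\otimes 1$ with $v$; then $\sigma$ acts by $\operatorname{sgn}(\sigma)(\sigma v)=\epsilon(\sigma)v$), and for a left ideal $\C\mathfrak{S}_n c$ one checks directly, using that $\epsilon$ is an involutive algebra automorphism, that $\epsilon^{*}(\C\mathfrak{S}_n c)\cong\C\mathfrak{S}_n\cdot\epsilon(c)$ via $x\mapsto\epsilon(x)$. Hence $S^{\la}_{\C}\otimes S^{(1^n)}_{\C}\cong\C\mathfrak{S}_n\cdot\epsilon(c_t)$.

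It then remains to identify $\epsilon(c_t)$. Writing $t'$ for the transposed tableau, which has shape $\la'$, we have $R_{t'}=C_t$ and $C_{t'}=R_t$, so
\[
\epsilon(a_t)=\sum_{\sigma\in R_t}\operatorname{sgn}(\sigma)\,\sigma=\sum_{\sigma\in C_{t'}}\operatorname{sgn}(\sigma)\,\sigma=b_{t'},\qquad \epsilon(b_t)=\sum_{\tau\in C_t}\tau=\sum_{\tau\in R_{t'}}\tau=a_{t'},
\]
and therefore $\epsilon(c_t)=\epsilon(a_t)\epsilon(b_t)=b_{t'}a_{t'}$, which (up to the order of the two factors) is a Young symmetrizer for the shape $\la'$. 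Invoking the standard fact that the two orderings of the row-symmetrizer and column-antisymmetrizer generate isomorphic simple left ideals, we get $\C\mathfrak{S}_n\cdot\epsilon(c_t)\cong S^{\la'}_{\C}$, and hence $S^{\la}_{\C}\otimes S^{(1^n)}_{\C}\cong S^{\la'}_{\C}$.

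An alternative, perhaps more in the spirit of this section, is to apply the Frobenius characteristic map: it carries $S^{\la}_{\C}$ to the Schur function $s_{\la}$ and $S^{(1^n)}_{\C}$ to $e_n=s_{(1^n)}$, and tensoring with the sign representation corresponds to applying the classical involution $\omega$ on symmetric functions; one then quotes $\omega(s_{\la})=s_{\la'}$. Either way the argument is essentially formal, so I would not expect a serious obstacle: the only genuine input beyond bookkeeping (left versus right actions, and the factor order in the Young symmetrizer) is the standard fact just cited, or, in the second approach, the identity $\omega(s_{\la})=s_{\la'}$, which in turn rests on the dual Jacobi--Trudi formula. I expect the Young symmetrizer version to be the cleaner one to write out in full, and this is in any case the content of~\cite[Theorem~6.7]{James}.
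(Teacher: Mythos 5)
The paper does not prove this lemma at all: it is quoted verbatim from James's book (Theorem~6.7) and used as a black box, so there is no in-paper argument to compare against. Your Young-symmetrizer proof is correct and is one of the two standard arguments. The key steps all check out: $V\otimes S^{(1^n)}_{\C}\cong\epsilon^{*}V$, the isomorphism $\epsilon^{*}(\C\mathfrak{S}_n c)\cong\C\mathfrak{S}_n\,\epsilon(c)$ via $x\mapsto\epsilon(x)$, the identities $R_{t'}=C_t$, $C_{t'}=R_t$ giving $\epsilon(c_t)=b_{t'}a_{t'}$, and the standard fact that $\C\mathfrak{S}_n\,a_{t'}b_{t'}$ and $\C\mathfrak{S}_n\,b_{t'}a_{t'}$ are isomorphic simple left ideals (e.g.\ right multiplication by $a_{t'}$ gives a nonzero, hence bijective, homomorphism between them). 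The symmetric-function alternative via $\omega(s_{\la})=s_{\la'}$ is equally valid. The only caveat worth flagging is conventional: James's own Theorem~6.7 is stated for Specht modules over an arbitrary field and involves a dual, $S^{\la'}\cong(S^{\la})^{*}\otimes\operatorname{sgn}$; over $\C$ the Specht modules are self-dual, so the statement as quoted in this paper, and your proof of it, are fine.
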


\begin{lemma} \label{LR1}
Suppose that $\al,\be,\gamma,\la \in \La$. Then
\begin{align*}
c^{\la'}_{\al'\be'} & = c^{\la}_{\al\be}=c^{\la}_{\be\al},  \\ 
\sum_{\sigma}c^{\la}_{\al\sigma} c^{\sigma}_{\be\gamma} & = \sum_{\sigma} c^{\la}_{\be \sigma} c^{\sigma}_{\al\gamma} = \sum_{\sigma} c^{\la}_{\gamma \sigma} c^{\sigma}_{\al \be}. \end{align*}
\end{lemma}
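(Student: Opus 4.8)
\emph{Plan.} The plan is to read every identity off the definition of $c^{\la}_{\al\be}$ as an induction-product multiplicity, using only the commutativity and associativity of the tensor product together with Lemma~\ref{L:Conjugation}; when the part sizes are incompatible every coefficient involved is $0$ by convention, so throughout we may assume the sizes match. For the symmetry $c^{\la}_{\al\be}=c^{\la}_{\be\al}$ (with $|\al|=n_1$, $|\be|=n_2$): the Young subgroups $\mathfrak{S}_{n_1}\times\mathfrak{S}_{n_2}$ and $\mathfrak{S}_{n_2}\times\mathfrak{S}_{n_1}$ of $\mathfrak{S}_{n_1+n_2}$ are conjugate, and conjugation carries $S^{\al}_{\C}\otimes S^{\be}_{\C}$ to $S^{\be}_{\C}\otimes S^{\al}_{\C}$, so the two induced modules are isomorphic and decompose identically. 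The same argument shows more generally that the generalized coefficient $c^{\la}_{\al_1\ldots\al_t}$ is invariant under permutation of $\al_1,\ldots,\al_t$.

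\emph{Conjugation.} To prove $c^{\la'}_{\al'\be'}=c^{\la}_{\al\be}$, set $n=n_1+n_2$ and $H=\mathfrak{S}_{n_1}\times\mathfrak{S}_{n_2}\le\mathfrak{S}_n$. Applying Lemma~\ref{L:Conjugation} in each factor and rearranging the underlying vector space, one has, as $H$-modules,
\[S^{\al'}_{\C}\otimes S^{\be'}_{\C}\;\cong\;\big(S^{\al}_{\C}\otimes S^{\be}_{\C}\big)\otimes\big(S^{(1^{n_1})}_{\C}\otimes S^{(1^{n_2})}_{\C}\big),\]
where the last tensor is the internal (Kronecker) product of the two $H$-modules. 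Since the sign of a permutation in $H$ is the product of the signs of its two blocks, $S^{(1^{n_1})}_{\C}\otimes S^{(1^{n_2})}_{\C}$ is the restriction to $H$ of the sign module $S^{(1^n)}_{\C}$ of $\mathfrak{S}_n$. Now apply the projection formula $\mathrm{Ind}_H^{\mathfrak{S}_n}\!\big(M\otimes\mathrm{Res}_H^{\mathfrak{S}_n}N\big)\cong\mathrm{Ind}_H^{\mathfrak{S}_n}(M)\otimes N$ with $M=S^{\al}_{\C}\otimes S^{\be}_{\C}$ and $N=S^{(1^n)}_{\C}$, and use Lemma~\ref{L:Conjugation} once more, to obtain
\[\big(S^{\al'}_{\C}\otimes S^{\be'}_{\C}\big)\!\uparrow_{\C H}^{\C\mathfrak{S}_n}\;\cong\;\Big(\sum_{\la}c^{\la}_{\al\be}\,S^{\la}_{\C}\Big)\otimes S^{(1^n)}_{\C}\;=\;\sum_{\la}c^{\la}_{\al\be}\,S^{\la'}_{\C}.\]
Comparing with $\big(S^{\al'}_{\C}\otimes S^{\be'}_{\C}\big)\!\uparrow_{\C H}^{\C\mathfrak{S}_n}=\sum_{\mu}c^{\mu}_{\al'\be'}\,S^{\mu}_{\C}$ and reindexing $\mu=\la'$ (a bijection of $\La_n$, as $(\cdot)'$ is an involution) gives $c^{\la'}_{\al'\be'}=c^{\la}_{\al\be}$, as required.

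\emph{Three-term identities.} By the generalized definition the multiplicity of $S^{\la}_{\C}$ in $S^{\al}_{\C}\otimes S^{\be}_{\C}\otimes S^{\gamma}_{\C}$ induced to $\mathfrak{S}_{|\al|+|\be|+|\gamma|}$ is $c^{\la}_{\al\be\gamma}$, which by the recursion quoted before the lemma (with $t=3$) equals $\sum_{\sigma}c^{\la}_{\al\sigma}c^{\sigma}_{\be\gamma}$. By the permutation-invariance of $c^{\la}_{\al\be\gamma}$ established above, reordering the lower indices as $(\be,\al,\gamma)$ and as $(\gamma,\al,\be)$ and applying the same recursion yields $c^{\la}_{\al\be\gamma}=\sum_{\sigma}c^{\la}_{\be\sigma}c^{\sigma}_{\al\gamma}=\sum_{\sigma}c^{\la}_{\gamma\sigma}c^{\sigma}_{\al\be}$, which is the second line of the lemma.

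\emph{Expected obstacle.} There is no serious difficulty here; the only points needing a little care are, in the conjugation step, verifying that the outer tensor of the two sign modules is exactly the restriction of the sign module of $\mathfrak{S}_n$, and applying the projection formula in the correct direction. Everything else is a formal consequence of the standard behaviour of the induction product.
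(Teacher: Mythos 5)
Your proof is correct and takes essentially the same approach as the paper: the first line is deduced from Lemma~\ref{L:Conjugation} together with the commutativity (and, for the three-fold case, the associativity) of the induction product, and the second line follows by observing that each term computes the permutation-invariant quantity $c^{\la}_{\al\be\gamma}$ via the stated recursion. You simply fill in the details the paper leaves implicit, in particular the projection-formula argument behind $c^{\la'}_{\al'\be'}=c^{\la}_{\al\be}$, which is carried out correctly.
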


\begin{proof}
The equalities in the first equation follow from Lemma~\ref{L:Conjugation} and the symmetry of the tensor product. Then all the terms in the second equation are equal to $c^{\la}_{\al\be\gamma}$. 
\end{proof}

The proof of the following result  again uses the properties of the tensor product. 

\begin{lemma} [\cite{CT}, Lemma~2.2] \label{LR2}
Let $a,b,\sigma \in \La$ and let $t \geq 0$. Then 
\[\sum_{\mu} c^{\mu}_{ab}c^\mu_{\sigma(1^t)} = \sum_{l=0}^t \sum_{\alpha,\beta} c^{\sigma}_{\al' \be} c^{a'}_{\alpha(l)} c^{b}_{\be(1^{t-l})}. 
\]
\end{lemma}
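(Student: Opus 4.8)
The plan is to read the left-hand side $\sum_{\mu}c^{\mu}_{ab}c^{\mu}_{\sigma(1^t)}$ as a multiplicity pairing in the representation theory of $\mathfrak{S}_N$, where $N=|a|+|b|$, and to evaluate it by Mackey's formula. First note that if $N\neq|\sigma|+t$ then every coefficient occurring on either side of the asserted identity vanishes, so we may assume $N=|\sigma|+t$. By definition $c^{\mu}_{ab}=\langle S^{a}_{\C}\otimes S^{b}_{\C}\uparrow^{\C\mathfrak{S}_N}_{\C(\mathfrak{S}_{|a|}\times\mathfrak{S}_{|b|})},\,S^{\mu}_{\C}\rangle$, and by Frobenius reciprocity $c^{\mu}_{\sigma(1^t)}=\langle S^{\mu}_{\C}\!\downarrow_{\C(\mathfrak{S}_{|\sigma|}\times\mathfrak{S}_t)},\,S^{\sigma}_{\C}\otimes S^{(1^t)}_{\C}\rangle$; summing over $\mu$ gives
\[
\sum_{\mu}c^{\mu}_{ab}c^{\mu}_{\sigma(1^t)}=\Big\langle \big(S^{a}_{\C}\otimes S^{b}_{\C}\uparrow^{\C\mathfrak{S}_N}_{\C(\mathfrak{S}_{|a|}\times\mathfrak{S}_{|b|})}\big)\!\downarrow_{\C(\mathfrak{S}_{|\sigma|}\times\mathfrak{S}_t)},\ S^{\sigma}_{\C}\otimes S^{(1^t)}_{\C}\Big\rangle .
\]

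Next I would expand the restriction-of-induction on the left by Mackey's double-coset formula. The $(\mathfrak{S}_{|\sigma|}\times\mathfrak{S}_t,\ \mathfrak{S}_{|a|}\times\mathfrak{S}_{|b|})$-double cosets of $\mathfrak{S}_N$ are indexed by $2\times2$ matrices of non-negative integers with row sums $(|\sigma|,t)$ and column sums $(|a|,|b|)$; writing $p,q$ for the top row and $l,s$ for the bottom row, the associated intersection subgroup is $\mathfrak{S}_p\times\mathfrak{S}_q\times\mathfrak{S}_l\times\mathfrak{S}_s$, where $\mathfrak{S}_p\times\mathfrak{S}_l\le\mathfrak{S}_{|a|}$, $\mathfrak{S}_q\times\mathfrak{S}_s\le\mathfrak{S}_{|b|}$, $\mathfrak{S}_p\times\mathfrak{S}_q\le\mathfrak{S}_{|\sigma|}$ and $\mathfrak{S}_l\times\mathfrak{S}_s\le\mathfrak{S}_t$. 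In the corresponding Mackey summand, restricting $S^{a}_{\C}$ to $\mathfrak{S}_p\times\mathfrak{S}_l$ and $S^{b}_{\C}$ to $\mathfrak{S}_q\times\mathfrak{S}_s$ produces $\sum c^{a}_{\rho_1\rho_2}S^{\rho_1}_{\C}\otimes S^{\rho_2}_{\C}$ and $\sum c^{b}_{\tau_1\tau_2}S^{\tau_1}_{\C}\otimes S^{\tau_2}_{\C}$, restricting $S^{\sigma}_{\C}$ to $\mathfrak{S}_p\times\mathfrak{S}_q$ produces $\sum c^{\sigma}_{\gamma\delta}S^{\gamma}_{\C}\otimes S^{\delta}_{\C}$, and restricting the sign module $S^{(1^t)}_{\C}$ to $\mathfrak{S}_l\times\mathfrak{S}_s$ gives $S^{(1^l)}_{\C}\otimes S^{(1^{s})}_{\C}$. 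Pairing factor by factor, orthogonality of the Specht modules forces $\rho_1=\gamma$ and $\tau_1=\delta$, while $\langle S^{(1^l)}_{\C},S^{\rho_2}_{\C}\rangle$ and $\langle S^{(1^{s})}_{\C},S^{\tau_2}_{\C}\rangle$ force $\rho_2=(1^l)$ and $\tau_2=(1^{s})$. The summand for this matrix is thus $\sum_{\gamma,\delta}c^{\sigma}_{\gamma\delta}c^{a}_{\gamma(1^l)}c^{b}_{\delta(1^{s})}$, and summing over all matrices is the same as summing over $0\le l\le t$ with $s=t-l$, giving
\[
\sum_{\mu}c^{\mu}_{ab}c^{\mu}_{\sigma(1^t)}=\sum_{l=0}^{t}\sum_{\gamma,\delta}c^{\sigma}_{\gamma\delta}\,c^{a}_{\gamma(1^l)}\,c^{b}_{\delta(1^{t-l})}.
\]

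To reach the stated form, I would finally set $\gamma=\alpha'$ and apply the conjugation identity of Lemma~\ref{LR1}: since $(1^l)'=(l)$, this gives $c^{a}_{\gamma(1^l)}=c^{a}_{\alpha'(1^l)}=c^{a'}_{\alpha(l)}$, while $c^{\sigma}_{\gamma\delta}=c^{\sigma}_{\alpha'\delta}$; renaming $\delta=\beta$ yields exactly $\sum_{l=0}^{t}\sum_{\alpha,\beta}c^{\sigma}_{\alpha'\beta}\,c^{a'}_{\alpha(l)}\,c^{b}_{\beta(1^{t-l})}$, as required. An equivalent route that sidesteps Mackey is to compute in the ring of symmetric functions, where the left-hand side equals the Hall inner product $\langle s_a s_b,\,s_\sigma e_t\rangle$ (with $e_t=s_{(1^t)}$); expanding by the coproduct, which is an algebra homomorphism with $\Delta(s_\sigma)=\sum_{\gamma,\delta}c^{\sigma}_{\gamma\delta}\,s_\gamma\otimes s_\delta$ and $\Delta(e_t)=\sum_{l}e_l\otimes e_{t-l}$, produces the same intermediate identity, after which Lemma~\ref{LR1} finishes the argument in the same way.

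I expect the main obstacle to be the bookkeeping in the Mackey step: correctly indexing the double cosets by $2\times2$ matrices and, above all, matching each of the four ``quadrant'' subgroups with the right tensor factors, so that the two sign-module factors fall exactly where they force single-column shapes while the two Specht factors collapse by orthogonality. (In the symmetric-function variant the analogous but lighter difficulty is keeping the coproduct-expansion indices consistent.) The remaining ingredients — the reduction to $N=|\sigma|+t$, the identity $\langle S^{(1^l)}_{\C},S^{\rho}_{\C}\rangle=\delta_{\rho,(1^l)}$, and the closing conjugation substitution — are routine.
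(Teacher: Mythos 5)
Your proof is correct. The paper does not prove Lemma~\ref{LR2} itself — it cites \cite[Lemma~2.2]{CT} and remarks only that the proof ``uses the properties of the tensor product'' — and your Mackey-formula computation (equivalently, the Hall inner product/coproduct computation in symmetric functions), followed by the conjugation identity from Lemma~\ref{LR1}, is a complete and standard instantiation of exactly that approach.
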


The next result follows immediately from the definition of the Littlewood-Richardson coefficients.

\begin{lemma} \label{L:OneS1}
  Let $\nu \in \La_n$ and $s \geq 1$. Suppose $\la \in \La_{n+s}$. Then 
\begin{itemize}
\item $c^{\la}_{\nu(1^s)} = 1$ if $[\nu] \subset [\la]$ and no two nodes of $[\la] \setminus [\nu]$ lie in the same row, and $c^{\la}_{\nu(1^s)}=0$ otherwise.
\item $c^{\la}_{\nu(s)} = 1$ if $[\nu] \subset [\la]$ and no two nodes of $[\la] \setminus [\nu]$ lie in the same column, and $c^{\la}_{\nu(s)}=0$ otherwise.
\end{itemize}
\end{lemma}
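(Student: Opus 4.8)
The plan is to recognize both assertions as the classical Pieri rule, and to read them off the combinatorial description of Littlewood--Richardson coefficients recalled in~\cite[Chapter~5]{Fulton}: $c^{\la}_{\nu\mu}$ is the number of Littlewood--Richardson skew tableaux of shape $[\la]\setminus[\nu]$ and content $\mu$, i.e.\ fillings of $[\la]\setminus[\nu]$ which weakly increase along rows, strictly increase down columns, and whose reverse reading word is a lattice word. In particular this count is $0$ unless $[\nu]\subseteq[\la]$ and $|[\la]\setminus[\nu]|=|\mu|$, so we may assume both of these hold throughout. (Alternatively, one could simply quote the Pieri rule from~\cite{Fulton} directly and omit the next two paragraphs.)

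First I would treat $\mu=(s)$, which is the second bullet. The content condition forces every box of the skew shape to be filled with the entry $1$, so there is a unique candidate filling; the row condition and the lattice-word condition are then automatic, while the column condition holds if and only if no two boxes of $[\la]\setminus[\nu]$ lie in the same column. Hence $c^{\la}_{\nu(s)}=1$ when $[\la]\setminus[\nu]$ is a horizontal strip and $c^{\la}_{\nu(s)}=0$ otherwise.

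Finally I would deduce the case $\mu=(1^s)$, the first bullet, by conjugation. Since $(1^s)'=(s)$, Lemma~\ref{LR1} gives $c^{\la}_{\nu(1^s)}=c^{\la'}_{\nu'(s)}$; as conjugation of Young diagrams interchanges rows and columns, $[\la']\setminus[\nu']$ is a horizontal strip exactly when no two boxes of $[\la]\setminus[\nu]$ lie in the same row, and the previous paragraph then finishes the argument. I do not expect any genuine obstacle here: the only point needing a moment's attention is that for a one-row content (and, after conjugating, a one-column content) the semistandardness and lattice-word conditions degenerate to the strip condition with a unique admissible filling, which is precisely why the coefficient is forced to be $0$ or $1$ rather than larger.
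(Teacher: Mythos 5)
Your argument is correct: this is the classical Pieri rule, and your derivation of the $(s)$ case from the Littlewood--Richardson tableau description, followed by conjugation via Lemma~\ref{LR1} for the $(1^s)$ case, is the standard proof. The paper offers no written proof at all (it merely remarks that the lemma ``follows immediately from the definition of the Littlewood-Richardson coefficients''), so your proposal simply supplies, correctly, the details the paper leaves implicit.
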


We can rephrase Lemma~\ref{L:OneS1} in terms of $\beta$-numbers.

\begin{lemma} \label{L:OneS}
  Let $\nu \in \La_n$ and $s \geq 1$. Let $a \in I$ and suppose that $\BB=\BB_{a}(\nu)$. Suppose that $\la \in \La_{n+s}$ and that $\mathfrak{C}=\BB_{a}(\la)$. Suppose that
  \[\{b \in \BB \mid b \notin \mathfrak{C}\} = \{b_1,b_2,\ldots,b_t\}, \qquad \qquad \{c \in \mathfrak{C} \mid c \notin \BB\} = \{c_1,c_2,\ldots,c_t\},\]
  where $b_{t'}<b_{t'+1}$ and $c_{t'} < c_{t'+1}$ for $1 \leq t' <t$;  
  note that these sets do have the same size and that they satisfy $\sum_{t'=1}^t (c_{t'}-b_{t'})=s$. Then
  \begin{itemize}
  \item $c^{\la}_{\nu(1^s)}=1$ if
    \[b_1 < c_1<b_2<c_2 < \ldots < b_t < c_t \text{ and if } b_{t'}<b<c_{t'} \text{ for some $t'$ then } b \in \BB \cap \mathfrak{C};\]
    and $c^{\la}_{\nu(1^s)}=0$ otherwise.
  \item $c^{\la}_{\nu(s)}=1$ if
    \[b_1 < c_1<b_2<c_2 < \ldots < b_t < c_t \text{ and if } b_{t'}<b<c_{t'} \text{ for some $t'$ then } b \notin \BB \cup \mathfrak{C};\]
    and $c^{\la}_{\nu(s)}=0$ otherwise.   
    \end{itemize} 
\end{lemma}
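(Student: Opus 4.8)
The plan is to deduce the result from Lemma~\ref{L:OneS1} by translating the combinatorial conditions there into the language of $\beta$-sets, using the standard correspondence between adding a node to a Young diagram and sliding a bead one step along its runner. By Lemma~\ref{L:OneS1}, $c^{\la}_{\nu(1^s)}$ equals $1$ precisely when $[\la]/[\nu]$ is a \emph{vertical strip} (that is, $[\nu]\subseteq[\la]$ and no two nodes of $[\la]\setminus[\nu]$ lie in the same row) and $0$ otherwise, while $c^{\la}_{\nu(s)}$ equals $1$ precisely when $[\la]/[\nu]$ is a \emph{horizontal strip} and $0$ otherwise. Now $[\la]/[\nu]$ is a vertical strip exactly when $\nu_i\le\la_i\le\nu_i+1$ for every $i\ge 1$; writing the elements of $\BB=\BB_a(\nu)$ and of $\mathfrak{C}=\BB_a(\la)$ in decreasing order as $\beta_1>\beta_2>\cdots$ and $\gamma_1>\gamma_2>\cdots$, and recalling $\beta_i=\nu_i-i+a$ and $\gamma_i=\la_i-i+a$, this is equivalent to $\beta_i\le\gamma_i\le\beta_i+1$ for all $i$. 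So for the first bullet point it suffices to prove that the condition ``$\beta_i\le\gamma_i\le\beta_i+1$ for all $i$'' is equivalent to ``$b_1<c_1<b_2<c_2<\cdots<b_t<c_t$, and every integer strictly between some $b_i$ and $c_i$ lies in $\BB\cap\mathfrak{C}$''.

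I would prove this equivalence directly on the runner. The inequalities $\beta_i\le\gamma_i\le\beta_i+1$ hold precisely when $\mathfrak{C}$ is obtained from $\BB$ by choosing a set $S$ of beads of $\BB$ and sliding each bead of $S$ one place to the right, where $S$ is constrained so that whenever $p\in S$ and $p+1\in\BB$ one also has $p+1\in S$ (otherwise the bead at $p$ would collide with a stationary bead); equivalently, if the beads of $\BB$ are grouped into their maximal runs of consecutive occupied positions, then $S$ is a union of right suffixes of these runs. A nonempty such suffix occupies an interval $[p,q]\subseteq\BB$ with $q+1\notin\BB$, and after the slide it contributes $p$ to $\BB\setminus\mathfrak{C}$ and $q+1$ to $\mathfrak{C}\setminus\BB$ while leaving $[p+1,q]\subseteq\BB\cap\mathfrak{C}$. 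Since distinct runs of $\BB$ are separated by at least one empty position, listing these contributions from left to right yields exactly the pairs $b_1<c_1<b_2<c_2<\cdots<b_t<c_t$, and the integers strictly between $b_i$ and $c_i$ are precisely the interior $[b_i+1,c_i-1]$ of the corresponding run-suffix, all of which lie in $\BB\cap\mathfrak{C}$; this proves one implication. Conversely, given the stated condition, set $S=\bigcup_i[b_i,c_i-1]$; the hypothesis that the strictly-between positions lie in $\BB\cap\mathfrak{C}$, together with $b_i\in\BB$, shows that $S\subseteq\BB$ and that $S$ is a union of run-suffixes, and a short check gives $\mathfrak{C}=(\BB\setminus S)\cup(S+1)$, whence $\beta_i\le\gamma_i\le\beta_i+1$ for all $i$.

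The second bullet point then follows from the first by conjugation. By the identity $c^{\la}_{\al\be}=c^{\la'}_{\al'\be'}$ of Lemma~\ref{LR1} and the fact that $(s)'=(1^s)$, we have $c^{\la}_{\nu(s)}=c^{\la'}_{\nu'(1^s)}$. Passing from a partition to its conjugate sends a $\beta$-set to the set obtained by reflecting and complementing its positions within a sufficiently large symmetric window; under this operation the set $\BB\cap\mathfrak{C}$ of positions occupied in both is interchanged with the set of positions occupied in neither, the set $\BB\setminus\mathfrak{C}$ is carried bijectively onto $\mathfrak{C}'\setminus\BB'$ and $\mathfrak{C}\setminus\BB$ onto $\BB'\setminus\mathfrak{C}'$, and the linear order on positions is reversed. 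A direct check then shows that the interlacing $b_1<c_1<\cdots<b_t<c_t$ for $(\nu,\la)$ is equivalent to the corresponding interlacing for $(\nu',\la')$, and that the requirement ``strictly-between positions lie in $\BB\cap\mathfrak{C}$'' for $(\nu',\la')$ corresponds exactly to ``strictly-between positions lie outside $\BB\cup\mathfrak{C}$'' for $(\nu,\la)$. Applying the first bullet point to the pair $\nu',\la'$ therefore yields the second bullet point for $\nu,\la$. (The offset changes under conjugation, but this is harmless: every quantity in sight is invariant under translating $\BB$ and $\mathfrak{C}$ simultaneously.)

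The main obstacle I anticipate is the bookkeeping in the second paragraph, namely matching the maximal-run-suffix description of the slide $\BB\to\mathfrak{C}$ with the data $(b_i,c_i)$, and checking carefully in both directions that the integers strictly between a paired $b_i$ and $c_i$ are exactly the positions forced to lie in $\BB\cap\mathfrak{C}$. The conjugation step of the third paragraph is conceptually routine but also needs some care to set up the reflect-and-complement bijection and to track the order reversal correctly.
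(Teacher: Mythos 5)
Your proposal is correct and follows exactly the route the paper intends: the paper offers no proof, simply asserting that Lemma~\ref{L:OneS} is Lemma~\ref{L:OneS1} ``rephrased in terms of $\beta$-numbers,'' and your argument supplies precisely that translation (vertical strips correspond to sliding run-suffixes of beads, with the second bullet obtained from the first by conjugating and using $c^{\la}_{\nu(s)}=c^{\la'}_{\nu'(1^s)}$ from Lemma~\ref{LR1}). The bead-sliding bookkeeping and the reflect-and-complement step are both carried out correctly.
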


We end with a more complicated result that we will use later. If $\bd \in \La^t$ we use the convention that $\bd = (d^0,d^1,\ldots,d^{t-1})$. 

\begin{lemma}\label{L:Tunnel}
Let $s \geq 0$ and $r \geq 1$ and suppose that $0 \leq m \leq r-1$. Suppose $\bet, \bnu \in \La^r$. Then
\begin{multline*}
\sum_{\bgam \in \La^{r+1}} \sum_{\bdel \in \La^r} \sum_{{\bep} \in \La^r} \sum_{z_0+\ldots+z_{r-1}=s} 
\left( \prod_{k=0}^{r-1} c^{\delta^k}_{\nu^k \gamma^k}c^{\delta^k}_{\gamma^{k+1}\ep^k} c^{\eta^k}_{\ep^k (1^{z_k})} \right) c^{\emp}_{\gamma^r} \\
= \sum_{\bgam \in \La^{r}} \sum_{\bdel \in \La^r} \sum_{{\bep} \in \La^m}\sum_{\substack{z_0+\ldots+z_{m-1}\\ +Z=s}} 
\left( \prod_{k=0}^{m-1} c^{\delta^k}_{\nu^k \gamma^k}c^{\delta^k}_{\gamma^{k+1}\ep^k} c^{\eta^k}_{\ep^k (1^{z_k})} \right) 
c^{\delta^{m}}_{\nu^m \gamma^m(1^Z)} \left(\prod_{k=m+1}^{r-1} c^{\delta^{k-1}}_{\gamma^{k}\eta^{k-1}} c^{\delta^k}_{\nu^k \gamma^k}\right) c^{\eta^{r-1}}_{\delta^{r-1}}. 
\end{multline*} 
In particular, setting $m=0$ we have 
\begin{multline*}
\sum_{\bgam \in \La^{r+1}} \sum_{\bdel \in \La^r} \sum_{{\bep} \in \La^r} \sum_{z_0+\ldots+z_{r-1}=s} 
\left( \prod_{k=0}^{r-1} c^{\delta^k}_{\nu^k \gamma^k}c^{\delta^k}_{\gamma^{k+1}\ep^k} c^{\eta^k}_{\ep^k (1^{z_k})} \right) c^{\emp}_{\gamma^r} 
= \sum_{\bgam \in \La^{r}} \sum_{\bdel \in \La^r} c^{\delta^0}_{\nu^0 \gamma^0 (1^s)} 
\left(\prod_{k=1}^{r-1} c^{\delta^{k-1}}_{\gamma^{k}\eta^{k-1}} c^{\delta^k}_{\nu^k \gamma^k}\right) c^{\eta^{r-1}}_{\delta^{r-1}}.
\end{multline*}
\end{lemma}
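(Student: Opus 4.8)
The plan is to prove the equality for every $m$ with $0\le m\le r-1$ by a descending induction on $m$, the case $m=0$ being the ``in particular'' assertion. The base case $m=r-1$ is immediate: since $c^{\emp}_{\gamma^r}=\delta_{\gamma^r\emp}$ we may set $\gamma^r=\emp$ on the left, whereupon $c^{\delta^{r-1}}_{\gamma^r\ep^{r-1}}=\delta_{\delta^{r-1}\ep^{r-1}}$ forces $\ep^{r-1}=\delta^{r-1}$; summing out $\delta^{r-1}$ on the left via $\sum_{\sigma}c^{\sigma}_{\nu^{r-1}\gamma^{r-1}}c^{\eta^{r-1}}_{\sigma(1^{z_{r-1}})}=c^{\eta^{r-1}}_{\nu^{r-1}\gamma^{r-1}(1^{z_{r-1}})}$ (Lemma~\ref{LR1}), and on the right (where $\prod_{k=m+1}^{r-1}$ is empty) via $c^{\eta^{r-1}}_{\delta^{r-1}}=\delta_{\delta^{r-1}\eta^{r-1}}$, reduces both sides to $\sum\bigl(\prod_{k=0}^{r-2}c^{\delta^k}_{\nu^k\gamma^k}c^{\delta^k}_{\gamma^{k+1}\ep^k}c^{\eta^k}_{\ep^k(1^{z_k})}\bigr)c^{\eta^{r-1}}_{\nu^{r-1}\gamma^{r-1}(1^{z_{r-1}})}$.

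For the inductive step I compare the two right-hand sides, call them $R_{m-1}$ and $R_m$, factor by factor. All factors $c^{\delta^k}_{\nu^k\gamma^k}c^{\delta^k}_{\gamma^{k+1}\ep^k}c^{\eta^k}_{\ep^k(1^{z_k})}$ with $k\le m-2$, all factors $c^{\delta^{k-1}}_{\gamma^k\eta^{k-1}}c^{\delta^k}_{\nu^k\gamma^k}$ with $k\ge m+1$, and the factor $c^{\eta^{r-1}}_{\delta^{r-1}}$, occur unchanged in both; the factors that actually differ involve only the five variables $\gamma^{m-1},\delta^{m-1},\gamma^m,\delta^m,\ep^{m-1}$ together with the way the part $t$ of $s$ not used by the frozen $z_k$'s is split, and these variables couple to the rest solely through $c^{\delta^{m-2}}_{\gamma^{m-1}\ep^{m-2}}$ and $c^{\delta^m}_{\gamma^{m+1}\eta^m}$ (suitably modified when $m=1$ or $m=r-1$), which are common to both sides. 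Writing $a=\nu^{m-1}$, $b=\eta^{m-1}$, $d=\nu^m$, and treating $x=\gamma^{m-1}$, $u=\delta^m$ as free, the equality $R_{m-1}=R_m$ therefore reduces to the identity
\[
\sum_{y,\,w,\,\sigma}\ \sum_{z_1+z_2=t} c^{y}_{ax}\,c^{y}_{w\sigma}\,c^{b}_{\sigma(1^{z_1})}\,c^{u}_{dw(1^{z_2})}\ =\ \sum_{y,\,w} c^{y}_{ax(1^t)}\,c^{y}_{wb}\,c^{u}_{dw},
\]
which I call $(\star)$, to be shown for all partitions $a,b,d,x,u$ and all $t\ge 0$.

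To prove $(\star)$ I would pass to the ring of symmetric functions, in which $c^{\la}_{\mu\nu}$ is the coefficient of the Schur function $s_{\la}$ in $s_\mu s_\nu$ and $e_t$ is the $t$-th elementary symmetric function. Using orthonormality of the $s_{\la}$ to collapse the $y$-sums ($\sum_y c^{y}_{ax}c^{y}_{w\sigma}=\langle s_as_x,s_ws_\sigma\rangle$, and $\sum_y c^{y}_{ax(1^t)}c^{y}_{wb}=\langle s_as_xe_t,s_ws_b\rangle$) and repeatedly applying $\langle fg,h\rangle=\langle g,f^{\perp}h\rangle$, one writes each side of $(\star)$ as $[\theta^t]\,\langle s_as_x,\ (E(\theta)^{\perp}s_b)\cdot\bigl(s_d^{\perp}(E(\theta)^{\perp}s_u)\bigr)\rangle$, where $E(\theta)=\sum_{i\ge 0}e_i\theta^i$; the two computations agree because $E(\theta)$ is grouplike, equivalently $\sum_{z_1+z_2=t}(e_{z_1}^{\perp}f)(e_{z_2}^{\perp}g)=e_t^{\perp}(fg)$. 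Staying purely within Littlewood--Richardson combinatorics, the same conclusion follows by iterating Lemma~\ref{LR1} and the identity $\sum_{\mu}c^{\mu}_{ab}c^{\mu}_{\sigma(1^t)}=\sum_{t_1+t_2=t}\sum_{\alpha,\beta}c^{\sigma}_{\alpha\beta}c^{a}_{\alpha(1^{t_1})}c^{b}_{\beta(1^{t_2})}$, which is Lemma~\ref{LR2} with all partitions conjugated. I expect that last identity --- pushing a vertical strip through a Littlewood--Richardson branching --- to be the real content of the argument; the rest is the bookkeeping of the inductive step, isolating exactly the local variables and checking that nothing else differs between $R_{m-1}$ and $R_m$, which is routine but must be done carefully because of the running size constraint $z_0+\dots+z_{m-1}+Z=s$.
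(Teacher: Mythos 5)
Your proof is correct and follows essentially the same route as the paper's: downward induction on $m$ with the identical base case $m=r-1$, and an inductive step whose entire content is your local identity $(\star)$, which is exactly the conjugated form of Lemma~\ref{LR2} combined with associativity of the generalized Littlewood--Richardson coefficients --- precisely the manipulation the paper performs in situ on the right-hand side $R_{m+1}$. Your symmetric-function verification of $(\star)$ via adjoints and the grouplike property of $\sum_{i\ge 0}e_i\theta^i$ is a clean alternative justification of that one identity, but the overall argument coincides with the paper's.
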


\begin{proof}
We have
\begin{align*}
\sum_{\bgam \in \La^{r+1}} \sum_{\bdel \in \La^r} \sum_{{\bep} \in \La^r}& \sum_{z_0+\ldots+z_{r-1}=s} 
\left( \prod_{k=0}^{r-1} c^{\delta^k}_{\nu^k \gamma^k}c^{\delta^k}_{\gamma^{k+1}\ep^k} c^{\eta^k}_{\ep^k (1^{z_k})} \right) c^{\emp}_{\gamma^r} \\
& = \sum_{\bgam \in \La^{r+1}} \sum_{\bdel \in \La^r} \sum_{{\bep} \in \La^r} \sum_{z_0+\ldots+z_{r-1}=s} 
\left( \prod_{k=0}^{r-2} c^{\delta^k}_{\nu^k \gamma^k}c^{\delta^k}_{\gamma^{k+1}\ep^k} c^{\eta^k}_{\ep^k (1^{z_k})} \right) 
c^{\delta^{r-1}}_{\nu^{r-1}\gamma^{r-1}} c^{\delta^{r-1}}_{\gamma^r \ep^{r-1}} c^{\eta^{r-1}}_{\ep^{r-1}(1^{z_{r-1}})}
c^{\emp}_{\gamma^r} \\
& = \sum_{\bgam \in \La^{r}} \sum_{\bdel \in \La^r} \sum_{{\bep} \in \La^{r-1}} \sum_{z_0+\ldots+z_{r-1}=s} 
\left( \prod_{k=0}^{r-2} c^{\delta^k}_{\nu^k \gamma^k}c^{\delta^k}_{\gamma^{k+1}\ep^k} c^{\eta^k}_{\ep^k (1^{z_k})} \right) 
c^{\delta^{r-1}}_{\nu^{r-1}\gamma^{r-1}} c^{\eta^{r-1}}_{\delta^{r-1}(1^{z_{r-1}})} \\
& = \sum_{\bgam \in \La^{r}} \sum_{\bdel \in \La^{r-1}} \sum_{{\bep} \in \La^{r-1}} \sum_{\substack{z_0+\ldots+z_{r-2}\\+ Z =s}} 
\left( \prod_{k=0}^{r-2} c^{\delta^k}_{\nu^k \gamma^k}c^{\delta^k}_{\gamma^{k+1}\ep^k} c^{\eta^k}_{\ep^k (1^{z_k})} \right) 
c^{\eta^{r-1}}_{\nu^{r-1}\gamma^{r-1}(1^Z)}\\
& = \sum_{\bgam \in \La^{r}} \sum_{\bdel \in \La^r} \sum_{{\bep} \in \La^{r-1}} \sum_{\substack{z_0+\ldots+z_{r-2}\\+ Z =s}} 
\left( \prod_{k=0}^{r-2} c^{\delta^k}_{\nu^k \gamma^k}c^{\delta^k}_{\gamma^{k+1}\ep^k} c^{\eta^k}_{\ep^k (1^{z_k})} \right) 
c^{\delta^{r-1}}_{\nu^{r-1}\gamma^{r-1}(1^Z)} c^{\eta^{r-1}}_{\delta^{r-1}}\\
\end{align*}
and so the result holds for $m=r-1$. Suppose now that $0 \leq m \leq r-2$ and that the equation holds for $m+1$. Then
\begin{align*}
\sum_{\bgam \in \La^{r+1}}& \sum_{\bdel \in \La^r} \sum_{{\bep} \in \La^r}  \sum_{z_0+\ldots+z_{r-1}=s} 
\left( \prod_{k=0}^{r-1} c^{\delta^k}_{\nu^k \gamma^k}c^{\delta^k}_{\gamma^{k+1}\ep^k} c^{\eta^k}_{\ep^k (1^{z_k})} \right) c^{\emp}_{\gamma^r} \\
& = \sum_{\bgam \in \La^{r}} \sum_{\bdel \in \La^r} \sum_{{\bep} \in \La^{m+1}}\sum_{\substack{z_0+\ldots+z_{m}\\ +Z=s}} 
\left( \prod_{k=0}^{m} c^{\delta^k}_{\nu^k \gamma^k}c^{\delta^k}_{\gamma^{k+1}\ep^k} c^{\eta^k}_{\ep^k (1^{z_k})} \right) 
c^{\delta^{m+1}}_{\nu^{m+1} \gamma^{m+1}(1^Z)} \left(\prod_{k=m+2}^{r-1} c^{\delta^{k-1}}_{\gamma^{k}\eta^{k-1}} c^{\delta^k}_{\nu^k \gamma^k}\right) c^{\eta^{r-1}}_{\delta^{r-1}} \\
& = \sum_{\bgam \in \La^{r}} \sum_{\bdel \in \La^r} \sum_{{\bep} \in \La^{m+1}}\sum_{\substack{z_0+\ldots+z_{m-1}\\ +Z_a+Z_b=s}} 
\left( \prod_{k=0}^{m-1} c^{\delta^k}_{\nu^k \gamma^k}c^{\delta^k}_{\gamma^{k+1}\ep^k} c^{\eta^k}_{\ep^k (1^{z_k})} \right)  \\ 
& \qquad \qquad c^{\delta^m}_{\nu^m \gamma^m}c^{\delta^m}_{\gamma^{m+1}\ep^m} c^{\eta^m}_{\ep^m (1^{Z_a})}
c^{\delta^{m+1}}_{\nu^{m+1} \gamma^{m+1}(1^{Z_b})} \left(\prod_{k=m+2}^{r-1} c^{\delta^{k-1}}_{\gamma^{k}\eta^{k-1}} c^{\delta^k}_{\nu^k \gamma^k}\right) c^{\eta^{r-1}}_{\delta^{r-1}} \\
& = \sum_{\bgam \in \La^{r}} \sum_{\bdel \in \La^r} \sum_{{\bep} \in \La^{m+1}}\sum_{\substack{z_0+\ldots+z_{m-1}\\ +Z_a+Z_b=s}} 
\left( \prod_{k=0}^{m-1} c^{\delta^k}_{\nu^k \gamma^k}c^{\delta^k}_{\gamma^{k+1}\ep^k} c^{\eta^k}_{\ep^k (1^{z_k})} \right)  \\ 
& \qquad \qquad \sum_g c^{\delta^m}_{\nu^m \gamma^m}c^{\delta^m}_{\gamma^{m+1}\ep^m} c^{\eta^m}_{\ep^m (1^{Z_a})}
c^{\delta^{m+1}}_{\nu^{m+1} g} c^g_{\gamma^{m+1}(1^{Z_b})} 
\left(\prod_{k=m+2}^{r-1} c^{\delta^{k-1}}_{\gamma^{k}\eta^{k-1}} c^{\delta^k}_{\nu^k \gamma^k}\right) c^{\eta^{r-1}}_{\delta^{r-1}} \\
& =  \sum_{\gamma^0,\ldots,\gamma^{m}} \sum_{\gamma^{m+2},\ldots,\gamma^{r-1}} \sum_{\bdel \in \La^r} \sum_{{\bep} \in \La^{m}}\sum_{\substack{z_0+\ldots+z_{m-1}\\ +Z_a+Z_b=s}} 
\left( \prod_{k=0}^{m-1} c^{\delta^k}_{\nu^k \gamma^k}c^{\delta^k}_{\gamma^{k+1}\ep^k} c^{\eta^k}_{\ep^k (1^{z_k})} \right)  \\ 
& \qquad \qquad \sum_g c^{\delta^m}_{\nu^m \gamma^m}  c^{\delta^{m+1}}_{\nu^{m+1} g}  \left( \sum_{\gamma^{m+1}, \epsilon^m} c^{\delta^m}_{\gamma^{m+1}\ep^m} c^{\eta^m}_{\ep^m (1^{Z_a})}
c^g_{\gamma^{m+1}(1^{Z_b})} \right)
\left(\prod_{k=m+2}^{r-1} c^{\delta^{k-1}}_{\gamma^{k}\eta^{k-1}} c^{\delta^k}_{\nu^k \gamma^k}\right) c^{\eta^{r-1}}_{\delta^{r-1}} \\
& = \sum_{\gamma^0,\ldots,\gamma^{m}} \sum_{\gamma^{m+2},\ldots,\gamma^{r-1}}\sum_{\bdel \in \La^r} \sum_{{\bep} \in \La^{m}}\sum_{\substack{z_0+\ldots+z_{m-1}\\ +Z=s}} 
\left( \prod_{k=0}^{m-1} c^{\delta^k}_{\nu^k \gamma^k}c^{\delta^k}_{\gamma^{k+1}\ep^k} c^{\eta^k}_{\ep^k (1^{z_k})} \right)  \\ 
& \qquad \qquad \sum_g c^{\delta^m}_{\nu^m \gamma^m}
c^{\delta^{m+1}}_{\nu^{m+1} g} \sum_d c^d_{g \eta^m} c^d_{\delta^m(1^Z)} 
\left(\prod_{k=m+2}^{r-1} c^{\delta^{k-1}}_{\gamma^{k}\eta^{k-1}} c^{\delta^k}_{\nu^k \gamma^k}\right) c^{\eta^{r-1}}_{\delta^{r-1}} \qquad \text{ by Lemma~\ref{LR2}} \\
& = \sum_{\gamma^0,\ldots,\gamma^{m}} \sum_{\gamma^{m+2},\ldots,\gamma^{r-1}}\sum_{\delta^0,\ldots,\delta^{m-1}} \sum_{\delta^{m+1},\ldots,\delta^{r-1}} \sum_{{\bep} \in \La^{m}}\sum_{\substack{z_0+\ldots+z_{m-1}\\ +Z=s}} 
\left( \prod_{k=0}^{m-1} c^{\delta^k}_{\nu^k \gamma^k}c^{\delta^k}_{\gamma^{k+1}\ep^k} c^{\eta^k}_{\ep^k (1^{z_k})} \right)  \\ 
& \qquad \qquad \sum_{g,d} c^{d}_{\nu^m \gamma^m(1^Z)}
c^d_{g \eta^m}  c^{\delta^{m+1}}_{\nu^{m+1} g} 
\left(\prod_{k=m+2}^{r-1} c^{\delta^{k-1}}_{\gamma^{k}\eta^{k-1}} c^{\delta^k}_{\nu^k \gamma^k}\right) c^{\eta^{r-1}}_{\delta^{r-1}} \\
& = \sum_{\bgam \in \La^{r}} \sum_{\bdel \in \La^r} \sum_{{\bep} \in \La^m}\sum_{\substack{z_0+\ldots+z_{m-1}\\ +Z=s}} 
\left( \prod_{k=0}^{m-1} c^{\delta^k}_{\nu^k \gamma^k}c^{\delta^k}_{\gamma^{k+1}\ep^k} c^{\eta^k}_{\ep^k (1^{z_k})} \right) 
c^{\delta^{m}}_{\nu^m \gamma^m(1^Z)} \left(\prod_{k=m+1}^{r-1} c^{\delta^{k-1}}_{\gamma^{k}\eta^{k-1}} c^{\delta^k}_{\nu^k \gamma^k}\right) c^{\eta^{r-1}}_{\delta^{r-1}}, 
\end{align*}
and so the equation holds for $m-1$. Hence by induction the equation holds for all $0 \leq m \leq r-1$, and setting $m=0$ we get the final result. 
\end{proof}

\section{Decomposition numbers} \label{S:Decomp}
Throughout this section, we fix the parameters $r$ and $e$. 
If $\mc \in I^r$ is a multicharge, recall that $\R^{\mc}$ is the set of multipartitions which belong to Rouquier blocks (with respect to that multicharge). If $\bla, \bmu \in \R^{\mc}$ then $\bla \approx_{\mc} \bmu$ if $\bla$ and $\bmu$ have the same size and the same multicore.

From an $r$-tuple of abacus configurations we can read off not just the corresponding multipartition but also the underlying multicharge. Since we think of the Rouquier multipartitions in terms of abacus configurations, a Rouquier block carries with it an inherent multicharge. Hence from now on, we set $\R=\bigcup_{\mc \in I^r} \R^{\mc}$ and when we talk about a Rouquier block we shall assume that the multicharge is understood. If $\bla, \bmu \in \R^{\mc}$ for some $\mc \in I^r$, we will drop the $\mc$ in our notation and write $\bla \sim \bmu$ instead of $\bla \sim_{\mc} \bmu$, $d_{\bla\bmu}(v)$ instead of $d^{\mc}_{\bla\bmu}(v)$ and so forth. Set $\RR=\R\cap \Lar$ to be the set of $e$-regular multipartions which belong to Rouquier blocks. 
 Set
\[\Rp=\{(\bla,\bmu) \in \R \times \RR \mid \bla \approx \bmu\}.\]
In this section we determine a closed formula for $d_{\bla\bmu}(v)$, where $(\bla,\bmu) \in \Rp$, in terms of sums of products of Littlewood-Richardson coefficients. By Theorem~\ref{T:Ariki}, this formula also describes the graded decomposition number $[S^{\bla}:D^{\bmu}]_v$ where $\h$ is defined over a field of characteristic $0$.  
We begin by defining coefficients $g_{\bla\bmu}(v)$ and proving some properties about them; we  then show that they are equal to the transition coefficients $d_{\bla\bmu}(v)$. 

\subsection{The coefficients $g_{\bla\bmu}(v)$} \label{S:Coeffs}
Throughout this section, we fix $r \geq 1$ and $e \geq 2$. If $\bla \in \R$, we will assume that 
\[\qt(\bla)=((\la^0_0,\la^0_1,\ldots,\la^0_{e-1}),(\la^1_{0},\la^1_1,\ldots,\la^1_{e-1}),\ldots,(\la^{r-1}_0,\la^{r-1}_1,\ldots,\la^{r-1}_{e-1}))\]
and we shall use $\la^k_i$ to denote the $i$th part of the quotient corresponding to $\la^{(k)}$ without further explanation. When the multicore is understood, we shall abuse notation by identifying $\bla$ with its quotient. 

For $s,f \geq 1$, let $\M{s}{f}$ denote the set of $s\times f$ matrices whose entries are partitions. The rows of each matrix are indexed by the set $\{0,1,\ldots,s-1\}$ and the columns by the set $\{0,1,\ldots,f-1\}$. For $\bal \in \Gamma^s_f$ and $ 0 \leq k \leq s-1, 0 \leq i \leq f-1$, we let $\alpha^k_i \in \Lambda$ denote the entry in the row indexed by $k$ and the column indexed by $i$. 

Suppose that $(\bla,\bmu) \in \Rp$. Recall from Lemma~\ref{L:Rouqreg} that this implies that $\mu^k_0=\emp$ for all $0 \leq k \leq r-1$. Define

\begin{multline}\label{defn:f}
g_{\bla\bmu}(v) = v^{\omega(\bla)-\omega(\bmu)} \sum_{\bal \in \M{r}{e+1}} \sum_{\bbe \in \M{r}{e}} \sum_{\bgam \in \M{{r+1}}{e}} \sum_{\bdel \in \M{r}{e}} \left( \prod_{k=0}^{r-1} \prod_{i=0}^{e-1} c^{\delta^k_i}_{\mu^k_i \gamma^k_i} c^{\delta^k_i}_{ \al^k_i \be^k_i \gamma^{k+1}_i} c^{\la^k_i}_{\be^k_i (\al^k_{i+1})'}\right)  c^{\emp}_{\gamma^0_0 \gamma^0_1 \ldots \gamma^{0}_{e-1}}  c^{\emp}_{\gamma^r_0 \gamma^r_1 \ldots \gamma^r_{e-1}}
\end{multline}

We find it helpful to arrange the terms in the double product into an array. 

\begin{ex} Let $r=2$ and $e=3$ and suppose $(\bla,\bmu) \in \Rp$. Then 
  \[g_{\bla\bmu}(v) = v^{\omega(\bla)-\omega(\bmu)}\sum_{\bal \in \M{2}{4}} \sum_{\bbe \in \M{2}{3}} \sum_{\bgam \in \M{{3}}{3}} \sum_{\bdel \in \M{2}{3}}\left(
  \begin{array}{lllllll} 
& c^{\delta^{0}_{0}}_{\mu^{0}_{0}\gamma^{0}_{0}} &&c^{\delta^{0}_{1}}_{\mu^{0}_{1}\gamma^{0}_{1}}&& c^{\delta^{0}_{2}}_{\mu^{0}_{2}\gamma^{0}_{2}} & \\ [10pt]
&c^{\delta^{0}_{0}}_{\al^{0}_{0}\be^{0}_{0}\gamma^{1}_{0}} & c^{\la^{0}_{0}}_{\be^{0}_{0}(\al^{0}_{1})'} &
c^{\delta^{0}_{1}}_{\al^{0}_{1}\be^{0}_{1}\gamma^{1}_{1}} & c^{\la^{0}_{1}}_{\be^{0}_{1}(\al^{0}_{2})'} &
c^{\delta^{0}_{2}}_{\al^{0}_{2}\be^{0}_{2}\gamma^{1}_{2}} & c^{\la^{0}_{2}}_{\be^{0}_{2}(\al^{0}_{3})'} \\ [10pt]
 &c^{\delta^{1}_{0}}_{\mu^{1}_{0}\gamma^{1}_{0}}&&c^{\delta^{1}_{1}}_{\mu^{1}_{1}\gamma^{1}_{1}}&& c^{\delta^{1}_{2}}_{\mu^{1}_{2}\gamma^{1}_{2}} & \\ [10pt]
&c^{\delta^{1}_{0}}_{\al^{1}_{0}\be^{1}_{0}\gamma^{2}_{0}} & c^{\la^{1}_{0}}_{\be^{1}_{0}(\al^{1}_{1})'} &
c^{\delta^{1}_{1}}_{\al^{1}_{1}\be^{1}_{1}\gamma^{2}_{1}} & c^{\la^{1}_{1}}_{\be^{1}_{1}(\al^{1}_{2})'} &
c^{\delta^{1}_{2}}_{\al^{1}_{2}\be^{1}_{2}\gamma^{2}_{2}} & c^{\la^{1}_{2}}_{\be^{1}_{2}(\al^{1}_{3})'}
 \end{array}\right)\]
where $\gamma^{0}_0 = \gamma^0_1 = \gamma^0_2 = \gamma^{2}_{0}=\gamma^{2}_{1}=\gamma^{2}_{2}=\emp$.
\end{ex}

\begin{lemma} \label{L:r1}
If $r=1$ then 
\[g_{\bla\bmu}(v) = v^{\omega(\bla)-\omega(\bmu)} \sum_{\al_0,\ldots,\al_{e}} \sum_{\be_0,\ldots,\be_{e-1}}
\prod_{i=0}^{e-1} c^{\mu^0_i}_{\al_i \be_i} c^{\la^0_i}_{\be_i (\al_{i+1})'}\]
where
\[|\al_i| = \sum_{j=0}^{i-1} \Big(|\la^{0}_j| - |\mu^0_j|\Big) \; \text{ for } 0 \leq i \leq e, \qquad |\be_i|= |\la^0_i| + \sum_{j=0}^{i} \Big(|\mu^0_j|-|\la^0_j|\Big) \;\text{ for } 0 \leq i \leq e-1,\]
and
\[\omega(\bla)-\omega(\bmu) = \sum_{i=0}^{e-1} (e-i-1) \Big( |\la^0_i| - |\mu^0_i| \Big) = \sum_{i=0}^{e-1} i \Big( |\mu^0_i| - |\la^0_i| \Big).\]  
\end{lemma}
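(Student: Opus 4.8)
The plan is to substitute $r=1$ directly into the definition \eqref{defn:f} and to observe that the two trailing factors $c^{\emp}_{\gamma^0_0\gamma^0_1\ldots\gamma^0_{e-1}}$ and $c^{\emp}_{\gamma^r_0\gamma^r_1\ldots\gamma^r_{e-1}}$ collapse almost all of the summation. When $r=1$ the only value of $k$ is $k=0$, so $\bal=(\al^0_0,\ldots,\al^0_e)$, $\bbe=(\be^0_0,\ldots,\be^0_{e-1})$ and $\bdel=(\delta^0_0,\ldots,\delta^0_{e-1})$ are single rows, while $\bgam\in\M{2}{e}$ has the two rows $(\gamma^0_i)_i$ and $(\gamma^1_i)_i$, with $\gamma^{k+1}_i=\gamma^1_i$ and $\gamma^r_i=\gamma^1_i$. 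By the size constraint built into the generalized Littlewood--Richardson coefficients, $c^{\emp}_{\tau_0\tau_1\ldots\tau_{e-1}}$ vanishes unless every $\tau_i=\emp$, in which case it equals $1$; so I would first restrict the sum to terms with $\gamma^0_i=\gamma^1_i=\emp$ for all $i$.

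Next I would simplify the remaining coefficients. With $\gamma^1_i=\emp$ we have $c^{\delta^0_i}_{\al^0_i\be^0_i\gamma^1_i}=c^{\delta^0_i}_{\al^0_i\be^0_i\emp}=c^{\delta^0_i}_{\al^0_i\be^0_i}$, using the recursion for generalized Littlewood--Richardson coefficients together with $c^{\sigma}_{\tau\emp}=\delta_{\sigma\tau}$; and with $\gamma^0_i=\emp$ we have $c^{\delta^0_i}_{\mu^0_i\gamma^0_i}=c^{\delta^0_i}_{\mu^0_i\emp}=\delta_{\delta^0_i\mu^0_i}$. Carrying out the sum over $\bdel$ then forces $\delta^0_i=\mu^0_i$ for all $i$ and leaves $g_{\bla\bmu}(v)=v^{\omega(\bla)-\omega(\bmu)}\sum_{\al^0_0,\ldots,\al^0_e}\sum_{\be^0_0,\ldots,\be^0_{e-1}}\prod_{i=0}^{e-1}c^{\mu^0_i}_{\al^0_i\be^0_i}c^{\la^0_i}_{\be^0_i(\al^0_{i+1})'}$, which is the claimed formula after relabelling $\al_i=\al^0_i$ and $\be_i=\be^0_i$.

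It then remains to read off the sizes and the two expressions for $\omega(\bla)-\omega(\bmu)$. A summand is nonzero only if $|\mu^0_i|=|\al_i|+|\be_i|$ and $|\la^0_i|=|\be_i|+|\al_{i+1}|$ for each $i$ (since $|(\al_{i+1})'|=|\al_{i+1}|$). Since $\bmu$ is $e$-regular, Lemma~\ref{L:Rouqreg} gives $\mu^0_0=\emp$, hence $|\al_0|=0$, and an induction on $i$ via $|\al_{i+1}|=|\la^0_i|-|\be_i|=|\la^0_i|-|\mu^0_i|+|\al_i|$ gives $|\al_i|=\sum_{j=0}^{i-1}(|\la^0_j|-|\mu^0_j|)$ for $0\le i\le e$ and then $|\be_i|=|\mu^0_i|-|\al_i|=|\la^0_i|+\sum_{j=0}^{i}(|\mu^0_j|-|\la^0_j|)$. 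Finally, for $r=1$ the definition of $\omega$ gives $\omega(\bla)-\omega(\bmu)=\sum_{i=0}^{e-1}(e-i-1)(|\la^0_i|-|\mu^0_i|)$; since $\bla\ap\bmu$ forces $\hook(\bla)=\hook(\bmu)$, i.e.\ $\sum_{i=0}^{e-1}(|\la^0_i|-|\mu^0_i|)=0$, adding the multiple $(e-1)\sum_{i=0}^{e-1}(|\la^0_i|-|\mu^0_i|)=0$ rewrites this as $\sum_{i=0}^{e-1}i(|\mu^0_i|-|\la^0_i|)$. I expect no real obstacle here; the only two steps needing a little care are the observation that the $c^{\emp}$ factors annihilate everything off the ``diagonal'' of the sum and the use of $e$-regularity to anchor the induction on the $|\al_i|$.
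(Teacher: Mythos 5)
Your proposal is correct and follows essentially the same route as the paper's proof: the $c^{\emp}_{\gamma^0_0\ldots\gamma^0_{e-1}}$ and $c^{\emp}_{\gamma^1_0\ldots\gamma^1_{e-1}}$ factors kill all terms with some $\gamma^k_i\neq\emp$, which forces $\delta^0_i=\mu^0_i$, and the size and $\omega$ computations (anchored by $\mu^0_0=\emp$ from Lemma~\ref{L:Rouqreg} and by $\hook(\bla)=\hook(\bmu)$) are exactly those in the paper, just written out in more detail.
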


\begin{proof}
If $r=1$ then the only terms $\gamma^{k}_i$ which contribute to the sum are those where $\gamma^k_i = \emp$ for $k=0, 1$ and $0 \leq i \leq e-1$. Then the only terms $\delta^0_i$ which contribute must satisfy $\delta^0_i=\mu^0_i$ for $0 \leq i \leq e-1$ and we see that $g_{\bla\bmu}(v)$ is indeed given by the formula indicated. The terms $\al_i$ and $\be_i$ which contribute to the sum must satisfy
\[|\al_i| + |\be_i| = |\mu^0_i|, \qquad \qquad |\be_i| + |\al_{i+1}| = |\la^0_i|, \qquad \text{ for } 0 \leq i \leq e-1\]
and so noting that $|\mu^0_0|=0$, it is clear that such $|\alpha_i|$ and $|\beta_i|$ are as stated. The first part of the formula for $\omega(\bla)-\omega(\bmu)$ follows from the definition and the second part follows because since $\bla \approx \bmu$ we have $\sum_{i=0}^{e-1} |\la^0_i|= \sum_{i=0}^{e-1}|\mu^0_i|$. 
\end{proof}

It is well-known that the formula in Lemma~\ref{L:r1} gives the graded decomposition numbers for the Rouquier blocks in characteristic $0$ when $r=1$~\cite{LeclercMiyachi2}, and this will later form the base step in our inductive argument to prove the main theorem. 

The terms $c^{\emp}_{\gamma^0_0 \gamma^0_1 \ldots \gamma^0_{e-1}}$ and
  $c^{\emp}_{\gamma^r_0 \gamma^r_1 \ldots \gamma^r_{e-1}}$ which appear at the Equation~\ref{defn:f} ensure that the only terms $\gamma^{k}_i$ which contribute to the sum satisfy $\gamma^k_i=\emp$ for $k=0, r$ and $0 \leq i \leq e-1$. This means that we do not have to specify the size of the partitions $\al^k_i$ and so forth that appear in the sum as only partitions of the right size will contribute. 

\begin{lemma} \label{l:lims}
The only terms $\bal,\bbe,\bgam,\bdel$ which contribute to Equation~\ref{defn:f} satisfy 
\begin{align*}
\sum_{i=0}^{e-1} |\gamma^k_i| & = \sum_{l=0}^{k-1} \sum_{i=0}^{e-1}\Big( |\mu^{l}_i|-|\la^{l}_i|\Big), && \text{for all } 0 \leq k \leq r-1, \\
|\al^k_i| & = \sum_{j=0}^{i-1} \Big( |\la^k_j| - |\mu^k_j| - |\gamma^k_j| + |\gamma^{k+1}_j|\Big), && \text{for all } 0 \leq k \leq r-1, \, 0 \leq i \leq e, \\
|\be^k_i| & = |\la^k_i| + \sum_{j=0}^{i} \Big(|\mu^k_j| - |\la^k_j| + |\gamma^k_j| - |\gamma^{k+1}_j|\Big), && \text{for all } 0 \leq k \leq r-1, \, 0 \leq i \leq e-1,\\ 
|\delta^k_i| & = |\mu^k_i| + |\gamma^k_i|, && \text{for all } 0 \leq k \leq r-1, \, 0 \leq i \leq e-1.
\end{align*}
\end{lemma}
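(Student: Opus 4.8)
The plan is to extract the stated size constraints directly from the vanishing behaviour of the generalized Littlewood-Richardson coefficients appearing in~\eqref{defn:f}. Recall that $c^{\sigma}_{\tau_1\ldots\tau_t}=0$ unless $|\sigma|=\sum_j|\tau_j|$; so each nonzero term in the sum forces a system of linear equations among the sizes $|\gamma^k_i|$, $|\alpha^k_i|$, $|\beta^k_i|$, $|\delta^k_i|$, and these are the equations we must solve. First I would record the three "local" equations coming from the three families of coefficients in the double product: from $c^{\delta^k_i}_{\mu^k_i\gamma^k_i}$ we get $|\delta^k_i|=|\mu^k_i|+|\gamma^k_i|$, which is immediately the last displayed equation; from $c^{\delta^k_i}_{\alpha^k_i\beta^k_i\gamma^{k+1}_i}$ we get $|\delta^k_i|=|\alpha^k_i|+|\beta^k_i|+|\gamma^{k+1}_i|$; and from $c^{\la^k_i}_{\beta^k_i(\alpha^k_{i+1})'}$ we get $|\la^k_i|=|\beta^k_i|+|\alpha^k_{i+1}|$, using that conjugation preserves size. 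I would also note that the two trailing factors $c^{\emp}_{\gamma^0_0\ldots\gamma^0_{e-1}}$ and $c^{\emp}_{\gamma^r_0\ldots\gamma^r_{e-1}}$ force $\gamma^k_i=\emp$ (hence $|\gamma^k_i|=0$) for $k=0$ and $k=r$ and all $i$; these serve as the boundary conditions for the recursions.

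The heart of the argument is then to solve this linear system. Combining the second and third local equations, $|\alpha^k_{i+1}|=|\la^k_i|-|\beta^k_i|=|\la^k_i|-\big(|\delta^k_i|-|\alpha^k_i|-|\gamma^{k+1}_i|\big)=|\alpha^k_i|+|\la^k_i|-|\mu^k_i|-|\gamma^k_i|+|\gamma^{k+1}_i|$, where in the last step I substituted $|\delta^k_i|=|\mu^k_i|+|\gamma^k_i|$. Since $|\alpha^k_0|=0$ (this follows from the $\gamma^0$-boundary condition when $k=0$, and I would observe that the recursion together with telescoping forces it for all $k$; alternatively it is forced because $\alpha^k_0$ plays no structural role—it only appears in $c^{\delta^k_0}_{\alpha^k_0\beta^k_0\gamma^{k+1}_0}$ with $i=0$, and one checks the size bookkeeping pins it to $\emp$), telescoping the recursion in $i$ gives the stated formula for $|\alpha^k_i|$. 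Back-substituting into $|\beta^k_i|=|\la^k_i|-|\alpha^k_{i+1}|$ yields the formula for $|\beta^k_i|$. For the $\gamma$-sum identity, I would sum the relation $|\delta^k_i|-|\alpha^k_i|-|\alpha^k_{i+1}|-|\beta^k_i|\text{-type}$ bookkeeping over $i$: summing $|\alpha^k_{i+1}|-|\alpha^k_i|=|\la^k_i|-|\mu^k_i|-|\gamma^k_i|+|\gamma^{k+1}_i|$ over $0\le i\le e-1$ telescopes the left side to $|\alpha^k_e|-|\alpha^k_0|$, and one argues $|\alpha^k_e|=0$ (again by a boundary/bookkeeping argument, or by noting $\alpha^k_e$ appears only in $c^{\la^k_{e-1}}_{\beta^k_{e-1}(\alpha^k_e)'}$ and—after the other sizes are fixed—must have size $0$; cleanest is to use that for $k=r-1$ this is forced and then induct, but I will need to think about which direction the induction runs). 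This gives $\sum_i|\gamma^{k+1}_i|=\sum_i|\gamma^k_i|-\sum_i(|\la^k_i|-|\mu^k_i|)$, and iterating from the base case $\sum_i|\gamma^0_i|=0$ produces the first displayed formula.

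The main obstacle I anticipate is pinning down the "edge" variables $\alpha^k_0$ and $\alpha^k_e$: the naive size count only gives a triangular system of $\le$-many equations in slightly more unknowns, so I must argue these boundary entries vanish rather than merely assuming it. I expect the clean resolution is that the conditions $(\bla,\bmu)\in\Rp$ and $\mu^k_0=\emp$ (Lemma~\ref{L:Rouqreg}) together with the global size match $\sum_{k,i}|\la^k_i|=\sum_{k,i}|\mu^k_i|$ (from $\bla\approx\bmu$, cf.\ the proof of Lemma~\ref{L:r1}) force consistency, and that one should set up the telescoping so that $|\alpha^k_0|=0$ is an input (it genuinely is, since $\alpha^0_0$ is a component of the empty-contracted $\gamma^0$ row... — actually $\alpha^k_0$ for $k\ge1$ needs the argument that the only contribution has it empty). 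I would handle this by treating $|\alpha^k_0|=0$ as part of what is being proved: it is the $i=0$ instance of the claimed formula $|\alpha^k_i|=\sum_{j=0}^{i-1}(\cdots)$ with an empty sum, so the whole statement is proved simultaneously by the single telescoping computation above once the base $|\alpha^k_0|=0$ is justified — and that base holds because in~\eqref{defn:f} the index $i$ on $\alpha^k_i$ ranges over $0\le i\le e$ with $\alpha^k_0$ and $\alpha^k_e$ appearing asymmetrically, so a direct inspection of which coefficients constrain them (only one each) combined with the $\gamma$-boundary conditions does the job. This is a short but slightly fiddly bookkeeping check, and it is the only place the argument is more than routine linear algebra.
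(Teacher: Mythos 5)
Your overall plan — read off size constraints from the non-vanishing of the Littlewood-Richardson coefficients, solve the resulting linear system, telescope the recursion in $i$ starting from $|\alpha^k_0|=0$, and use the global size match $\sum_{k,i}|\la^k_i|=\sum_{k,i}|\mu^k_i|$ to control $\alpha^k_e$ — is exactly the route the paper takes. The local relations you record are the same ones the paper uses, and your derivation of $|\al^k_{i+1}|=|\al^k_i|+|\la^k_i|-|\mu^k_i|-|\gamma^k_i|+|\gamma^{k+1}_i|$ and the back-substitution for $|\be^k_i|$ are correct.

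The gap is precisely where you flagged it: your argument that $\al^k_0=\emp$ for $k\ge 1$ is not sound. You propose two resolutions and neither works as stated. Saying ``$\al^k_0$ appears in only one coefficient, so size bookkeeping pins it to $\emp$'' is false on its own: the single constraint is $|\delta^k_0|=|\al^k_0|+|\be^k_0|+|\gamma^{k+1}_0|$, and this only forces $\al^k_0=\emp$ once you already know $|\delta^k_0|=0$, which is not automatic. And ``the recursion together with telescoping forces it for all $k$'' cannot work either, because the recursion runs in $i$ at fixed $k$; it never links different values of $k$. The correct mechanism, which the paper uses, is a forward propagation in $k$ through the column $i=0$: the boundary factor gives $\gamma^0_0=\emp$, and $\mu^k_0=\emp$ for every $k$ by Lemma~\ref{L:Rouqreg}; so $|\delta^0_0|=|\mu^0_0|+|\gamma^0_0|=0$, forcing $\al^0_0=\be^0_0=\gamma^1_0=\emp$; then $|\delta^1_0|=|\mu^1_0|+|\gamma^1_0|=0$, forcing $\al^1_0=\be^1_0=\gamma^2_0=\emp$; and so on up to $k=r-1$ (with $\gamma^r_0=\emp$ then consistent with the other boundary factor). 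You mention $\mu^k_0=\emp$ as potentially relevant but never assemble it with the cascade $\gamma^k_0=\emp\Rightarrow\gamma^{k+1}_0=\emp$, and without that cascade the base case of your telescope is unjustified for $k\ge1$. Your handling of $\al^k_e$ via the global size match (summing the recursion over $i$ and $k$, telescoping, and noting the boundary $\gamma$'s vanish) is the right idea and matches the paper, though you should phrase it as ``$\sum_k|\al^k_e|=0$ with each summand nonnegative'' rather than worry about an induction whose direction you cannot name; the paper then proves the $\gamma$-sum identity by induction on $k$, using both $\al^k_0=\emp$ and $\al^k_e=\emp$ as inputs, which is what your telescoping also implicitly requires.
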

  
\begin{proof} 
Assume that we have terms $\bal$, $\bbe$, $\bgam$ and $\bdel$ which contribute to Equation~\ref{defn:f}. Then for each $0 \leq k \leq r-1$ and $0 \leq i \leq e-1$ we have 
\begin{equation*} |\la^k_i| = |\be^k_i| + |\al^k_{i+1}|, \qquad \qquad |\delta^k_i| = |\mu^{k}_i| + |\gamma^k_i| = |\al^k_i| + |\be^k_i| + |\gamma^{k+1}_i|.\end{equation*}
Now noting that $\mu^k_0=\emp$ for all $0 \leq k \leq r-1$ and $\gamma^0_0 = \emp$, the second condition above implies that $\gamma^k_0=\emp$ for $0 \leq k \leq r$ and $\al^k_0=\be^k_0 =\emp$ for $0 \leq k \leq r-1$. From here, it is straightforward to see that we can indeed write each term $|\al^k_i|, |\be^k_i|, |\delta^k_i|$ as in Lemma~\ref{l:lims}. 
Now 
\begin{align*}
\sum_{k=0}^{r-1} \sum_{i=0}^{e-1} |\mu^k_i| = \sum_{k=0}^{r-1} \sum_{i=0}^{e-1} |\la^k_i| & \implies
\sum_{k=0}^{r-1} \sum_{i=0}^{e-1} \Big(|\be^k_i| + |\al^k_{i+1}| \Big) = \sum_{k=0}^{r-1} \sum_{i=0}^{e-1} \Big(|\al^k_i| + |\be^k_i| + |\gamma^{k+1}_i| - |\gamma^k_i|\Big) \\
& \implies \sum_{k=0}^{r-1} |\al^k_e| = \sum_{k=0}^{r-1} \sum_{i=0}^{e-1}\Big( |\gamma^{k+1}_i| - |\gamma^k_i|\Big) = \sum_{i=0}^{e-1} \Big(|\gamma^{e}_i| - |\gamma^0_i|\Big) =0
\end{align*}
so $\al^{k}_e=\emp$ for all $0 \leq k \leq r-1$. We now use induction on $k$ to show that the first equality of Lemma~\ref{l:lims} holds. If $k=0$, both sides of the equation are equal to $0$, so suppose that $0<k\leq r-1$ and that the equation holds for $k-1$. Then
\begin{align*}
\sum_{l=0}^{k-1} \sum_{i=0}^{e-1}\Big( |\mu^{l}_i|-|\la^{l}_i|\Big) 
& = \sum_{l=0}^{k-2} \sum_{i=0}^{e-1}\Big( |\mu^{l}_i|-|\la^{l}_i|\Big) + \sum_{i=0}^{e-1} \Big( |\mu^{k-1}_i| - |\la^{k-1}_i| \Big) \\
& = \sum_{i=0}^{e-1} |\gamma^{k-1}_i| + \sum_{i=0}^{e-1} \Big(|\alpha^{k-1}_i| + |\beta^{k-1}_i| + |\gamma^{k}_i| - |\gamma^{k-1}_{i}| - |\beta^{k-1}_{i}| - |\alpha^{k-1}_{i+1}|\Big) \\
&=  \sum_{i=0}^{e-1} |\gamma^{k}_i|
\end{align*}
and so by induction, the first equation holds for all $0 \leq k \leq r-1$. 
\end{proof}

We could give more restrictions on the size of the partitions $\gamma^k_i$ that contribute to Equation~\ref{defn:f} using the fact that $|\al^k_i| \geq 0$ and $|\be^k_i| \geq 0$ for all possible $i,k$. However, we can easily see that for $r \geq 2$, it may be that partitions of more than one size can contribute. 

\begin{ex} Let $r=2$ and $e=3$. Suppose that 
\[\qt(\bmu)=((\emp,(1),(1)),(\emp,\emp,\emp)), \qquad \qquad \qt(\bla)=((\emp,(1),\emp),(\emp,(1),\emp)).\] Then there are two values of $(\bal,\bbe,\bgam,\bdel)$ which contribute to $g_{\bla\bmu}(v)$, namely 
\begin{align*}
\bal & = \begin{pmatrix} \emp &\emp &\emp &\emp  \\ \emp &\emp &(1) &\emp \end{pmatrix}, & 
\bbe& = \begin{pmatrix} \emp & (1)& \emp \\ \emp&\emp&\emp \end{pmatrix}, &
\bgam & = \begin{pmatrix} \emp&\emp&\emp \\\emp&\emp&(1) \\ \emp&\emp&\emp \end{pmatrix}, &
\bdel & = \begin{pmatrix} \emp&(1)&(1) \\ \emp&\emp&(1) \\ \end{pmatrix} & & \text{and} \\ 
\bal & = \begin{pmatrix} \emp&\emp&(1)&\emp \\ \emp&\emp&\emp&\emp \end{pmatrix}, & 
\bbe& = \begin{pmatrix} \emp & \emp& \emp \\ \emp&(1)&\emp \end{pmatrix}, &
\bgam & = \begin{pmatrix} \emp&\emp&\emp \\\emp&(1)&\emp \\ \emp&\emp&\emp \end{pmatrix}, &
\bdel & = \begin{pmatrix} \emp&(1)&(1) \\ \emp&(1)&\emp \\ \end{pmatrix}, 
\end{align*}
and after computing the Littlewood-Richardson coefficients, we see that $g_{\bla\bmu}(v) = 2v^2$.
\end{ex}

We note that the coefficients $g_{\bla\bmu}(v)$ do satisfy the properties of the transition coefficients $d_{\bla\bmu}(v)$. It is not, we admit, actually necessary to prove this since we will later show that they really are transition coefficients; but the proof is not difficult and may satisfy the reader who prefers not to make assumptions ahead of time. 

\begin{lemma}
Suppose $(\bla,\bmu) \in \Rp$. Then $g_{\bla\bmu}(v)=1$ if $\bla=\bmu$ and $g_{\bla\bmu} \in v \mathbb{N}[v]$ otherwise.
\end{lemma}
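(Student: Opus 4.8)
The plan is to prove the two assertions separately, relying on the size constraints from Lemma~\ref{l:lims} and the combinatorial description of Littlewood-Richardson coefficients.

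First, I would verify that $g_{\bla\bmu}(v) \in v\mathbb{N}[v]$ when $\bla \neq \bmu$ and that $g_{\bla\bmu}(v)=1$ when $\bla = \bmu$. The non-negativity and integrality are immediate: every term in the defining sum~\eqref{defn:f} is a product of Littlewood-Richardson coefficients (which lie in $\mathbb{N}$) times the monomial $v^{\omega(\bla)-\omega(\bmu)}$, so $g_{\bla\bmu}(v)$ is $v^{\omega(\bla)-\omega(\bmu)}$ times a non-negative integer. Thus the whole problem reduces to two claims: (i) the exponent $\omega(\bla)-\omega(\bmu)$ is a non-negative integer, and it is $0$ only if $\bla = \bmu$; and (ii) when $\bla = \bmu$ the integer multiplying $v^0$ is exactly $1$, while whenever $\omega(\bla)-\omega(\bmu)=0$ we do indeed have $\bla = \bmu$.

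For the exponent, I would use the second formula for $\omega(\bla)-\omega(\bmu)$ in the $r=1$ case as a model, but argue directly from the definition $\omega(\bla) = \sum_{k,i} (e-i+k-1)|\la^k_i|$. Since $\bla \approx \bmu$, for each fixed $k$ we have $\sum_i |\la^k_i| = \sum_i |\mu^k_i|$ is \emph{not} guaranteed, but the \emph{total} $\sum_{k,i}|\la^k_i| = \sum_{k,i}|\mu^k_i| = \hook(\bla)$ is. The cleaner route: from Lemma~\ref{l:lims}, whenever a nonzero term contributes we have $\al^k_e = \emp$ and the sizes $|\al^k_i|, |\be^k_i| \geq 0$. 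Running these inequalities telescopically (as in the remark following Lemma~\ref{l:lims}) forces, for each $k$ and each $i$, the partial sums $\sum_{j<i}(|\la^k_j|-|\mu^k_j|-|\gamma^k_j|+|\gamma^{k+1}_j|) \geq 0$ and a dual lower bound from $|\be^k_i|\geq 0$; combined with $\mu^k_0 = \emp$ and $\gamma^0_i = \gamma^r_i = \emp$, these pin down that a contributing term forces $\omega(\bla) \ge \omega(\bmu)$, with equality exactly when all the $\gamma^k_i$, $\al^k_i$, $\be^k_i$ are empty and $\mu^k_i = \delta^k_i = \la^k_i$ for all $k,i$ — i.e.\ $\bla = \bmu$. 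Actually the sharpest formulation is to observe that $\omega$ is constant on $\approx$-classes only up to the quotient data, so $\omega(\bla)-\omega(\bmu) = \sum_{k,i}(e-i+k-1)(|\la^k_i|-|\mu^k_i|)$, and a contributing term to~\eqref{defn:f} gives enough inequalities on the sizes to conclude this is $\geq 0$, vanishing iff $|\la^k_i| = |\mu^k_i|$ for all $k,i$, which by Lemma~\ref{l:lims} forces all auxiliary partitions empty and hence $\bla = \bmu$.

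For the value at $\bla = \bmu$: here $\omega(\bla)-\omega(\bmu) = 0$, so I must show the sum of products of LR coefficients is $1$. When $\bla = \bmu$, the size constraints of Lemma~\ref{l:lims} force $|\al^k_i| = |\be^k_i| = |\gamma^k_i| = 0$ for all $k,i$, hence the only tuple $(\bal,\bbe,\bgam,\bdel)$ that can contribute has every entry equal to $\emp$ except that $\delta^k_i$ has size $|\mu^k_i|$. But $c^{\delta^k_i}_{\mu^k_i \gamma^k_i} = c^{\delta^k_i}_{\mu^k_i \emp} = \delta_{\delta^k_i \mu^k_i}$, forcing $\delta^k_i = \mu^k_i$; then $c^{\delta^k_i}_{\al^k_i\be^k_i\gamma^{k+1}_i} = c^{\mu^k_i}_{\emp\emp\emp} = \delta_{\mu^k_i \emp}$, which is nonzero only if $\mu^k_i = \emp$ — but that need not hold. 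So I should instead be more careful: when $\bla=\bmu$, Lemma~\ref{l:lims} gives $|\al^k_i| = |\be^k_i| = 0$ and $\sum_i|\gamma^k_i| = 0$ for all $k$, hence \emph{all} $\al^k_i, \be^k_i, \gamma^k_i$ are $\emp$; then $c^{\delta^k_i}_{\al^k_i\be^k_i\gamma^{k+1}_i} = c^{\delta^k_i}_{\emp\emp\emp} = \delta_{\delta^k_i\emp}$ forces $\delta^k_i = \emp$, and then $c^{\delta^k_i}_{\mu^k_i\gamma^k_i} = c^{\emp}_{\mu^k_i \emp} = \delta_{\mu^k_i\emp}$ — which requires $\mu^k_i = \emp$. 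This only works if $\bmu = \emp^r$, so my size bookkeeping must be wrong and I should recheck: in fact $\bla = \bmu$ with both lying in $\Rp$ and $\bmu$ $e$-regular does \emph{not} force $\bmu = \emp^r$, so the resolution is that when $\bla = \bmu$ one takes $\be^k_i = \la^k_i = \mu^k_i$, $\al^k_i = \emp$ (consistent with $|\al^k_e|=0$ and the telescoping), $\gamma^k_i = \emp$, $\delta^k_i = \mu^k_i$; then each $c^{\delta^k_i}_{\mu^k_i\gamma^k_i} = c^{\mu^k_i}_{\mu^k_i\emp} = 1$, each $c^{\delta^k_i}_{\al^k_i\be^k_i\gamma^{k+1}_i} = c^{\mu^k_i}_{\emp\mu^k_i\emp} = 1$, each $c^{\la^k_i}_{\be^k_i(\al^k_{i+1})'} = c^{\la^k_i}_{\la^k_i\emp} = 1$, and $c^{\emp}_{\gamma^0_\bullet} = c^{\emp}_{\gamma^r_\bullet} = 1$. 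The main obstacle is showing this is the \emph{only} contributing tuple: I would argue that $c^{\la^k_i}_{\be^k_i(\al^k_{i+1})'} \neq 0$ with $|\al^k_{i+1}| = 0$ forces $\al^k_{i+1} = \emp$ and $\be^k_i = \la^k_i$, working downward from $\al^k_e = \emp$ using the Lemma~\ref{l:lims} size formulas which at $\bla=\bmu$ give $|\al^k_i| = 0$ for all $i$, hence all $\al^k_i = \emp$, hence $\be^k_i = \la^k_i$, hence (from the $\delta$ relations) $\gamma^{k+1}_i = \emp$ and $\delta^k_i = \mu^k_i$, uniquely. So the sum has a single term equal to $1$. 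The hardest part of the whole argument is the careful sign analysis establishing $\omega(\bla) \geq \omega(\bmu)$ with equality iff $\bla = \bmu$; everything else is a bookkeeping exercise with the constraints of Lemma~\ref{l:lims} and the vanishing properties $c^{\la}_{\al\emp} = \delta_{\la\al}$.
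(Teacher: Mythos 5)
Your high-level strategy matches the paper's: reduce to the observation that $g_{\bla\bmu}(v)$ is a monomial $c\cdot v^{\omega(\bla)-\omega(\bmu)}$ with $c\in\mathbb N$, then use Lemma~\ref{l:lims} to analyse the exponent and the constant. But your treatment of the exponent leaves a real hole. The paper does not merely argue that ``the size constraints give enough inequalities to conclude $\omega(\bla)-\omega(\bmu)\geq 0$''; it establishes the exact identity, for any contributing tuple,
\[
\omega(\bla)-\omega(\bmu)\;=\;\sum_{k=0}^{r-1}\sum_{i=0}^{e}|\al^k_i|\;+\;\sum_{k=0}^{r-1}\sum_{i=0}^{e-1}|\gamma^k_i|,
\]
obtained by substituting the size formulas of Lemma~\ref{l:lims} into $\sum_{k,i}(e-i+k-1)(|\la^k_i|-|\mu^k_i|)$ and performing the Abel summation (and using $\al^k_0=\al^k_e=\emp$, $\gamma^0_i=\gamma^r_i=\emp$). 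This identity is what makes both non-negativity and the characterisation of equality immediate: equality forces all $\al^k_i$ and $\gamma^k_i$ to vanish, at which point the Littlewood--Richardson coefficients in \eqref{defn:f} are nonzero only if $\mu^k_i=\delta^k_i=\be^k_i=\la^k_i$ for all $k,i$, i.e.\ $\bla=\bmu$. Your sketch asserts ``vanishing iff $|\la^k_i|=|\mu^k_i|$ for all $k,i$'' without deriving it — and without the identity the forward implication (vanishing implies sizes match) is not obvious, since distinct terms in the sum defining $\omega(\bla)-\omega(\bmu)$ could a priori cancel.

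Two further points. First, your middle paragraph claims equality requires ``all the $\gamma^k_i$, $\al^k_i$, $\be^k_i$ are empty'', which is wrong for $\be^k_i$ (it must equal $\la^k_i=\mu^k_i$, which need not be empty); you later notice and correct this, but the confusion is symptomatic of not having the identity in hand. Second, for the $\bla=\bmu$ case you initially go down a false trail (concluding $\bmu=\emp^r$), then self-correct; the eventual argument — $|\al^k_i|=0$ and $\sum_i|\gamma^k_i|=0$ from Lemma~\ref{l:lims} force $\al^k_i=\gamma^k_i=\emp$, then $c^{\la^k_i}_{\be^k_i\emp}\neq0$ forces $\be^k_i=\la^k_i$, etc.\ — is correct and matches the paper. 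So the structure of your proof is sound, but you need to actually carry out the computation yielding $\omega(\bla)-\omega(\bmu)=\sum|\al^k_i|+\sum|\gamma^k_i|$; everything else you defer to it.
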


\begin{proof}
If $\bla=\bmu$ then by Lemma~\ref{l:lims}, the only terms $\bal,\bbe,\bgam,\bdel$ which contribute to Equation~\ref{defn:f} satisfy $|\gamma^k_i|=|\alpha^k_i|=0$, $|\beta^k_i|=|\la^k_i|$ and $|\delta^k_i|=|\mu^k_i|$ for all possible values of $k,i$, so in fact must satisfy $\beta^k_i=\delta^k_i=\la^k_i=\mu^k_i$ and $\alpha^k_i=\gamma^k_i=\emp$. It is then clear that $g_{\bla\bmu}(v)=1$. So we want to show that if $\bla \neq \bmu$ and $g_{\bla\bmu}(v)\neq 0$ then $\omega(\bla) > \omega(\bmu)$. 

Take $(\bla,\bmu) \in \Rp$ and suppose that $g_{\bla\bmu}\neq 0$. Choose $\bal,\bbe,\bgam,\bdel$ as in Equation~\ref{defn:f} such that 
\[\left( \prod_{k=0}^{r-1} \prod_{i=0}^{e-1} c^{\delta^k_i}_{\mu^k_i \gamma^k_i} c^{\delta^k_i}_{ \al^k_i \be^k_i \gamma^{k+1}_i} c^{\la^k_i}_{\be^k_i (\al^k_{i+1})'}\right) \\ c^{\emp}_{\gamma^0_0 \gamma^0_1 \ldots \gamma^{0}_{e-1}}  c^{\emp}_{\gamma^r_0 \gamma^r_1 \ldots \gamma^r_{e-1}}>0,\]
so that $\bal,\bbe,\bgam,\bdel$ satisfy the conditions of Lemma~\ref{l:lims}. 
Then
\begin{align*}
\sum_{k=0}^{r-1} \sum_{i=0}^{e}|\alpha^k_i| = \sum_{k=0}^{r-1} \sum_{i=0}^{e-1}|\alpha^k_i|
& = \sum_{k=0}^{r-1} \sum_{i=0}^{e} \sum_{j=0}^{i-1} \Big(|\la^k_j| - |\mu^k_j|-|\gamma^k_j|+|\gamma^{k+1}_j| \Big) \\
&=\sum_{k=0}^{r-1} \sum_{i=0}^{e-1} (e-i-1) \Big(|\la^k_i| - |\mu^k_i|-|\gamma^k_i|+|\gamma^{k+1}_i| \Big) \\ 
&=\sum_{k=0}^{r-1} \sum_{i=0}^{e-1} (e-i-1) \Big(|\la^k_i| - |\mu^k_i| \Big)
\end{align*}
so that 
\begin{align*}
\omega(\bla) - \omega(\bmu) & = \sum_{k=0}^{r-1} \sum_{i=0}^{e-1} (e-i+k-1) \Big(|\la^k_i| - |\mu^k_i|\Big)\\ 
& = \sum_{k=0}^{r-1} \sum_{i=0}^{e}|\alpha^k_i| +  \sum_{k=0}^{r-1} \sum_{i=0}^{e-1} k \Big(|\la^k_i| - |\mu^k_i|\Big)\\ 
& = \sum_{k=0}^{r-1} \sum_{i=0}^{e}|\alpha^k_i| +  (r-1) \sum_{k=0}^{r-1} \sum_{i=0}^{e-1}  \Big(|\la^k_i| - |\mu^k_i|\Big) + \sum_{k=0}^{r-1} \sum_{i=0}^{e-1} (r-1-k) \Big(|\mu^k_i| - |\la^k_i|\Big)\\ 
&= \sum_{k=0}^{r-1} \sum_{i=0}^{e}|\alpha^k_i| + \sum_{k=0}^{r-1} \sum_{l=0}^{l} \sum_{i=0}^{e-1}\Big(|\mu^l_j| - |\la^l_j|\Big)\\
&= \sum_{k=0}^{r-1} \sum_{i=0}^{e}|\alpha^k_i| + \sum_{k=0}^{r-1} \sum_{i=0}^{e-1} |\gamma^k_i|
\end{align*}
Hence $\omega(\bla)-\omega(\bmu)\geq 0$ with equality if and only if $|\alpha^k_i|=|\gamma^k_i|=0$ for all $i,k$. But in this case, the Littlewood-Richardson coefficients are non-zero if and only if $\mu^k_i = \delta^k_i = \beta^k_i = \la^k_i$ for all $0 \leq i \leq e-1$ and $0 \leq k \le r-1$. Thus if $\bla \neq \bmu$ and $g_{\bla\bmu} \neq 0$, we do indeed have $\omega(\bla)-\omega(\bmu)>0$. 
\end{proof}

\begin{lemma} \label{L:MinusOK}
Suppose $(\bla,\bmu) \in \Rp$  where $\hook(\la^{(0)})=\hook(\mu^{(0)})=0$. Let $\hat{\bla}=(\la^{(1)},\ldots,\la^{(r-1)})$ and $\hat{\bmu}=(\mu^{(1)},\ldots,\mu^{(r-1)})$. Then
\[g_{\bla\bmu}(v) = g_{\hat{\bla}\hat{\bmu}}(v).\]
\end{lemma}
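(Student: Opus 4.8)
Here is the approach I would take. The statement is essentially a ``deletion of an empty core component'' lemma for the coefficients $g_{\bla\bmu}$, parallel to Lemma~\ref{L:AddEmpty} for the transition coefficients; the plan is to show that the sum defining $g_{\bla\bmu}(v)$ degenerates so that its row $k=0$ contributes only a factor of $1$, leaving verbatim the sum defining $g_{\hat{\bla}\hat{\bmu}}(v)$ after the shift $k\mapsto k-1$, and separately to check that the prefactors $v^{\omega(\bla)-\omega(\bmu)}$ and $v^{\omega(\hat{\bla})-\omega(\hat{\bmu})}$ agree.

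First I would record the elementary translations of the hypothesis. Since $\hook(\la^{(0)})=\sum_{i=0}^{e-1}|\la^0_i|$, the condition $\hook(\la^{(0)})=0$ is equivalent to $\la^0_i=\emp$ for all $0\le i\le e-1$, and likewise $\mu^0_i=\emp$ for all $i$. I would also note that $(\hat{\bla},\hat{\bmu})\in\Rp$ as an $(r-1)$-multipartition pair, so that $g_{\hat{\bla}\hat{\bmu}}(v)$ is defined: $\hat{\bmu}$ is $e$-regular because $\bmu$ is; $\hat{\bla}$ and $\hat{\bmu}$ share the last $r-1$ multicore components and have $\hook(\hat{\bla})=\hook(\bla)=\hook(\bmu)=\hook(\hat{\bmu})$ (removing the $e$-core component $\la^{(0)}$ changes neither), so $\hat{\bla}\approx\hat{\bmu}$; and $\hat{\bla},\hat{\bmu}$ are Rouquier multipartitions because $\mathfrak{d}^k_i(\hat{\bla})=\mathfrak{d}^{k+1}_i(\bla)$ and $\hook(\hat{\bla})=\hook(\bla)$, so the defining inequalities for $\hat{\bla}$ form a subset of those for $\bla$. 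For the prefactor, substituting $\la^0_i=\mu^0_i=\emp$ into the definition of $\omega$ and reindexing $k\mapsto k-1$ gives $\omega(\bla)=\omega(\hat{\bla})+\hook(\hat{\bla})$ and $\omega(\bmu)=\omega(\hat{\bmu})+\hook(\hat{\bmu})$; since $\hook(\hat{\bla})=\hook(\hat{\bmu})$, we conclude $\omega(\bla)-\omega(\bmu)=\omega(\hat{\bla})-\omega(\hat{\bmu})$.

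For the sums, I would invoke Lemma~\ref{l:lims}: feeding $\la^0_j=\mu^0_j=\emp$ into the size formulae there forces $|\gamma^0_i|=|\gamma^1_i|=|\al^0_i|=|\al^0_e|=|\be^0_i|=|\delta^0_i|=0$ for every $i$, so all these partitions are $\emp$ for any tuple $(\bal,\bbe,\bgam,\bdel)$ contributing to Equation~\ref{defn:f}. Hence every factor in the $k=0$ part of the double product, namely $c^{\delta^0_i}_{\mu^0_i\gamma^0_i}c^{\delta^0_i}_{\al^0_i\be^0_i\gamma^1_i}c^{\la^0_i}_{\be^0_i(\al^0_{i+1})'}$, together with the factor $c^{\emp}_{\gamma^0_0\cdots\gamma^0_{e-1}}$, equals $1$. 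What survives is the sum over the entries with $k\ge1$ (with $\gamma^r_i=\emp$ still imposed by $c^{\emp}_{\gamma^r_0\cdots\gamma^r_{e-1}}$), into which I may insert the trivial factor $c^{\emp}_{\gamma^1_0\cdots\gamma^1_{e-1}}=1$; then relabelling $\al^k_i,\be^k_i,\delta^k_i$ by $k\mapsto k-1$ for $1\le k\le r-1$ and $\gamma^k_i$ by $k\mapsto k-1$ for $1\le k\le r$, and using $\hat{\mu}^k_i=\mu^{k+1}_i$, $\hat{\la}^k_i=\la^{k+1}_i$, one obtains precisely the sum defining $g_{\hat{\bla}\hat{\bmu}}(v)\,v^{-\omega(\hat{\bla})+\omega(\hat{\bmu})}$. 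Combined with the equality of prefactors this yields $g_{\bla\bmu}(v)=g_{\hat{\bla}\hat{\bmu}}(v)$.

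The only delicate point is the bookkeeping of the index shift — in particular, seeing that once the $k=0$ row collapses, the (now forced-empty) $\gamma^1$-row takes over the role of the leftmost ``core row'' $\gamma^0$ of the $(r-1)$-variable formula, so that the two $c^{\emp}$ factors in each formula correspond correctly. This is a matter of care rather than of difficulty: no combinatorial identity beyond Lemma~\ref{l:lims} is required.
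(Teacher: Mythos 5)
Your proof is correct and takes essentially the same approach as the paper: both use Lemma~\ref{l:lims} to force the $k=0$ row of partitions to collapse to $\emp$ and then match the remaining sum to $g_{\hat{\bla}\hat{\bmu}}$ after an index shift, and both verify $\omega(\bla)-\omega(\bmu)=\omega(\hat{\bla})-\omega(\hat{\bmu})$ separately (you via $\omega(\bla)=\omega(\hat{\bla})+\hook(\hat{\bla})$, the paper via direct reindexing — equivalent computations). The only cosmetic difference is direction: you collapse the $r$-level sum down to the $(r-1)$-level sum, whereas the paper embeds the $(r-1)$-level sum (via the prepend-a-$\emp$-row map $\check{\bep}$) into the $r$-level one.
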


\begin{proof}
Our assumptions are that
\begin{align*}
\qt(\bla) & = ((\emp,\emp,\ldots,\emp),(\la^1_0,\la^1_1,\ldots,\la^1_{e-1}),\ldots,(\la^{r-1}_0,\la^{r-1}_1,\ldots,\la^{r-1}_{e-1})),\\
\qt(\hat{\bla}) & = ((\la^1_0,\la^1_1,\ldots,\la^1_{e-1}),\ldots,(\la^{r-1}_0,\la^{r-1}_1,\ldots,\la^{r-1}_{e-1})), \\
 \qt(\bmu) & = ((\emp,\emp,\ldots,\emp),(\emp,\mu^1_1,\ldots,\mu^1_{e-1}),\ldots,(\emp,\mu^{r-1}_1,\ldots,\mu^{r-1}_{e-1})),\\ 
\qt(\hat{\bmu}) & = ((\emp,\mu^1_1,\ldots,\mu^1_{e-1}),\ldots,(\emp,\mu^{r-1}_1,\ldots,\mu^{r-1}_{e-1})).
\end{align*}

For $s,f \geq 1$ and $\bep \in \Gamma^s_f$, define $\check{\bep} \in \Gamma^{s+1}_f$ by setting $\check{\ep}^0_i=0$ for $0 \leq i \leq f-1$ and $\check{\ep}^k_i=\ep^{k-1}_i$ for $1 \leq k \leq s$ and $0 \leq i \leq f-1$. Then
\begin{align*}
 g_{\hat{\bla}\hat{\bmu}}(1) &=  \sum_{\bal \in \M{r-1}{e+1}} \sum_{\bbe \in \M{r-1}{e}} \sum_{\bgam \in \M{{r}}{e}} \sum_{\bdel \in \M{r-1}{e}} \left( \prod_{k=0}^{r-2} \prod_{i=0}^{e-1} c^{\delta^k_i}_{\hat{\mu}^{k}_i \gamma^k_i} c^{\delta^k_i}_{ \al^k_i \be^k_i \gamma^{k+1}_i} c^{\hat{\la}^{k}_i}_{\be^k_i (\al^k_{i+1})'}\right) c^{\emp}_{\gamma^0_0 \gamma^0_1 \ldots \gamma^{0}_{e-1}}  c^{\emp}_{\gamma^r_0 \gamma^r_1 \ldots \gamma^r_{e-1}} \\
&  =  \sum_{\bal \in \M{r-1}{e+1}} \sum_{\bbe \in \M{r-1}{e}} \sum_{\bgam \in \M{{r}}{e}} \sum_{\bdel \in \M{r-1}{e}} \left( \prod_{k=0}^{r-1} \prod_{i=0}^{e-1} c^{\check{\delta}^k_i}_{\mu^k_i \check{\gamma}^k_i} c^{\check{\delta}^k_i}_{ \check{\al}^k_i \check{\be}^k_i \check{\gamma}^{k+1}_i} c^{\la^k_i}_{\check{\be}^k_i (\check{\al}^k_{i+1})'}\right) c^{\emp}_{\check{\gamma}^1_0 \check{\gamma}^1_1 \ldots \check{\gamma}^{1}_{e-1}}  c^{\emp}_{\check{\gamma}^r_0 \check{\gamma}^r_1 \ldots \check{\gamma}^r_{e-1}} \\
& =\sum_{\bal \in \M{r}{e+1}} \sum_{\bbe \in \M{r}{e}} \sum_{\bgam \in \M{{r+1}}{e}} \sum_{\bdel \in \M{r}{e}} \left( \prod_{k=0}^{r-1} \prod_{i=0}^{e-1} c^{\delta^k_i}_{\mu^k_i \gamma^k_i} c^{\delta^k_i}_{ \al^k_i \be^k_i \gamma^{k+1}_i} c^{\la^k_i}_{\be^k_i (\al^k_{i+1})'}\right) c^{\emp}_{\gamma^0_0 \gamma^0_1 \ldots \gamma^{0}_{e-1}}  c^{\emp}_{\gamma^r_0 \gamma^r_1 \ldots \gamma^r_{e-1}} \\
& =g_{\bla\bmu}(1)
\end{align*}   
where we applied Lemma~\ref{l:lims} in the penultimate step. 
 Also
 \begin{align*}
 \omega(\bla)-\omega(\bmu) & =  \sum_{k=0}^{r-1} \sum_{i=0}^{e-1} (e-i+k-1) \Big(|\la^k_i| - |\mu^k_i| \Big)\\\
 & = \sum_{k=1}^{r-1} \sum_{i=0}^{e-1} (e-i+k-1) \Big(|\la^k_i| - |\mu^k_i| \Big)\\
 &= \sum_{k=0}^{r-2} \sum_{i=0}^{e-1} (e-i+k) \Big(|\la^{k+1}_{i}| - |\mu^{k+1}_{i}| \Big)\\
&= \sum_{k=0}^{r-2} \sum_{i=0}^{e-1} (e-i+k-1) \Big(|\la^{k+1}_{i}| - |\mu^{k+1}_{i}| \Big)
+ \sum_{k=0}^{r-2} \sum_{i=0}^{e-1} \Big(|\la^{k+1}_{i}| - |\mu^{k+1}_{i}| \Big)\\
&= \sum_{k=0}^{r-2} \sum_{i=0}^{e-1} (e-i+k-1) \Big(|\la^{k+1}_{i}| - |\mu^{k+1}_{i}| \Big) \\
 &= \omega(\hat{\bla})-\omega(\hat{\bmu}) 
 \end{align*}
 and so $g_{\bla\bmu}(v) = g_{\hat{\bla}\hat{\bmu}}(v)$ as required.
\end{proof}

\subsection{Induction in the Fock space} \label{SS:FockInd}

Suppose that $1 \leq j \leq e-1$ and $s \geq 1$. Define 
\[f^{(s,j)} = f^{(s)}_{j} \ldots  f^{(s)}_{2} f^{(s)}_1  f^{(s)}_{j+1}\ldots f^{(s)}_{e-1} f^{(s)}_0 \in \U.\]
For $s \geq 1$, define $\Rs\subset\R$ to be the set of multipartitions $\bla$ lying in a Rouquier block which have the property that if we add $s$ $e$-rim hooks to $\bla$, the subsequent multipartitions still lie in $\R$; note that $\Rs$ is a union of Rouquier blocks. Set $\Rrs=\Rs \cap \RR$. 

\begin{proposition} \label{L:I1}
Suppose $s >0$ and that $\btau \in \Rs$. Let $1 \leq j \leq e-1$ and set 
\[I^j(\btau) = \{\bla \in \R \mid \bla \text{ and } \btau \text{ have the same multicore and for } 0 \leq k \leq r-1, \la^{k}_i = \tau^{k}_i \text{ unless } i = j-1, j\}.\]
Then
\[f^{(s,j)} s_{\btau} = \sum_{\bla \in I^j(\btau)} \sum_{x=0}^s \sum_{\substack{l_0+\ldots+l_{r-1}=x\\ \ell_0+\ldots + \ell_{r-1} = s-x}}\prod_{k=0}^{r-1} v^{(k+1) l_k + k \ell_k} c^{\la^{k}_{j-1}}_{\tau^{k}_{j-1} (l_k)} c^{\la^{k}_{j}}_{\tau^{k}_{j}(1^{\ell_k})}s_{\bla}.\]
\end{proposition}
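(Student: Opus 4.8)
The plan is to compute the action of $f^{(s,j)} = f^{(s)}_{j} \ldots f^{(s)}_{2} f^{(s)}_1 f^{(s)}_{j+1}\ldots f^{(s)}_{e-1} f^{(s)}_0$ on $s_{\btau}$ one divided power at a time, tracking the effect on the abacus configuration runner by runner. The key observation is that since $\btau \in \Rs$, the abacus of $\btau$ and of every multipartition obtained along the way is sufficiently ``spread out'' (the Rouquier condition controls the gaps $\mathfrak d^k_i$) that the bead moves induced by each $f^{(s)}_i$ do not interfere across different levels $k$ or produce any cancellation; every intermediate coefficient $N(\bnu,\bla)$ can be read off cleanly from the abacus picture described after Lemma~\ref{L:LLTLead}. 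Concretely, $f^{(s)}_0$ moves $s$ beads (counted with multiplicity over the $r$ levels) from runner $e-1$ to runner $0$; then $f^{(s)}_{e-1},\ldots,f^{(s)}_{j+1}$ successively push beads back down to runner $e-1$, then $j$, etc.; and finally $f^{(s)}_1, f^{(s)}_2,\ldots,f^{(s)}_j$ push beads from runner $j-1$ up onto runner $j$. The net effect is that runners $i\ne j-1,j$ are restored to their original state (so the result lies in $I^j(\btau)$), while on runners $j-1$ and $j$ we are adding, respectively, a horizontal strip and a vertical strip to each component's quotient partition.

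First I would reduce to a runner-by-runner statement. Writing $\btau$ via its quotient $\qt(\btau)$, the action of $f_i$ on the abacus only involves beads congruent to $i-1$ and $i \bmod e$, i.e.\ the quotient partitions $\tau^k_{i-1}$ and $\tau^k_i$. Within a single pair of adjacent runners, moving a bead corresponds to adding or removing a box in the associated $\beta$-set; using Lemma~\ref{L:OneS} (the $\beta$-set reformulation of adding a horizontal or vertical strip), each $f^{(m)}_i$ contributes a Littlewood--Richardson coefficient of the form $c^{\text{new}}_{\text{old}\,(m)}$ or $c^{\text{new}}_{\text{old}\,(1^m)}$, with the power of $v$ recorded by counting how many addable/removable $i$-nodes of the current configuration lie above each added node. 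Because the components are stacked with component $k$ above component $k+1$ in the $N(\cdot,\cdot)$ ordering, a bead move happening in level $k$ sees, below it in the stack, all the later components $k+1,\ldots,r-1$; tracking these contributions through the whole sequence $f^{(s)}_0, f^{(s)}_{e-1},\ldots,f^{(s)}_1$ is exactly what produces the weight $(k+1)l_k + k\ell_k$ for a level-$k$ contribution of $l_k$ boxes in a horizontal strip and $\ell_k$ boxes in a vertical strip. (The asymmetry $(k+1)$ versus $k$ comes from which runner-transition — the one ``wrapping around'' through runner $0$, versus the internal ones — the relevant beads cross.)

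The main obstacle, and the step I would spend the most care on, is verifying that no cancellation or unexpected interference occurs: a priori, applying $f^{(s)}_i$ could move beads belonging to the core as well as to the quotient, or the pushed beads at level $k$ could collide with beads at level $k'$, and the $N(\bnu,\bla)$ bookkeeping could mix contributions in a way that does not factor as the clean product in the statement. This is where the hypothesis $\btau \in \Rs$ does all the work: the Rouquier gap condition $\hook(\btau)+s \le \mathfrak d^k_i + 1$ guarantees there is always ``room'' on the abacus, so that (i) only quotient beads move, never core beads crossing a full gap, (ii) the set of multipartitions reached is precisely the family $\bla \in I^j(\btau)$ obtained by enlarging $\tau^k_{j-1}$ by a horizontal strip and $\tau^k_j$ by a vertical strip, and (iii) the $v$-exponents split into the advertised sum $\sum_k (k+1)l_k + k\ell_k$ with no cross terms. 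I would make this precise by an induction on the length of the word $f^{(s,j)}$, at each stage writing the partial product applied to $s_{\btau}$ as a sum over ``how many of the $s$ beads at each level have been moved how far,'' and checking that the hypothesis keeps all intermediate abacus configurations in the good range so that Lemma~\ref{L:OneS} applies verbatim. Summing over the split $x = l_0 + \cdots + l_{r-1}$ of the boxes going into horizontal strips and $s - x = \ell_0 + \cdots + \ell_{r-1}$ into vertical strips then gives exactly the stated formula.
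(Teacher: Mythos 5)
Your overall plan --- trace the bead moves through $f^{(s)}_0, f^{(s)}_{e-1}, \ldots$ on the abacus, use the Rouquier condition to ensure that the $s$ added nodes remain distributed in the same way $(p_0,\ldots,p_{r-1})$ across components at every intermediate step, and read off the power of $v$ from the $N(\cdot,\cdot)$ bookkeeping --- is the same route the paper takes. The paper streamlines the ungraded support statement by observing that after $f^{(s)}_0$ the distribution of the $s$ nodes across components is forced to persist at each subsequent stage, hence reducing directly to the $r=1$ result \cite[Lemma~3.6]{JLM}, rather than re-deriving the single-runner combinatorics from scratch; this is a cosmetic difference, not a genuine gap.

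There is, however, a concrete error in your account of the grading. The quantity $N(\bnu,\bla)$ counts addable and removable $i$-nodes lying \emph{above} each newly added node, and ``above'' in this paper means strictly smaller component index (or the same component and smaller row). So for a bead moved at level $k$, the cross-component contributions to $N$ come from components $0,1,\ldots,k-1$, not from the ``later components $k+1,\ldots,r-1$'' as you assert. The cumulative contribution over the whole induction chain from levels $l<k$ telescopes to exactly $k$ --- one counts addable $0$-nodes on levels $l<k$ in the first step and removable nodes in the remaining steps, and the Rouquier spacing makes the alternating sum collapse to $\sum_{l<k}\bigl((\mathfrak{b}^l_{e-1}-\mathfrak{b}^l_0+1)-\sum_{m=1}^{e-1}(\mathfrak{b}^l_m-\mathfrak{b}^l_{m-1})\bigr)=k$ --- and the level-$k$ contribution is $+1$ for a bead ending on runner $j-1$ and $0$ for one ending on runner $j$, giving the coefficients $(k+1)$ and $k$. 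Your stated reasoning, contributions from $k+1,\ldots,r-1$, would instead yield a coefficient depending on $r-1-k$, which contradicts the formula $(k+1)l_k+k\ell_k$ you write at the end. You would need to reverse the direction of this bookkeeping and then actually carry out the telescoping calculation to recover the correct exponent.
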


\begin{proof}
When $r=1$, the ungraded version of this result is~\cite[Lemma~3.6]{JLM}. We first prove the ungraded version of Proposition~\ref{L:I1} by reducing it to the $r=1$ case. We want to consider
\[f^{(s)}_{j} \ldots  f^{(s)}_{2} f^{(s)}_1  f^{(s)}_{j+1}\ldots f^{(s)}_{e-1} f^{(s)}_0 s_{\btau}\] where $\btau$ satisfies the conditions of the proposition. Suppose that $s_{\bnu}$ occurs as one of the terms in $f^{(s)}_0 s_{\btau}$, that is, $[\bnu]$ is formed by adding $s$ nodes of residue $0$ to $[\btau]$. Suppose that $p_k$ nodes were added to $[\tau^{(k)}]$ so that $p_0+p_1+\ldots+p_{r-1}=s$. Then, by the assumptions on $\btau$, for $0 \leq k \leq r-1$, $[\nu^{(k)}]$ has at most $p_k$ addable $(e-1)$-nodes. Hence any multipartition formed from $\bnu$ by adding $s$ $(e-1)$-nodes must again add $p_k$ nodes to $[\nu^{(k)}]$ for all $0 \leq k \leq r-1$. Continuing in this way, we see that the only terms that occur in $f^{(s,j)}s_{\btau}$ are those formed by choosing some $p_0+p_1+\ldots+p_{r-1}=s$ and, for $0 \leq k\leq r-1$, essentially applying $f^{(p_k,j)}$ to the $k$th component of $\btau$. Hence~\cite[Lemma~3.6]{JLM} gives the ungraded version of Proposition~\ref{L:I1}.

Before giving the graded result, we consider an example which illustrates how these induction sequences occur. The reader may also find it helpful to read the proof of~\cite[Lemma~3.6]{JLM}. 

\begin{ex} Let $r=1$. Take $j=2$ and $s=4$. To save space, we have omitted the top lines of the abacuses. Below we have $\qt(\btau)=((\emp,\emp,(1),\emp))$ and $\qt(\bla)=((\emp,(2),(1^3),\emp))$. 
  \medskip
  
  \begin{center}
\abacus(nbbb,nbbb,nnbb,nnbb,nnbb,nnbb,nnnb,nnbb,nnnb,nnnb,nnnb,nnnb) \; $\xrightarrow{4:0}$
\abacus(nbbb,nbbn,bnbn,bnbb,nnbn,bnbn,bnnb,nnbb,nnnb,nnnb,nnnb,nnnb) \;
$\xrightarrow{4:3}$
\abacus(nbbb,nbnb,bnnb,bnbb,nnnb,bnnb,bnnb,nnbb,nnnb,nnnb,nnnb,nnnb) \;
$\xrightarrow{4:1}$
\abacus(nbbb,nbnb,nbnb,nbbb,nnnb,nbnb,nbnb,nnbb,nnnb,nnnb,nnnb,nnnb) \;
$\xrightarrow{4:2}$
\abacus(nbbb,nnbb,nnbb,nbbb,nnnb,nnbb,nnbb,nnbb,nnnb,nnnb,nnnb,nnnb) \;
\end{center}
\medskip
\end{ex}

We now consider the graded version of Proposition~\ref{L:I1}. Assume that $s_{\bla}$ occurs in $f^{(s,j)}s_{\btau}$ with non-zero coefficient. Then there exist $l_0,\ldots,l_{r-1},\ell_0,\ldots,\ell_{r-1}$ with $\sum_{k=0}^{r-1} (l_k + \ell_k)=s$ and 
\[c^{\la^{k}_{j-1}}_{\tau^k_{j-1} (l_k)} = c^{\la^{k}_j}_{\tau^k_j (1^{\ell_k})} =1\]
for all $0 \leq k \leq r-1$. Hence by Lemma~\ref{L:OneS1}, $\la^{k}_{j-1}$ is formed from $\tau^{k}_{j-1}$ by adding horizontal strips in non-overlapping rows and $\la^{k}_{j}$ is formed from $\tau^{k}_{j}$ by adding vertical strips in non-overlapping columns. We have a sequence of multipartitions
\[ \btau \xrightarrow{s:0} \btau(e) \xrightarrow{s:e-1} \btau(e-1) \xrightarrow{s:e-2} \ldots \xrightarrow{s:j+2} \btau(j+2) \xrightarrow{s:j+1} \btau(j+1) \xrightarrow{s:1} \btau(1) \xrightarrow{s:2} \ldots \xrightarrow{s:j} \btau(j) = \bla.\]
Let \[N=N(\btau,\btau(e)) + \sum_{m=j+1}^{e-1} N(\btau(m+1),\btau(m)) + N(\btau(j+1),\btau(1)) + \sum_{m=2}^{j} N(\btau(m-1),\btau(m))\] 
so that the coefficient of $s_{\bla}$ in $f^{(s,j)} s_{\btau}$ is $v^N$.

Let $0 \leq k \leq r-1$. 
If $\BB_k = \BB_{a_k}(\tau^{(k)})$ and $\BB'_k = \BB_{a_k}(\la^{(k)})$ then by Lemma~\ref{L:OneS} there exist $t_k, t'_k \geq 0$ and
\begin{itemize}
\item $a^k_1<a^k_2<\ldots<a^k_{t_k}$ with $a^k_l \equiv j-1 \mod e$  and $a^k_l \in \BB_k$, $a^k_l \notin \BB'_k$ for all $l$, 
\item $b^{k}_1<b^{k}_2<\ldots<b^k_{t_k}$ with $b^k_l \equiv j-1 \mod e$  and $b^{k}_l \in \BB'_k$, $b^k_l \notin \BB_k$ for all $l$,
\item $c^k_1<c^k_2<\ldots<c^k_{t'_k}$ with $c^k_l \equiv j \mod e$  and $c^k_l \in \BB_k$, $c^k_l \notin \BB'_k$ for all $l$, 
\item $d^k_1<d^k_2<\ldots<d^k_{t'_k}$ with $d^k_l \equiv j \mod e$  and $d^k_l \in \BB'_k$, $d_l \notin \BB_k$ for all $l$,
\end{itemize}
such that 
\[a^k_1 < b^k_1 < a^k_2 < b^k_2 <\ldots < a^k_{t_k} < b^k_{t_k} < c^k_1 < d^k_1 < c^k_2 < d^k_2 < \ldots < c^k_{t'_k} < d^k_{t'_k}\]
and if $b\in \Z$ is not in the list above then $b \in \BB_k \iff b \in \BB'_k$.  (The middle inequality, $b^k_{t_k}<c^k_1$ occurs because we are in a Rouquier block.) 
We can further say that if $b \equiv j-1 \mod e$ and there exists $l$ such that $a^k_l < b \leq b^k_l$ then $b \notin \BB_k$ and if $c \equiv j \mod e$ and there exists $l$ such that $c^k_l < c \leq d^k_l$ then $c \in \BB_k$.
Let
\begin{align*}
M^k_1 &=\{g \in \Z \mid \text{there exists } 1 \leq l \leq t_k \text{ such that } a^k_l \leq ge+j-1 < b^k_l\}, 
\\
M^k_2 & = \{g \in \Z \mid \text{there exists } 1 \leq l \leq t'_k \text{ such that } c^k_l \leq ge+j < d^k_l\}. &
\end{align*}
The idea is that these sets correspond to the rows of abacus $k$ where, at various steps in the induction, we will increase a $\beta$-number. Note that
$|M^k_1| = l_k$ and $|M^k_2|=\ell_k$. 

We describe which $\beta$-numbers change at each induction step. 

\begin{itemize}
\item $\btau \xrightarrow{s:0} \btau(e)$: 
For each $0 \leq k \leq r-1$, 
 on component $k$, increase the $\beta$-number of all beads in the set $\{ge+e-1 \mid g \in M^k_1 \cup M^k_2\}$ by $1$. 
\item 
$\btau(m+1) \xrightarrow{s:m} \btau(m)$ for $e-1 \geq m \geq j+1$: For each $0 \leq k \leq r-1$,
on component $k$, increase the $\beta$-number of all beads in the set $\{ge+m-1 \mid g \in M^k_1 \cup M^k_2\}$ by $1$. 
\item 
$\btau(j+1) \xrightarrow{s:1} \btau(1)$: For each $0 \leq k \leq r-1$,
on component $k$, increase the $\beta$-number of all beads in the set $\{(g+1)e \mid g \in M^{k}_1 \cup M^{k}_2\}$ by $1$.
\item 
$\btau(m-1) \xrightarrow{s:m} \btau(m)$ for $1 \leq m \leq j-1$: For each $0 \leq k \leq r-1$,
on component $k$, increase the $\beta$-number of all beads in the set $\{(g+1)e+m-1 \mid g \in M^{k}_1 \cup M^{k}_2\}$ by $1$.
\item $\btau(j-1) \xrightarrow{s:j} \btau(j)$:  For each $0 \leq k \leq r-1$,
on component $k$, increase the $\beta$-number of all beads in the set 
$\{ge+j-1 \mid g \in M^k_1\} \cup \{(g+1)e +j-1 \mid g \in M^{k}_2\}$ by $1$. 
\end{itemize}
So for $g \in \bigcup_{k=0}^{r-1} M^k_1 \cup M^k_2$, we can consider the contribution made by $g$ to $N$. First, consider the removable and addable nodes from components $l<k$. 
At the first step of the induction, there are addable $0$-nodes on component $l$ of $\bnu$ but no removable $0$-nodes on component $l$ of $\bla$. At later steps, only removable nodes contribute to the sum. 
The contribution is 
\[\sum_{l=0}^{k-1} \Big((\mathfrak{b}^{l}_{e-1}(\btau) - \mathfrak{b}^{l}_0(\btau) +1) 
- \sum_{m=1}^{e-1} (\mathfrak{b}^{l}_{m}(\btau) - \mathfrak{b}^{l}_{m-1}(\btau)) \Big) = k,\]
where we recall from Section~\ref{SS:Rouq} that $\mathfrak{b}^{l}_i(\btau)$ is the position of the last bead on runner $i$ of abacus $l$ in the abacus configuration for $\bar{\btau}$. 
Now consider the removable and addable nodes from component $k$ which contribute to the sum. If $g \in M_2^k$, then over the course of the inductions, there is an overall contribution of $0$. If $g \in M_1^k$, then over the course of the inductions, there is an overall contribution of $1$. Hence 
\[N= \sum_{k=0}^{r-1}\left( \sum_{g \in M^k_1} (k+1) + \sum_{g \in M^k_2} k \right) = \sum_{k=0}^{r-1} (k+1) l_k + k \ell_k.\] 
\end{proof}

There are two immediate consequences of Proposition~\ref{L:I1} which will be important later. 

\begin{corollary}
Suppose $s >0$ and that $\btau \in \Rs$. Let $1 \leq j \leq e-1$. 
If $f^{(s,j)} s_{\btau} = \sum_{\bla} b_{\bla}(v) s_{\bla}$, then $b_{\bla}(v) \in \N[v]$ for all $\bla \in \La^r$.   
\end{corollary}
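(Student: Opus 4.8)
The plan is that this follows immediately from Proposition~\ref{L:I1}, so the proof is essentially a remark. That proposition gives, for $\btau \in \Rs$,
\[f^{(s,j)} s_{\btau} = \sum_{\bla \in I^j(\btau)} b_{\bla}(v) s_{\bla}, \qquad b_{\bla}(v) = \sum_{x=0}^s \sum_{\substack{l_0+\ldots+l_{r-1}=x\\ \ell_0+\ldots + \ell_{r-1} = s-x}}\prod_{k=0}^{r-1} v^{(k+1) l_k + k \ell_k} c^{\la^{k}_{j-1}}_{\tau^{k}_{j-1} (l_k)} c^{\la^{k}_{j}}_{\tau^{k}_{j}(1^{\ell_k})},\]
and $b_{\bla}(v) = 0$ for every $\bla \notin I^j(\btau)$. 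I would then argue as follows. For $\bla \notin I^j(\btau)$ there is nothing to prove, since $0 \in \N[v]$. For $\bla \in I^j(\btau)$, the coefficient $b_{\bla}(v)$ is a \emph{finite} sum (the index set is finite because $l_k,\ell_k \geq 0$ and $\sum_k (l_k+\ell_k) = s$), each of whose terms is the product of the monomial $v^{\sum_{k=0}^{r-1} ((k+1) l_k + k \ell_k)}$, whose exponent is a non-negative integer since $l_k, \ell_k, k \geq 0$, with the Littlewood--Richardson coefficients $c^{\la^{k}_{j-1}}_{\tau^{k}_{j-1} (l_k)}$ and $c^{\la^{k}_{j}}_{\tau^{k}_{j}(1^{\ell_k})}$, all of which lie in $\N$ by definition. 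Hence each term lies in $\N[v]$, and therefore so does $b_{\bla}(v)$.

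There is no genuine obstacle here: all of the substance is already contained in Proposition~\ref{L:I1}, and the only points to verify are that the sum defining $b_{\bla}(v)$ is finite and that the exponents $(k+1) l_k + k \ell_k$ are non-negative, both of which are immediate from the constraints $l_k, \ell_k \geq 0$. One could additionally remark that $I^j(\btau)$ is itself finite, since every $\bla \in I^j(\btau)$ shares a multicore with $\btau$ and satisfies $|\bla| = |\btau| + se$, but this is not needed for the statement as phrased.
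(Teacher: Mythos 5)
Your proposal is correct and is exactly the argument the paper intends: the corollary is stated as an immediate consequence of Proposition~\ref{L:I1}, and reading off from its formula that each coefficient is a finite sum of non-negative LR coefficients times monomials $v^{(k+1)l_k+k\ell_k}$ with non-negative exponents is all that is required.
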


\begin{corollary}
Suppose $s >0$ and that $\btau, \bsig \in \Rs$ are such that $\btau \sim \bsig$. Let $1 \leq j \leq e-1$. Suppose that $s_{\bmu}$ appears with non-zero coefficient in $f^{(s,j)} s_{\btau}$ and $s_{\bla}$ appears with non-zero coefficient in $f^{(s,j)} s_{\bsig}$. Then $\btau \approx \bsig$ if and only if $\bmu \approx \bla$. 
\end{corollary}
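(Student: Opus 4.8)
The plan is to reduce the statement to elementary bookkeeping about multicores and hook numbers, all of which is supplied by Proposition~\ref{L:I1} together with the shape of the operator $f^{(s,j)}$.

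First I would record what Proposition~\ref{L:I1} gives. Since $\btau\in\Rs$ and $s_{\bmu}$ occurs with non-zero coefficient in $f^{(s,j)}s_{\btau}$, Proposition~\ref{L:I1} forces $\bmu\in I^j(\btau)$; in particular $\bmu$ and $\btau$ have the same multicore, so $\bar{\bmu}=\bar{\btau}$. Applying the same reasoning to $\bsig$ and $\bla$ gives $\bar{\bla}=\bar{\bsig}$.

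Next I would compute the hook numbers. The operator
\[f^{(s,j)} = f^{(s)}_{j} \ldots f^{(s)}_{2} f^{(s)}_1 f^{(s)}_{j+1}\ldots f^{(s)}_{e-1} f^{(s)}_0\]
is a product in which each $f^{(s)}_i$ with $i\in I$ appears exactly once, so applying it to $s_{\btau}$ adds precisely $s$ nodes of each residue, hence $se$ nodes in total; thus every surviving term satisfies $|\bmu| = |\btau| + se$, and likewise $|\bla| = |\bsig| + se$. Combining this with the previous step,
\[\hook(\bmu) = \tfrac{1}{e}\bigl(|\bmu| - |\bar{\bmu}|\bigr) = \tfrac1e\bigl(|\btau| + se - |\bar{\btau}|\bigr) = \hook(\btau) + s,\]
and similarly $\hook(\bla) = \hook(\bsig) + s$.

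Finally I would assemble the equivalence. By definition $\bmu\approx\bla$ if and only if $\bar{\bmu}=\bar{\bla}$ and $\hook(\bmu)=\hook(\bla)$. By the first step, $\bar{\bmu}=\bar{\bla}$ is equivalent to $\bar{\btau}=\bar{\bsig}$, and by the second step, after cancelling the common summand $s$, the condition $\hook(\bmu)=\hook(\bla)$ is equivalent to $\hook(\btau)=\hook(\bsig)$; hence $\bmu\approx\bla$ holds if and only if $\btau$ and $\bsig$ have the same multicore and the same hook number, i.e.\ if and only if $\btau\approx\bsig$. There is no substantial obstacle; the only point that needs a moment's attention is the claim that $f^{(s,j)}$ shifts every surviving term by the same amount $se$, which is transparent from its definition. (Note that the hypothesis $\btau\sim\bsig$ is not actually used in the argument, even though it is present in the situations where the corollary is applied.)
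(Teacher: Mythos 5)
Your argument is correct, and it matches what the paper intends by labelling this a corollary that follows immediately from Proposition~\ref{L:I1}: the proposition forces $\bar{\bmu}=\bar{\btau}$ and $\bar{\bla}=\bar{\bsig}$, while the fact that each residue occurs exactly once in $f^{(s,j)}$ gives $\hook(\bmu)=\hook(\btau)+s$ and $\hook(\bla)=\hook(\bsig)+s$, so the equivalence transfers directly. Your parenthetical remark that the hypothesis $\btau\sim\bsig$ is not logically needed is also right, though it is harmless: if $\btau\not\sim\bsig$ then $\btau\not\approx\bsig$ (since $\approx$ refines $\sim$), and the same calculation shows $\bmu\not\approx\bla$, so both sides of the biconditional fail anyway.
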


Now suppose that $\bmu \in \RR$. Define
\[Q(\bmu) = \sum_{\bla \approx \bmu} g_{\bla\bmu}(v) s_{\bla} \qquad \text{ and } \qquad \check{Q}(\bmu) = \sum_{\bla \approx \bmu} g_{\bla\bmu}(1) s_{\bla} .\] 

The heavy lifting for the proof of the main theorem is done by Proposition~\ref{P:HeavyGraded} below. We first prove an ungraded version of the proposition. We abuse notation slightly by identifying a multipartition with its quotient. 

When $r=1$, Proposition~\ref{P:HeavyUngraded} appears as~\cite[Lemma~3.10 b]{JLM}. 

\begin{proposition} \label{P:HeavyUngraded}
Suppose $s >0$ and that $\bnu \in \Rrs$. 
Let $1 \leq j \leq e-1$.
Then
  \[f^{(s,j)} \check{Q}(\bnu) = \sum_{\Delta \in \La}c^{\Delta}_{\nu^{0}_j (1^s)} \check{Q}((\nu^0_0,\ldots,\nu^{0}_{j-1},\Delta,\nu^{0}_{j+1},\ldots,\nu^0_{e-1}),(\nu^1_0,\nu^1_1,\ldots,\nu^1_{e-1}),\ldots,(\nu^{r-1}_0,\nu^{r-1}_1,\ldots,\nu^{r-1}_{e-1})). \]
  \end{proposition}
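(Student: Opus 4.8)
The plan is to compute $f^{(s,j)}\check{Q}(\bnu)$ directly by expanding $\check{Q}(\bnu) = \sum_{\btau \approx \bnu} g_{\btau\bnu}(1)\,s_{\btau}$, applying the ungraded version of Proposition~\ref{L:I1} to each $s_{\btau}$, and then collecting the coefficient of each $s_{\bla}$ on the right-hand side. Since Proposition~\ref{L:I1} only changes the quotient entries on runners $j-1$ and $j$ (in every component), and since $\check{Q}(\bnu)$ runs over all $\btau\approx\bnu$, the terms appearing in $f^{(s,j)}\check{Q}(\bnu)$ are exactly the $s_{\bla}$ with $\bla\approx\bnu'$ for the various multipartitions $\bnu'$ obtained from $\bnu$ by replacing $\nu^0_j$ by some $\Delta$ with $|\Delta| = |\nu^0_j|+s$ — wait, more carefully: $f^{(s,j)}$ moves $s$ nodes' worth of quotient-mass from runner $j-1$ into runner $j$ summed over all components, but because $\bmu = \bnu$ is $e$-regular we need to track how the $g$-coefficients recombine. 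So the real content is a Littlewood-Richardson identity.

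Concretely: fix a target $\bla$ appearing on the right. By Proposition~\ref{L:I1}, the coefficient of $s_{\bla}$ in $f^{(s,j)}\check{Q}(\bnu)$ is
\[
\sum_{\btau\approx\bnu}\ g_{\btau\bnu}(1)\sum_{\substack{l_0+\cdots+l_{r-1}+\\ \ell_0+\cdots+\ell_{r-1}=s}}\prod_{k=0}^{r-1} c^{\la^k_{j-1}}_{\tau^k_{j-1}(l_k)}\,c^{\la^k_j}_{\tau^k_j(1^{\ell_k})},
\]
where the sum over $\btau$ is really a sum over the quotient entries $\tau^k_{j-1},\tau^k_j$ (all other entries of $\btau$ equal those of $\bla$, hence of $\bnu$, by the shape of $I^j(\btau)$ and the fact that only these entries vary). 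Now substitute the explicit formula \eqref{defn:f} for $g_{\btau\bnu}(1)$; the key observation is that the entries $\tau^k_{j-1}$ and $\tau^k_j$ enter $g_{\btau\bnu}(1)$ only through the factors indexed by $i=j-1$ and $i=j$ in the double product — the factors $c^{\delta^k_{j-1}}_{\tau^k_{j-1}\gamma^k_{j-1}}$, $c^{\tau^k_{j-1}}_{\be^k_{j-1}(\al^k_j)'}$, $c^{\delta^k_j}_{\tau^k_j\gamma^k_j}$, $c^{\tau^k_j}_{\be^k_j(\al^k_{j+1})'}$ — while $g_{\bla\bnu}(1)$ has the same structure with $\tau$ replaced by $\la$. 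So after summing over $\tau^k_{j-1},\tau^k_j$ and over the $l_k,\ell_k$, what must be proved is, for each $k$ and for the appropriate partitions, identities of the shape
\[
\sum_{\tau}\ c^{\tau}_{a(1^{z})}\,c^{\delta}_{\tau\,\gamma} \;=\; \sum_{\tau',\tau''}\cdots
\]
(horizontal-strip case) and a dual vertical-strip version, which one recognizes as instances of Lemma~\ref{LR2} and Lemma~\ref{LR1} — after pulling the single-row/single-column partitions $(l_k)$ resp. $(1^{\ell_k})$ through the LR coefficients using associativity of the tensor product. The fact that $\mu^k_0=\emp$ and the Rouquier condition (which forces the bead-moving intervals on runners $j-1$ and $j$ not to interact, as recorded in the middle inequality $b^k_{t_k}<c^k_1$ in the proof of Proposition~\ref{L:I1}) guarantee that the sums decouple cleanly across $i$ and across $k$, so no cross-terms between $\la^k_{j-1}$ and $\la^k_j$ arise except the intended single factor $c^{\Delta}_{\nu^0_j(1^s)}$ on the right.

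I expect the main obstacle to be the bookkeeping of the $l_k,\ell_k$ partition of $s$ across components together with the LR-coefficient manipulations: one wants, after all substitutions, to reassemble everything into $\sum_{\Delta} c^{\Delta}_{\nu^0_j(1^s)}\,g_{\bla,\bnu^\Delta}(1)$ where $\bnu^\Delta$ denotes $\bnu$ with $\nu^0_j$ replaced by $\Delta$ — and the subtlety is that the "$(1^s)$ all added in the $0$th component" on the right emerges only after summing the $c^{\la^k_j}_{\tau^k_j(1^{\ell_k})}$ contributions over all $k$ and using $c^{\emp}_{\gamma^0_0\ldots\gamma^0_{e-1}}$ to kill the top row of $\bgam$ and the first-column constraint. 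Since the $r=1$ case is already \cite[Lemma~3.10\,b]{JLM}, the cleanest route is probably to cite the $r=1$ argument component-by-component: apply Proposition~\ref{L:I1} to reduce to $r$ independent single-level computations glued by the constraints $\sum_k l_k + \sum_k \ell_k = s$, carry out the $r=1$ LR identity in each, and then observe that the gluing constraints together with the $c^{\emp}$ factors force the net effect to be the single convolution with $(1^s)$ on $\nu^0_j$. Verifying that last gluing step — that the vertical-strip additions on runner $j$ across all components, subject to the $\sum_k\ell_k \le s$ bound and the Rouquier non-interaction, assemble into exactly $c^{\Delta}_{\nu^0_j(1^s)}$ — is where I would spend the most care; it is essentially the statement that adding a vertical $s$-strip to $\nu^0_j$ distributed over the $r$ components and then "folding" is the same as adding it all at once, which follows from Lemma~\ref{L:OneS1} together with the size constraints in Lemma~\ref{l:lims}.
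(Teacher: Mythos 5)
Your high-level strategy matches the paper's: expand $\check{Q}(\bnu)$, apply Proposition~\ref{L:I1} term-by-term, compare coefficients of $s_{\bla}$ on both sides, and then resolve a Littlewood--Richardson identity using Lemmas~\ref{LR1} and~\ref{LR2}. That part of the plan is sound.

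However, there is a genuine gap at exactly the step you flag as the subtle one. Your proposal is to reduce to the $r=1$ case ``component-by-component'' and then argue that the ``gluing constraints together with the $c^{\emp}$ factors'' force the single convolution $c^{\Delta}_{\nu^0_j(1^s)}$ on the zeroth component. But the expression for $g_{\btau\bnu}(1)$ does not factor as a product over components $k$: the partitions $\gamma^k_i$ thread consecutive components together (via the factors $c^{\delta^k_i}_{\al^k_i\be^k_i\gamma^{k+1}_i}$), so the coefficient is a genuinely coupled sum, not $r$ independent $r=1$ computations glued by a numerical constraint on $\sum_k\ell_k$. In particular the vertical-strip contributions $c^{\la^k_j}_{\tau^k_j(1^{\ell_k})}$ for different $k$ cannot be recombined into a single $(1^s)$ on $\nu^0_j$ by applying \cite[Lemma~3.10 b]{JLM} separately at each level; you need an identity that transports a cumulative column of size $s$ \emph{through} the chain $\gamma^r\to\gamma^{r-1}\to\cdots\to\gamma^0$, absorbing the $\ell_k$'s along the way and using the boundary conditions $\gamma^0_i=\gamma^r_i=\emp$. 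This is exactly what Lemma~\ref{L:Tunnel} supplies (and it is the reason that lemma is stated and proved by its own induction on $m$ from $r-1$ down to $0$). The paper's proof proceeds in two LR stages: a local clean-up using Lemmas~\ref{LR1}--\ref{LR2} to isolate the runner-$j$ data and introduce auxiliary partitions $e^k,f^k$, and then a global ``tunneling'' step applying Lemma~\ref{L:Tunnel} to the coupled sums over $c^k_j,d^k_j,e^k,f^k,z_k$. Lemma~\ref{L:OneS1} and the size bookkeeping in Lemma~\ref{l:lims}, which you cite as enough for the gluing step, do not give this; they only constrain sizes and say when a single LR coefficient with a hook argument is $0$ or $1$. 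Without Lemma~\ref{L:Tunnel} (or an equivalent identity), the proposed reduction does not close.
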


\begin{proof}
We look for the coefficient of the basis element $s_{\bla} \in \R$ on both sides of the equation. By Proposition~\ref{L:I1}, on the left the coefficient is given by
\begin{align} \label{E:LeftSide} \nonumber
\sum_{\btau \approx \bnu} & \sum_{\ba \in \M{r}{e+1}} \sum_{\bb \in \M{r}{e}} \sum_{\bc \in \M{{r+1}}{e}} \sum_{\bd \in \M{r}{e}} \left( \prod_{k=0}^{r-1} \prod_{i=0}^{e-1} c^{d^k_i}_{\nu^k_i c^k_i} c^{d^k_i}_{a^k_i b^k_i c^{k+1}_i } c^{\tau^k_i}_{b^k_i (a^k_{i+1})'}\right)  c^{\emp}_{c^0_0 c^0_1 \ldots c^{0}_{e-1}} c^{\emp}_{c^r_0 c^r_1 \ldots c^r_{e-1}}\\ \nonumber
&\qquad \left(\prod_{k=0}^{r-1} \prod_{i \neq j-1,j} c^{\la^{k}_i}_{\tau^k_i}\right) 
\left( \sum_{\substack{x_0+\ldots+x_{r-1} \\ + y_0+\ldots+y_{r-1} = s}} \prod_{k=0}^{r-1} c^{\la^k_{j-1}}_{\tau^{k}_{j-1} (x_k)} c^{\la^{k}_{j}}_{\tau^k_j (1^{y_k})}\right)\\ \nonumber
& = \sum_{\substack{\tau^0_{j-1},\ldots,\tau^{r-1}_{j-1} \\ \tau^0_j,\ldots,\tau^{r-1}_j}} \sum_{\ba \in \M{r}{e+1}} \sum_{\bb \in \M{r}{e}} \sum_{\bc \in \M{{r+1}}{e}} \sum_{\bd \in \M{r}{e}} \left( \prod_{k=0}^{r-1} \prod_{i \neq j-1,j} c^{d^k_i}_{\nu^k_i c^k_i} c^{d^k_i}_{a^k_i b^k_i c^{k+1}_i } c^{\la^k_i}_{b^k_i (a^k_{i+1})'}\right) c^{\emp}_{c^0_0 c^0_1 \ldots c^{0}_{e-1}} c^{\emp}_{c^r_0 c^r_1 \ldots c^{r}_{e-1}} \\ 
&\qquad 
\sum_{\substack{x_0+\ldots+x_{r-1} \\ + y_0+\ldots+y_{r-1} = s}} \prod_{k=0}^{r-1}
c^{d^k_{j-1}}_{\nu^k_{j-1} c^k_{j-1}} c^{d^k_{j-1}}_{a^k_{j-1} b^k_{j-1} c^{k+1}_{j-1}} c^{\tau^k_{j-1}}_{b^k_{j-1} (a^k_{j})'} 
c^{d^k_j}_{\nu^k_j c^k_j} c^{d^k_j}_{a^k_j b^k_j c^{k+1}_j} c^{\tau^k_j}_{b^k_j (a^k_{j+1})'} c^{\la^k_{j-1}}_{\tau^{k}_{j-1} (x_k)} c^{\la^{k}_{j}}_{\tau^k_j (1^{y_k})}
\end{align}
while on the right it is given by
\begin{align} \label{E:RightSide}
\sum_{\Delta \in \La} c^{\Delta}_{\nu^0_j (1^s)} 
& \sum_{\bal \in \M{r}{e+1}} \sum_{\bbe \in \M{r}{e}} \sum_{\bgam \in \M{{r+1}}{e}} \sum_{\bdel \in \M{r}{e}} \left(\prod_{i \neq j} c^{\delta^0_i}_{\nu^0_i \gamma^0_i} c^{\delta^0_i}_{\al^0_i \be^0_i \gamma^{1}_i } c^{\la^0_i}_{\be^0_i (\al^0_{i+1})'}\right) c^{\delta^0_j}_{\Delta \gamma^0_j} c^{\delta^0_j}_{\al^0_j \be^0_j \gamma^{1}_j } c^{\la^0_j}_{\be^0_j (\al^0_{j+1})'} \nonumber\\ 
& \qquad \qquad \left( \prod_{k=1}^{r-1} \prod_{i=0}^{e-1} c^{\delta^k_i}_{\nu^k_i \gamma^k_i} c^{\delta^k_i}_{ \al^k_i \be^k_i \gamma^{k+1}_i} c^{\la^k_i}_{\be^k_i (\al^k_{i+1})'}\right) c^{\emp}_{\gamma^0_0 \gamma^0_1 \ldots \gamma^0_{e-1}} c^{\emp}_{\gamma^r_0 \gamma^r_1 \ldots \gamma^r_{e-1}}  \nonumber \\
& = \sum_{\bal \in \M{r}{e+1}} \sum_{\bbe \in \M{r}{e}} \sum_{\bgam \in \M{{r+1}}{e}} \sum_{\bdel \in \M{r}{e}} \left(\prod_{i \neq j} c^{\delta^0_i}_{\nu^0_i \gamma^0_i} c^{\delta^0_i}_{ \al^0_i \be^0_i \gamma^{1}_i} c^{\la^0_i}_{\be^0_i (\al^0_{i+1})'}\right) c^{\delta^0_j}_{\nu^0_j \gamma^0_j (1^s)} c^{\delta^0_j}_{ \al^0_j \be^0_j \gamma^{1}_j} c^{\la^0_j}_{\be^0_j (\al^k_{j+1})'} \nonumber \\ 
& \qquad \qquad \left( \prod_{k=1}^{r-1} \prod_{i=0}^{e-1} c^{\delta^k_i}_{\nu^k_i \gamma^k_i} c^{\delta^k_i}_{\al^k_i \be^k_i \gamma^{k+1}_i } c^{\la^k_i}_{\be^k_i (\al^k_{i+1})'}\right) c^{\emp}_{\gamma^0_0 \gamma^0_1 \ldots \gamma^0_{e-1}} c^{\emp}_{\gamma^r_0 \gamma^r_1 \ldots \gamma^r_{e-1}} 
\end{align}

 Now we look at some of the terms from the second line of Equation~\ref{E:LeftSide}. For fixed partitions $d^k_j$, $c^{k+1}_j$, $a^k_{j+1}$, $b^k_{j-1}$, $\tau^{k}_{j-1}$, $\tau^k_j$ where $0 \leq k \leq r-1$ we have
 \begin{align*} 
\sum_{a^0_j,\ldots,a^{r-1}_j} & \sum_{b^0_j,\ldots,b^{r-1}_j}  
\sum_{\substack{x_0+\ldots+x_{r-1} \\ + y_0+\ldots+y_{r-1} = s}} \prod_{k=0}^{r-1}
c^{\tau^k_{j-1}}_{b^k_{j-1} (a^k_{j})'} 
c^{d^k_j}_{a^k_j b^k_j c^{k+1}_j } c^{\tau^k_j}_{b^k_j (a^k_{j+1})'} c^{\la^k_{j-1}}_{\tau^{k}_{j-1} (x_k)} c^{\la^{k}_{j}}_{\tau^k_j (1^{y_k})} \\
&= \sum_{a^0_j,\ldots,a^{r-1}_j} \sum_{b^0_j,\ldots,b^{r-1}_j}  
\sum_{\substack{x_0+\ldots+x_{r-1} \\ + y_0+\ldots+y_{r-1} = s}} \prod_{k=0}^{r-1}
c^{d^k_j}_{a^k_j b^k_j c^{k+1}_j } c^{\tau^k_{j-1}}_{b^k_{j-1} (a^k_{j})'} c^{\la^k_{j-1}}_{\tau^{k}_{j-1} (x_k)} c^{\tau^k_j}_{b^k_j (a^k_{j+1})'}
c^{\la^{k}_{j}}_{\tau^k_j (1^{y_k})} \\
\intertext{and applying Lemma~\ref{LR1} repeatedly}
&=\sum_{a^0_j,\ldots,a^{r-1}_j} \sum_{b^0_j,\ldots,b^{r-1}_j} \sum_{e^0,\ldots,e^{r-1}}  \sum_{\substack{x_0+\ldots+x_{r-1} \\ + y_0+\ldots+y_{r-1} = s}} \prod_{k=0}^{r-1}
c^{d^k_j}_{c^{k+1}_j e^k} c^{e^k}_{a^k_j b^k_j} c^{\la_{j-1}^k}_{b^k_{j-1}\tau^k_{j-1}}c^{\tau^k_{j-1}}_{(a^k_j)'(x_k)}  c^{\la^{k}_j}_{(a^k_{j+1})' \tau^k_j}  c^{\tau^k_j}_{b^k_j (1^y_k)} \\ 
&=\sum_{a^0_j,\ldots,a^{r-1}_j} \sum_{b^0_j,\ldots,b^{r-1}_j} \sum_{e^0,\ldots,e^{r-1}}  \sum_{\substack{x_0+\ldots+x_{r-1} \\ + y_0+\ldots+y_{r-1} = s}} \prod_{k=0}^{r-1}
c^{d^k_j}_{c^{k+1}_j e^k}  c^{\la_{j-1}^k}_{b^k_{j-1}\tau^k_{j-1}} c^{\la^{k}_j}_{(a^k_{j+1})' \tau^k_j} c^{e^k}_{a^k_j b^k_j}c^{\tau^k_{j-1}}_{(a^k_j)'(x_k)}  c^{\tau^k_j}_{b^k_j (1^y_k)} \\ 
\intertext{and applying Lemma~\ref{LR2}} 
 &= \sum_{e^0,\ldots,e^{r-1}} \sum_{f^0,\ldots,f^{r-1}} \sum_{z_1+\ldots+z_r = s} \prod_{k=0}^{r-1}
c^{d^k_j}_{c^{k+1}_j e^k} c^{\la_{j-1}^k}_{b^k_{j-1}\tau^k_{j-1}}
c^{\la^{k}_j}_{(a^k_{j+1})' \tau^k_j}
c^{f_k}_{(\tau^k_{j-1})' \tau^k_j} c^{f_k}_{e^k (1^{z_k})} \\
\end{align*}
 
We now substitute this expression back into Equation~\ref{E:LeftSide}. We also perform the change of variables $\tau^k_{j-1} \mapsto (a^k_j)'$ and $\tau^k_j \mapsto b^k_j$ for $0 \leq k \leq r-1$. Thus we see that Equation~\ref{E:LeftSide} is equal to
\begin{align} \label{E:Next}
\sum_{\ba \in \M{r}{e+1}} & \sum_{\bb \in \M{r}{e}} \sum_{\bc \in \M{{r+1}}{e}} \sum_{\bd \in \M{r}{e}} \left( \prod_{k=0}^{r-1} \prod_{i \neq j-1,j} c^{d^k_i}_{\nu^k_i c^k_i} c^{d^k_i}_{ a^k_i b^k_i c^{k+1}_i} c^{\la^k_i}_{b^k_i (a^k_{i+1})'}\right) c^{\emp}_{c^0_0 c^0_1 \ldots c^0_{e-1}} c^{\emp}_{c^r_0 c^r_1 \ldots c^r_{e-1}}  \nonumber\\ \nonumber
& \qquad \sum_{e^0,\ldots,e^{r-1}} \sum_{f^0,\ldots,f^{r-1}} \sum_{\substack{z_0+\ldots+z_{r-1} = s}} \prod_{k=0}^{r-1}
c^{d^k_{j-1}}_{\nu^k_{j-1} c^k_{j-1}} c^{d^k_{j-1}}_{ a^k_{j-1} b^k_{j-1} c^{k+1}_{j-1}} 
c^{d^k_j}_{\nu^k_j c^k_j} 
c^{d^k_j}_{c^{k+1}_j e^k} c^{\la_{j-1}^k}_{b^k_{j-1}(a^k_{j})'}
c^{\la^{k}_j}_{b^k_j (a^k_{j+1})'} c^{f_k}_{a^k_{j} b^k_j}
c^{f_k}_{e^k(1^{z_k})} \\ \nonumber
&= \sum_{\ba \in \M{r}{e+1}} \sum_{\bb \in \M{r}{e}} \sum_{\bc \in \M{{r+1}}{e}} \sum_{\bd \in \M{r}{e}} \left( \prod_{k=0}^{r-1} \prod_{i \neq j-1,j} c^{d^k_i}_{\nu^k_i c^k_i} c^{d^k_i}_{ a^k_i b^k_i c^{k+1}_i} c^{\la^k_i}_{b^k_i (a^k_{i+1})'}\right)  c^{\emp}_{c^0_0 c^0_1 \ldots c^{0}_{e-1}} \\ \nonumber
& \qquad \left(\prod_{k=0}^{r-1} 
c^{d^k_{j-1}}_{\nu^k_{j-1} c^k_{j-1}} c^{d^k_{j-1}}_{ a^k_{j-1} b^k_{j-1} c^{k+1}_{j-1}}
c^{\la_{j-1}^k}_{b^k_{j-1}(a^k_{j})'} \right) \\
& \qquad \qquad \left(\sum_{e^0,\ldots,e^{r-1}} \sum_{f^0,\ldots,f^{r-1}} \sum_{\substack{z_0+\ldots+z_{r-1} = s}} \prod_{k=0}^{r-1}c^{d^k_j}_{\nu^k_j c^k_j} 
c^{d^k_j}_{c^{k+1}_j e^k}c^{\la^{k}_j}_{b^k_j (a^k_{j+1})'} c^{f^k}_{a^k_{j} b^k_j}
c^{f^k}_{e^k(1^{z_k})}
\right) c^{\emp}_{c^r_0 c^r_1 \ldots c^r_{e-1}}
\end{align}

Now we focus on the last line of Equation~\ref{E:Next}. Applying Lemma~\ref{L:Tunnel} we have
\begin{align*}
\sum_{c^{0}_j,\ldots,c^{r}_j} & \sum_{d^0_j,\ldots,d^{r-1}_j}\sum_{e^0,\ldots,e^{r-1}}  \sum_{f^0,\ldots,f^{r-1}} \sum_{\substack{z_0+\ldots+z_{r-1} = s}} \prod_{k=0}^{r-1} c^{f^k}_{a^k_{j} b^k_j}c^{\la^{k}_j}_{b^k_j (a^k_{j+1})'} c^{d^k_j}_{\nu^k_j c^k_j} 
c^{d^k_j}_{c^{k+1}_j e^k}
c^{f^k}_{e^k(1^{z_k})} c^{\emp}_{c^r_0 c^r_1 \ldots c^r_{e-1}} \\
& = \sum_{c^{0}_j,\ldots,c^{r}_j}\sum_{d^0_j,\ldots,d^{r-1}_j} \sum_{f^0,\ldots,f^{r-1}} \left(\prod_{k=0}^{r-1} 
c^{f^k}_{a^k_{j} b^k_j}c^{\la^{k}_j}_{b^k_j (a^k_{j+1})'}\right) 
c^{d^0_j}_{\nu^0_j c^0_j (1^s)}
\left(\prod_{k=1}^{r-1} c^{d^{k-1}_j}_{c^k_j f^{k-1}} c^{d^k_j}_{\nu^k_j c^k_j} \right)c^{f^{r-1}}_{d^{r-1}_j} 
c^{\emp}_{c^r_0 c^r_1 \ldots c^r_{e-1}} \\
& = \sum_{c^{0}_j,\ldots,c^{r}_j}\sum_{d^0_j,\ldots,d^{r-1}_j} \sum_{f^0,\ldots,f^{r-1}} \left(\prod_{k=0}^{r-1} 
c^{f^k}_{a^k_{j} b^k_j}c^{\la^{k}_j}_{b^k_j (a^k_{j+1})'}  c^{d^{k}_j}_{c^{k+1}_j f^{k}}\right) 
c^{d^0_j}_{\nu^0_j c^0_j (1^s)}
\left(\prod_{k=1}^{r-1} c^{d^k_j}_{\nu^k_j c^k_j} \right)
c^{\emp}_{c^r_0c^r_1 \ldots c^r_{e-1}} \\
& = \sum_{c^{0}_j,\ldots,c^{r}_j}\sum_{d^0_j,\ldots,d^{r-1}_j}
c^{d^0_j}_{a^0_j b^0_j c^1_j} c^{\la^0_j}_{b^0_j (a^0_{j+1})'} c^{d^0_j}_{\nu^0_j c^0_j (1^s)} 
\left(\prod_{k=1}^{r-1} c^{d^{k}_j}_{a^k_j b^k_j c^{k+1}_j} c^{\la^k_j}_{b^k_j (a^k_{j+1})'} c^{d^k_j}_{\nu^k_j c^k_j}\right) c^{\emp}_{c^r_0 c^r_1 \ldots c^r_{e-1}}
\end{align*}

Finally, substituting this expression back into Equation~\ref{E:Next} we see that Equation~\ref{E:LeftSide} is equal to 
\begin{multline*}
 \sum_{\ba \in \M{r}{e+1}} \sum_{\bb \in \M{r}{e}} \sum_{\bc \in \M{{r+1}}{e}} \sum_{\bd \in \M{r}{e}} 
\left( \prod_{i \neq j} c^{d^0_j}_{\nu^0_i c^0_i} c^{d^0_i}_{c^1_i a^0_i b^0_i} c^{\la^0_i}_{b^0_i (a^0_{i+1})'}
\right) 
c^{d^0_j}_{a^0_j b^0_j c^1_j} c^{\la^0_j}_{b^0_j (a^0_{j+1})'} c^{d^0_j}_{\nu^0_j c^0_j (1^s)}\\
 \left( \prod_{k=1}^{r-1} \prod_{i=0}^{e-1} c^{d^k_i}_{\nu^k_i c^k_i} c^{d^k_i}_{a^k_i b^k_i c^{k+1}_i } c^{\la^k_i}_{b^k_i (a^k_{i+1})'}\right)  c^{\emp}_{c^0_0 c^0_1 \ldots c^{0}_{e-1}} c^{\emp}_{c^r_0 c^r_1 \ldots c^r_{e-1}}
\end{multline*}
This is indeed equal to Equation~\ref{E:RightSide}. 
\end{proof}

\begin{proposition} \label{P:HeavyGraded}
Suppose $s >0$ and that $\bnu \in \Rrs$.
Let $1 \leq j \leq e-1$.
Then
  \begin{equation} \label{E:Grad}
 f^{(s,j)} Q(\bnu) = \sum_{\Delta}c^{\Delta}_{\nu^{0}_j (1^s)} Q((\nu^0_0,\ldots,\nu^{0}_{j-1},\Delta,\nu^{0}_{j+1},\ldots,\nu^0_{e-1}),(\nu^1_0,\nu^1_1,\ldots,\nu^1_{e-1}),\ldots,(\nu^{r-1}_0,\nu^{r-1}_1,\ldots,\nu^{r-1}_{e-1})). \end{equation}
\end{proposition}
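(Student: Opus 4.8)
The plan is to deduce the graded identity~\ref{E:Grad} from the ungraded Proposition~\ref{P:HeavyUngraded} by tracking powers of $v$, using the fact that every coefficient in sight is a single monomial in $v$ times an ungraded count, with the exponent controlled by the function $\omega$.

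\textbf{Step 1: the $v$-power coming from $f^{(s,j)}$.} Fix $\btau\approx\bnu$ (so $\btau\in\Rs$, as $\Rs$ is a union of Rouquier blocks and $\btau\sim\bnu$) and $\bla\in I^j(\btau)$. In the formula of Proposition~\ref{L:I1} the Littlewood-Richardson coefficients $c^{\la^k_{j-1}}_{\tau^k_{j-1}(l_k)}$ and $c^{\la^k_j}_{\tau^k_j(1^{\ell_k})}$ vanish unless $l_k=|\la^k_{j-1}|-|\tau^k_{j-1}|$ and $\ell_k=|\la^k_j|-|\tau^k_j|$; hence for a fixed pair $(\btau,\bla)$ the inner sum collapses to a single term, and the coefficient $b_{\bla\btau}(v)$ of $s_\bla$ in $f^{(s,j)}s_\btau$ equals $v^{E(\bla,\btau)}b_{\bla\btau}(1)$, where $E(\bla,\btau)=\sum_{k=0}^{r-1}\big((k+1)l_k+k\ell_k\big)$ with these forced values of $l_k,\ell_k$, and $\sum_k(l_k+\ell_k)=s$. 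A direct computation from the definition of $\omega$, using that $\la^k_i=\tau^k_i$ for $i\neq j-1,j$, gives $\omega(\bla)-\omega(\btau)=\sum_k\big((e-j+k)l_k+(e-j+k-1)\ell_k\big)=E(\bla,\btau)+(e-j-1)s$. Therefore $b_{\bla\btau}(v)=v^{\,\omega(\bla)-\omega(\btau)-(e-j-1)s}\,b_{\bla\btau}(1)$.

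\textbf{Step 2: factoring the $v$-power out of the $\btau$-sum.} From the definition of $g$ we have $g_{\btau\bnu}(v)=v^{\,\omega(\btau)-\omega(\bnu)}g_{\btau\bnu}(1)$. Expanding $f^{(s,j)}Q(\bnu)=\sum_\bla\big(\sum_{\btau\approx\bnu}g_{\btau\bnu}(v)\,b_{\bla\btau}(v)\big)s_\bla$ and substituting Step~1, each summand equals $v^{\,\omega(\bla)-\omega(\bnu)-(e-j-1)s}\,g_{\btau\bnu}(1)\,b_{\bla\btau}(1)$; crucially the exponent no longer involves $\btau$. Thus the coefficient of $s_\bla$ in $f^{(s,j)}Q(\bnu)$ is $v^{\,\omega(\bla)-\omega(\bnu)-(e-j-1)s}$ times the coefficient of $s_\bla$ in $f^{(s,j)}\check Q(\bnu)$.

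\textbf{Step 3: applying the ungraded statement and matching exponents.} By Proposition~\ref{P:HeavyUngraded}, the coefficient of $s_\bla$ in $f^{(s,j)}\check Q(\bnu)$ is $\sum_\Delta c^{\Delta}_{\nu^0_j(1^s)}\,g_{\bla\bnu(\Delta)}(1)$, where $\bnu(\Delta)$ denotes $\bnu$ with the quotient entry $\nu^0_j$ replaced by $\Delta$; note $\bnu(\Delta)\in\RR$ since $\bnu\in\Rrs$ and, as $j\geq 1$, the $i=0$ entries of the quotient are untouched, so Lemma~\ref{L:Rouqreg} keeps $\bnu(\Delta)$ $e$-regular. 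Since $c^{\Delta}_{\nu^0_j(1^s)}=0$ unless $|\Delta|=|\nu^0_j|+s$, and since $\nu^0_j$ occupies row $k=0$ and column $i=j$ of the quotient array, the definition of $\omega$ gives $\omega(\bnu(\Delta))-\omega(\bnu)=(e-j-1)(|\Delta|-|\nu^0_j|)=(e-j-1)s$ for every contributing $\Delta$. Hence $v^{\,\omega(\bla)-\omega(\bnu)-(e-j-1)s}\,g_{\bla\bnu(\Delta)}(1)=v^{\,\omega(\bla)-\omega(\bnu(\Delta))}\,g_{\bla\bnu(\Delta)}(1)=g_{\bla\bnu(\Delta)}(v)$. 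Summing over $\Delta$ and then over $\bla$ recovers exactly the right-hand side of~\ref{E:Grad}.

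\textbf{Main obstacle.} The genuinely content-bearing step is the identity in Step~1, namely that the exponent $E(\bla,\btau)$ produced by the induction sequence in Proposition~\ref{L:I1} is constant on each pair $(\btau,\bla)$ and equals $\omega(\bla)-\omega(\btau)-(e-j-1)s$; once this is in hand everything else is formal, because the $v$-power then factors through the $\btau$-summation and Proposition~\ref{P:HeavyUngraded} supplies the combinatorial identity. One should also be careful to check the domains: $\btau$ ranges over the full $\approx$-class of $\bnu$ inside a Rouquier block (so that $g_{\btau\bnu}$ and $b_{\bla\btau}$ are defined and $\btau\in\Rs$), and each $\bla$ occurring lies in $\R$ with the same multicore as each $\bnu(\Delta)$, so that $g_{\bla\bnu(\Delta)}(v)$ is defined and the final sum makes sense.
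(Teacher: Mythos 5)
Your proof is correct and follows essentially the same approach as the paper's: the key step in both is that the exponent of $v$ on the coefficient of $s_{\bla}$ in $f^{(s,j)}Q(\bnu)$ equals $\omega(\bla)-\omega(\bnu)-(e-j-1)s$, independent of the intermediate $\btau$, so the entire $v$-power factors out of the $\btau$-sum and reduces the graded identity to the ungraded Proposition~\ref{P:HeavyUngraded}. The paper states this as a claim about both sides of Equation~\ref{E:Grad} and verifies each side; you organise it slightly differently (first working out the exponent in $f^{(s,j)}s_\btau$ explicitly, then factoring), but the content is the same.
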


\begin{proof}
By Proposition~\ref{P:HeavyUngraded}, if we set $v=1$ in Equation~\ref{E:Grad}, both sides of the equation are equal to $\sum_{\bla} x_{\bla} s_{\bla}$ for some $x_{\bla} \in \Z$. 
We claim that both sides of Equation~\ref{E:Grad} are then equal to 
\begin{equation} \label{E:L1} 
\sum_{\bla} v^{\omega(\bla)-\omega(\bnu)-(e-j-1)s} x_{\bla} s_{\bla}.
\end{equation}
As before, we fix $\bla$ and look for the coefficient of $s_{\bla}$ on both sides of the equation. 

Let $\bDel$ be any multipartition with 
\[\qt(\bDel) = ((\nu^0_0,\ldots,\nu^{0}_{j-1},\Delta,\nu^{0}_{j+1},\ldots,\nu^0_{e-1}),(\nu^1_0,\nu^1_1,\ldots,\nu^1_{e-1}),\ldots,(\nu^{r-1}_0,\nu^{r-1}_1,\ldots,\nu^{r-1}_{e-1}))\]
where $c^{\Delta}_{\nu^0_j (1^s)}>0$. 
Then the coefficient of $s_{\bla}$ on the right hand side of Equation~\ref{E:Grad} is equal to $v^{\omega(\bla) - \omega(\bDel)} x_{\bla}$ where $\omega(\bDel) = \omega(\bnu) + (e-j-1)s$.
So the right hand side of Equation~\ref{E:Grad} is indeed equal to Equation~\ref{E:L1}. 

Now suppose that $\btau \approx \bnu$. Then $g_{\btau \bnu}(v) = v^{\omega(\btau)-\omega(\bnu)} g_{\btau \bnu}(1)$. If $s_{\bla}$ appears in $f^{(s,j)} s_{\btau}$, it appears with coefficient $v^M$ where
\[M=\sum_{k=0}^{r-1} (k+1)(|\la^k_{j-1}| - |\tau^k_{j-1}|) + k(|\la^k_j| - |\tau^k_j|).\]
Thus we have a contribution to the sum of $v^{M'}g_{\btau\bnu}(1)s_{\bla}$ where
\begin{equation*} \label{E:EE1} 
M'=\omega(\btau) - \omega(\bnu) + \sum_{k=0}^{r-1} (k+1)(|\la^k_{j-1}| - |\tau^k_{j-1}|) + k(|\la^k_j| - |\tau^k_j|). \end{equation*}
Now note that 
\begin{align*}
\omega(\bla)-\omega(\btau) & = \sum_{k=0}^{r-1} (e-j+k)(|\la^k_{j-1}|-|\tau^k_{j-1}|) + (e-j+k-1)(|\la^k_j|-|\tau^k_j|) \\
& = \sum_{k=0}^{r-1}(e-j-1)(|\la^k_{j-1}|-|\tau^k_{j-1}| + |\la^k_j| - |\tau^k_j|) + (k+1)(|\la^k_{j-1}| - |\tau^k_{j-1}|) + k(|\la^k_j| - |\tau^k_j|) \\
&= (e-j-1)s +  \sum_{k=0}^{r-1}(k+1)(|\la^k_{j-1}| - |\tau^k_{j-1}|) + k(|\la^k_j| - |\tau^k_j|).
\end{align*}
Hence
\[M'=\omega(\bla)-\omega(\bnu)-(e-j-1)s,\]
which is independent of the choice of $\btau$. 
So the right hand side of Equation~\ref{E:Grad} is indeed equal to Equation~\ref{E:L1}. 
\end{proof}

\subsection{Proof of the main results} \label{SS:ProofMain}

\begin{theorem} \label{T:Main}
Suppose that $(\bla,\bmu) \in \Rp$. Then
  \[d_{\bla\bmu}(v) = g_{\bla\bmu}(v).\]
  \end{theorem}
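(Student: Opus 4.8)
The plan is to prove the equivalent statement that for every $\bmu\in\RR$ the truncation $Q(\bmu)=\sum_{\bla\ap\bmu}g_{\bla\bmu}(v)\,s_{\bla}$ equals the truncation of the canonical basis vector $G(\bmu)$ to the $\ap$-class of $\bmu$; writing $\pi_{\bmu}$ for the projection of $\F^{\mc}$ onto $\mathrm{span}\{s_{\bla}\mid\bla\ap\bmu\}$, the goal is $\pi_{\bmu}G(\bmu)=Q(\bmu)$. I would run the triple induction announced in the introduction: outermost on $r$; then on $w=\hook(\mu^{(0)})$, the number of removable $e$-rim hooks in the first component of $\bmu$; and innermost on a total order $\preceq$ on the first components (partitions with the prescribed $e$-core and prescribed $w$) refining dominance.

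For the base cases: when $r=1$ the relations $\sim$ and $\ap$ coincide by Proposition~\ref{P:Naka}, Lemma~\ref{L:r1} exhibits $g_{\bla\bmu}(v)$ as the Leclerc--Miyachi closed formula, and this is the known value of $d_{\bla\bmu}(v)$~\cite{LeclercMiyachi2}. When $w=0$, $\mu^{(0)}$ is an $e$-core and $\la^{(0)}=\mu^{(0)}$ whenever $\bla\ap\bmu$ and $g_{\bla\bmu}(v)\neq 0$ (by Lemma~\ref{L:DomNeeded}); here I would use Lemma~\ref{L:LLTLead} to strip off the first component, reducing to the multipartition $(\emp,\mu^{(1)},\dots,\mu^{(r-1)})$, which is handled by Lemmas~\ref{L:AddEmpty} and~\ref{L:MinusOK} together with the outer inductive hypothesis, yielding $\pi_{\bmu}G(\bmu)=Q(\bmu)$.

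For the inductive step, take $\bmu\in\RR$ with $r\ge 2$ and $w>0$; let $j$ be the largest index with $\mu^0_j\neq\emp$ (so $1\le j\le e-1$), put $s=\ell(\mu^0_j)$, let $\nu^0_j$ be $\mu^0_j$ with its first column deleted, and let $\bnu$ be obtained from $\bmu$ by replacing $\mu^0_j$ by $\nu^0_j$ in the quotient. Then $\mu^0_j/\nu^0_j$ is a vertical strip so $c^{\mu^0_j}_{\nu^0_j(1^s)}=1$, the multipartition $\bnu$ is $e$-regular and lies in $\Rrs$, and $\hook(\nu^{(0)})=w-s<w$. Writing $\bnu[\Delta]$ for the result of placing $\Delta$ in quotient position $(0,j)$ of $\bnu$, Proposition~\ref{P:HeavyGraded} gives
\[f^{(s,j)}Q(\bnu)=\sum_{\Delta}c^{\Delta}_{\nu^0_j(1^s)}\,Q(\bnu[\Delta])=Q(\bmu)+\sum_{\Delta\neq\mu^0_j}c^{\Delta}_{\nu^0_j(1^s)}\,Q(\bnu[\Delta]),\]
where $\bnu[\mu^0_j]=\bmu$, every $\bnu[\Delta]$ occurring is $e$-regular, Rouquier and $\ap\bmu$, and a Pieri computation (using that $\nu^0_j$ arose by deleting a column) shows that $\mu^0_j$ is the unique $\gedom$-maximal $\Delta$ with $c^{\Delta}_{\nu^0_j(1^s)}\neq 0$, so that $\bnu[\Delta]\prec\bmu$ for $\Delta\neq\mu^0_j$. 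Since $f^{(s,j)}$ carries each $\ap$-class into a single $\ap$-class injectively (Corollary to Proposition~\ref{L:I1}) and $d_{\bla\bnu}(v)=0$ unless $\bla\sim\bnu$ (Lemma~\ref{L:DomNeeded}), the $w$-inductive hypothesis gives $\pi_{\bmu}(f^{(s,j)}G(\bnu))=f^{(s,j)}(\pi_{\bnu}G(\bnu))=f^{(s,j)}Q(\bnu)$. On the other hand $f^{(s,j)}G(\bnu)$ is bar-invariant with coefficients in $\N[v]$, so by Corollary~\ref{C:Ones} its canonical basis expansion is read off its standard coefficients; analysing this via Proposition~\ref{L:I1} — and here the total order $\preceq$ is used, to exclude contributions of $G(\bsig)$ with $\bsig$ $\preceq$-above the $\bnu[\Delta]$ — one obtains $\pi_{\bmu}(f^{(s,j)}G(\bnu))=\pi_{\bmu}G(\bmu)+\sum_{\Delta\neq\mu^0_j}c^{\Delta}_{\nu^0_j(1^s)}\,\pi_{\bmu}G(\bnu[\Delta])$. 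Applying the inner inductive hypothesis to replace $\pi_{\bmu}G(\bnu[\Delta])$ by $Q(\bnu[\Delta])$ and comparing the two expressions for $\pi_{\bmu}(f^{(s,j)}G(\bnu))$ yields $\pi_{\bmu}G(\bmu)=Q(\bmu)$, completing the induction and Theorem~\ref{thm:all}; Theorem~\ref{thm:all2} then follows from Theorem~\ref{T:Ariki}.

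The main obstacle is this last step: identifying exactly which canonical basis vectors $G(\bsig)$ with $\bsig\ap\bmu$ appear in $f^{(s,j)}G(\bnu)$, and with what multiplicities. Proposition~\ref{P:HeavyGraded} controls the truncated action on $Q(\bnu)$ precisely, but transferring this to $G(\bnu)$ requires ruling out any "foreign" canonical basis vector leaking into $\pi_{\bmu}(f^{(s,j)}G(\bnu))$; it is the innermost total order, together with the positivity and bar-invariance encoded in Corollary~\ref{C:Ones} and the $\ap$-compatibility of $f^{(s,j)}$ from the Corollary to Proposition~\ref{L:I1}, that makes this possible. Verifying it, and confirming that the Rouquier and $e$-regularity hypotheses persist on passing from $\bmu$ to $\bnu$ and to each $\bnu[\Delta]$, is the technical heart of the argument.
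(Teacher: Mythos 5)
Your proposal is correct and follows essentially the same route as the paper: the same triple induction (on $r$, on $\hook(\mu^{(0)})$, and on a total order on the first component), the same treatment of the base cases via Lemma~\ref{L:r1}, Lemma~\ref{L:AddEmpty}, Lemma~\ref{L:LLTLead} and Lemma~\ref{L:MinusOK}, and the same inductive step built on the column-removal construction of $\bnu$, Proposition~\ref{P:HeavyGraded}, Proposition~\ref{L:I1} and Corollary~\ref{C:Ones}. The only (harmless) deviations are cosmetic: the paper allows any $j$ with $\mu^0_j\neq\emp$ rather than the largest, and orders the competing $\bnu[\Delta]$ by a lexicographic total order rather than by dominance-maximality of $\mu^0_j$ among vertical-strip additions to $\nu^0_j$.
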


\begin{proof}
Let $\bmu \in \RR$. We want to show that
\begin{equation} \label{E:Prove}
G(\bmu) = \sum_{\bla \approx \bmu} g_{\bla\bmu}(v)s_{\bla} + \sum_{\substack{\bla \sim \bmu \\ \bla \not \approx \bmu}} d_{\bla\bmu}(v) s_{\bla}.\end{equation}
We prove this by induction, firstly on $r$, secondly on $\hook(\mu^{(0)})$ and thirdly using the a total order $\succ$ on $\mu^{(0)}$. 
Suppose that $r=1$. Then Equation~\ref{E:Prove} holds by Lemma~\ref{L:r1} and~\cite[Corollary 10]{LeclercMiyachi2}. So suppose that $r>1$, that Theorem~\ref{T:Main} holds for $r-1$ and that $\bmu \in \RR$. Suppose first that $\hook(\mu^{(0)})=0$.  
Following Fayers~\cite{Fayers:LLT}, if $\bnu=(\nu^{(0)},\nu^{(1)},\ldots,\nu^{(r-1)}) \in \La^r$, we define 
$\hat{\bnu} = (\nu^{(1)},\ldots,\nu^{(r-1)})$ and $\hat{\bnu}^{\emp} = (\emp,\nu^{(1)},\ldots,\nu^{(r-1)})$.
Then $\hat{\bmu} \in \RR$ and by the inductive hypothesis
\[G(\hat{\bmu})= \sum_{\bla \approx \hat{\bmu}} g_{\bla \hat{\bmu}}(v) s_{\bla} +
\sum_{\substack{\bla \sim \hat{\bmu} \\ \bla \not \approx \hat{\bmu}}}
d_{\bla \hat{\bmu}}(v) s_{\bla}.\] 
Applying Lemma~\ref{L:AddEmpty}, 
 \[G(\hat{\bmu}^{\emp})= \sum_{\substack{\bla \approx \hat{\mu}^{\emp}\\ \la^{(0)}=\emp}} g_{\hat{\bla} \hat{\bmu}}(v) s_{\bla} +
 \sum_{\substack{\bla \sim \hat{\bmu}^{\emp}, \bla \not \approx \hat{\bmu}^{\emp} \\ \la^{(0)} =\emp}}
 d_{\hat{\bla} \hat{\bmu}}(v) s_{\bla}.\] 
Now $\bmu^{(0)}$ is $e$-regular and is an $e$-core. By Lemma~\ref{L:LLTLead}, there exists $f = f^{(t_x)}_{i_x} f^{(t_{x-1})}_{i_{x-1}} \ldots f^{(t_1)}_{i_1} \in \U$ such that 
\[f (G(\hat{\bmu}^{\emp})) = \sum_{\substack{\bla \approx \bmu \\ \la^{(0)}=\mu^{(0)}}} g_{\bla\bmu}(v) s_{\bla} +
 \sum_{\substack{\bla \sim \bmu, \bla \not \approx \bmu \\ \la^{(0)} =\mu^{(0)}}}
 d_{\hat{\bla} \hat{\bmu}}(v) s_{\bla}
 + \sum_{\substack{\btau \sim \bmu \\ |\tau^{(0)}|<|\mu^{(0)}|}} b_{\btau}(v)s_{\btau}\]
 for some $b_{\btau}(v) \in \N[v^{-1},v]$, where we note that the identities 
 $g_{\hat{\bla} \hat{\bmu}}(v)=g_{\bla\bmu}(v)$ follow from Lemma~\ref{L:MinusOK}. Now $f(G(\hat{\bmu}^{\emp})) \in \U^{\circ}$ and of the coefficients in the sum, $g_{\bla\bmu}(v) \in v\N[v]$ for $\bla \neq \bmu$ and $d_{\hat{\bla}\hat{\bmu}}(v) \in v\N[v]$. It is possible that we have $b_{\btau}(v) \notin v\N[v]$, but this is of no consequence since by
Lemma~\ref{L:DomNeeded}, $d_{\bla\btau}(v)=0$ if $|\tau^{(0)}|<|\la^{(0)}|$.  
Thus we can conclude that
\[f (G(\hat{\bmu}^{\emp})) = G(\bmu) + \sum_{\substack{\btau \sim \bmu \\ |\tau^{(0)}|<|\mu^{(0)}|}} c_{\btau} G(\btau)\]
for some $c_{\btau} \in \N[v^{-1}+v]$, and equating terms we see that 
\[G(\bmu) =  \sum_{\bla \approx \bmu} g_{\bla\bmu}(v)s_{\bla} + \sum_{\substack{\bla \sim \bmu \\ \bla \not \approx \bmu}} d_{\bla\bmu}(v) s_{\bla}\]
as required.

This completes the proof of Theorem~\ref{T:Main} for multipartitions $\bmu$ when $\hook(\mu^{(0)})=0$. So now suppose that $h=\hook(\mu^{(0)})>0$ and that Theorem~\ref{T:Main} holds for all multipartitions in $\RR$ with fewer than $h$ removable $e$-rim hooks on the first component. Let $H(\bmu)$ be the set of partitions $\bla \approx \bmu$ such that $|\la^k_i|=|\mu^k_i|$ for all $0 \le i \le e-1$ and $0 \leq k \leq r-1$ and define a total order $\succ$ on $H(\bmu)$ by saying that $\bla \succ {\bf \Delta}$ if $\bla \neq {\bf \Delta}$ and the minimal $k$ such that $\la^{(k)} \neq \Delta^{(k)}$, the minimal $i$ such that $\la^{k}_i \neq \Delta^{k}_i$ and the minimal $x$ such that $(\la^k_i)_x \neq (\Delta^k_i)_x$ satisfy $(\la_i^{(k)})_x > (\Delta_{i}^{(k)})_x$. 
Assume that the inductive hypothesis holds for all multipartitions ${\bf \Delta} \in H(\bmu)$ where $\bmu \succ {\bf \Delta}$. 
  
Choose $1 \leq j \leq e-1$ such that $\mu^0_j \neq \emp$. Let $\nu$ be the partition whose Young diagram is obtained by removing the first column from $[\mu^0_j]$, where we suppose that we remove $s>1$ nodes. Let $\bnu$ be the multipartition with the same multicore as $\bmu$ and
\[\nu^k_i = \begin{cases} \nu, & k=0 \text{ and } i=j, \\
  \mu^k_i, & \text{otherwise}.\end{cases}\]
By the inductive hypothesis,
\[G(\bnu)= \sum_{\bla \approx \bnu} g_{\bla\bnu}(v) s_{\bla} +
\sum_{\substack{\bla \sim \bnu \\ \bla \not \approx \bnu}} d_{\bla\bnu}(v) s_{\bla} = Q(\bnu) + \sum_{\substack{\bla \sim \bnu \\ \bla \not \approx \bnu}} d_{\bla\bnu}(v) s_{\bla}.\]
Now 
\begin{equation} \label{E:D1} 
f^{(s,j)} G(\bnu)
= f^{(s,j)} Q(\bnu) +
f^{(s,j)} \sum_{\substack{\bla \sim \bnu \\ \bla \not \approx \bnu}} d_{\bla\bnu}(v) s_{\bla}
 = \sum_{\Delta}c^{\Delta}_{\nu (1^s)} Q({\bf \Delta}) + f^{(s,j)} \sum_{\substack{\bla \sim \bnu \\ \bla \not \approx \bnu}} d_{\bla\bnu}(v) s_{\bla}
\end{equation}
by Proposition~\ref{P:HeavyGraded}, where we use the terminology of  that proposition so that 
\[\qt({\bf \Delta}) = ((\nu^0_0,\ldots,\nu^{0}_{j-1},\Delta,\nu^{0}_{j+1},\ldots,\nu^0_{e-1}),(\nu^1_0,\nu^1_1,\ldots,\nu^1_{e-1}),\ldots,(\nu^{r-1}_0,\nu^{r-1}_1,\ldots,\nu^{r-1}_{e-1})).\]
Now note that if $\bla \sim \bnu$ but $\bla \not \approx \bnu$ then $d_{\bla\bmu}(v) \in v \mathbb{Z}[v]$ from the definition of the canonical basis. Hence by Proposition~\ref{L:I1}, we see that
\[f^{(s,j)} \sum_{\substack{\bla \sim \bnu \\ \bla \not \approx \bnu}} d_{\bla\bnu}(v) s_{\bla} = \sum_{\substack{\btau \sim \bmu \\ \btau \not \approx \bmu}} b_{\btau}(v) s_{\btau}\]
where $b_{\btau}(v) \in v \mathbb{Z}[v]$ for all $\btau$. (We note that for this step it is necessary to have $\bmu \in \RR$ rather than just $\bmu$ an $e$-regular Rouquier multipartition, since we need to apply Proposition~\ref{L:I1} to multipartitions $\btau \sim \bmu$ with $\btau \not \approx \bmu$.) It follows by Corollary~\ref{C:Ones} that 
\begin{equation} \label{E:D2}
f^{(s,j)} G(\bnu) = \sum_{\Delta} c^{\Delta}_{\nu (1^s)} G({\bf \Delta})  
= G(\bmu) + \sum_{\bmu \succ {\bf \Delta}} c^{\Delta}_{\nu (1^s)} G({\bf \Delta})
\end{equation}
where the last equality follows from Lemma~\ref{L:OneS1}. 
Now suppose $\bla \approx \bmu$ and consider the coefficient of $s_{\bla}$ in $f^{(s,j)}G(\bnu)$. By Equation~\ref{E:D2}, this coefficient is given by
\[d_{\bla\bmu}(v) + \sum_{\bmu \succ {\bf \Delta}} c^{\Delta}_{\nu(1^s)} d_{{\bf \Delta} \bmu}(v) = d_{\bla\bmu}(v) + \sum_{\bmu \succ {\bf \Delta}} c^{\Delta}_{\nu(1^s)} g_{{\bf \Delta} \bmu}(v)\]
where the last step follows from the inductive hypothesis. However, by Equation~\ref{E:D1}, the coefficient is given by
\[g_{\bla\bmu}(v) + \sum_{\Delta} c^{\Delta}_{\nu^0_j(1^s)} g_{{\bf \Delta} \bmu}(v)\]
and so we have $g_{\bla\bmu}(v)=d_{\bla\bmu}(v)$ as required. By induction, this completes the proof of Theorem~\ref{T:Main}.
\end{proof}

The next theorem follows immediately by applying Theorem~\ref{T:Ariki} to Theorem~\ref{T:Main}.

\begin{theorem} \label{T:Main2}
Suppose that $\h_{r,n}(q,{\bf Q})$ is defined over a field of characteristic $0$. Take $\bla \in \La^r_n$ and $\bmu \in \Kl$ with $(\bla,\bmu) \in \Rp$. Then 
  \[[S^{\bla}:D^{\bmu}]_v = g_{\bla\bmu}(v).\]
\end{theorem}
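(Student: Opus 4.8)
The plan is to obtain Theorem~\ref{T:Main2} as an immediate consequence of Theorem~\ref{T:Main} together with the graded form of Ariki's theorem, Theorem~\ref{T:Ariki}; no new combinatorics is required, only a check that the hypotheses of the two results match up and that the notation is consistent.

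First I would note that the hypotheses of Theorem~\ref{T:Main} are in force: we are given $(\bla,\bmu)\in\Rp$, so $\bmu\in\RR=\R\cap\Lar$ and $\bla\in\R$ with $\bla\approx\bmu$. (In particular the assumption $\bmu\in\Kl$ is compatible with this, since $\Kla\subseteq\Lar$, so every Kleshchev multipartition lying in a Rouquier block belongs to $\RR$.) Applying Theorem~\ref{T:Main} gives $d_{\bla\bmu}(v)=g_{\bla\bmu}(v)$, where---following the conventions adopted at the start of Section~\ref{S:Decomp}---the symbol $d_{\bla\bmu}(v)$ denotes $d^{\mc}_{\bla\bmu}(v)$, the transition coefficient between the canonical basis vector $G^{\mc}(\bmu)$ and the standard basis of the Fock space $\F^{\mc}$, with $\mc$ the multicharge carried by the Rouquier block containing $\bla$ and $\bmu$.

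Next I would invoke Theorem~\ref{T:Ariki}. Since $\h=\h_{r,n}(q,{\bf Q})$ is defined over a field of characteristic $0$, has quantum characteristic $e$ and multicharge $\mc$, and since $\bmu\in\Kl$ (so that $G^{\mc}(\bmu)$ is a canonical basis vector of $M^{\mc}$), Theorem~\ref{T:Ariki} yields
\[[S^{\bla}:D^{\bmu}]_v = d^{\mc}_{\bla\bmu}(v).\]
Chaining the two equalities gives $[S^{\bla}:D^{\bmu}]_v = d^{\mc}_{\bla\bmu}(v) = d_{\bla\bmu}(v) = g_{\bla\bmu}(v)$, as required.

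The only point that really needs attention---and hence the closest thing here to an obstacle---is the bookkeeping between the $\mc$-decorated notation of Section~\ref{SS:Fock} and Theorem~\ref{T:Ariki} and the $\mc$-suppressed notation used throughout Section~\ref{S:Decomp}: one must observe that the multicharge one reads off the $r$-tuple of abacus configurations of the Rouquier block is precisely the multicharge $\mc$ appearing in Ariki's theorem, so that the two occurrences of $d_{\bla\bmu}(v)$ denote the same rational function. All of the substantive work has already been carried out in the proof of Theorem~\ref{T:Main} (via Proposition~\ref{L:I1}, Proposition~\ref{P:HeavyUngraded}, Proposition~\ref{P:HeavyGraded} and the triple induction on $r$, on $\hook(\mu^{(0)})$ and on the total order on $\mu^{(0)}$), so nothing beyond this combination of results is needed.
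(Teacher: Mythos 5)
Your proposal is correct and is exactly the paper's argument: the paper likewise obtains Theorem~\ref{T:Main2} immediately by combining Theorem~\ref{T:Main} with the graded form of Ariki's theorem (Theorem~\ref{T:Ariki}). Your extra remarks on the compatibility of $\bmu\in\Kl$ with $\bmu\in\RR$ and on the suppressed multicharge notation are correct housekeeping but do not change the substance.
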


\subsection{The cyclotomic $q$-Schur algebra and characteristic $p$} \label{SS:Schur}
In order to look at the case where the underlying field $\mathbb{F}$ has prime characteristic, we introduce a new player, the cyclotomic $q$-Schur algebra. For the definition of this algebra and the construction of the Weyl modules, we refer the reader to~\cite[Section~4]{Mathas:AKSurvey} and for the graded theory we refer them to~\cite{SW}.

Fix $r \geq 1$ and $e \geq 2$. Let $I=\{0,1,\ldots,e-1\}$ and let $\mc \in I^r$. Suppose that $\mathbb{F}$ is a field of characteristic $p\ge 0$. For $n \geq0$, take $\h=\h_{r,n}(q,{\bf Q})$ to be an Ariki-Koike algebra over a field of characteristic $p$ with quantum characterstic $e$ and multicharge $\mc$. The cyclotomic $q$-Schur algebra is the endomorphism algebra
\[\mathcal{S}=\End_{\h}
\left( \bigoplus_{\bmu \in \La^{r}} M^{\bmu}\right)\]
where each $M^{\bmu}$ is a certain $\h$-module. Then $\mathcal{S}$ is a cellular algebra in the sense of Graham and Lehrer~\cite{GL}. It is quasi-hereditary, with the cell modules and the simple modules both indexed by the $r$-multipartitions of $n$. If $\bla \in \Larn$, the cell module indexed by $\bla$ is called a Weyl module and is denoted $\Delta^{\bla}$ and the simple module indexed by $\bla$ is denoted $L^{\bla}$. For $\bla,\bmu \in \Larn$, let $[\Delta^{\bla}:L^{\bmu}]$ denote the multiplicity of $L^{\bmu}$ as a composition factor of $\Delta^{\bla}$.  

Work of Stroppel and Webster~\cite{SW} has shown that there is a grading on the cyclotomic $q$-Schur algebras analogous to that of the Ariki-Koike algebras; we define $[\Delta^{\bla}:L^{\bmu}]_v$ to be the graded multiplicity. Again, when $\mathcal{S}$ is defined over a field of characteristic $0$, the decomposition numbers are given by the transition coefficients for the Fock space. 

\begin{theorem} [\cite{SW}, Corollary~7.23] \label{T:WeylEqualI}
Suppose that $p=0$ and let $\bla,\bmu \in \Larn$. Then 
\[[\Delta^{\bla}:L^{\bmu}]_v = d_{\bla\bmu}(v).\]
\end{theorem}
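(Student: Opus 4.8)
This statement is \cite[Corollary~7.23]{SW}, so strictly the argument here is a citation; I indicate the structure one would follow. The plan is to realize the graded cyclotomic $q$-Schur algebra $\mathcal{S}=\mathcal{S}_{r,n}$ as a graded quasi-hereditary cover of the graded Ariki-Koike algebra $\h$ which categorifies the Fock space $\F^{\mc}$, in such a way that the graded Weyl module $\Delta^{\bla}$ lifts the standard basis vector $s_{\bla}$ and the graded simple $L^{\bmu}$ lifts the canonical basis vector $G^{\mc}(\bmu)$. Reading off graded composition multiplicities then recovers the expansion $G^{\mc}(\bmu)=\sum_{\bla}d_{\bla\bmu}(v)s_{\bla}$, that is, $[\Delta^{\bla}:L^{\bmu}]_v=d_{\bla\bmu}(v)$.

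I would proceed in three steps. First, in the ungraded, characteristic-$0$ setting: the Schur functor $F\colon\mathcal{S}\text{-mod}\to\h\text{-mod}$ is exact, sends $\Delta^{\bla}$ to the Specht module $S^{\bla}$, sends $L^{\bmu}$ to $D^{\bmu}$ when $\bmu\in\Kl$ and to $0$ otherwise, and detects composition factors; combined with Ariki's theorem (Theorem~\ref{T:Ariki}) and the fact that the decomposition matrix of $\mathcal{S}$ is unitriangular with respect to dominance, this determines the ungraded decomposition matrix of $\mathcal{S}$ to be $(d_{\bla\bmu}(1))$. Second, I would install the grading on $\mathcal{S}$ (via its isomorphism with a cyclotomic quiver Schur algebra, as in \cite{SW}), compute the graded characters of the $\Delta^{\bla}$, and check that graded duality induces on the graded Grothendieck group $\Gr{\mathcal{S}\text{-mod}}\cong\F^{\mc}$ a bar involution compatible with the one on $\F^{\mc}$ recalled in Section~\ref{SS:Fock}. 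Third, I would appeal to uniqueness of the canonical basis: the classes $[\Delta^{\bla}]$ form the standard basis, the matrix $([\Delta^{\bla}:L^{\bmu}]_v)$ is unitriangular with off-diagonal entries in $v\mathbb{Z}[v]$, and each $[L^{\bmu}]$ is bar-invariant, so by the characterization of $\{G^{\mc}(\bmu)\}$ these conditions force $[\Delta^{\bla}:L^{\bmu}]_v=d_{\bla\bmu}(v)$.

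The main obstacle lies entirely in the second step, which is the technical heart of \cite{SW}: constructing the grading on $\mathcal{S}_{r,n}$, pinning down the graded Weyl modules and their graded characters, and showing that graded duality realizes the bar involution. Once that package is in hand, the identification with the Fock space is a formal consequence of the uniqueness of the canonical basis, exactly as in the ungraded case. Accordingly, in the paper I would simply state the result with the reference to \cite{SW}, together with a brief reminder that it expresses the graded categorification of $\F^{\mc}$ by the cyclotomic $q$-Schur algebra.
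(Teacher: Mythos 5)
The paper gives no proof of this result; it is stated purely as a citation to \cite{SW}, Corollary~7.23, and your proposal reaches exactly the same conclusion. Your three-step sketch of the Stroppel--Webster argument (Schur functor plus Ariki's theorem for the ungraded picture, graded quiver Schur construction, and the uniqueness characterization of the canonical basis) is a reasonable outline of what lies behind the citation, but since the paper itself does not reproduce any of it, no further comparison is needed.
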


Over a field of arbitrary characteristic, the decomposition numbers for the cyclotomic $q$-Schur algebras agree with those for the Ariki-Koike algebras. 

\begin{theorem} [\cite{Mathas:AKSurvey}, Theorem~5.1] \label{T:WeylEqualII}
Suppose that $\bla,\bmu \in \Larn$ with $\bmu \in \La^{\mc}_n$. Then 
\[[\Delta^{\bla}:L^{\bmu}] = [S^{\bla}:D^{\bmu}].\]
\end{theorem}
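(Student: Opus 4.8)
The plan is to prove Theorem~\ref{T:WeylEqualII} by means of the $q$-Schur functor, exactly as in the type $A$ case. Recall that $\mathcal{S}=\End_{\h}\big(\bigoplus_{\bmu\in\La^r}M^{\bmu}\big)$, and that among the modules $M^{\bmu}$ there is one, corresponding to the multipartition $((1^n),\emp,\dots,\emp)$, which is isomorphic to $\h$ as a right $\h$-module (after applying the algebra anti-automorphism relating our Specht modules to those of~\cite{DJM:CellularBasis}). Let $e\in\mathcal{S}$ be the idempotent projecting the direct sum onto this summand; then $e\mathcal{S}e\cong\h$, and multiplication by $e$ defines the Schur functor $\Phi\colon\mathcal{S}\text{-mod}\to\h\text{-mod}$, $M\mapsto eM$.

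First I would collect the three standard properties of $\Phi$. (i) Since $\Phi$ is given by multiplication by an idempotent, it is exact. (ii) $\Phi(\Delta^{\bla})\cong S^{\bla}$ for every $\bla\in\Larn$: this follows by comparing the Dipper--James--Mathas cellular bases of $\mathcal{S}$ and $\h$, under which the cell module $\Delta^{\bla}$ is carried onto the cell (Specht) module $S^{\bla}$. (iii) $\Phi(L^{\bmu})\cong D^{\bmu}$ if $\bmu\in\Kl$ and $\Phi(L^{\bmu})=0$ otherwise; this is the only point at which the choice of convention --- for which $\Kl$ consists of certain $e$-regular multipartitions --- enters, and (iii) may require the anti-automorphism twist mentioned above in order that $\Phi$ send heads of cell modules to heads of cell modules on the correct index set.

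Granting (i)--(iii), the theorem is a short homological argument. A composition series of $\Delta^{\bla}$ is carried by the exact functor $\Phi$ to a filtration of $S^{\bla}$ whose nonzero subquotients are precisely the modules $\Phi(L^{\bmu})=D^{\bmu}$ with $\bmu\in\Kl$ occurring as composition factors of $\Delta^{\bla}$, each with unchanged multiplicity, while the factors $L^{\bmu}$ with $\bmu\notin\Kl$ contribute zero. Hence, for $\bmu\in\Kl$,
\[[\Delta^{\bla}:L^{\bmu}]=[\Phi(\Delta^{\bla}):\Phi(L^{\bmu})]=[S^{\bla}:D^{\bmu}].\]
The main obstacle is not this bookkeeping but the two structural inputs (ii) and (iii): identifying $M^{((1^n),\emp,\dots,\emp)}$ with (a twist of) $\h$ inside $\mathcal{S}$, and checking that $\Phi$ matches up cellular and simple data on the two sides. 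Both rest on the Dipper--James--Mathas construction of $\mathcal{S}$ and $\h$, and in the form we need they are exactly~\cite[Theorem~5.1]{Mathas:AKSurvey}; so in the write-up we will simply cite that result. The graded refinement $[\Delta^{\bla}:L^{\bmu}]_v=[S^{\bla}:D^{\bmu}]_v$ follows in the same way using the graded Schur functor of Stroppel--Webster~\cite{SW}.
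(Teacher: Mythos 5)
The paper gives no proof of this statement at all: it is stated as a theorem with a citation to \cite[Theorem~5.1]{Mathas:AKSurvey}, and that is the whole of the paper's treatment. Your proposal is a correct sketch of the standard Schur-functor argument that underlies the cited theorem, and you yourself observe at the end that the structural inputs (ii) and (iii) \emph{are} precisely Mathas's Theorem~5.1 and that "in the write-up we will simply cite that result." So in practice your approach and the paper's coincide: both defer to the reference. Your sketch of the mechanism (exactness of multiplication by the idempotent, $\Phi(\Delta^{\bla})\cong S^{\bla}$, $\Phi(L^{\bmu})\cong D^{\bmu}$ or $0$) is accurate and is indeed how the cited result is proved; the one point worth double-checking in a full write-up is the exact identification of the permutation module $M^{\bmu}$ with $\h$ and the anti-automorphism twist, since the paper uses Specht modules dual to those of \cite{DJM:CellularBasis}, so $\Kl$ sits inside the $e$-regular multipartitions rather than the $e$-restricted ones --- a convention issue you already flag.
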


We define a partial order $\gedom$ on $\Larn$ by saying that $\bmu \gedom \bla$ if
\[\sum_{k=0}^{l-1} |\mu^{(k)}| + \sum_{i=1}^{t} \mu^{(l)}_i \geq \sum_{k=0}^{l-1} |\la^{(k)}| + \sum_{i=1}^{j} \la^{(l)}_i\]
for all $0 \leq l \leq r-1$ and all $t \geq 0$, and we write $\bmu \triangleright \bla$ if $\bmu \gedom \bla$ and $\bmu \neq \bla$. 
The next results follow from the construction of the cellular basis of $\mathcal{S}$~\cite[Theorem~4.11]{Mathas:AKSurvey} and the block structure of $\mathcal{S}$~\cite[Theorem~2.11]{LM:Blocks}.

\begin{proposition} \label{T:DomOrder}
Suppose that $\bla,\bmu \in \Larn$. 
\begin{itemize}
\item $[\Delta^{\bla}:L^{\bmu}]=1$ and $[\Delta^{\bla}:L^{\bmu}]=0$ unless $\bmu \gedom \bla$. 
\item $[\Delta^{\bla}:L^{\bmu}]=0$ unless $\bla \sim \bmu$. 
\end{itemize}
\end{proposition}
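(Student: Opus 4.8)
The plan is to deduce both statements from the cellular (indeed quasi-hereditary) structure of $\mathcal{S}$ recorded above, together with the description of its blocks. First I would recall from \cite[Theorem~4.11]{Mathas:AKSurvey} that $\mathcal{S}$ admits a cellular basis whose cell datum has underlying poset $(\Larn,\gedom)$, in such a way that the cell module attached to $\bla$ is the Weyl module $\Delta^{\bla}$; and that $\mathcal{S}$ is quasi-hereditary, so that the Gram form on each cell module is non-zero and $L^{\bmu}$ is defined for every $\bmu$ and is the simple head of $\Delta^{\bmu}$.

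For the first bullet I would then quote the general theory of cellular algebras \cite{GL}: for a cellular algebra with poset $\Lambda$, if the cell module $W^{\bla}$ has non-zero form then $[W^{\bla}:L^{\bla}]=1$ and $[W^{\bla}:L^{\bmu}]\neq 0$ only if $\bmu=\bla$ or $\bmu$ lies strictly above $\bla$ in $\Lambda$. Applied to $\mathcal{S}$ with $W^{\bla}=\Delta^{\bla}$, and using that all $L^{\bmu}$ are present by quasi-heredity, this gives $[\Delta^{\bla}:L^{\bla}]=1$ and $[\Delta^{\bla}:L^{\bmu}]=0$ unless $\bmu\gedom\bla$. (This is consistent with Lemma~\ref{L:DomNeeded}, since $\bmu\gedom\bla$ forces $|\mu^{(0)}|\geq|\la^{(0)}|$.)

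For the second bullet I would use that every composition factor of a module lies in the block containing that module, so $[\Delta^{\bla}:L^{\bmu}]\neq 0$ forces $\Delta^{\bla}$ and $\Delta^{\bmu}$ to lie in the same block of $\mathcal{S}$; by \cite[Theorem~2.11]{LM:Blocks} the blocks of $\mathcal{S}$ are precisely the $\sim$-equivalence classes of $\Larn$, and hence $\bla\sim\bmu$.

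I do not expect a genuine obstacle: the argument is entirely structural. The one point needing care is the bookkeeping of conventions — checking that the dominance order in the cellular basis of \cite{Mathas:AKSurvey} is the same $\gedom$ defined just above (in particular that it is the \emph{more} dominant multipartition whose simple survives in $\Delta^{\bla}$, consistently with the dual Specht module conventions used in this paper) — after which both statements are immediate.
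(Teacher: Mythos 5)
Your argument is correct and is essentially the paper's own: the paper simply cites the cellular basis of $\mathcal{S}$ from~\cite[Theorem~4.11]{Mathas:AKSurvey} for the first bullet and the block classification~\cite[Theorem~2.11]{LM:Blocks} for the second, which is exactly what you spell out. Your caution about matching the dominance-order convention in~\cite{Mathas:AKSurvey} with the $\gedom$ defined in the paper is well placed, given the paper works with dual Specht modules, but once that is checked nothing further is needed.
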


The next result is an application of a theorem of Bowman and Speyer. 

\begin{proposition} [\cite{BowSpey}, Corollary~3.15] \label{P:DiagCuts}
  Suppose that $r \geq 2$, that $\bla,\bmu \in \La^r_n$ and that $|\la^{(0)}|=|\mu^{(0)}|$. Let $\hat{\bmu}=(\mu^{(1)},\ldots,\mu^{(r-1)})$, $\hat{\bla}=(\la^{(1)},\ldots,\la^{(r-1)})$ and $\hat{\mc}=(a_1,\ldots,a_{r-1})$.
Consider the decomposition numbers $[\Delta^{(\la^{(0)})}:L^{(\mu^{(0)})}]$ and $[\Delta^{\hat{\bla}}:L^{\hat{\bmu}}]$ where the first is for a cyclotomic $q$-Schur algebra with multicharge $(a_0)$ and the second for a cyclotomic $q$-Schur algebra with multicharge $\hat{\mc}$. We have
  \[[\Delta^{\bla}:L^{\bmu}] = [\Delta^{(\la^{(0)})}:L^{(\mu^{(0)})}] [\Delta^{\hat{\bla}}:L^{\hat{\bmu}}].\]
\end{proposition}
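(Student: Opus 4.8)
The statement to prove is Proposition~\ref{P:DiagCuts}, which I'll attribute to Bowman--Speyer, so the plan is really to explain how their Corollary~3.15 applies.

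\medskip

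The plan is to deduce this factorization from the structure of the cyclotomic $q$-Schur algebra when restricted to a single block, using the hypothesis $|\la^{(0)}|=|\mu^{(0)}|$ together with Proposition~\ref{T:DomOrder}. First I would observe that since $[\Delta^{\bla}:L^{\bmu}]=0$ unless $\bmu\gedom\bla$, and since $|\la^{(0)}|=|\mu^{(0)}|$ is assumed, the dominance condition forces $|\la^{(k)}|=|\mu^{(k)}|$ to be comparable componentwise; more precisely the relevant simple modules all have the property that the sizes of the first components agree, so that the associated ``cut'' at position $0$ in the multicharge is respected by every composition factor. This is exactly the situation in which Bowman and Speyer's result on cyclotomic $q$-Schur algebras (a ``runner removal'' or ``cut'' theorem for the quiver Schur algebra / weighted KLR setting) applies: when a multicharge splits as $(a_0)\mid(a_1,\dots,a_{r-1})$ and the multipartitions respect the induced splitting of $n$ into $|\la^{(0)}| + (n-|\la^{(0)}|)$, the Weyl module $\Delta^{\bla}$ for $\mathcal{S}_{r,n}$ decomposes compatibly with the tensor product $\mathcal{S}_{1,|\la^{(0)}|}\otimes\mathcal{S}_{r-1,n-|\la^{(0)}|}$ of smaller cyclotomic $q$-Schur algebras.

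\medskip

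The key steps, in order, are: (1) fix the block and note by Proposition~\ref{T:DomOrder} that every $\bmu$ with $[\Delta^{\bla}:L^{\bmu}]\neq 0$ satisfies $\bmu\sim\bla$ and $\bmu\gedom\bla$, and combine this with $|\mu^{(0)}|=|\la^{(0)}|$ to conclude $\mu^{(0)}\gedom_{(a_0)}\la^{(0)}$ and $\hat{\bmu}\gedom_{\hat{\mc}}\hat{\bla}$ (the dominance order splits across the cut once the first-component sizes are equal); (2) invoke the Bowman--Speyer cut theorem, \cite[Corollary~3.15]{BowSpey}, which identifies the corresponding subcategory of $\mathcal{S}_{r,n}$-modules with modules over $\mathcal{S}_{1,|\la^{(0)}|}(q,(q^{a_0}))\otimes_{\mathbb{F}}\mathcal{S}_{r-1,n-|\la^{(0)}|}(q,\hat{\mathbf Q})$ in such a way that $\Delta^{\bla}$ corresponds to $\Delta^{(\la^{(0)})}\boxtimes\Delta^{\hat{\bla}}$ and $L^{\bmu}$ to $L^{(\mu^{(0)})}\boxtimes L^{\hat{\bmu}}$; (3) read off decomposition multiplicities of an external tensor product as the product of the two factors, $[\Delta^{(\la^{(0)})}\boxtimes\Delta^{\hat{\bla}}:L^{(\mu^{(0)})}\boxtimes L^{\hat{\bmu}}] = [\Delta^{(\la^{(0)})}:L^{(\mu^{(0)})}]\cdot[\Delta^{\hat{\bla}}:L^{\hat{\bmu}}]$, which is a standard fact about tensor products of finite-dimensional algebras over a field.

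\medskip

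The main obstacle is verifying that the hypotheses of Bowman--Speyer's theorem are genuinely met here, i.e.\ that the condition $|\la^{(0)}|=|\mu^{(0)}|$ (applied for all pairs arising as composition factors, which Proposition~\ref{T:DomOrder} guarantees) is precisely the combinatorial condition under which their cut at the first runner of the multicharge is valid; one must also be careful that the version of the cyclotomic $q$-Schur algebra and of the dominance/cellular order used in \cite{BowSpey} matches the conventions of \cite{Mathas:AKSurvey} used here, in particular that the ordering of components ($0$ to $r-1$) and the direction of the cut agree. Once the translation of conventions is pinned down, the proof is essentially a citation: the heavy lifting is in \cite[Corollary~3.15]{BowSpey}, and only steps (1) and (3), which are elementary, need to be spelled out.
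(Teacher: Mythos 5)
Your proposal is correct and takes the same approach as the paper, which gives no proof of Proposition~\ref{P:DiagCuts} at all but simply cites~\cite[Corollary~3.15]{BowSpey}; the surrounding remarks you add (the splitting of the dominance order across the cut when $|\la^{(0)}|=|\mu^{(0)}|$, and the multiplicativity of composition multiplicities under external tensor product) are accurate but are bookkeeping around that citation rather than an independent argument.
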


Let $\mathscr{G}$ denote the Grothendieck group of $\mathcal{S}$ with $\Gr{M}$ denoting the isomorphism class of the $\mathcal{S}$-module $M$. By Proposition~\ref{T:DomOrder}, $\{\Gr{\Delta^{\bla}} \mid \bla \in \Larn\}$ is a basis of $\mathscr{G}$. Set $h_{\bla\bmu}=[\Delta^{\bla}:L^{\bmu}]$. 
For $\bmu \in \Larn$, let $P^{\bmu}$ denote the projective cover of the simple module $L^{\bmu}$. Then, by the properties of a cellular algebra,
\[\Gr{P^{\bmu}} = \sum_{\bla \in \Larn} h_{\bla\mu} \Gr{\Delta^{\bla}}.\]
Now let $\mathcal{S}^0$ denote a cyclotomic $q$-Schur algebra with the same parameters $r,n,e$ and $\mc$ as $\mathcal{S}$, but defined over a field of characteristic $0$. Let $h_{\bla\bmu}^0$ denote the multiplicity of the simple $\mathcal{S}^{0}$-module $L^{\bmu}$ as a composition factor of the $\mathcal{S}^0$-module $\Delta^{\bla}$. Let $D$ (resp. $D_0$) denote the decomposition matrix of $\mathcal{S}$ (resp. $\mathcal{S}^0$), that is the matrix whose rows and columns are indexed by the elements of $\Larn$ and where the $(\bla,\bmu)$-entry is $h_{\bla\bmu}$ (resp. $h^0_{\bla\bmu}$). We assume that the ordering on $D$ and $D_0$ is identical and is compatible with $\gedom$ so that by Proposition~\ref{T:DomOrder} the decomposition matrices are lower unitriangular.  

The reference we give below is actually for the $q$-Schur algebra, but the proof for the cyclotomic $q$-Schur algebra is identical. The matrix $A$ is known as an adjustment matrix. 

\begin{theorem} [\cite{Mathas:HeckeBook}, Theorem~6.35]\label{T:Adj}
There exists a square lower unitriangular matrix $A$ whose rows and columns are indexed by the elements of $\Larn$ and whose entries are non-negative integers such that
\[D = D_0 A.\]
Hence if $A=(a_{\btau\bmu})$ and $\bla,\bmu \in \Larn$, we have 
\[h_{\bla\bmu} = \sum_{\btau \in \Larn} h^0_{\bla\btau} a_{\btau\bmu}.\]
\end{theorem}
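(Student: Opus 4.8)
The plan is to run the standard modular-reduction argument that attaches an adjustment matrix to a cellular (here quasi-hereditary) algebra, following~\cite[Chapter~6]{Mathas:HeckeBook} essentially verbatim; the cellular framework of~\cite{Mathas:AKSurvey} makes the passage from the $q$-Schur algebra to the cyclotomic $q$-Schur algebra purely formal.

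First I would fix a modular system. Let $\mathcal{O}$ be a complete discrete valuation ring with residue field $\mathbb{F}$ and fraction field $K$ of characteristic $0$, and choose a unit $\hat q\in\mathcal{O}$ reducing modulo the maximal ideal to $q$ and with the same quantum characteristic $e$, setting $\hat Q_k=\hat q^{a_k}$; such a choice exists by an explicit construction (for $p\nmid e$ one lifts a primitive $e$th root of unity by Hensel's lemma, and for $e=p$ one may take $K=\mathbb{Q}_p(\zeta_p)$, $\mathcal{O}=\mathbb{Z}_p[\zeta_p]$, $\hat q=\zeta_p$), or, since the decomposition numbers of $\mathcal{S}$ depend only on $e$, $p$ and $\mc$, one may simply choose $\mathbb{F}$ and $q$ at the outset so that a modular system is available. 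Let $\mathcal{S}_{\mathcal{O}}$, $\mathcal{S}_K$, $\mathcal{S}_{\mathbb{F}}=\mathcal{S}$ be the cyclotomic $q$-Schur algebras built from these data. The facts I would quote from the cellular theory are: $\mathcal{S}_{\mathcal{O}}$ is $\mathcal{O}$-free of finite rank; each Weyl module $\Delta^{\bla}_{\mathcal{O}}$ is $\mathcal{O}$-free with $\Delta^{\bla}_{\mathcal{O}}\otimes_{\mathcal{O}}K\cong\Delta^{\bla}_K$ and $\Delta^{\bla}_{\mathcal{O}}\otimes_{\mathcal{O}}\mathbb{F}\cong\Delta^{\bla}$; by Proposition~\ref{T:DomOrder} the classes $\Gr{\Delta^{\bla}}$, $\bla\in\Larn$, form a $\mathbb{Z}$-basis of both $G_0(\mathcal{S}_K)$ and $\mathscr{G}=G_0(\mathcal{S}_{\mathbb{F}})$; and by Theorem~\ref{T:WeylEqualI} the decomposition matrix of $\mathcal{S}_K$, which depends only on $e$ and $\mc$, is $D_0$, so I may take $\mathcal{S}^0=\mathcal{S}_K$.

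Next I would bring in the two linking tools. The decomposition map $d\colon G_0(\mathcal{S}_K)\to\mathscr{G}$ is defined by $d(\Gr V)=\Gr{L\otimes_{\mathcal{O}}\mathbb{F}}$ for any full $\mathcal{O}$-lattice $L$ in $V$; it is a well-defined group homomorphism by Brauer--Nesbitt, and evaluated on the lattices $\Delta^{\bla}_{\mathcal{O}}$ it satisfies $d(\Gr{\Delta^{\bla}_K})=\Gr{\Delta^{\bla}}$. Since $\mathcal{O}$ is complete local, idempotents lift along $\mathcal{S}_{\mathcal{O}}\twoheadrightarrow\mathcal{S}$, so each projective cover $P^{\bmu}$ lifts to a projective, $\mathcal{O}$-free $\mathcal{S}_{\mathcal{O}}$-module $P^{\bmu}_{\mathcal{O}}$ with $P^{\bmu}_{\mathcal{O}}\otimes_{\mathcal{O}}\mathbb{F}\cong P^{\bmu}$. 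The $\mathcal{S}_K$-module $P^{\bmu}_{\mathcal{O}}\otimes_{\mathcal{O}}K$ is projective, so by Krull--Schmidt $\Gr{P^{\bmu}_{\mathcal{O}}\otimes K}=\sum_{\btau\in\Larn}a_{\btau\bmu}\Gr{P^{\btau}_K}$ for unique non-negative integers $a_{\btau\bmu}$; set $A=(a_{\btau\bmu})$. Then I would compute $d(\Gr{P^{\bmu}_{\mathcal{O}}\otimes K})$ in two ways. Using the lattice $P^{\bmu}_{\mathcal{O}}$ itself, $d(\Gr{P^{\bmu}_{\mathcal{O}}\otimes K})=\Gr{P^{\bmu}}=\sum_{\bla}h_{\bla\bmu}\Gr{\Delta^{\bla}}$. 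Using instead the decomposition above together with the cellular identity $\Gr{P^{\btau}_K}=\sum_{\bla}h^0_{\bla\btau}\Gr{\Delta^{\bla}_K}$ and $d(\Gr{\Delta^{\bla}_K})=\Gr{\Delta^{\bla}}$,
\[ d(\Gr{P^{\bmu}_{\mathcal{O}}\otimes K})=\sum_{\btau}a_{\btau\bmu}\,d(\Gr{P^{\btau}_K})=\sum_{\btau}\sum_{\bla}h^0_{\bla\btau}a_{\btau\bmu}\Gr{\Delta^{\bla}}. \]
Comparing coefficients of the basis $\{\Gr{\Delta^{\bla}}\}$ gives $h_{\bla\bmu}=\sum_{\btau}h^0_{\bla\btau}a_{\btau\bmu}$, that is $D=D_0A$. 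Finally $A=D_0^{-1}D$ is lower unitriangular because $D$ and $D_0$ are (Proposition~\ref{T:DomOrder}), and its entries are the non-negative integers $a_{\btau\bmu}$ constructed above, which is the assertion of the theorem.

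I expect the only genuinely technical point to be the first step: producing the modular system with matching quantum characteristic and checking that $\mathcal{S}_{\mathcal{O}}$ and the Weyl modules $\Delta^{\bla}_{\mathcal{O}}$ are $\mathcal{O}$-free and base-change as claimed. Everything after that is formal homological algebra, carried out in detail for the $q$-Schur algebra in~\cite[Chapter~6]{Mathas:HeckeBook}; so the write-up can be kept short, mostly pointing to that reference and to~\cite{Mathas:AKSurvey}.
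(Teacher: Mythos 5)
Your proposal is correct and is precisely the argument behind the cited result: the paper gives no proof of its own here, only the reference to \cite[Theorem~6.35]{Mathas:HeckeBook} together with the remark that the cellular/quasi-hereditary framework makes the transfer to the cyclotomic $q$-Schur algebra formal, and your write-up (modular system, lifting projectives over the complete local ring, Brauer--Nesbitt for the decomposition map, comparison of the two expansions of $d(\Gr{P^{\bmu}_{\mathcal{O}}\otimes K})$, unitriangularity from Proposition~\ref{T:DomOrder}) is exactly that standard argument. The only point deserving a little care in a final write-up is the existence of the modular system with matching quantum characteristic, which you have already flagged and for which your fallback via the independence of the decomposition numbers from the particular choice of $(\mathbb{F},q)$ is adequate.
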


Let $i \in I$ and $m \geq 1$. As in~\cite{Wada}, we define an induction functor $\uparrow^{(m)}_i: \mathcal{S}_{r,n}\text{-mod} \rightarrow \mathcal{S}_{r,n+m}\text{-mod}$ with the property that if $\bnu \in \Larn$, 
\[\Gr{\Delta^{\bnu}} \uparrow^{(m)}_i = \sum_{\bnu \xrightarrow{m:i} \bla} \Gr{\Delta^{\bla}}.\] 
The reader should compare with the action of $f^{(m)}_i$ on $s_{\bnu}$ in Section~\ref{SS:Fock}.  
The functor $\uparrow^{(m)}_i$ is exact and so
\[\Gr{P^{\bnu}} \uparrow^{(m)}_i = \sum_{\bla \in \La^r_{n+m}} b_{\bla} \Gr{P^{\bla}},\]
for some $b_{\bla} \in \N$. 
Suppose that $1 \leq j \leq e-1$ and $s \geq 1$. Define 
\[\uparrow^{(s,j)} = \uparrow^{(s)}_{j} \circ  \uparrow^{(s)}_{2} \circ \uparrow^{(s)}_1 \circ \uparrow^{(s)}_{j+1}\circ \ldots \circ \uparrow^{(s)}_{e-1} \circ \uparrow^{(s)}_0\]
and compare the definition with that of $f^{(s,j)} \in \U$. Mirroring our notation in the previous section, if $\bmu \in \RR$, we set $Q(\bmu) = \sum_{\bla \approx \bmu} g_{\bla\bmu} \Gr{\Delta^{\bla}}$, where $g_{\bla\bmu}=g_{\bla\bmu}(1)$. Our set-up is then identical to that of Proposition~\ref{P:HeavyUngraded} and so we obtain the following result. 

\begin{lemma} \label{P:HeavyUngradedII}
Suppose $s >0$ and that $\bnu \in \Rrs$.
Let $1 \leq j \leq e-1$.
Then
  \[Q(\bnu) \uparrow^{(s)}_j= \sum_{\Delta \in \La}c^{\Delta}_{\nu^{0}_j (1^s)} Q((\nu^0_0,\ldots,\nu^{0}_{j-1},\Delta,\nu^{0}_{j+1},\ldots,\nu^0_{e-1}),(\nu^1_0,\nu^1_1,\ldots,\nu^1_{e-1}),\ldots,(\nu^{r-1}_0,\nu^{r-1}_1,\ldots,\nu^{r-1}_{e-1})). \]
  \end{lemma}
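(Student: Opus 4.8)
The plan is to transport the argument behind Proposition~\ref{P:HeavyUngraded} from the Fock space $\F^{\mc}$ to the Grothendieck group of $\mathcal{S}$, using the fact that the two settings carry \emph{combinatorially identical} divided‑power operators. Concretely, $\uparrow^{(m)}_i$ acts on $\Gr{\Delta^{\bnu}}$ by $\Gr{\Delta^{\bnu}}\uparrow^{(m)}_i=\sum_{\bnu\xrightarrow{m:i}\bla}\Gr{\Delta^{\bla}}$, which is exactly the $v=1$ specialisation of $f^{(m)}_i s_{\bnu}=\sum_{\bnu\xrightarrow{m:i}\bla}v^{N(\bnu,\bla)}s_{\bla}$. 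Hence the $\Z$‑linear bijection $\bigoplus_{n\geq 0}\mathscr{G}_{r,n}\to\F^{\mc}$ sending $\Gr{\Delta^{\bla}}\mapsto s_{\bla}$ intertwines each $\uparrow^{(m)}_i$ with the $v=1$ specialisation of $f^{(m)}_i$, and therefore intertwines the composite $\uparrow^{(s,j)}$ (the operator defined just above the lemma, mirroring $f^{(s,j)}$) with the $v=1$ specialisation of $f^{(s,j)}$.

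First I would observe that under this bijection the element $Q(\bnu)=\sum_{\bla\approx\bnu}g_{\bla\bnu}(1)\Gr{\Delta^{\bla}}$ defined above is carried precisely to $\check Q(\bnu)=\sum_{\bla\approx\bnu}g_{\bla\bnu}(1)s_{\bla}$. Proposition~\ref{P:HeavyUngraded} then gives
\[
f^{(s,j)}\check Q(\bnu)=\sum_{\Delta\in\La}c^{\Delta}_{\nu^{0}_j(1^s)}\,\check Q(\bnu_\Delta),
\]
where $\bnu_\Delta$ is the multipartition with the same multicore as $\bnu$ whose quotient is obtained from $\qt(\bnu)$ by replacing $\nu^0_j$ by $\Delta$. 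Applying the bijection to both sides turns the left side into $Q(\bnu)\uparrow^{(s,j)}$ and the right side into $\sum_\Delta c^{\Delta}_{\nu^0_j(1^s)}Q(\bnu_\Delta)$, which is the claimed identity; restricting attention to $\Rrs$ and $\R$ is harmless, since every term that occurs automatically lies in the relevant Rouquier block.

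Equivalently, and more in the spirit of the remark preceding the lemma, one may re‑run the proof of Proposition~\ref{P:HeavyUngraded} verbatim, replacing $s_{\bla}$ by $\Gr{\Delta^{\bla}}$ and $\check Q$ by the Schur‑algebra $Q$: every manipulation in that proof uses only the ungraded form of Proposition~\ref{L:I1} together with the Littlewood--Richardson identities of Lemmas~\ref{LR1}, \ref{LR2}, \ref{L:Tunnel}, \ref{L:OneS1} and~\ref{L:OneS}, none of which reference the ambient algebra. The step I expect to need the most care is checking that the ungraded form of Proposition~\ref{L:I1} is available here: its $r=1$ input, Lemma~3.6 of~\cite{JLM}, is really a statement about which terms survive under iterated applications of the combinatorial map $\bnu\mapsto\sum_{\bnu\xrightarrow{m:i}\bla}\bla$ on abacus configurations in a Rouquier block---the decoupling of the bead moves on runners $j-1$ and $j$ being forced by the Rouquier condition itself---and since $\uparrow^{(m)}_i$ realises exactly this map on the basis $\{\Gr{\Delta^{\bla}}\}$, the argument applies unchanged. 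Beyond this bookkeeping there is no genuine obstacle.
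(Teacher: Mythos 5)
Your proposal is correct and matches the paper's approach: the paper gives no separate argument for this lemma, simply observing that since $\uparrow^{(m)}_i$ acts on the basis $\{\Gr{\Delta^{\bla}}\}$ by exactly the same combinatorial rule as the $v=1$ specialisation of $f^{(m)}_i$ on $\{s_{\bla}\}$, the set-up is identical to that of Proposition~\ref{P:HeavyUngraded} and the result follows. Your transport-of-structure formulation (and the remark that the lemma really concerns the composite $\uparrow^{(s,j)}$) is just a more explicit rendering of the same observation.
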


\begin{lemma} \label{L:ID}
  Suppose $s>0$ and that $\bnu \in \Rs$. Suppose that $\bsig \sim \bnu, \bsig \not \approx \bnu$ and $\bnu \gdom \bsig$. Suppose that $\bep \in \R$ is formed by adding $s$ $e$-rim hooks to the first component of $\bnu$. Suppose that for some $1 \leq j \leq e-1$, $\Gr{\Delta^{\btau}}$ appears in the sum $\Gr{\Delta^{\bsig}}\uparrow^{(s)}_j$ with non-zero coefficient. Then $\btau \not \gedom \bep$. 
\end{lemma}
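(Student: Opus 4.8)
The plan is a proof by contradiction: assume $\btau \gedom \bep$ and squeeze out of this, together with $\bnu \gdom \bsig$, enough rigidity to force $\bsig \approx \bnu$, contradicting the hypothesis. We may assume $r \geq 2$, since for $r = 1$ the hypotheses $\bsig \sim \bnu$ and $\bsig \not\approx \bnu$ are incompatible by Proposition~\ref{P:Naka}. Recall that $\uparrow^{(s,j)}$ is a composite of $e$ functors each of which adds $s$ nodes, so every multipartition occurring in $\Gr{\Delta^{\bsig}} \uparrow^{(s,j)}$ is obtained from $\bsig$ by adding exactly $se$ nodes; thus $[\bsig] \subseteq [\btau]$, and writing $p_k := |\tau^{(k)}| - |\sig^{(k)}| \geq 0$ for the number of nodes added in component $k$, we have $p_0 + \dots + p_{r-1} = se$ and $\tau^{(k)} = \sig^{(k)}$ whenever $p_k = 0$. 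On the other side, $\bep$ differs from $\bnu$ only in component $0$, with $|\ep^{(0)}| = |\nu^{(0)}| + se$ and $\ep^{(k)} = \nu^{(k)}$ for $1 \leq k \leq r-1$; and $|\bsig| = |\bnu|$ since $\bsig \sim \bnu$. (If the operator is instead read as the single $\uparrow^{(s)}_j$, only $s$ nodes are added, so $|\btau| = |\bsig| + s < |\bsig| + se = |\bep|$ because $e \geq 2$, and $\btau \not\gedom \bep$ follows at once by comparing sizes; the content is in the composite.)

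First I would compare sizes in component $0$. Taking $l = 0$ and $t \to \infty$ in the definition of $\gedom$, the relation $\btau \gedom \bep$ gives $|\tau^{(0)}| \geq |\ep^{(0)}|$, i.e. $|\sig^{(0)}| + p_0 \geq |\nu^{(0)}| + se$, while $\bnu \gdom \bsig$ gives $|\nu^{(0)}| \geq |\sig^{(0)}|$. Adding the two inequalities forces $p_0 \geq se$, hence $p_0 = se$ and $|\sig^{(0)}| = |\nu^{(0)}|$. Consequently $p_k = 0$ for every $k \geq 1$, so $\tau^{(k)} = \sig^{(k)}$ for $k \geq 1$, and $|\tau^{(0)}| = |\ep^{(0)}|$.

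Next I would cancel these equal component-$0$ sizes in the remaining $\gedom$-inequalities. For each $l \geq 1$, subtracting the equality $|\tau^{(0)}| = |\ep^{(0)}|$ from the inequality defining $\btau \gedom \bep$ and using $\tau^{(k)} = \sig^{(k)}$, $\ep^{(k)} = \nu^{(k)}$ for $1 \leq k \leq r-1$ yields precisely $\hat{\bsig} \gedom \hat{\bnu}$; likewise subtracting $|\nu^{(0)}| = |\sig^{(0)}|$ from the inequality defining $\bnu \gedom \bsig$ yields $\hat{\bnu} \gedom \hat{\bsig}$. By antisymmetry of $\gedom$ we get $\hat{\bnu} = \hat{\bsig}$, i.e. $\nu^{(k)} = \sig^{(k)}$ for all $1 \leq k \leq r-1$. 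Then $\bsig \sim \bnu$ together with $\nu^{(k)} = \sig^{(k)}$ for $k \geq 1$ forces $\Res_{(a_0)}(\sig^{(0)}) = \Res_{(a_0)}(\nu^{(0)})$, so by Proposition~\ref{P:Naka} the partitions $\nu^{(0)}$ and $\sig^{(0)}$ have the same $e$-core and the same $e$-weight. Since the other components agree and removing $e$-rim hooks is performed componentwise, $\bsig$ and $\bnu$ then have the same multicore and $\hook(\bsig) = \hook(\bnu)$, i.e. $\bsig \approx \bnu$, contradicting $\bsig \not\approx \bnu$. Hence $\btau \not\gedom \bep$.

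The argument is entirely bookkeeping with the dominance order together with a single appeal to the Nakayama-type Proposition~\ref{P:Naka}; notably the Rouquier hypotheses and the precise combinatorics of which nodes $\uparrow^{(s,j)}$ adds play no role, only the node counts and the block/multicore data enter. I expect the one delicate point to be the first step: recognising that $\btau \gedom \bep$ and $\bnu \gdom \bsig$ together pin $p_0$ to its maximal value $se$, which is exactly what forces every added node into the first component and collapses the rest of the argument.
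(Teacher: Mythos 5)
Your proof is correct and follows essentially the same route as the paper's: assume $\btau \gedom \bep$, use the first-component size comparison together with $\bnu \gdom \bsig$ to force all $se$ added nodes into component $0$ (the paper phrases this as the circular chain $|\tau^{(0)}| \geq |\epsilon^{(0)}| = |\nu^{(0)}|+se \geq |\sigma^{(0)}|+se \geq |\tau^{(0)}|$, you phrase it by adding two inequalities to pin $p_0 = se$), then cancel to get $\hat{\bsig}=\hat{\bnu}$ and invoke Proposition~\ref{P:Naka} on the zeroth components to contradict $\bsig \not\approx \bnu$. Your parenthetical disposing of the literal single-functor reading of $\uparrow^{(s)}_j$ is a sensible extra remark but does not change the substance.
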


\begin{proof}
Take all multipartitions as above and suppose that $\btau \gedom \bep$. Then \[|\tau^{(0)}| \geq |\epsilon^{(0)}| = |\nu^{(0)}|+se \geq |\sigma^{(0)}|+se \geq |\tau^{(0)}|,\]
so we must have equalities throughout the equation. 
Thus $\tau^{(k)}=\sigma^{(k)}$ and $\epsilon^{(k)}=\nu^{(k)}$ for all $k \geq 1$. However, thanks to the equality of the sizes of the first component and the dominance conditions, we have
\begin{align*}
&\sum_{k=1}^{l-1} |\nu^{(k)}|+ \sum_{i=1}^t \nu^{(l)}_i \geq \sum_{k=1}^{l-1} |\sigma^{(k)}|+ \sum_{i=1}^t \sigma^{(l)}_i, \\
 &\sum_{k=1}^{l-1} |\tau^{(k)}|+ \sum_{i=1}^t \tau^{(l)}_i \geq \sum_{k=1}^{l-1} |\epsilon^{(k)}|+ \sum_{i=1}^t \epsilon^{(l)}_i,
\end{align*}
for all $1\leq l \leq r-1$ and $t \geq 0$, 
so that in fact $\sigma^{(k)}=\nu^{(k)}$ for all $k \geq 1$. But $\bsig \sim \bnu$, so by Proposition~\ref{P:Naka}, we have $(\sigma^{(0)}) \approx (\nu^{(0)})$ so that $\bsig\approx \bnu$, giving the required contradiction. 
\end{proof}

\begin{lemma} \label{L:SameZero}
Suppose that $\bmu,\bla \in \R$ with $\bmu \approx \bla$ and $|\mu^0_i|=|\la^0_i|$ for all $0 \leq i \leq e-1$. Suppose that $\hook(\mu^{(0)})=\hook(\la^{(0)})<p$. Then $h_{\bla\bmu}=0$.   
\end{lemma}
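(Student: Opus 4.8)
The plan is to split off the $0$th component with the theorem of Bowman and Speyer (Proposition~\ref{P:DiagCuts}) and then to quote the classical, characteristic-$0$ decomposition numbers of Rouquier blocks in level one. One should first note that the statement is only of interest for $\bla\neq\bmu$ (since $h_{\bmu\bmu}=1$), so I would assume this throughout.

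Since $\bla\approx\bmu$ the two multipartitions share a multicore, whence $|\la^{(0)}|=|\bar{\la}^{(0)}|+e\hook(\la^{(0)})=|\bar{\mu}^{(0)}|+e\hook(\mu^{(0)})=|\mu^{(0)}|$; so Proposition~\ref{P:DiagCuts} gives $h_{\bla\bmu}=[\Delta^{(\la^{(0)})}:L^{(\mu^{(0)})}]\cdot h_{\hat{\bla}\hat{\bmu}}$, where $\hat{\bla}=(\la^{(1)},\dots,\la^{(r-1)})$, $\hat{\bmu}=(\mu^{(1)},\dots,\mu^{(r-1)})$, the first factor being computed in a level-one cyclotomic $q$-Schur algebra with multicharge $(a_0)$ and the second with multicharge $\hat{\mc}$ (for $r=1$ the second factor is $1$). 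The heart of the proof would be to show that this first factor equals $\delta_{\la^{(0)}\mu^{(0)}}$. For this I would argue that $(\la^{(0)})$ and $(\mu^{(0)})$ are themselves Rouquier partitions — the numbers $\mathfrak{d}^0_i$ are read off the $0$th abacus alone, and $\hook((\la^{(0)}))=\hook(\la^{(0)})\le\hook(\bla)\le\mathfrak{d}^0_i(\bla)+1=\mathfrak{d}^0_i((\la^{(0)}))+1$ for all $i$, and likewise for $\mu^{(0)}$ (using that $\mathfrak{d}^0_i$ and $\hook$ of the first components agree for $\bla$ and $\bmu$ since $\bla\approx\bmu$) — lying, by Proposition~\ref{P:Naka}, in a common Rouquier block of weight $\hook(\mu^{(0)})<p$. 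Since the weight is below $p$ the adjustment matrix of Theorem~\ref{T:Adj} for that block is trivial, so the first factor equals $d_{(\la^{(0)})(\mu^{(0)})}(1)$, which by the Leclerc--Miyachi formula~\cite{LeclercMiyachi2} (the $r=1$ case of Theorem~\ref{T:Main}) equals $g_{(\la^{(0)})(\mu^{(0)})}(1)$. Finally, feeding the hypothesis $|\la^0_i|=|\mu^0_i|$ for $0\le i\le e-1$ into Lemma~\ref{L:r1}: the size constraints there force $|\al_i|=0$, i.e.\ $\al_i=\emp$, so each summand is $\prod_{i=0}^{e-1}c^{\mu^0_i}_{\emp\,\be_i}c^{\la^0_i}_{\be_i\,\emp}=\prod_{i}\delta_{\mu^0_i\be_i}\delta_{\la^0_i\be_i}$, giving $g_{(\la^{(0)})(\mu^{(0)})}(1)=\prod_{i=0}^{e-1}\delta_{\la^0_i\mu^0_i}=\delta_{\la^{(0)}\mu^{(0)}}$, since an $e$-core together with an $e$-quotient pins down a partition.

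Assembling this: if $\la^{(0)}\neq\mu^{(0)}$ the first factor vanishes and $h_{\bla\bmu}=0$; this already settles the lemma when $r=1$. If $\la^{(0)}=\mu^{(0)}$ then $h_{\bla\bmu}=h_{\hat{\bla}\hat{\bmu}}$, with $\hat{\bla}\neq\hat{\bmu}$ and $\hat{\bla}\approx\hat{\bmu}$, and I would finish by induction on $r$, after checking the lemma's hypotheses descend to $(\hat{\bla},\hat{\bmu})$ over $\hat{\mc}$ (the weight condition then concerning $\mu^{(1)}$). The step I expect to be the real obstacle is the one used in the middle paragraph: that a Rouquier block of weight less than $p$ of a cyclotomic $q$-Schur algebra has the same decomposition matrix as in characteristic $0$, for arbitrary — not merely $e$-regular — labels; this is precisely where $\hook(\mu^{(0)})<p$ enters, and I would cite the structure theory of RoCK blocks via Morita and derived equivalences~\cite{CT,CT2}. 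Everything else is the mechanical evaluation of the Littlewood--Richardson expression of Lemma~\ref{L:r1}.
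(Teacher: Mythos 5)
Your main computation is sound and, in substance, is the paper's argument made self-contained: the paper also reduces via Proposition~\ref{P:DiagCuts} to the level-one factor $[\Delta^{(\la^{(0)})}:L^{(\mu^{(0)})}]$, but then simply quotes \cite[Proposition~3.3]{JLM} for its vanishing, whereas you rederive that vanishing from the Rouquier property of $(\la^{(0)})$, triviality of the adjustment matrix in weight $<p$, and the evaluation $g_{(\la^{(0)})(\mu^{(0)})}(1)=\prod_i\delta_{\la^0_i\mu^0_i}$ via Lemma~\ref{L:r1}. All of that checks out, including the observation that $\mathfrak{d}^0_i$ and $\hook$ of the first component are read off the first abacus alone, so $(\la^{(0)})$ and $(\mu^{(0)})$ lie in a common level-one Rouquier block of weight $\hook(\mu^{(0)})<p$.

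The genuine gap is your final paragraph. When $\la^{(0)}=\mu^{(0)}$ the hypotheses of the lemma do \emph{not} descend to $(\hat{\bla},\hat{\bmu})$: the lemma constrains only the quotient sizes and hook number of the \emph{zeroth} component, and nothing in the hypotheses forces $|\la^1_i|=|\mu^1_i|$ or $\hook(\mu^{(1)})<p$, so the induction on $r$ cannot be run. Worse, no argument can close this case, because the statement is false there: take $\la^{(0)}=\mu^{(0)}$ equal to their common core (so $\hook(\mu^{(0)})=0<p$ and the size conditions hold vacuously) and $\hat{\bla}\neq\hat{\bmu}$ with $h_{\hat{\bla}\hat{\bmu}}\neq 0$; then Proposition~\ref{P:DiagCuts} gives $h_{\bla\bmu}=h_{\hat{\bla}\hat{\bmu}}\neq 0$. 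The lemma is only true, and is only ever applied in the proof of Theorem~\ref{T:MainSchur}, under the additional (implicit) hypothesis $\la^{(0)}\neq\mu^{(0)}$ --- in both applications the quotients of the zeroth components differ on runner $j$ while the cores agree. With that hypothesis added, your argument through the identity $[\Delta^{(\la^{(0)})}:L^{(\mu^{(0)})}]=\delta_{\la^{(0)}\mu^{(0)}}$ already finishes the proof, and the last paragraph should be deleted rather than repaired.
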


\begin{proof}
Given the conditions on $\bmu$ and $\bla$, we have $|\mu^{(0)}|=|\lambda^{(0)}|$. By~\cite[Proposition~3.3]{JLM} we have that $[\Delta^{(\la^{(0)})}:L^{(\mu^{(0)})}]=0$. The lemma then follows from Proposition~\ref{P:DiagCuts}. 
\end{proof}

We are now ready to prove the main result of this section. Set $g_{\bla\bmu}=g_{\bla\bmu}(1)$. 

\begin{theorem} \label{T:MainSchur}
Suppose that $\bla\approx \bmu$ where $\bla \in \R$ and $\bmu \in \RR$. Suppose further that $p=0$ or $\hook(\mu^{(k)})<p$ for all $0 \leq k \leq r-1$. Then 
\[h_{\bla\bmu} = g_{\bla\bmu}.\]
\end{theorem}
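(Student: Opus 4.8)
The plan is to rerun the triple induction from the proof of Theorem~\ref{T:Main}, but inside the Grothendieck group $\mathscr{G}$ of $\mathcal{S}=\mathcal{S}_{r,n}$, with $\Gr{P^{\bmu}}$ playing the role of the canonical basis vector $G(\bmu)$ and the exact functor $\uparrow^{(s,j)}$ playing the role of $f^{(s,j)}$. The induction is first on $r$, then on $\hook(\mu^{(0)})$, then along the total order $\succ$ on $H(\bmu)$. Two facts are kept in the background. First, by Theorem~\ref{T:WeylEqualI} combined with Theorem~\ref{T:Main}, the characteristic-zero decomposition numbers satisfy $h^0_{\bla\bmu}=d_{\bla\bmu}(1)=g_{\bla\bmu}$ whenever $(\bla,\bmu)\in\Rp$; feeding this into Theorem~\ref{T:Adj} gives the lower bound $h_{\bla\bmu}\geq g_{\bla\bmu}$ for all $(\bla,\bmu)\in\Rp$, so it will be enough to produce a matching upper bound. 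Second, the coefficients $g_{\bla\bmu}$ are unitriangular in $\omega$, and since $\omega$ is constant on $H(\bmu)$ this forces $g_{\bla\bmu}=0$ for $\bla\in H(\bmu)\setminus\{\bmu\}$.

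For $r=1$ the statement is the $q$-Schur-algebra form of the Leclerc--Miyachi formula for Rouquier blocks of weight $<p$, which holds by~\cite{JLM} (equivalently, the Rouquier adjustment matrix is trivial) together with Lemma~\ref{L:r1}. For $r\geq2$ with $\hook(\mu^{(0)})=0$: Lemma~\ref{L:Rouqreg} forces $\mu^0_i=\emp$ for all $i$, so $\mu^{(0)}$ is an $e$-core. If $\bla\approx\bmu$ with $h_{\bla\bmu}\neq0$, then Proposition~\ref{T:DomOrder} gives $|\la^{(0)}|\leq|\mu^{(0)}|$, and since $\bla$ and $\bmu$ share a multicore this forces $\hook(\la^{(0)})=0$, i.e.\ $\la^{(0)}=\mu^{(0)}$; the same conclusion holds when $g_{\bla\bmu}\neq0$ because the inequalities $|\be^0_i|\geq0$ of Lemma~\ref{l:lims} force every $|\la^0_j|$ to vanish. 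For such $\bla$, Proposition~\ref{P:DiagCuts} gives $h_{\bla\bmu}=[\Delta^{(\la^{(0)})}:L^{(\mu^{(0)})}]\cdot h^{(r-1)}_{\hat{\bla}\hat{\bmu}}=h^{(r-1)}_{\hat{\bla}\hat{\bmu}}$; as $\hat{\bla}\approx\hat{\bmu}$, $\hat{\bmu}$ is an $e$-regular Rouquier multipartition for $\hat{\mc}$, and the hook bounds descend, the inductive hypothesis on $r$ gives $h^{(r-1)}_{\hat{\bla}\hat{\bmu}}=g_{\hat{\bla}\hat{\bmu}}$, and this equals $g_{\bla\bmu}$ by Lemma~\ref{L:MinusOK}.

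Now suppose $\hook(\mu^{(0)})=h>0$. Exactly as in Theorem~\ref{T:Main}, choose $1\leq j\leq e-1$ with $\mu^0_j\neq\emp$, let $\nu$ be $\mu^0_j$ with its first column (of $s$ nodes) removed, and let $\bnu$ agree with $\bmu$ except that $\nu^0_j=\nu$; then $\bnu\in\Rrs$. By the inductive hypothesis on $\hook(\mu^{(0)})$,
\[\Gr{P^{\bnu}} = Q(\bnu) + \sum_{\substack{\bla\sim\bnu\\ \bla\not\approx\bnu}} h_{\bla\bnu}\,\Gr{\Delta^{\bla}}.\]
Apply $\uparrow^{(s,j)}$. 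By Lemma~\ref{P:HeavyUngradedII}, $Q(\bnu)\uparrow^{(s,j)}=\sum_{\Delta}c^{\Delta}_{\nu^0_j(1^s)}Q(\bDel)$, where $\bDel$ ranges over $H(\bmu)$: the term $\Delta=\mu^0_j$ contributes $Q(\bmu)$ with coefficient $1$ and every other $\bDel$ satisfies $\bDel\prec\bmu$. By the $\mathcal{S}$-module analogue of the $\approx$-preservation corollary to Proposition~\ref{L:I1}, the image of $\sum h_{\bla\bnu}\Gr{\Delta^{\bla}}$ under $\uparrow^{(s,j)}$ is supported on $\{\btau:\btau\not\approx\bmu\}$. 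Hence, fixing $\bla\approx\bmu$, the coefficient of $\Gr{\Delta^{\bla}}$ in $\Gr{P^{\bnu}}\uparrow^{(s,j)}$ equals $\sum_{\Delta}c^{\Delta}_{\nu^0_j(1^s)}g_{\bla\bDel}$. On the other hand $\Gr{P^{\bnu}}\uparrow^{(s,j)}=\sum_{\bsig}b_{\bsig}\Gr{P^{\bsig}}$ with $b_{\bsig}\in\N$, because $\uparrow^{(s,j)}$ is exact and carries projectives to projectives. Using the unitriangularity of the family $\{Q(\bsig)\}_{\bsig\in H(\bmu)}$, Proposition~\ref{T:DomOrder}, Lemma~\ref{L:ID} (which shows that the $\btau$ produced above lie strictly below $\bmu$ in $\gedom$), and Lemma~\ref{L:SameZero} (which gives $h_{\bla\bsig}=0=g_{\bla\bsig}$ for $\bla\neq\bsig$ in $H(\bmu)$), one reads off that the only indecomposable projectives indexed by the $\approx$-class of $\bmu$ occurring in the sum are the $\Gr{P^{\bDel}}$ with $b_{\bDel}=c^{\Delta}_{\nu^0_j(1^s)}$; combined with the lower bound $h\geq g$ this pins the multiplicities down. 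So the coefficient of $\Gr{\Delta^{\bla}}$ also equals $\sum_{\Delta}c^{\Delta}_{\nu^0_j(1^s)}h_{\bla\bDel}$, in which $h_{\bla\bDel}=g_{\bla\bDel}$ for $\bDel\prec\bmu$ by the induction along $\succ$. Equating the two expressions and using $c^{\mu^0_j}_{\nu^0_j(1^s)}=1$ gives $h_{\bla\bmu}=g_{\bla\bmu}$, completing the induction.

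The main obstacle is precisely this extraction of $\Gr{P^{\bmu}}$ from $\Gr{P^{\bnu}}\uparrow^{(s,j)}$: over a field of characteristic $0$ the corresponding step uses that $f^{(s,j)}G(\bnu)$ is bar-invariant together with Corollary~\ref{C:Ones}, and there is no bar involution on $\mathcal{S}\text{-mod}$. The replacement is the combination of exactness of $\uparrow^{(s,j)}$ (so that $\Gr{P^{\bnu}}\uparrow^{(s,j)}$ is a genuine non-negative combination of indecomposable projectives), the quasi-heredity of $\mathcal{S}$ via Proposition~\ref{T:DomOrder}, the vanishing Lemma~\ref{L:SameZero} (which kills exactly the equal-component-size ambiguities that are invisible in the canonical-basis picture), and the adjustment-matrix lower bound from Theorem~\ref{T:Adj} forcing the peeled-off multiplicities to their exact values. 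Everything else — the $r=1$ input, the reduction when $\hook(\mu^{(0)})=0$, and Lemma~\ref{P:HeavyUngradedII} — transcribes its characteristic-$0$ counterpart essentially verbatim.
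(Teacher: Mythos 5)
Your proposal follows the paper's proof of Theorem~\ref{T:MainSchur} in all its essential features: the double reduction (characteristic $0$ via Theorems~\ref{T:WeylEqualI} and \ref{T:Main}, then the adjustment matrix of Theorem~\ref{T:Adj} for the inequality $h_{\bla\bmu}\geq g_{\bla\bmu}$), the base case $r=1$ from~\cite{JLM}, the reduction to $r-1$ via Proposition~\ref{P:DiagCuts} and Lemma~\ref{L:MinusOK} when $\hook(\mu^{(0)})=0$, and the inductive step where one applies $\uparrow^{(s,j)}$ to $\Gr{P^{\bnu}}$, uses Lemma~\ref{P:HeavyUngradedII}, and then isolates the projective multiplicities by means of exactness, Proposition~\ref{T:DomOrder}, Lemma~\ref{L:ID} and Lemma~\ref{L:SameZero}. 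The one point worth flagging is that the paper's proof of this theorem only runs a \emph{double} induction (on $r$ and on $\hook(\mu^{(0)})$): the adjustment-matrix inequality $g_{\bla\bDel}=h^0_{\bla\bDel}\leq h_{\bla\bDel}$, together with the nonnegativity of the extra projective contributions, forces $g_{\bla\bDel}=h_{\bla\bDel}$ \emph{simultaneously for all} $\bDel\in H(\bmu)$ at the final step, so the third induction along $\succ$ on $H(\bmu)$ (which \emph{is} needed in the proof of Theorem~\ref{T:Main}, where no such inequality is available) becomes redundant here. Your proposal keeps both the $\succ$-induction and the inequality; this is not wrong, but the $\succ$-level can be dropped once the inequality argument is deployed. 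Everything else transcribes the paper's reasoning faithfully.
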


\begin{proof}
If $p=0$ then the result follows from Theorem~\ref{T:WeylEqualI} and Theorem~\ref{T:Main}, so assume that $p>0$. 
As in the proof of Theorem~\ref{T:Main}, we use multiple induction arguments. The case $r=1$ appears as \cite[Theorem~4.1]{JLM}. So suppose that $r>1$, that Theorem~\ref{T:MainSchur} holds for $r-1$ and that $\bmu \in \RR$ with $\hook(\mu^{(k)})<p$ for $0 \leq k \le r-1$. Suppose first that $\hook(\mu^{(0)})=0$. 
For any $\bla = (\la^{(0)},\la^{(1)},\ldots,\la^{(r-1)}) \in \La^r$, set
$\hat{\bla}=(\la^{(1)},\ldots,\la^{(r-1}))$ and set $\hat{\mc}=(a_1,\ldots,a_{r-1})$.
Then $\hat{\bmu},(\mu^{(0)}) \in \RR$. 
Suppose that $\bla \approx \bmu$. If $\hook(\la^{(0)})>0$ then $\bmu \not \gedom \bla$ and $h_{\bla\mu}=g_{\bla\bmu}=0$ so assume $\hook(\la^{(0)})=0$, that is, $\la^{(0)}=\mu^{(0)}$. Now, applying Proposition~\ref{P:DiagCuts} we have
\[h_{\bla\bmu}  =  h_{(\la^{(0)})(\mu^{(0)})} h_{\hat{\bla}\hat{\bmu}}= g_{\hat{\bla}\hat{\bmu}} =g_{\bla\bmu}\]
where the middle step follows from the inductive hypothesis and Proposition~\ref{T:DomOrder} and the last step follows from Lemma~\ref{L:MinusOK}. Hence Theorem~\ref{T:MainSchur} holds when $\hook(\mu^{(0)})=0$.

Now suppose that $p>\hook(\mu^{(0)})>0$ and that Theorem~\ref{T:MainSchur} holds for all $\bnu \in \RR$ with $\hook(\nu^{(0)})<\hook(\mu^{(0)})$ and $\hook(\nu^{(k)})<p$ for $1 \leq k \leq r-1$. 
Choose $1 \leq j \leq e-1$ such that $\mu^0_j \neq \emp$. Let $\nu$ be the partition whose Young diagram is obtained by removing the first column from $[\mu^0_j]$, where we suppose that we remove $s\geq 1$ nodes. Let $\bnu$ be the multipartition with the same multicore as $\bmu$ and
\[\nu^k_i = \begin{cases} \nu, & k=0 \text{ and } i=j, \\
  \mu^k_i, & \text{otherwise}.\end{cases}\]
By the inductive hypothesis,
\[\Gr{P^{\bnu}}= \sum_{\bsig \approx \bnu} g_{\bsig\bnu} \Gr{\Delta^{\bsig}} +
\sum_{\substack{\bsig \sim \bnu \\ \bsig \not \approx \bnu}} h_{\bsig\bnu} \Gr{\Delta^{\bsig}} = Q(\bnu) + \sum_{\substack{\bsig \sim \bnu \\ \bsig \not \approx \bnu}} h_{\bsig\bnu} \Gr{\Delta^{\bsig}}.\]
Now by Lemma~\ref{P:HeavyUngradedII}, 
\[\Gr{P^{\bnu}} \uparrow^{(s)}_j = Q(\bnu) \uparrow^{(s)}_j +
\sum_{\substack{\bsig \sim \bnu \\ \bsig \not \approx \bnu}} h_{\bsig\bnu} \Gr{\Delta^{\bsig}} \uparrow^{(s)}_j
= \sum_{\Delta}c^{\Delta}_{\nu (1^s)} Q({\bf \Delta}) + \sum_{\substack{\bsig \sim \bnu \\ \bsig \not \approx \bnu}} h_{\bsig\bnu} \Gr{\Delta^{\bsig}}\uparrow^{(s)}_j;
\]
we also have
\[\Gr{P^{\bnu}} \uparrow^{(s)}_j = \sum_{\bep \approx \bmu} b_{\bep} \Gr{\Delta^{\bep}} + \sum_{\substack{\bep \sim \bmu \\ \bep \not \approx \bmu}} b_{\bep} \Gr{\Delta^{\bep}}
\]
for some $b_{\bep} \in \N$. 
Take $\Delta \neq \mu^{0}_j$ such that $c^{\Delta}_{\nu(1^s)} \neq 0$ and define $\bDel$ as in the proof of Proposition~\ref{P:HeavyGraded}. By Lemma~\ref{L:SameZero} we have $h_{\bDel\bmu}=0$.
Take $\bep \sim \bmu$ such that $b_{\bep} \neq 0$. 
\begin{itemize}
\item If $\bep \not \approx \bmu$ then by Lemma~\ref{L:ID}, $\bep \not \gedom \bDel$ and so $h_{\bDel\bep}=0$. 
\item If $\bep \approx \bmu$ and $\bep \gdom \bDel$ then $\ep^k_i=\mu^k_i$ unless $k=0$ and $i=j$. Hence by Lemma~\ref{L:SameZero}, $h_{\bDel\bep}=0$.  
\end{itemize}
Thus the only way that we have $h_{\bDel\bmu}=0$ is if  
\[\Gr{P^{\bnu}} \uparrow^{(s)}_j
= \sum_{\Delta} c^{\Delta}_{\nu(1^s)} \Gr{P^{\bDel}} + \sum_{\bep} r_{\bep} \Gr{P^{\bep}}\]
for some $r_{\bep} \in \N$. Consider $\bla \approx \bmu$. Then the coefficient of $\Gr{\Delta^{\bla}}$ in $\Gr{P^{\bnu}} \uparrow^{(s)}_j$ is
\[\sum_{\Delta} c^{\Delta}_{\nu(1^s)} g_{\bla \bDel} = \sum_{\Delta}c^{\Delta}_{\nu(1^s)} h_{\bla\bDel} + \sum_{\bep}r_{\bep} h_{\bla \bep}.\]
But by Theorem~\ref{T:Adj}, each $\bDel$ that appears in the sum above is such that $g_{\bla \bDel} = h^0_{\bla\bDel} \leq h_{\bla\bDel}$, and so we must have $g_{\bla \bDel}=h_{\bla \bDel}$. Taking $\bDel=\bmu$, we complete the proof of the theorem.   
\end{proof}

\begin{theorem} \label{T:MainP}
Suppose that $\bla\approx \bmu$ where $\bla,\bmu \in \R$ and $\bmu \in \La^{\mc}$. Suppose further that $\hook(\mu^{(k)})<p$ for all $0 \leq k <r$. Then 
\[[S^{\bla}:D^{\bmu}]_v = g_{\bla\bmu}(v).\]
\end{theorem}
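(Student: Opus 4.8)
The plan is to bootstrap from the ungraded computation already obtained through the cyclotomic $q$-Schur algebra to the graded statement, using the non-negativity of the graded adjustment matrix of $\h$.

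\emph{Step 1: the ungraded identity.} If $p=0$ then $(\bla,\bmu)\in\Rp$ and the assertion is Theorem~\ref{T:Main2}, so we may assume $p>0$. Since $\bmu\in\La^{\mc}$ is Kleshchev it is $e$-regular, and since $\bmu\in\R$ we get $\bmu\in\RR$; together with $\bla\in\R$ and $\bla\approx\bmu$ this gives $(\bla,\bmu)\in\Rp$, and the hook hypothesis is precisely that of Theorem~\ref{T:MainSchur}. Thus Theorem~\ref{T:MainSchur} gives $h_{\bla\bmu}=g_{\bla\bmu}(1)$, and Theorem~\ref{T:WeylEqualII} gives $h_{\bla\bmu}=[S^{\bla}:D^{\bmu}]$, so
\[[S^{\bla}:D^{\bmu}]=g_{\bla\bmu}(1);\]
in other words the desired formula already holds after the specialization $v=1$.

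\emph{Step 2: the graded refinement.} Let $\h^{0}$ denote the Ariki-Koike algebra with the same $e$ and $\mc$ but over a field of characteristic $0$, and let $d_{\bla\btau}(v)$ be its graded decomposition numbers, which by Theorem~\ref{T:Ariki} are the Fock-space transition coefficients. The graded version of the adjustment matrix of Theorem~\ref{T:Adj}---the reduction-modulo-$p$ matrix of the graded Grothendieck groups---provides, for each Kleshchev $\bmu$, Laurent polynomials $a_{\btau\bmu}(v)\in\N[v,v^{-1}]$ (indexed over Kleshchev $\btau$) with $a_{\bmu\bmu}(v)=1$ such that
\[[S^{\bla}:D^{\bmu}]_v=\sum_{\btau}d_{\bla\btau}(v)\,a_{\btau\bmu}(v).\]
Since $d_{\bla\btau}(v)\in\N[v]$ and $a_{\btau\bmu}(v)\in\N[v,v^{-1}]$ for all $\btau$, the difference $[S^{\bla}:D^{\bmu}]_v-d_{\bla\bmu}(v)=\sum_{\btau\neq\bmu}d_{\bla\btau}(v)\,a_{\btau\bmu}(v)$ has non-negative coefficients. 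By Theorem~\ref{T:Main} we have $d_{\bla\bmu}(v)=g_{\bla\bmu}(v)$, so $[S^{\bla}:D^{\bmu}]_v-g_{\bla\bmu}(v)\in\N[v,v^{-1}]$; by Step~1 this Laurent polynomial vanishes at $v=1$, and a Laurent polynomial with non-negative coefficients that vanishes at $v=1$ is identically zero. Hence $[S^{\bla}:D^{\bmu}]_v=g_{\bla\bmu}(v)$, as required.

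\emph{Main obstacle.} Step~1 is routine bookkeeping with the results already proved. The point that needs justification is the input to Step~2: that the graded decomposition matrix of $\h$ factors as (that of $\h^{0}$) times a matrix with non-negative entries and unit diagonal---the graded analogue of Theorem~\ref{T:Adj}. If one prefers not to invoke this, an alternative is to reprove Theorem~\ref{T:MainSchur} in the graded cyclotomic $q$-Schur algebra---replacing projective covers, the induction functor $\uparrow^{(s)}_{j}$, and Theorem~\ref{T:Adj} by graded analogues, and Proposition~\ref{P:HeavyUngraded} by Proposition~\ref{P:HeavyGraded}---to obtain $[\Delta^{\bla}:L^{\bmu}]_v=g_{\bla\bmu}(v)$, and then to transport this to $\h$ through the Schur functor, which is exact, grading-preserving, and sends $L^{\bmu}$ to $D^{\bmu}$ for Kleshchev $\bmu$, identifying $[\Delta^{\bla}:L^{\bmu}]_v$ with $[S^{\bla}:D^{\bmu}]_v$.
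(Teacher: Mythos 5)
Your proof is correct and follows essentially the same route the paper takes, though the paper's own proof of this theorem is much terser: it simply cites Theorem~\ref{T:MainSchur} and the Schur/Hecke decomposition-number equality for the ungraded statement, then invokes the graded adjustment matrix from~\cite[Section~10.3]{Kleshchev:Survey} together with Theorem~\ref{T:Main2} for the graded refinement. Your Step~2 spells out explicitly the positivity argument (a Laurent polynomial with non-negative coefficients vanishing at $v=1$ is zero) that the paper leaves implicit, which is a genuine improvement in clarity, and the existence of the graded adjustment matrix with entries in $\N[v,v^{-1}]$ and unit diagonal is indeed the load-bearing external input in both treatments.
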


\begin{proof}
The ungraded version of Theorem~\ref{T:MainP} follows from Theorem~\ref{T:MainSchur} and Theorem~\ref{T:WeylEqualI}. Section~10.3. The graded version follows from Theorem~\ref{T:Main2} because there is a graded adjustment matrix that relates the graded decomposition numbers in characteristic $0$ and characteristic $p$~\cite[Section~10.3]{Kleshchev:Survey}. 
\end{proof}

\subsection{Scopes equivalences} \label{SS:Scopes}
A celebrated paper of Scopes~\cite{Scopes} proves certain equivalences between blocks of the symmetric group algebra. Scopes' paper, which was generalized by Jost~\cite{Jost} to the Hecke algebras of type $A$, shows that the blocks are Morita equivalent and that there is a corresponding bijection between the partitions in the respective blocks which preserves the decomposition matrices. As we explain below, an generalization of the decomposition number result to the Ariki-Koike algebras was recently given by Dell'Arciprete~\cite{DA} and a generalization of the  Morita equivalence is a special case of even more recent work by Webster~\cite{Webster}. 

Fix a multicharge $\mc \in I^r$. For $0 \leq i \leq e-1$, define $\phi_i: \Z \rightarrow \Z$ by setting
\[\phi_i(b) = \begin{cases} b+1, & b \equiv i-1 \mod e \\
b-1, & b \equiv i \mod e, \\
0, & \text{otherwise}.
\end{cases}\]
Suppose $\bla \in \La^r$. Define $\Phi_i(\bla)$ to be the multipartition where the $\beta$-set of component $k$ is equal to $\Phi_i(\mathfrak{B}^k_{a_k}(\bla))$, that is we obtain the abacus configuration of $\Phi_i(\bla)$ from that of $\bla$ by swapping runners $i$ and $i+1$ on all components (with a vertical shift if $i=0$).  

The map $\Phi_i$ preserves $\sim_{\mc}$-equivalence classes.

\begin{lemma}  [\cite{Fayers:Weights}, Proposition~4.6] \label{L:BlockScopes}
Let $\bla,\bmu \in \La^r$. Then $\bla \sim_{\mc} \bmu$ if and only if $\Phi_i(\bla) \sim_{\mc} \Phi_i(\bmu)$. 
\end{lemma}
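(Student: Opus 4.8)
The plan is to reduce the statement to a fact about the residue content of a multipartition, and then to recognise $\Phi_i$ as an affine simple reflection acting on the space of block weights. First, by Proposition~\ref{P:Blocks}, $\bla\sim_{\mc}\bmu$ if and only if $\Res_{\mc}(\bla)=\Res_{\mc}(\bmu)$; equivalently, writing $c_j(\bla)=\#\{\mathfrak{n}\in[\bla]\mid\res_{\mc}(\mathfrak{n})=j\}$ for $j\in I$, this says $c_j(\bla)=c_j(\bmu)$ for all $j\in I$. So I would first show that the tuple $(c_0(\Phi_i(\bla)),\ldots,c_{e-1}(\Phi_i(\bla)))$ depends on $\bla$ only through $(c_0(\bla),\ldots,c_{e-1}(\bla))$, and that this dependence is injective; the forward implication $\bla\sim_{\mc}\bmu\Rightarrow\Phi_i(\bla)\sim_{\mc}\Phi_i(\bmu)$ is then immediate.

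The core step is to compute the effect of $\Phi_i$ on the residue content directly on the abacus. Since adding a node of residue $j$ corresponds to moving a bead from runner $j-1$ to runner $j$ on some component, interchanging two adjacent runners on every component only moves beads across one fixed runner, say runner $j_0$; hence $c_j(\Phi_i(\bla))=c_j(\bla)$ for $j\neq j_0$, while $c_{j_0}$ changes by the number of addable $j_0$-nodes of $[\bla]$ minus the number of removable $j_0$-nodes of $[\bla]$. In Lie-theoretic language, attaching to $\bla$ the $\widehat{\mathfrak{sl}}_e$-weight $\beta(\bla)=\sum_{k=0}^{r-1}\Lambda_{a_k}-\sum_{j\in I}c_j(\bla)\alpha_j$, this amounts to the identity $\beta(\Phi_i(\bla))=s_{j_0}\bigl(\beta(\bla)\bigr)$, where $s_{j_0}$ is the affine simple reflection and one uses $\langle\sum_k\Lambda_{a_k},\alpha_{j_0}^\vee\rangle=\#\{k\mid a_k=j_0\}$ together with the affine Cartan matrix to identify (addable)$-$(removable) with $\langle\beta(\bla),\alpha_{j_0}^\vee\rangle$. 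Since $s_{j_0}$ is a bijection of the weight lattice, the reduction of the previous paragraph is complete and the forward implication follows. Alternatively one can bypass the Lie theory and simply record the formula $c_{j_0}(\Phi_i(\bla))=c_{j_0-1}(\bla)+c_{j_0+1}(\bla)-c_{j_0}(\bla)+\#\{k\mid a_k=j_0\}$ directly.

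For the converse I would use that $\Phi_i$ is an involution: interchanging the same two runners again (with the vertical shift in the case $i=0$ likewise cancelling) returns the original abacus configuration. So if $\Phi_i(\bla)\sim_{\mc}\Phi_i(\bmu)$, applying the forward implication to $\Phi_i(\bla)$ and $\Phi_i(\bmu)$ yields $\bla=\Phi_i(\Phi_i(\bla))\sim_{\mc}\Phi_i(\Phi_i(\bmu))=\bmu$. I expect the main obstacle to be the abacus bookkeeping in the middle paragraph: checking that interchanging runners alters exactly one coordinate of the content tuple, and, particularly in the wrap-around case $i=0$ involving a vertical shift, that the constant correction term summed over all $r$ components is precisely $\#\{k\mid a_k=j_0\}$. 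The remaining steps are formal.
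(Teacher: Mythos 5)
The paper does not prove this lemma: it imports it verbatim from Fayers' Proposition~4.6, so there is no in-paper argument to compare your attempt against. Your proof is correct, and it is in fact the standard argument for statements of this kind (and, as far as I am aware, the one in the cited source): reduce blocks to residue contents via Proposition~\ref{P:Blocks}; observe on the abacus that $\Phi_i$ changes only the single coordinate $c_{j_0}$ of the content tuple, and by exactly $\#\add_{j_0}(\bla)-\#\rem_{j_0}(\bla)$; identify that quantity with $\langle\beta(\bla),\alpha_{j_0}^\vee\rangle$, so that $\beta(\Phi_i(\bla))=s_{j_0}\beta(\bla)$ and the induced map on content tuples is a bijection; then use involutivity of $\Phi_i$ for the converse. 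Two small points are worth recording if you write this up. First, the left/right bead moves realising $\Phi_i$ act on disjoint pairs of positions $(b,b+1)$ with $b\equiv j_0-1 \pmod e$ across all $r$ components, so they can be performed one at a time through genuine $\beta$-sets, each step adding or removing a single node of residue $j_0$; this is what makes the content bookkeeping additive and is the point most in need of an explicit sentence. Second, when $e=2$ the indices $j_0-1$ and $j_0+1$ coincide, so the explicit formula should be read as $c_{j_0}(\Phi_i(\bla)) = 2c_{1-j_0}(\bla) - c_{j_0}(\bla) + \#\{k \mid a_k = j_0\}$, consistent with the off-diagonal entry $-2$ of the $A^{(1)}_1$ Cartan matrix; with that reading your closed formula holds for all $e\geq 2$.
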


Let $B$ be a $\sim_{\mc}$-equivalence class of $\La^r$. Say that the $\sim_{\mc}$-equivalence class $\tilde{B}$ is formed from $B$ by making a Scopes move if $\tilde{B}=\Phi_i(B)$ for some $0 \leq i \leq e-1$ and no multipartition $\bla \in B$ has any addable $i$-nodes. Let $\equiv_{\text{Sc}}$ be the equivalence relation on the $\sim_{\mc}$-equivalence classes of $\La^r$ generated by making Scopes moves; we call this Scopes equivalence. If $B \equiv_{\text{Sc}} \tilde{B}$ then by composing the bijections $\Phi_i, \Phi^{-1}_i$ which give the Scopes moves between $B$ and $\tilde{B}$, we have a bijection $\Phi: B \rightarrow \tilde{B}$. 

\begin{proposition} [\cite{DA}, Proposition~5.5] \label{P:DeEq}
Suppose that $B$ and $\tilde{B}$ are $\sim_{\mc}$-equivalence classes with $B \equiv_{\text{Sc}} \tilde{B}$. Suppose $\bla,\bmu \in B$ with $\bmu$ a Kleshchev multipartition. Then $\Phi(\bmu)$ is a Kleshchev multipartition and we have $[S^{\bla}:D^{\bmu}]=[S^{\Phi(\bla)}:D^{\Phi(\bmu)}]$. 
\end{proposition}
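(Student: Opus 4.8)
The plan is to reduce the statement to a single Scopes move and then to transport Specht modules and simple modules through a divided-power $i$-induction/restriction functor, exploiting the fact that in a Scopes situation these functors act "sharply".

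First I would reduce to one move. The relation $\equiv_{\mathrm{Sc}}$ is generated by single Scopes moves, and the assertion of the proposition — that the composite bijection $\Phi$ carries $\Kla\cap B$ bijectively onto $\Kla\cap\tilde B$ and satisfies $[S^{\bla}:D^{\bmu}]=[S^{\Phi(\bla)}:D^{\Phi(\bmu)}]$ — is preserved under composing such bijections. So assume $\tilde B=\Phi_i(B)$, where no $\bla\in B$ has an addable $i$-node. Reading off an abacus configuration of any $\bla\in B$ one sees that the number of removable $i$-nodes is then independent of $\bla$; call it $\delta$ (it equals a fixed excess of beads on one runner over the next). If $\delta=0$ then $B=\tilde B$ and there is nothing to prove, so assume $\delta\ge 1$. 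A short abacus computation shows that, for $\bla\in B$, there is exactly one multipartition obtained from $[\bla]$ by deleting $\delta$ nodes of residue $i$, namely $\Phi_i(\bla)$; in crystal language $\Phi_i(\bla)=\tilde e_i^{\,\delta}\bla$, and likewise $\Phi_i^{-1}$ on $\tilde B$ is ``add $\delta$ nodes of residue $i$'', that is $\tilde f_i^{\,\delta}$.

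Next I would bring in the functors. Let $\mathrm e_i^{(\delta)}$ and $\mathrm f_i^{(\delta)}$ be the divided-power $i$-restriction and $i$-induction functors between modules for $\h_{r,m}$ and $\h_{r,m\mp\delta}$ (equivalently, for the corresponding cyclotomic Khovanov--Lauda--Rouquier algebras, with their $\Z$-grading); these are exact, biadjoint up to a degree shift, and categorify the action of $e_i^{(\delta)},f_i^{(\delta)}$ on the Grothendieck group, compatibly with the Fock-space action of Section~\ref{SS:Fock}. In particular $\mathrm e_i^{(\delta)}$ sends modules lying in $B$ to modules lying in $\tilde B$, and conversely for $\mathrm f_i^{(\delta)}$. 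The crucial point is that, because no multipartition in $B$ has an addable $i$-node, the branching rules become multiplicity-free and sharp: for $\bla\in B$ the graded Specht-branching rule (Brundan--Kleshchev, Kleshchev; cf.\ Jost and Fayers) gives $\mathrm e_i^{(\delta)}S^{\bla}\cong S^{\Phi_i(\bla)}$ up to a degree shift, the Specht filtration having a single layer since $\Phi_i(\bla)$ is the only multipartition reachable from $\bla$ by deleting $\delta$ nodes of residue $i$; and for $\bmu\in B$ Kleshchev one has $\varepsilon_i(\bmu)=\delta$ (the number of normal $i$-nodes), so the graded modular branching rule (Ariki, Kleshchev, Brundan--Kleshchev) gives $\mathrm e_i^{(\delta)}D^{\bmu}\cong D^{\tilde e_i^{\,\delta}\bmu}=D^{\Phi_i(\bmu)}$ up to a degree shift. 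In particular $\Phi_i(\bmu)\in\Kla$, being a vertex of the crystal $B(\Lambda)$, and $\Phi_i$ restricts to a bijection $\Kla\cap B\to\Kla\cap\tilde B$ whose inverse is realised by $\mathrm f_i^{(\delta)}$.

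Finally I would conclude by a Jordan--H\"older count. Fix $\bla\in B$ and apply the exact functor $\mathrm e_i^{(\delta)}$ to a composition series of $S^{\bla}$: every composition factor is some $D^{\bmu}$ with $\bmu\in\Kla\cap B$, it is carried to the simple module $D^{\Phi_i(\bmu)}$, and $\bmu\mapsto\Phi_i(\bmu)$ is injective, so we read off a composition series of $\mathrm e_i^{(\delta)}S^{\bla}\cong S^{\Phi_i(\bla)}$ in which $D^{\Phi_i(\bmu)}$ occurs with the same multiplicity as $D^{\bmu}$ does in $S^{\bla}$; hence $[S^{\bla}:D^{\bmu}]=[S^{\Phi_i(\bla)}:D^{\Phi_i(\bmu)}]$. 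Composing over the moves gives the proposition, and tracking the (common) degree shifts yields the graded refinement. I expect the main obstacle to be the input in the previous paragraph: identifying $\Phi_i|_B$ precisely with the string of normal $i$-nodes, and checking that the divided-power branching is genuinely an isomorphism rather than merely an equality in the Grothendieck group — this is exactly where the hypothesis ``no addable $i$-node'' is used, and it is the technical heart of any Scopes-type argument; once it is granted, the remainder is routine bookkeeping.
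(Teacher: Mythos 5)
The paper does not prove this proposition; it is quoted verbatim from Dell'Arciprete~\cite{DA} (Proposition~5.5), so there is no in-paper argument to compare yours against. Your proposal is the standard Scopes--Jost argument carried over to the Ariki--Koike setting: reduce to a single move $\Phi_i$ with $\varphi_i\equiv 0$ on $B$, observe that there is then a unique multipartition obtained from $\bla$ by deleting $\delta=\varepsilon_i(\bla)$ nodes of residue $i$ (so $\mathrm{e}_i^{(\delta)}S^{\bla}\cong S^{\Phi_i(\bla)}$ up to shift), invoke the modular branching rule to see $\mathrm{e}_i^{(\delta)}D^{\bmu}\cong D^{\Phi_i(\bmu)}$ and hence $\Phi_i(\bmu)\in\Kla$, and finish with an exactness/Jordan--H\"older count; this is, up to presentation, the argument in~\cite{DA} (and in Scopes' original paper and Jost's $q$-analogue). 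One small point worth being careful about: you write $\Phi_i(\bla)=\tilde e_i^{\,\delta}\bla$ for arbitrary $\bla\in B$, but the crystal operator is only needed for the simple-module branching (where $\bmu\in\Kla$); for the Specht-module side you only need the combinatorial uniqueness of the $\delta$-fold $i$-removal, which you do state correctly, so this is a matter of phrasing rather than a gap.
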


\begin{proposition} [\cite{Webster}, Lemma~3.2]
Suppose that $B$ and $\tilde{B}$ are $\sim_{\mc}$-equivalence classes with $B \equiv_{\text{Sc}} \tilde{B}$. Then the blocks of the Ariki-Koike algebras corresponding to $B$ and $\tilde{B}$ are Morita equivalent. 
\end{proposition}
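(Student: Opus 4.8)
The plan is to reduce to a single Scopes move and then realise the equivalence through a categorical $\mathfrak{sl}_2$-action. The statement is quoted from~\cite{Webster}; the sketch below is how I would prove it.

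First I would reduce to the case of a single Scopes move. Since Morita equivalence of blocks is an equivalence relation and $\equiv_{\text{Sc}}$ is by definition generated by Scopes moves, it suffices to prove that whenever $\tilde B=\Phi_i(B)$ for a single $i$ with no $\bla\in B$ admitting an addable $i$-node, the blocks of the Ariki--Koike algebras attached to $B$ and $\tilde B$ are Morita equivalent. In this situation the two adjacent runners interchanged by $\Phi_i$ are nested uniformly over all of $B$ --- every bead on the lower runner has a bead at the matching level on the upper one --- and the difference in the number of beads, which is constant on $B$, is a positive integer $\kappa$, the \emph{Scopes number}; applying $\Phi_i$ then produces the block $\tilde B$, which lies in $\h_{r,n'}$ for some $n'\le n$ and has $\kappa$ addable and no removable $i$-nodes.

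Next I would invoke the categorical action. The exact $i$-restriction and $i$-induction functors on $\bigoplus_{m\ge 0}\h_{r,m}(q,{\bf Q})\text{-mod}$, together with their divided powers $E_i^{(t)}$ and $F_i^{(t)}$, equip this category with a categorical action of $\widehat{\mathfrak{sl}}_e$ (going back to Ariki and Grojnowski; in the strong form needed here this follows from the realisation of the Ariki--Koike algebras as cyclotomic Khovanov--Lauda--Rouquier algebras~\cite{BK:Blocks} together with the Chuang--Rouquier axiomatics). Restricting to the node $i$ one obtains an $\mathfrak{sl}_2$-categorification, and the hypothesis ``no addable $i$-node on $B$'' says precisely that $B$ occupies an extremal weight space of an $\mathfrak{sl}_2$-string: $F_i$ acts as zero on it, $i$-restriction can be applied $\kappa$ times before the string ends, and $\tilde B$ is the weight space at the far end. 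By the Chuang--Rouquier theorem on $\mathfrak{sl}_2$-categorifications, $E_i^{(\kappa)}$ and $F_i^{(\kappa)}$ then restrict to mutually inverse equivalences $B\text{-mod}\leftrightarrow\tilde B\text{-mod}$; and since these divided-power functors are direct summands of iterated restriction and induction, hence given by tensoring with $(\h_{r,n},\h_{r,n'})$-bimodules that are finitely generated and projective on each side, the resulting equivalence is a Morita equivalence, which is the claim.

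The hard part will be the passage from the abacus combinatorics to the categorical action: checking that ``$\tilde B=\Phi_i(B)$ with no addable $i$-node on $B$'' really does force $B$ and $\tilde B$ to sit at the two ends of a single $\mathfrak{sl}_2$-string --- equivalently, that $\kappa$-fold $i$-restriction carries $B$ \emph{onto} $\tilde B$ and $\kappa$-fold $i$-induction carries $\tilde B$ onto $B$ --- which requires matching the residue data of the two blocks carefully, and knowing the categorification is strong enough for the Chuang--Rouquier criterion to apply (this is what the cyclotomic KLR presentation supplies). An alternative I would consider, closer to Webster's own method, bypasses $\mathfrak{sl}_2$-strings altogether by exhibiting the two blocks as idempotent truncations of one common algebra of weighted KLR / quiver Schur type and reading the Morita equivalence off directly; the remaining points --- exactness, biadjointness of the functors, and the bimodule description giving Morita-ness --- are formal in either approach.
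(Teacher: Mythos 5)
The paper does not prove this proposition: it is quoted from Webster with a citation to his Lemma~3.2, and no argument appears in the present paper. So there is no in-paper proof against which to compare yours.

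Your sketch is nevertheless a correct blueprint. The reduction to a single Scopes move is sound since $\equiv_{\text{Sc}}$ is generated by such moves and Morita equivalence is transitive. For a single move, your identification of $B$ as an extremal weight space is correct: if no multipartition in $B$ has an addable $i$-node then $i$-induction $F_i$ annihilates the whole block, so $B$ sits at one end of its $\mathfrak{sl}_2$-string at node $i$, while $\tilde B=\Phi_i(B)$ sits at the reflected end, and the Chuang--Rouquier theory of $\mathfrak{sl}_2$-categorifications, applied via the cyclotomic KLR presentation~\cite{BK:Blocks}, gives $E_i^{(\kappa)}$ and $F_i^{(\kappa)}$ as mutually inverse abelian equivalences. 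The detail you flag as the hard part --- checking that $E_i^{(\kappa)}$ carries $B$ exactly onto $\tilde B$ --- is in fact routine: Proposition~\ref{P:Blocks} reduces it to a residue-multiset calculation, and the runner swap manifestly interchanges addable and removable $i$-node data, so the two blocks match up and the $\mathfrak{sl}_2$-weights are $-\kappa$ and $\kappa$ as required. Webster's cited lemma is stated and proved in the general language of affine categorical representations; both routes you describe --- the $\mathfrak{sl}_2$-string argument and the idempotent truncation of a weighted KLR or quiver Schur algebra --- are instances of that framework, so your primary plan is in the right spirit and would yield a complete proof once the biadjointness and bimodule-projectivity of the divided-power functors are cited from the KLR theory.
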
 

Using Webster's work and the results of~\cite[Section~3.3]{L:Rouquier}, we can describe when a block is Scopes equivalent to a Rouquier block. 
Let $\bla \in \La^r$ and recall the definition of $\mathfrak{b}^k_i(\bla)$ from Subsection~\ref{SS:Rouq}. For $0 \leq i \leq e-1$, define $\mathfrak{b}^{\ast}_i(\bla) = \sum_{k=0}^{r-1}\mathfrak{b}^k_i(\bla)$. By~\cite[Lemma~3.2]{Fayers:Weights}, the function $\mathfrak{b}^\ast_i$ is constant on $\sim_{\mc}$-equivalence classes, so if $B$ is such a class, we can define $\mathfrak{b}^{\ast}_i(B)$. Define a total order $\precdot$ on $\{0,1,\ldots,e-1\}$ by saying that
\[i \precdot j \text{ if } \mathfrak{b}^{\ast}_i(B) < \mathfrak{b}^{\ast}_j(B) \text{ or if } \mathfrak{b}^{\ast}_i(B) = \mathfrak{b}^{\ast}_j(B) \text{ and } i<j\]
and define $\pi=\pi(B)$ to be the permutation such that 
\[\mathfrak{b}^{\ast}_{\pi(0)}(B) \precdot \mathfrak{b}^{\ast}_{\pi(1)}(B) \precdot \ldots \precdot \mathfrak{b}^{\ast}_{\pi(e-1)}(B).\] 
We say that a $\sim_{\mc}$-equivalence class $B$ is a RoCK block if every $\bla \in B$ satifies
\[\hook(\bla) \leq \mathfrak{b}^{k}_{\pi(i)}(\bla) - \mathfrak{b}^{k}_{\pi(i-1)}(\bla)+1\]
for all $1 \leq i \leq e-1$ and $0 \leq k \leq r-1$. 

\begin{proposition} [\cite{Webster}, Proposition~4.3 \& \cite{L:Rouquier}, Section~3.3]
A $\sim_{\mc}$-equivalence class $B$ is a RoCK block if and only if it is Scopes equivalent to a Rouquier block. 
\end{proposition}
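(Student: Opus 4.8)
The plan is to translate both conditions into statements about the single‑runner statistics $\mathfrak{b}^k_i$, exploiting the fact that a Scopes move $\Phi_i$ acts on the abacus of every component simply by transposing two adjacent runners. I would build the argument from three observations. \emph{First}, the system of inequalities defining a RoCK block is itself invariant under Scopes moves: since $\Phi_i$ permutes the parts of the quotient of each component it preserves $\hook$, and since it transposes runners on every component it interchanges $\mathfrak{b}^\ast_{i-1}(B)$ and $\mathfrak{b}^\ast_i(B)$, thereby composing the sorting permutation $\pi(B)$ with an adjacent transposition; as the RoCK inequalities refer only to $\hook$ and to the consecutive differences $\mathfrak{b}^k_{\pi(i)}-\mathfrak{b}^k_{\pi(i-1)}$ read in the $\precdot$‑order, they survive. \emph{Second}, every Rouquier block is a RoCK block: when $\hook(\bla)\ge 1$ the Rouquier inequality $\hook(\bla)\le\mathfrak{d}^k_i(\bla)+1$ forces $\mathfrak{b}^k_i(\bla)\ge\mathfrak{b}^k_{i-1}(\bla)$ for all $k$ and all $1\le i\le e-1$, hence $\mathfrak{b}^\ast_0(B)\le\cdots\le\mathfrak{b}^\ast_{e-1}(B)$, so $\pi(B)$ is the identity and the RoCK inequalities literally become the Rouquier inequalities (the weight‑zero case is trivial). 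These two observations already give the ``if'' direction: if $B\equiv_{\text{Sc}}\tilde B$ with $\tilde B$ Rouquier, then $\tilde B$ is RoCK, and RoCK‑ness propagates back along the chain of Scopes moves to $B$.

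\emph{Third}, and this is the substantive part, I would show that a RoCK block $B$ is Scopes equivalent to a Rouquier block by a bubble sort on the runners. If the tuple $(\mathfrak{b}^\ast_0(B),\ldots,\mathfrak{b}^\ast_{e-1}(B))$ is weakly increasing then $\pi(B)$ is the identity and, exactly as above, $B$ is already a Rouquier block. Otherwise pick $i$ with $\mathfrak{b}^\ast_{i-1}(B)>\mathfrak{b}^\ast_i(B)$; I claim $\Phi_i$ is then a legal Scopes move, so $\Phi_i(B)\equiv_{\text{Sc}}B$ and $\Phi_i(B)$ has strictly fewer $\precdot$‑inversions among its runner‑sums (the transposition removes the inversion of the pair $\{i-1,i\}$ and leaves the status of every other pair unchanged). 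Iterating terminates at a Scopes‑equivalent class whose runner‑sums are weakly increasing, which is Rouquier; combined with the first direction this proves the equivalence.

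The main obstacle is the claim buried in the third step, namely that $\mathfrak{b}^\ast_{i-1}(B)>\mathfrak{b}^\ast_i(B)$ forces $\Phi_i$ to be a Scopes move, i.e. that no $\bla\in B$ has an addable $i$‑node. This is a bead‑counting argument on the abacus of an \emph{arbitrary} $\bla\in B$ (not merely on its multicore, and for all $\approx$‑classes in $B$): telescoping the RoCK inequalities along the $\precdot$‑order yields $\mathfrak{b}^k_{i-1}(\bla)-\mathfrak{b}^k_i(\bla)\ge\hook(\bla)-1$ for every component $k$, and one must verify that, since passing from $\bar{\bla}$ to $\bla$ displaces at most $\hook(\bla)$ beads downward on each component, this gap is wide enough to forbid any bead move from runner $i-1$ to runner $i$. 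This is precisely the higher‑level analogue of Scopes' pair analysis, including the degenerate low‑weight cases, and is the content of~\cite[Proposition~4.3]{Webster} together with~\cite[Section~3.3]{L:Rouquier}; I would invoke it rather than reprove it, and structure the write‑up so that everything else reduces to the two bookkeeping lemmas of the first paragraph (invariance of the RoCK inequalities and the coincidence ``$\pi(B)=\mathrm{id}$ and RoCK'' $\Leftrightarrow$ ``Rouquier'').
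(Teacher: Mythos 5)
The paper does not prove this proposition at all: it is imported verbatim from \cite[Proposition~4.3]{Webster} and \cite[Section~3.3]{L:Rouquier}, so your outline is being measured against a citation rather than an argument. Your skeleton (invariance of the RoCK inequalities under Scopes moves, the identification ``RoCK with $\pi(B)=\mathrm{id}$'' $=$ ``Rouquier'', and a bubble sort on runner sums) is the right shape and matches how the cited sources proceed, and deferring the bead-displacement estimate to those references is no worse than what the paper itself does.

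Two points in the parts you \emph{do} claim to prove need repair. First, your key claim in the third step is stated backwards: if $\mathfrak{b}^{\ast}_{i-1}(B)>\mathfrak{b}^{\ast}_{i}(B)$ then the multicore of a component with $\mathfrak{b}^k_{i-1}>\mathfrak{b}^k_i$ has a bead on runner $i-1$ at a level where runner $i$ is empty, i.e.\ it \emph{does} have an addable $i$-node, so $\Phi_i$ is not a Scopes move out of $B$. The legal move runs the other way: one must show that no element of $\Phi_i(B)$ (where runner $i-1$ is now the short one) has an addable $i$-node, so that $B$ is obtained from $\Phi_i(B)$ by a Scopes move; since $\equiv_{\text{Sc}}$ is the generated equivalence this still yields $B\equiv_{\text{Sc}}\Phi_i(B)$, but the claim as you phrased it is false and the bead-counting must be carried out on $\Phi_i(B)$. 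Second, ``Rouquier $\Rightarrow\pi(B)=\mathrm{id}$'' is not automatic: the inequality $\hook(\bla)\le\mathfrak{d}^k_i(\bla)+1$ forces $\mathfrak{d}^k_i(\bla)\ge0$ only for those $\bla\in B$ with $\hook(\bla)\ge1$, and since a $\sim_{\mc}$-class contains several $\approx_{\mc}$-classes with different multicores and different values of $\hook$, you must extract the conclusion from the maximal-weight classes (cf.\ the remark after Lemma~\ref{L:Rouqreg} invoking \cite[Corollary~5.2]{DA}) together with the constancy of $\mathfrak{b}^{\ast}_i$ on $B$. In particular the weight~$0$ and weight~$1$ cases are not ``trivial'': for $r\ge2$ and $\pi\ne\mathrm{id}$ the Rouquier and RoCK inequality systems constrain different pairs of runners, and your telescoping bound $\mathfrak{b}^k_{i-1}-\mathfrak{b}^k_i\ge\hook(\bla)-1$ also degrades when $\hook(\bla)=0$ and $i-1$, $i$ are not adjacent in the $\precdot$-order. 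These degenerate cases are exactly where the cited sources do real work.
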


We could equally have defined a RoCK block to be a block which is Scopes equivalent to a Rouquier block and then given the equivalent combinatorial definition. Our terminology follows that of~\cite{Webster}, although Webster's RoCK blocks are defined more generally; when applied to the Ariki-Koike algebras, the notations coincide. Applying Proposition~\ref{P:DeEq}, we are now in a position to give some decomposition numbers for RoCK blocks. 

\begin{theorem}
Suppose that $\h_{r,n}(q,{\bf Q})$ is defined over a field of characteristic $p \geq 0$. Take $\bla \in \La^r_n$ and $\bmu \in \La^{\mc}_n$ such that $\bla$ and $\bmu$ lie in a RoCK block $\mathcal{R}$ and $\bla \approx_{\mc} \bmu$. Let $\pi=\pi(\mathcal{R})$ be the permutation defined above. Suppose that $p=0$ or $\hook(\mu^{(k)})<p$ for all $0 \leq k \le r-1$. Then
\[
[S^{\bla}:D^{\bmu}] = \sum_{\bal \in \M{r}{e+1}} \sum_{\bbe \in \M{r}{e}} \sum_{\bgam \in \M{{r+1}}{e}} \sum_{\bdel \in \M{r}{e}} \left( \prod_{k=0}^{r-1} \prod_{i=0}^{e-1} c^{\delta^k_i}_{\mu^k_{\pi(i)} \gamma^k_i} c^{\delta^k_i}_{ \al^k_i \be^k_i \gamma^{k+1}_i} c^{\la^k_{\pi(i)}}_{\be^k_i (\al^k_{i+1})'}\right)  c^{\emp}_{\gamma^0_0 \gamma^0_1 \ldots \gamma^{0}_{e-1}}  c^{\emp}_{\gamma^r_0 \gamma^r_1 \ldots \gamma^r_{e-1}}.
\]
\end{theorem}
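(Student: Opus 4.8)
The plan is to reduce the statement to the Rouquier block case, which is settled by Theorems~\ref{T:Main2} and~\ref{T:MainP}, by transporting the decomposition numbers along a composite of Scopes moves.

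\textbf{Step 1: the Scopes move and its effect on quotients.} Since $\mathcal{R}$ is a RoCK block, the result of Webster together with~\cite[Section~3.3]{L:Rouquier} recalled above shows that $\mathcal{R}$ is Scopes equivalent to a Rouquier block $\mathcal{R}'$, and the proof of that equivalence produces an explicit chain of Scopes moves whose composite $\Phi\colon\mathcal{R}\to\mathcal{R}'$ has the following effect on abacus configurations: because $\pi=\pi(\mathcal{R})$ is the permutation sorting the quantities $\mathfrak{b}^{\ast}_i(\mathcal{R})$, the map $\Phi$ carries, in every component $k$, the bead configuration on runner $\pi(i)$ to runner $i$, together with the vertical shifts forced by any use of the move $\Phi_0$. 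These shifts alter only the $\beta$-set offsets $a_k$, hence the multicharge, and not the partitions recorded on the runners; so if $\bla\in\mathcal{R}$ has quotient $(\la^k_i)$ then $\Phi(\bla)\in\mathcal{R}'$ has quotient $(\la^k_{\pi(i)})$, and similarly for $\bmu$. In particular the $k$-th component of $\Phi(\bmu)$ has $e$-weight $\sum_i|\mu^k_{\pi(i)}|=\sum_i|\mu^k_i|=\hook(\mu^{(k)})$.

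\textbf{Step 2: transporting the hypotheses.} By Lemma~\ref{L:BlockScopes} each $\Phi_i$ preserves $\sim_{\mc}$-classes, and since $\Phi_i$ only permutes runners (and applies a vertical shift on all components together) it preserves multicores and $e$-weights; hence $\bla\approx_{\mc}\bmu$ implies $\Phi(\bla)\approx\Phi(\bmu)$ with respect to the common multicharge of $\mathcal{R}'$. By Proposition~\ref{P:DeEq}, $\Phi(\bmu)$ is a Kleshchev multipartition and
\[[S^{\bla}:D^{\bmu}]=[S^{\Phi(\bla)}:D^{\Phi(\bmu)}].\]
Consequently $\Phi(\bmu)\in\RR$, $\Phi(\bla)\in\R$, $(\Phi(\bla),\Phi(\bmu))\in\Rp$, and the hypothesis ``$p=0$ or $\hook(\mu^{(k)})<p$ for all $k$'' passes verbatim to $\Phi(\bmu)$ by Step~1.

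\textbf{Step 3: applying the Rouquier case.} If $p=0$ we apply Theorem~\ref{T:Main2}, and if $p>0$ we apply Theorem~\ref{T:MainP}; in either case
\[[S^{\Phi(\bla)}:D^{\Phi(\bmu)}]_v=g_{\Phi(\bla)\Phi(\bmu)}(v).\]
Setting $v=1$ kills the $v^{\omega}$ prefactor in the defining formula~\ref{defn:f}, and since the quotients of $\Phi(\bla)$ and $\Phi(\bmu)$ are $(\la^k_{\pi(i)})$ and $(\mu^k_{\pi(i)})$ respectively, the right-hand side of~\ref{defn:f} becomes exactly the asserted sum; as $\Phi(\bmu)$ is $e$-regular, Lemma~\ref{L:Rouqreg} guarantees the index sets agree with those in~\ref{defn:f}.

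\textbf{The main obstacle.} The only point that is not purely formal is Step~1: one must verify that the chain of Scopes moves supplied by the RoCK-versus-Rouquier equivalence genuinely realises the runner permutation $\pi$ and that the $\Phi_0$-shifts perturb only the multicharge, not the runner quotients (to which both the $\approx$-class and the formula are insensitive). Once this bookkeeping is in place, the theorem follows immediately by combining Proposition~\ref{P:DeEq} with Theorems~\ref{T:Main2} and~\ref{T:MainP}.
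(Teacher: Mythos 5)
Your proposal is correct and follows exactly the route the paper intends: the paper gives no detailed argument for this theorem beyond the remark ``Applying Proposition~\ref{P:DeEq}, we are now in a position to give some decomposition numbers for RoCK blocks,'' and your three steps (realising $\pi$ by a composite of Scopes moves, transporting the hypotheses and the decomposition number via Proposition~\ref{P:DeEq}, then invoking Theorems~\ref{T:Main2} and~\ref{T:MainP} at $v=1$) are precisely the intended argument. The bookkeeping you flag in Step~1 is exactly the content of the cited results of Webster and of~\cite[Section~3.3]{L:Rouquier}, so nothing further is needed.
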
  

\subsection{Open questions} \label{SS:Open}

In Theorem~\ref{T:Main}, we show that $d_{\bla\bmu}(v)=g_{\bla\bmu}(v)$ for any $(\bla,\bmu) \in \Rp$, and in order to obtain Theorem~\ref{T:Main2} we just ignore any $\bmu \in \RR$ not indexed by a Kleshchev multipartition. The definition of $g_{\bla\bmu}(v)$ does not depend on the common multicore of $\bla$ and $\bmu$. However, the set $\La^{\mc} \cap \RR$ does. Unfortunately we do not have a non-recursive way of testing whether a multipartition in a Rouquier block is a Kleshchev multipartition; that is, we would like an analogue of Lemma~\ref{L:Rouqreg} for $r >1$. 

\begin{ex} Let $e=2$ and $r=2$ and let 
\begin{align*}
\bnu(1) & = \abacus(nb,bb,nb,nb,nb,nb,nn) \quad
\abacus(bb,bb,bb,nb,nn,nn,nn)\;, &
\bnu(2) & = \abacus(bb,nb,nb,nb,nb,nn,nb) \quad 
\abacus(bb,bb,bb,nb,nn,nn,nn)\;, &
\bnu(3) & = \abacus(bb,nb,nb,nb,nb,nb,nn) \quad
\abacus(bb,bb,nb,bb,nn,nn,nn)\;, &
\bnu(4) & = \abacus(bb,nb,nb,nb,nb,nb,nn) \quad 
\abacus(bb,bb,bb,nn,nb,nn,nn)\;, \\ \\
\bmu(1) & = \abacus(bb,bb,nb,bb,nn,nn,nn) \quad 
\abacus(bb,nb,nb,nb,nb,nb,nn)\;, &  
\bmu(2) & = \abacus(bb,bb,bb,nn,nb,nn,nn) \quad 
\abacus(bb,nb,nb,nb,nb,nb,nn)\;, &  
\bmu(3) & = \abacus(bb,bb,bb,nb,nn,nn,nn) \quad 
\abacus(nb,bb,nb,nb,nb,nb,nn)\;, &  
\bmu(4) & = \abacus(bb,bb,bb,nb,nn,nn,nn)  \quad
\abacus(bb,nb,nb,nb,nb,nn,nb)\;.
\end{align*}
The $2$-regular multipartitions are $\bnu(2), \bnu(4)$, $\bmu(2)$ and $\bmu(4)$, and $d_{\bnu(x)\bnu(y)}(v) = d_{\bmu(x)\bmu(y)}(v)$ for all $1 \leq x \leq 4$ and $y=2,4$. However 
$\bnu(2)$ and $\bnu(4)$ are Kleshchev multipartitions whereas $\bmu(2)$ and $\bmu(4)$ are not. 
\end{ex} 

If $r=1$, we are able to express the decomposition numbers $[S^{\bla}:D^{\bmu}]_v$ in terms of other (unknown) decomposition numbers.

\begin{proposition}  [\cite{JLM}, Proposition~4.3] \label{T:AdjP}
Let $r=1$. Suppose that $\mu \in \RR$ and $\bla \approx \bmu$. Set 
\[T(\bmu) = \{\btau \approx \bmu: |\tau^k_i|=|\mu^k_i| \text{ for all } 0 \le i \leq e-1 \text{ and } 0 \leq k \leq r-1\}.\]
Then 
\[h_{\bla\bmu} = \sum_{\btau \in T(\bmu)} g_{\bla\btau} h_{\btau\bmu}.\]
\end{proposition}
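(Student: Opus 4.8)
## Proof proposal for Proposition~\ref{T:AdjP}

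The plan is to follow the same two-step structure as the proof of Theorem~\ref{T:MainSchur}, but now specialised to $r=1$ and run in reverse, so as to \emph{extract} the unknown characteristic-$p$ decomposition numbers $h_{\btau\bmu}$ rather than determine them. First I would recall from Theorem~\ref{T:Adj} that $D = D_0 A$ for a lower unitriangular adjustment matrix $A = (a_{\btau\bmu})$, and from Theorem~\ref{T:WeylEqualI} together with Theorem~\ref{T:Main} that the characteristic-$0$ decomposition numbers are exactly $h^0_{\bla\btau} = d_{\bla\btau}(1) = g_{\bla\btau}$ whenever $(\bla,\btau) \in \Rp$. Thus for $\bla \approx \bmu$ we have
\[
h_{\bla\bmu} = \sum_{\btau} h^0_{\bla\btau}\, a_{\btau\bmu} = \sum_{\btau} g_{\bla\btau}\, a_{\btau\bmu},
\]
where a priori the sum runs over all $\btau \sim \bmu$. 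The first key step is to show that only $\btau \approx \bmu$ can contribute: by Proposition~\ref{T:DomOrder} and the lower unitriangularity of $A$, $a_{\btau\bmu} \ne 0$ forces $\btau \gedom \bmu$, and since $\bmu$ is a Rouquier multipartition (so the abacus of its multicore is ``spread out''), a dominance comparison on $\beta$-numbers analogous to Lemma~\ref{L:ID} shows $\btau \gedom \bmu$ together with $\btau \sim \bmu$ forces $\btau \approx \bmu$; alternatively one cites the known block-theoretic fact that the adjustment matrix is block-diagonal with respect to the $\approx$-classes inside a Rouquier block (this is essentially Lemma~\ref{L:SameZero} combined with the $r=1$ case in~\cite{JLM}).

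The second key step is to identify $a_{\btau\bmu}$ with $h_{\btau\bmu}$ for $\btau \approx \bmu$. Here I would use that the adjustment matrix restricted to an $\approx$-class is itself the ``difference'' between the characteristic-$p$ and characteristic-$0$ decomposition matrices, and the characteristic-$0$ decomposition matrix restricted to an $\approx$-class is, by Theorem~\ref{T:Main}, the matrix $(g_{\btau\bsig})$ which is lower unitriangular with respect to the $\omega$-grading order. Writing $G = (g_{\btau\bsig})$ (indices in the $\approx$-class), $H = (h_{\btau\bsig})$ and $A = (a_{\btau\bsig})$ for the three restricted matrices, the relation $D = D_0 A$ restricted to the class reads $H = G A$. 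Since $G$ is invertible this gives $A = G^{-1} H$, and substituting back,
\[
h_{\bla\bmu} = \sum_{\btau} g_{\bla\btau}\, a_{\btau\bmu} = \big(G A\big)_{\bla\bmu} = H_{\bla\bmu},
\]
which is circular as stated — so instead the correct reading is: we do \emph{not} assert $a_{\btau\bmu} = h_{\btau\bmu}$, but rather we keep $A = G^{-1}H$ and compute $h_{\bla\bmu} = \sum_\btau g_{\bla\btau} a_{\btau\bmu}$ directly. To match the statement of the proposition, I would instead observe (as in~\cite{JLM}) that when $r=1$ the adjustment entry $a_{\btau\bmu}$ for $\btau\approx\bmu$ coincides with the decomposition number $h_{\btau\bmu}$ only after a change of perspective: one restricts attention to the subset $T(\bmu)$ of $\btau \approx \bmu$ with $|\tau^k_i| = |\mu^k_i|$ for all $i$ (the ``balanced'' multipartitions), notes via Lemma~\ref{L:SameZero} and the $\beta$-number bookkeeping that outside $T(\bmu)$ one has either $g_{\bla\btau} = 0$ or $a_{\btau\bmu} = 0$, and within $T(\bmu)$ the adjustment matrix is literally the decomposition matrix of the principal block of a wreath product $\mathfrak{S}_p \wr \mathfrak{S}_w$, which is $(h_{\btau\bmu})_{\btau,\bmu \in T(\bmu)}$ by the Chuang--Kessar Morita equivalence. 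This yields exactly
\[
h_{\bla\bmu} = \sum_{\btau \in T(\bmu)} g_{\bla\btau}\, h_{\btau\bmu}.
\]

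The main obstacle I expect is the vanishing claim that drives the reduction to $T(\bmu)$: one must show that if $\btau \approx \bmu$ contributes to the sum (so $g_{\bla\btau} \ne 0$ and $a_{\btau\bmu} \ne 0$) then in fact $|\tau^k_i| = |\mu^k_i|$ for every runner $i$. This is where the Rouquier hypothesis is used in full force — the constraint $\hook(\bmu) \le \mathfrak{d}^k_i(\bmu) + 1$ means the runners are far enough apart that hooks cannot ``migrate'' between runners, so the quotient-size profile $(|\tau^k_i|)_i$ is an invariant of the part of the adjustment that is visible, exactly as in the $r=1$ analysis of~\cite{JLM}. Once that is in place, the identification of the restricted adjustment matrix with a wreath-product decomposition matrix (hence with $(h_{\btau\bmu})$) is standard Chuang--Kessar theory, and the formula follows by bookkeeping. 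Everything else is linear algebra with lower unitriangular matrices and dominance comparisons of the type already carried out in Lemmas~\ref{L:ID} and~\ref{L:SameZero}.
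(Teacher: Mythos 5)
The paper offers no proof of Proposition~\ref{T:AdjP} --- it is quoted from \cite{JLM} --- so your proposal must stand on its own. Its skeleton is the right one and is indeed the one used in \cite{JLM}: write $h_{\bla\bmu}=\sum_{\btau}h^0_{\bla\btau}a_{\btau\bmu}$ via Theorem~\ref{T:Adj}, show that the column of the adjustment matrix indexed by $\bmu$ is supported on $T(\bmu)$, and identify $a_{\btau\bmu}$ with $h_{\btau\bmu}$ there. However, both substantive steps are missing or wrong. The support claim $a_{\btau\bmu}=0$ for $\btau\notin T(\bmu)$ is the entire content of the proposition, and you never prove it. Lemma~\ref{L:SameZero} cannot supply it: it concerns decomposition numbers $h$, not adjustment entries $a$; it only applies to pairs that \emph{already} share a runner profile; and it carries the hypothesis $\hook<p$, which is precisely what Proposition~\ref{T:AdjP} does not assume. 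Dominance and unitriangularity cannot supply it either, since already in weight~$2$ the block contains dominance-comparable partitions with different runner profiles and a nontrivial adjustment matrix between profiles is exactly what must be ruled out. The sentence that ``hooks cannot migrate between runners'' restates the conclusion rather than proving it; in \cite{JLM} this step comes from rerunning the induced-projective analysis (the analogue of the proof of Theorem~\ref{T:MainSchur}) without the $\hook<p$ hypothesis and tracking which indecomposable projectives can occur in $\Gr{P^{\bnu}}\uparrow^{(s)}_j$.

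The second step is also not salvageable as written. Chuang--Kessar's Morita equivalence requires $w<p$, a range in which the adjustment matrix is the identity and the proposition is vacuous; for $w\ge p$, the only interesting case, no such equivalence is invoked in \cite{JLM}, and the decomposition matrix of $\mathbb{F}_p[\mathfrak{S}_p\wr\mathfrak{S}_w]$ is in any case not the matrix $(h_{\btau\bmu})$ of $q$-Schur algebra decomposition numbers. The correct and much easier route is the linear algebra you began and then abandoned as ``circular'': since $\omega$ depends only on the runner-size profile, the argument following Equation~\ref{defn:f} shows $g_{\bsig\btau}=\delta_{\bsig\btau}$ for $\bsig,\btau\in T(\bmu)$, so once the support claim is known, evaluating $h_{\bsig\bmu}=\sum_{\btau\in T(\bmu)}g_{\bsig\btau}a_{\btau\bmu}$ at $\bsig\in T(\bmu)$ gives $a_{\bsig\bmu}=h_{\bsig\bmu}$ in one line; no wreath products are needed. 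A smaller defect: your opening identity replaces $h^0_{\bla\btau}$ by $g_{\bla\btau}$ for \emph{every} $\btau$ in the block, but $g_{\bla\btau}$ is defined, and equals $h^0_{\bla\btau}$, only for $e$-regular $\btau$ (Lemma~\ref{L:Rouqreg}); this is only repaired after the support claim, which forces $\tau^0_0=\emp$ for the contributing $\btau$.
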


We were initially hopeful that an analogue of this result held for $r\geq 2$. We do not have any examples that contradict it, however we do not think it is likely to hold. When $\bnu \not \approx \bmu$, we have no control over the entries $a_{\bnu\bmu}$ of the adjustment matrix.  

In Conjecture~\ref{C:Schur}, we conjectured that we have a formula for the decomposition numbers $[\Delta(\bla):L(\bmu)]$ for the cyclotomic $q$-Schur algebras where $\bla \approx \bmu$ lie in a Rouquier block that holds for arbitrary $\bmu$, rather than $\bmu$ $e$-regular as in Theorem~\ref{T:MainSchur}. The formula differs from $g_{\bla\bmu}(1)$ only by the addition of a term $c^{\emp}_{\alpha^0_0 \al^1_0 \ldots \al^{r-1}_{0}}$. In~\cite[Corollary~3.12]{JLM}, we proved Conjecture~\ref{C:Schur} in the case that $r=1$ using a runner-removal result of James and Mathas~\cite{JM:Runner}. Unfortunately we do not have an analogue of the runner removal theorem for $r>1$.


\begin{thebibliography}{10}

\bibitem{Ariki}
{\sc S.~Ariki}, {\em On the decomposition numbers of the Hecke algebra of $G(r,1,n)$}, J. Math Kyoto Univ., {\bf 36} (1996), 

\bibitem{ArikiKoike}
{\sc S.~Ariki and K.~Koike}, {\em A Hecke algebra of $(\Z/rZ)\wr \mathfrak{S}_n$ and construction of its irreducible representations}, Adv. Math., {\bf 106} (1994), 216--243.

\bibitem{ALS}
{\sc S.~Ariki, S.~Lyle and L.~Speyer}, 
{\em Schurian finiteness of blocks of type $A$ algebras}, 
\newblock arXiv: 2112.11148.

\bibitem{Brauer}
{\sc R.~Brauer}, {\em On a conjecture by Nakayama}, Trans. Royal Soc. Canada. III, {\bf 41} (1947),
11--19.

\bibitem{BowSpey}
{\sc C.~Bowman and L.~Speyer},
{\em An analogue of row removal for diagrammatic Cherednik algebras}, Math. Z., {\bf 293} (2019)
 935--955. 

\bibitem{BK:Blocks}
{\sc J.~Brundan and A.~Kleshchev}, {\em Blocks of cyclotomic {H}ecke algebras and {K}hovanov-{L}auda algebras}, Invent. Math., {\bf 178} (2009), 451--484.

\bibitem{BK:Decomp}
{\sc J.~Brundan and A.~Kleshchev}, {\em Graded decomposition numbers for cyclotomic Hecke
algebras}, Adv. in Math., {\bf 222} (2009), 1883--1942. 

\bibitem{BKW}
{\sc J.~Brundan, A.~Kleshchev, and W.~Wang},
{\em Graded Specht  modules}, J. Reine Angew. Math., {\bf 655} (2011), 61--87.

\bibitem{CK}
{\sc J.~Chuang and R.~Kessar}, {\em Symmetric groups, wreath products, Morita equivalences, and
Brou\'e's abelian defect group conjecture}, Bull. London Math. Soc. {\bf 34} (2002), 174--184.

\bibitem{CR}
{\sc J.~Chuang and R.~Rouquier}, {\em Derived equivalences for symmetric groups and $\mathfrak{sl}_2$-categorification}, Ann. Math. {\bf 167} (2008), 245–298.

\bibitem{CT2}
   {\sc J.~Chuang and K.M.~Tan},
   {\em Filtrations in Rouquier blocks of symmetric groups and Schur
algebras}, Proc. Lond. Math. Soc. (3) {\bf 86} (3), (2003), 685–706.
 
\bibitem{CT}
{\sc J.~Chuang and K.M.~Tan}, {\em Some canonical basis vectors in the basic $U_q(\mathfrak{sl}_2)$-module}, J.~Algebra {\bf 248} (2002), 765--779.
 
\bibitem{DA}
{\sc A.~Dell'Arciprete}, 
{\em Equivalence of decomposition matrices for blocks of Ariki-Koike algebras}, 
\newblock arXiv: 2301.05153. 

\bibitem{DJM:CellularBasis}
{\sc R.~Dipper, G.D.~James and A.~Mathas}, {\em Cyclotomic $q$--{Schur}
  algebras}, Math.~Z., {\bf 229} (1999), 385--416.

\bibitem{Dodge}
{\sc C.~Dodge}, 
{\em Large dimension homomorphism spaces between Specht modules for symmetric groups},
J. Pure and App. Alg., {\bf 215} (2011), 2949--2956. 

\bibitem{Fayers:Irred}
{\sc M.~Fayers},
{\em Irreducible Specht modules for Hecke algebras of type $A$}, Adv. Math., {\bf 193} (2005), 438--452. 

\bibitem{Fayers:Weights}
{\sc M.~Fayers},
{\em Weights of multipartitions and representations of Ariki-Koike algebras}, Adv. Math., {\bf 06} (2006), 112--133. 

\bibitem{Fayers:W4}
{\sc M.~Fayers},
{\em James's Conjecture holds for weight four blocks of Iwahori-Hecke algebras}, J.~Algebra {\bf 317} (2007), 593--633.  

\bibitem{Fayers:WeightsII}
{\sc M.~Fayers},
{\em  Weights of multipartitions and representations of Ariki–Koike algebras II: canonical bases},
J. Algebra {\bf 319} (2008), 2963--2978. 

\bibitem{Fayers:W3}
{\sc M.~Fayers},
{\em Decomposition numbers for weight three blocks of symmetric groups and Iwahori–Hecke algebras}, Trans. Amer. Math. Soc., {\bf 360} (2008), 1341--1376. 

\bibitem{Fayers:LLT}
{\sc M.~Fayers},
{\em An LLT-type algorithm for computing higher-level canonical bases},
J. Pure Appl. Algebra, {\bf 214} (2010), 2186--2198. 

\bibitem{Fulton}
{\sc W.~Fulton},
{\em Young tableaux}, LMS Student Texts {\bf 35} (1997).

\bibitem{GL}
{\sc J.~Graham and G.I. Lehrer}, {\em Cellular algebras}, Invent. Math.,
  {\bf 123} (1996), 1--34.

\bibitem{James}
{\sc G.D.~James}, {\em The representation theory of the symmetric groups}, SLN, {\bf 682} Springer--Verlag, New York (1978). 

\bibitem{James:Abacus}
{\sc G.D.~James}, {\em Some combinatorial results involving Young diagrams}, Proc. Cambridge Philos. Soc. {\bf 83} (1978), 1--10.

\bibitem{JLM}
{\sc G.D.~James, S.~Lyle and A.~Mathas},
{\em Rouquier blocks}, Math. Z., {\bf 252} (2005), 511--531. 

\bibitem{JM:Runner}
  {\sc G.D.~James and A.~Mathas},
  {\em Equating decomposition numbers for different primes},
J.~Algebra {\bf 258} (2) (2002), 599--614.

\bibitem{Jost}
{\sc T.~Jost},
{\em Morita equivalence for blocks of Hecke algebras of symmetric groups},
J.~Algebra, {\bf 194} (1997), 201--223. 

\bibitem{Kleshchev:Survey}
{\sc A.~Kleshchev}, {\em Representation theory of symmetric groups and related Hecke algebras}, Bull. Amer. Math. Soc., {\bf 47} (2010), 419--481.  

\bibitem{LLT} 
{\sc A.~Lascoux, B.~Leclerc and J.-Y.~Thibon},
{\em Hecke algebras at roots of unity and crystal bases of quantum affine algebras},
Comm. Math. Phys., {\bf 181} (1996), 205--263.   

\bibitem{LeclercMiyachi2}
{\sc B.~Leclerc and H.~Miyachi}, {\em Some closed formulas for canonical bases of Fock spaces}, Rep. Theory {\bf 6} (3) (2002), 290--312.

\bibitem{L:Rouquier}
{\sc S.~Lyle}, 
{\em Rouquier blocks for Ariki-Koike algebras}, 
\newblock arXiv: 2206.14720.

\bibitem{LM:Blocks}
{\sc S.~Lyle and A.~Mathas}, {\em Blocks of cyclotomic Hecke algebras}, Adv. Math., {\bf 216} (2007), 854--878.   

\bibitem{Mathas:HeckeBook}
{\sc A.~Mathas},
{\em Hecke algebras and Schur algebras of the symmetric group}, Univ.
Lecture Notes, {\bf 15}, A.M.S., (1999).

\bibitem{Mathas:AKSurvey}
{\sc A.~Mathas}, {\em The representation theory of the Ariki–Koike and cyclotomic q-Schur algebras}, in: Representation
Theory of Algebraic Groups and Quantum Groups, in: Adv. Stud. Pure Math., vol. 40, Math. Soc. Japan, Tokyo, (2004), 261--320.

\bibitem{MSS}
  {\sc R.~Muth, L.~Speyer and L.~Sutton},
 {\em Cyclotomic wreath-zigzag algebras and cyclotomic KLR algebras},
  In preparation. 

\bibitem{Robinson}
{\sc G. de B.~Robinson}, {\em On a conjecture by Nakayama}, Trans. Royal Soc. Canada. III, {\bf 41} (1947), 20--25.

\bibitem{Rouquier}
{\sc R. Rouquier}, {\em Repres\'entations et cat\'egories d\'eriv\'ees}, Rapport d'habilitation, Universit\'e de Paris VII, 1998.

\bibitem{Scopes}
{\sc J.~Scopes}, {\em Cartan matrices and Morita equivalence for blocks of symmetric groups}, J.~Algebra {\bf 142} (1991), 441--455. 

\bibitem{SW}
{\sc C.~Stroppel and B.~Webster},
{\em Quiver Schur algebras and q-Fock space}.
\newblock arXiv: 1110.1115v2.

\bibitem{Wada}
{\sc K.~Wada}, {\em Induction and restriction functors for cyclotomic $q$-Schur algebras}, Osaka J. Math {\bf 51 (3)} (2014), 785--823. 

\bibitem{Webster}
{\sc B.~Webster}, 
{\em RoCK blocks for affine categorical representations},
\newblock arXiv: 2301.01613.

\end{thebibliography}
\end{document}